\documentclass[11pt]{article}
%\usepackage{setspace}
%\usepackage{cprotect}

% \usepackage{polski}

% define RedeclareMathOperator
\makeatletter
\newcommand\RedeclareMathOperator{%
  \@ifstar{\def\rmo@s{m}\rmo@redeclare}{\def\rmo@s{o}\rmo@redeclare}%
}
% this is taken from \renew@command
\newcommand\rmo@redeclare[2]{%
  \begingroup \escapechar\m@ne\xdef\@gtempa{{\string#1}}\endgroup
  \expandafter\@ifundefined\@gtempa
     {\@latex@error{\noexpand#1undefined}\@ehc}%
     \relax
  \expandafter\rmo@declmathop\rmo@s{#1}{#2}}
% This is just \@declmathop without \@ifdefinable
\newcommand\rmo@declmathop[3]{%
  \DeclareRobustCommand{#2}{\qopname\newmcodes@#1{#3}}%
}
\@onlypreamble\RedeclareMathOperator
\makeatother

\usepackage{witharrows}
\usepackage{tikz}
\usepackage{graphicx,wrapfig,lipsum}   

\usepackage{xspace}

\usepackage{framed}

\usepackage{amsmath,amssymb}
\usepackage{amsthm}
\usepackage{mathtools}
\usepackage[noend]{algorithmic}
\usepackage[ruled,vlined]{algorithm2e}
\usepackage{url}
\usepackage{fullpage}
\usepackage{makeidx}
\usepackage{enumerate}
\usepackage[top=1in, bottom=1.25in, left=0.75in, right=0.75in]{geometry}
\usepackage{graphicx,float,psfrag,epsfig,caption}

%\definecolor{darkgreen}{rgb}{0.0,0,0.9}
\usepackage{hyperref}
\usepackage{epstopdf}
\usepackage{color}
\usepackage{xr}
\usepackage{subcaption}
\usepackage[utf8]{inputenc}

\usepackage{scalerel,stackengine}

\usepackage{enumitem}
\usepackage{bbm}

\stackMath
\newcommand\reallywidehat[1]{%
\savestack{\tmpbox}{\stretchto{%
  \scaleto{%
    \scalerel*[\widthof{\ensuremath{#1}}]{\kern.1pt\mathchar"0362\kern.1pt}%
    {\rule{0ex}{\textheight}}%WIDTH-LIMITED CIRCUMFLEX
  }{\textheight}% 
}{2.4ex}}%
\stackon[-6.9pt]{#1}{\tmpbox}%
}
\parskip 1ex
\usepackage[mathscr]{euscript} 

\usepackage[T1]{fontenc}

% Copied from mathrsfs.sty
\DeclareSymbolFont{rsfs}{U}{rsfs}{m}{n}
\DeclareSymbolFontAlphabet{\mathscrsfs}{rsfs}

\numberwithin{equation}{section}

\newtheoremstyle{myexample} % name
    {\topsep}                    % Space above
    {\topsep}                    % Space below
    {\rm }                   % Body font
    {}                           % Indent amount
    {\bf }                   % Theorem head font
    {.}                          % Punctuation after theorem head
    {.5em}                       % Space after theorem head
    {}  % Theorem head spec (can be left empty, meaning normal)

\newtheoremstyle{myremark} % topsep
    {\topsep}                    % Space above
    {\topsep}                    % Space below
    {\rm}                        % Body font
    {}                           % Indent amount
    {\bf}                        % Theorem head font
    {.}                          % Punctuation after theorem head
    {.5em}                       % Space after theorem head
    {}  % Theorem head spec (can be left empty, meaning normal)

\newtheorem{claim}{Claim}[section]
\newtheorem{lemma}[claim]{Lemma}

\newtheorem{theorem}{Theorem}
\newtheorem{proposition}[claim]{Proposition}
\newtheorem{corollary}[claim]{Corollary}
\newtheorem{definition}[claim]{Definition}

\theoremstyle{myremark}
\newtheorem{remark}{Remark}[section]

\theoremstyle{myremark}

\theoremstyle{myexample}
\newtheorem{example}[remark]{Example}

\definecolor{darkgreen}{rgb}{0.0, 0.5, 0.0}

\hypersetup{
  colorlinks   = true, %Colours links instead of ugly boxes
  urlcolor     = blue, %Colour for external hyperlinks
  linkcolor    = blue, %Colour of internal links
  citecolor   = red %Colour of citations
}

\newcommand{\bea}{\begin{eqnarray}}
\newcommand{\eea}{\end{eqnarray}}
\newcommand{\<}{\langle}
\renewcommand{\>}{\rangle}

\newcommand{\wt}{\widetilde}
\newcommand{\op}{\text{op}}

\def\eps{{\varepsilon}}

\def\bh{\boldsymbol{h}}

\def\cuK{\mathscrsfs{K}}
\def\cuU{\mathscrsfs{U}}
\def\cuL{\mathscrsfs{L}}
\def\cuV{\mathscrsfs{V}}

\def\ind{{\mathbb I}}

\def\hzeta{\widehat{\zeta}}

\def\bSigma{{\boldsymbol{\Sigma}}}

\def\hq{{\widehat{q}}}

\def\hB{{\widehat{B}}}

\def\bg{{\boldsymbol{g}}}
\def\bx{{\boldsymbol{x}}}

\def\cX{{\mathcal X}}
\def\cZ{{\mathcal Z}}

\def\Z{{\mathbb Z}}

\def\op{\mbox{\tiny\rm op}}

\def\uu{\underline{u}}
\def\vk{\vec k}
\def\vp{\vec p}
\def\vv{\vec v}
\def\vx{\vec x}
\def\vq{\vec q}

\def\vB{\vec B}
\def\vV{\vec V}
\def\vX{\vec X}
\def\vW{\vec W}
\def\uvq{\underline{\vec q}}
\def\uvp{\underline{\vec p}}
\def\uvk{\underline{\vec k}}
\def\wtH{\wt{H}}
\def\wtB{\wt{B}}
\def\wtq{\wt{q}}
\def\vone{\vec 1}

\def\bsig{{\boldsymbol \sigma}}
\def\vbsig{{\vec \bsig}}
\def\uvbsig{\underline{\vec \bsig}}
\def\ubsig{\underline{\bsig}}
\def\hbsig{{\widehat \bsig}}

\def\brho{{\boldsymbol \rho}}

\def\vbrho{{\vec \brho}}

\def\bfeta{{\boldsymbol \eta}}
\def\veta{{\vec \eta}}
\def\vbfeta{{\vec \bfeta}}
\def\by{{\boldsymbol y}}
\def\vy{{\vec y}}
\def\vby{{\vec \by}}
\def\vbx{{\vec \bx}}

\def\R{{\mathbb R}}

\def\Z{{\mathbb Z}}

\def\bv{{\boldsymbol{v}}}

\def\bx{{\boldsymbol{x}}}

\def\bm{\boldsymbol{m}}

\def\vbm{\boldsymbol{\vec m}}
\def\vbh{\boldsymbol{\vec h}}

\def\Par{{\sf P}}
\def\de{{\rm d}}

\def\prob{{\mathbb P}}

\def\<{\langle}
\def\>{\rangle}
\def\Tr{{\sf Tr}}

\def\cH{{\cal H}}
\def\cM{{\cal M}}
\def\cN{{\cal N}}

\def\uD{\underline{D}}

\def\cY{{\cal Y}}

\def\by{{\boldsymbol{y}}}

\def\P{\mathbb{P}}

% NOTE TO ANDREA:
% Please algorithm and [noend]{algorithmic} in the beginning of the file, instead the following two below.
%\newcommand{\algorithmicrequire}{\textbf{Input:}}
%\newcommand{\algorithmicensure}{\textbf{Output:}}
%\renewcommand{\algorithmicrequire}{\textbf{Input:}}
%\renewcommand{\algorithmicensure}{\textbf{Output:}}

\def\b0{{\boldsymbol{0}}}

\def\hH{\widehat{H}}

\DeclareMathOperator*{\plim}{p-lim}

\def\OPT{{\sf OPT}}

\def\cA{{\mathcal A}}

\def\cS{{\mathcal S}}
\def\bn{{\boldsymbol n}}
\def\cB{{\mathcal B}}

\def\cK{{\mathcal K}}

\def\ozeta{\overline{\zeta}}

\def\uzeta{\underline{\zeta}}
\def\oI{\overline{I}}
\def\uI{\underline{I}}
\def\oJ{\overline{J}}
\def\uJ{\underline{J}}
\def\uK{\underline{K}}
\def\uQ{\underline{Q}}
\def\PA{\mathrm{pa}}

\renewcommand{\b}{\mathbf{b}}

\def\fr{\frac}
\def\lt{\left}
\def\rt{\right}

\def\la{\langle}
\def\ra{\rangle}

\def\eps{\varepsilon}

\def\bbE{{\mathbb{E}}}
\def\bbL{{\mathbb{L}}}
\def\bbN{{\mathbb{N}}}
\def\bbP{{\mathbb{P}}}
\def\bbR{{\mathbb{R}}}
\def\bbS{{\mathbb{S}}}
\def\bbT{{\mathbb{T}}}
\def\bbZ{{\mathbb{Z}}}

\def\ubbL{\underline{\mathbb{L}}}

\def\cA{{\mathcal{A}}}
\def\cB{{\mathcal{B}}}

\def\cK{{\mathcal{K}}}
\def\cN{{\mathcal{N}}}

\def\cQ{{\mathcal{Q}}}
\def\ucQ{\underline{\mathcal{Q}}}

\def\sH{{\mathscr{H}}}

\def\bg{{\mathbf{g}}}
\def\bh{{\boldsymbol{h}}}

\def\hbsig{{\boldsymbol {\hat \sigma}}}

\def\hcA{{\widehat \cA}}

\def\ALG{{\mathsf{ALG}}}
\def\OPT{{\mathsf{OPT}}}

\def\TV{{\mathrm{TV}}}
\def\GS{{\mathrm{GS}}}

\def\Ssolve{S_{\mathrm{solve}}}

\def\Soverlap{S_{\mathrm{overlap}}}

\def\Sogp{S_{\mathrm{ogp}}}
\def\Seigen{S_{\mathrm{eigen}}}

\def\smax{{s_{\max}}}

\DeclareMathOperator*{\E}{\bbE}
\RedeclareMathOperator*{\P}{\bbP}
\DeclareMathOperator*{\argmax}{\arg\,\max}

\def\Sp{\mathrm{Sp}}
\def\Is{\mathrm{Is}}

\newcommand{\Gp}[1]{\mathbf{G}^{(#1)}}
\newcommand{\gp}[1]{\mathbf{g}^{(#1)}}
\newcommand{\gb}[1]{\mathbf{g}^{[#1]}}
\newcommand{\wtHNp}[1]{{\wt{H}_N^{(#1)}}}

\newcommand{\HNp}[1]{{H_N^{(#1)}}}
\newcommand{\bgp}[1]{\bg^{(#1)}}

\newcommand{\Sum}{\mathrm{Sum}}

\newcommand{\diff}[1]{{\mathrm{d}#1}}

\newcommand{\norm}[1]{{\lt\|#1\rt\|}}

\newcommand{\He}{\mathrm{He}}
\newcommand{\tHe}{\widetilde \He}

\newcommand{\ucuL}{\underline{\cuL}}
\newcommand{\ocuL}{\overline{\cuL}}
\newcommand{\ocuK}{\overline{\cuK}}

\newcommand{\uk}{\underline{k}}

\newcommand{\pderiv}[2]{{\fr{\partial #1}{\partial #2}}}

\newcommand{\Span}{\mathrm{span}}

\title{Tight Lipschitz Hardness for Optimizing Mean Field Spin Glasses}
\author{
    Brice Huang\thanks{Department of Electrical Engineering and Computer Science, Massachusetts Institute of Technology.}
    \and
    Mark Sellke\thanks{Department of Mathematics, Stanford University.}
}
\date{}

\begin{document}

\maketitle
\vspace{-4pt}

\begin{abstract}
    We study the problem of algorithmically optimizing the Hamiltonian $H_N$ of a spherical or Ising mixed $p$-spin glass. 
    The maximum asymptotic value $\OPT$ of $H_N/N$ is characterized by a variational principle known as the Parisi formula, proved first by Talagrand and in more generality by Panchenko.
    Recently developed approximate message passing algorithms efficiently optimize $H_N/N$ up to a value $\ALG$ given by an extended Parisi formula, which minimizes over a larger space of functional order parameters.
    These two objectives are equal for spin glasses exhibiting a \emph{no overlap gap} property.
    However, $\ALG < \OPT$ can also occur, and no efficient algorithm producing an objective value exceeding $\ALG$ is known.
    
    We prove that for mixed even $p$-spin models, no algorithm satisfying an \emph{overlap concentration} property can produce an objective larger than $\ALG$ with non-negligible probability. 
    This property holds for all algorithms with suitably Lipschitz dependence on the disorder coefficients of $H_N$. It encompasses natural formulations of gradient descent, approximate message passing, and Langevin dynamics run for bounded time and in particular includes the algorithms achieving $\ALG$ mentioned above. 
    To prove this result, we substantially generalize the overlap gap property framework introduced by Gamarnik and Sudan to arbitrary ultrametric forbidden structures of solutions.
\end{abstract}

\tableofcontents

\section{Introduction}

In a \emph{random optimization problem}, one sets out to optimize an objective function generated from random data.
The computational complexity of these problems is not well understood, due to the fact that they are often both non-convex and high-dimensional.
Optimizing non-convex functions in high dimensions is well-known to be computationally intractable in the worst case; however, worst-case lower bounds rely on highly structured hard instances, and in average-case settings the picture is far less clear.

In this paper we obtain a sharp computational threshold for a natural class of random optimization problems, namely the Hamiltonians of mean-field spin glasses. These functions have been studied since \cite{sherrington1975solvable} as models for disordered magnetic systems. From a mathematical point of view, they are simply polynomials or power series in many variables with independent and identically distributed coefficients. Moreover as discussed below they are closely related to random combinatorial optimization problems such as $k$-SAT and MaxCut.
Our main result is a lower bound against a natural class of \textbf{stable} algorithms which exactly matches the best known algorithms for this problem.

Our problem is defined as follows. For each $p\in 2\bbN$, let $\Gp{p} \in \lt(\bbR^N\rt)^{\otimes p}$ be an independent $p$-tensor with i.i.d. $\cN(0,1)$ entries. Let $h \ge 0$, and set $\bh = (h,\ldots,h) \in \bbR^N$. Fix a sequence $(\gamma_p)_{p\in 2\bbN}$ with $\gamma_p\ge 0$ and $\sum_{p\in 2\bbN} 2^p \gamma_p^2 < \infty$.
The mixed even $p$-spin Hamiltonian $H_N$ is defined by
\begin{align}
    \label{eq:def-hamiltonian}
    H_N(\bsig) &= \la \bh, \bsig \ra + \wtH_N(\bsig), \quad \text{where}\\
    \label{eq:def-hamiltonian-no-field}
    \wtH_N(\bsig) &=\sum_{p\in 2\bbN} \fr{\gamma_p}{N^{(p-1)/2}} \la \Gp{p}, \bsig^{\otimes p} \ra.
\end{align}

We consider inputs $\bsig$ in either the sphere $S_N = \{\bsig \in \bbR^N : \norm{\bsig}_2^2 = N\}$ or the cube $\Sigma_N = \{-1, 1\}^N$.
These define, respectively, the \emph{spherical} and \emph{Ising} mixed $p$-spin glass models. The coefficients $\gamma_p$ are customarily encoded in the \emph{mixture function} $\xi(x) = \sum_{p\in 2\bbN} \gamma_p^2 x^p$. Note that $\wtH_N$ is equivalently described as the Gaussian process with covariance
\[
    \E \wtH_N(\bsig^1) \wtH_N(\bsig^2) = N \xi(\la \bsig^1,\bsig^2\ra/N)
\]
while the term $\la \bh, \bsig \ra$ represents an external field.
Our purpose is to shed light on a discrepancy between the in-probability limiting maximum values
\[
    \OPT^{\Sp}_{\xi,h}=\plim_{N\to\infty}\frac{1}{N}\max_{\bsig\in S_N}H_N(\bsig), \qquad 
    \OPT^{\Is}_{\xi,h}=\plim_{N\to\infty}\frac{1}{N}\max_{\bsig\in \Sigma_N}H_N(\bsig)
\]
and the maximum \emph{efficiently computable} values of $H_N$ over the same sets. 
We will write $\OPT^{\Sp} = \OPT^{\Sp}_{\xi,h}$ and $\OPT^{\Is} = \OPT^{\Is}_{\xi,h}$ when $\xi,h$ are clear from context.

\subsection{$\OPT$, $\ALG$, and the Parisi Functional}
The values $\OPT^{\Sp}$ and $\OPT^{\Is}$ are given by the celebrated Parisi formula \cite{parisi1979infinite} which was proved for even models by \cite{talagrand2006parisi,talagrand2006spherical} and in more generality by \cite{panchenko2014parisi}. 
While most often stated as a formula for the limiting free energy at inverse temperature $\beta$, the asymptotic maximum can be recovered as a $\beta\to\infty$ limit of the Parisi formula. 
Restricting for concreteness to the Ising case (we will state the analogous result for the spherical case in Section~\ref{sec:results}), the result can be expressed in the following form due to Auffinger and Chen \cite{auffinger2017parisi}.
Define the function space
\begin{equation}
    \label{eq:def-cuU}
    \cuU= \lt\{
        \zeta : [0,1) \to \bbR_{\ge 0}:
        \text{$\zeta$ is right-continuous and nondecreasing,}
        \int_0^1 \zeta(t) \de t < \infty
    \rt\}.
\end{equation}
For $\zeta\in \cuU$, define $\Phi_{\zeta}:[0,1]\times \bbR\to\bbR$ to be the solution of the following \emph{Parisi PDE}.
\begin{align}
    \label{eq:intro-1dParisiPDEdefn}
    \partial_t \Phi_{\zeta}(t,x)+\frac{1}{2}\xi''(t)\left(\partial_{xx}\Phi_{\zeta}(t,x)+\zeta(t)(\partial_x \Phi_{\zeta}(t,x))^2\right)&=0\\
    % \nonumber
    \Phi_{\zeta}(1,x)&=|x|.
\end{align}

Existence and uniqueness properties for this PDE are well established and are reviewed in Subsection~\ref{subsec:1d-parisi-review}. 
The Parisi functional $\Par^{\Is} = \Par^{\Is}_{\xi,h} : \cuU \to \bbR$ is given by
\begin{equation}
    \label{eq:def-parisi-functional-is}
    \Par^{\Is}(\zeta) 
    = 
    \Phi_{\zeta}(0,h) - \frac{1}{2}\int_{0}^1 t\xi''(t)\zeta(t) \diff{t}.
\end{equation}

\begin{theorem}[{\cite[Theorem 1]{auffinger2017parisi}}]
    \label{thm:opt-is}
    The following identity holds.
    \begin{equation}
        \label{eq:opt-variational-formula-is}
        \OPT^{\Is}
        = 
        \inf_{\zeta\in \cuU}
        \Par^{\Is}(\zeta).
    \end{equation}
\end{theorem}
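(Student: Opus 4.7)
The plan is to derive Theorem~\ref{thm:opt-is} as the zero-temperature limit of the positive-temperature Parisi formula, established for even models by Talagrand and in greater generality by Panchenko. Fix $\beta>0$, set $Z_N(\beta) = \sum_{\bsig\in\Sigma_N} e^{\beta H_N(\bsig)}$, and let $F_N(\beta) = \frac{1}{N\beta}\log Z_N(\beta)$. The elementary sandwich
\[
    \frac{1}{N}\max_{\bsig} H_N(\bsig) \;\le\; F_N(\beta) \;\le\; \frac{1}{N}\max_{\bsig} H_N(\bsig) + \frac{\log 2}{\beta}
\]
together with Gaussian concentration of $\frac{1}{N}\max_{\bsig} H_N$ around its mean gives $\OPT^{\Is} = \lim_{\beta\to\infty}\lim_{N\to\infty} F_N(\beta)$. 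The positive-temperature Parisi formula identifies $\lim_{N}F_N(\beta)$ with $\cP_\beta^\star := \inf_\mu \cP_\beta(\mu)$, where $\mu$ ranges over probability measures on $[0,1]$ and $\cP_\beta$ is defined via a Parisi PDE with terminal condition $\Phi(1,x)=\beta^{-1}\log(2\cosh(\beta x))$. It therefore suffices to show $\lim_{\beta\to\infty}\cP_\beta^\star = \inf_{\zeta\in\cuU}\Par^{\Is}(\zeta)$.

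The crux of the argument is a reparametrization. For a probability measure $\mu$ on $[0,1]$, set $\zeta_\mu(t) = \beta\cdot \mu([0,t])$; this is an element of $\cuU$ pointwise bounded by $\beta$, and as $\beta\to\infty$ the images exhaust $\cuU$. Under this substitution, the finite-temperature Parisi PDE takes precisely the form \eqref{eq:intro-1dParisiPDEdefn} except with terminal condition $\beta^{-1}\log(2\cosh(\beta x))$ in place of $|x|$, and the functional $\cP_\beta(\mu)$ takes precisely the form \eqref{eq:def-parisi-functional-is} evaluated at $\zeta_\mu$. Since $\beta^{-1}\log(2\cosh(\beta x))\to |x|$ uniformly on compacts as $\beta\to\infty$, continuity of the PDE solution in its terminal data, combined with Lipschitz continuity of $\zeta\mapsto \Phi_\zeta(0,h)$, should give $\cP_\beta(\mu_\beta) \to \Par^{\Is}(\zeta)$ whenever $\zeta_{\mu_\beta}\to \zeta$ in a suitable metric.

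The proof then splits in two. For the upper bound $\inf_\zeta \Par^{\Is}(\zeta) \ge \OPT^{\Is}$, fix $\zeta \in \cuU$, truncate it at height $\beta$ to obtain an admissible $\zeta_{\mu_\beta}$, and pass to the limit using the continuity statements above. For the matching lower bound $\inf_\zeta \Par^{\Is}(\zeta) \le \OPT^{\Is}$, take near-minimizers $\mu_\beta$ of $\cP_\beta$ and extract a subsequential limit $\zeta^\star$ of the $\zeta_{\mu_\beta}$ in $\cuU$, then pass to the limit to show $\Par^{\Is}(\zeta^\star) \le \lim_\beta \cP_\beta(\mu_\beta) = \OPT^{\Is}$. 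I expect the main obstacle to be the compactness step here: since $\cuU$ is not locally compact in any natural topology, one must leverage the coercivity of the linear penalty $\frac{1}{2}\int_0^1 t\xi''(t)\zeta(t)\,dt$ to show that $\int_0^1 \zeta_{\mu_\beta}(t)\,dt$ stays bounded along near-minimizing sequences, which together with monotonicity lets one extract a subsequential $L^1$ limit on any $[0,1-\delta]$. The needed $L^1$-Lipschitz continuity of $\zeta\mapsto\Phi_\zeta(0,h)$ on $\cuU$ (used in both halves) is standard and follows from a stochastic control / Cole--Hopf representation of $\Phi_\zeta$; the minor technical points around the singularity at $t=1$ are handled by first working with truncated $\zeta$ supported on $[0,1-\delta]$ and then letting $\delta\downarrow 0$.
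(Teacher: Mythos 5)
Theorem~\ref{thm:opt-is} is cited from \cite{auffinger2017parisi} and is not proved in this paper, so there is no in-paper proof to compare against; what you have written is a sketch of the Auffinger--Chen argument, and its overall architecture (recover $\OPT^{\Is}$ as the $\beta\to\infty$ limit of the positive-temperature Parisi formula via the reparametrization $\zeta_\mu=\beta\,\mu([0,\cdot])$ together with the scaling $\Psi(t,y)=\frac{1}{\beta}\Phi_\mu(t,\beta y)$) is the correct one.

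Your upper bound (fix $\zeta\in\cuU$, truncate at height $\beta$ to get an admissible CDF of a probability measure, pass to the limit) is clean and works. However, your lower bound is substantially more complicated than it needs to be, and as written it leaves a genuine gap: you propose extracting subsequential $L^1$ limits of near-minimizers $\zeta_{\mu_\beta}$ and then appeal to some form of lower semicontinuity of $\Par^{\Is}$, but you have not established the coercivity you invoke (it is not obvious that $\Phi_\zeta(0,h)-\frac{1}{2}\int t\xi''\zeta$ is coercive, since $\Phi_\zeta(0,h)$ itself grows with $\zeta$), nor that the relevant semicontinuity holds in the topology you obtain. The much simpler route bypasses compactness entirely: the rescaled terminal condition satisfies $\frac{1}{\beta}\log(2\cosh(\beta x))\ge|x|$ pointwise, and the Parisi PDE obeys a comparison principle in its terminal data (immediate from the Auffinger--Chen stochastic-control representation, cf.\ Proposition~\ref{prop:1dGTIsing}). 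Hence $\Psi_{\zeta_\mu}(0,h)\ge\Phi_{\zeta_\mu}(0,h)$ for every $\mu$, which gives $\frac{1}{\beta}\cP_\beta(\mu)\ge\Par^{\Is}(\zeta_\mu)\ge\inf_{\zeta\in\cuU}\Par^{\Is}(\zeta)$ for every $\mu$ and every $\beta$; taking the infimum over $\mu$ and then $\beta\to\infty$ yields the lower bound with no subsequence extraction, no coercivity, and no semicontinuity. You would do well to replace your compactness step with this monotone-comparison argument.
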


The infimum over $\zeta\in\cuU$ is achieved at a unique $\zeta_*\in \cuU$ as shown in \cite{auffinger2017parisi,chen2018energy}, which can be obtained as an appropriately renormalized zero-temperature limit of the corresponding minimizers in the positive temperature Parisi formula. 
These positive temperature minimizers roughly correspond to cumulative distribution functions for the overlap $\la \bsig^1,\bsig^2\ra/N$ of two replicas $\bsig^1,\bsig^2$ sampled from the Gibbs measure $e^{\beta H_N}/Z_N(\beta)$; this is why the functions $\zeta$ considered in the Parisi formula are nondecreasing.

Efficient algorithms to find an input $\bsig$ achieving a large objective have recently emerged in a line of work initiated by \cite{subag2018following} and continued in \cite{mon18,ams20,sellke2021optimizing}. 
The main results of these works in the Ising case can be described as follows. 
For a function $f: \bbR\to\bbR$ and interval $J$, let $\norm{f}_{\TV(J)}$ denote the total variation of $f$ on $J$, expressed as the supremum over partitions: 
\[
    \norm{f}_{\TV(J)}
    =
    \sup_n
    \sup_{t_0 < t_1 < \cdots < t_n, t_i\in J}
    \sum_{i=1}^n |f(t_i) - f(t_{i-1})|.
\]
Let $\cuL\supseteq \cuU$ denote the set of functions given by
\begin{equation}
    \label{eq:def-cuL}
    \cuL = \lt\{
        \begin{array}{ll}
            \displaystyle
            \zeta : [0,1) \to \bbR_{\ge 0}:
            &\text{$\zeta$ right-continuous},
            \norm{\xi''\cdot \zeta}_{\TV[0,t]}<\infty~\text{for all $t\in [0,1)$}, \\
            &\displaystyle \int_0^1 \xi''(t) \zeta(t) \diff{t} < \infty
        \end{array}
    \rt\}.
\end{equation}
It turns out (see Subsection~\ref{subsec:1d-parisi-review}) that the definition of $\Par^{\Is}$ above extends from $\cuU$ to $\cuL$. 
Therefore we may define $\ALG^{\Is} = \ALG^{\Is}_{\xi, h}$ by
\begin{equation}
    \label{eq:def-alg-is}
    \ALG^{\Is} = \inf_{\zeta\in \cuL}\Par^{\Is}(\zeta).
\end{equation}
Note that $\ALG^{\Is}\leq \OPT^{\Is}$ trivially holds.
We have $\ALG^{\Is} = \OPT^{\Is}$ if the infimum in \eqref{eq:def-alg-is} is attained by some $\zeta \in \cuU$, and otherwise $\ALG^{\Is} < \OPT^{\Is}$.

\begin{theorem}[\cite{ams20,sellke2021optimizing}]
    \label{thm:ams20}
    Assume there exists $\zeta_*\in\cuL$ such that $\Par^{\Is}(\zeta_*)=\ALG^{\Is}$. Then for any $\eps>0$, there exists an efficient algorithm $\cA:\sH_N\to C_N$ such that
    \[
        \P[H_N(\cA(H_N))/N\geq \ALG^{\Is}-\eps]\geq 1-o(1),\quad c=c(\eps)>0.
    \] 
\end{theorem}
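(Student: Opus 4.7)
The plan is to construct an approximate message passing (AMP) algorithm whose state evolution realizes the Parisi PDE~\eqref{eq:intro-1dParisiPDEdefn} associated with a carefully discretized approximate minimizer of $\Par^{\Is}$ over $\cuL$, and then to round the final iterate to a cube vertex by coordinatewise sign. The boundary condition $\Phi_\zeta(1,x)=|x|$ in the PDE corresponds exactly to this rounding step, which is what allows an Ising (rather than purely spherical) guarantee.

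\textbf{Step 1 (Discretization of $\zeta_*$).} Since $\zeta_*\in\cuL$ need not be a bounded CDF and may diverge as $t\to 1^-$, approximate it by a step function $\zeta_{K,\delta}$ taking finitely many values on a partition $0=q_0<q_1<\cdots<q_K=1-\delta$ and extended by $0$ on $[1-\delta,1)$. Using the continuity of $\Par^{\Is}$ on $\cuL$ and continuity properties of the Parisi PDE solution map $\zeta\mapsto\Phi_\zeta$ recalled in Subsection~\ref{subsec:1d-parisi-review}, pick $K$ large and $\delta$ small so that $\Par^{\Is}(\zeta_{K,\delta})\le\ALG^{\Is}+\eps/3$.

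\textbf{Step 2 (AMP matched to the discretization).} Run $K$ iterations of an AMP producing iterates $\bm^{(1)},\ldots,\bm^{(K)}\in\bbR^N$ whose updates combine $\nabla\wtH_N$ evaluated at past iterates with Onsager-type corrections, and with denoisers designed so that the state evolution yields the following: as $N\to\infty$, the empirical joint coordinate distribution of $(\bm^{(1)}_i,\ldots,\bm^{(K)}_i)_{i\le N}$ converges in $W_2$ to the law of a centered Gaussian process on the $K$-level Parisi ultrametric tree with branching overlaps $q_0,\ldots,q_K$ and hierarchical weights determined by $\zeta_{K,\delta}$. Equivalently, the $k$-th iterate tracks $\partial_x\Phi_{\zeta_{K,\delta}}(q_k,X_k)$ along a diffusion $X_k$ whose variance schedule is read off from $\zeta_{K,\delta}$. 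The coordinatewise state evolution, the asymptotic orthogonalization of the Gaussian increments, and the concentration of pairwise overlaps follow from the non-separable AMP machinery used in~\cite{mon18,ams20}.

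\textbf{Step 3 (Rounding and objective evaluation).} Output $\bsig=\sign(\bm^{(K)})\in\Sigma_N$. Decompose $H_N(\bsig)/N$ as a telescoping sum along the AMP trajectory plus a boundary contribution from the rounding. Via state evolution, the bulk increments match, slice by slice, the $-\tfrac12\int_0^{1-\delta}t\xi''(t)\zeta_{K,\delta}(t)\,dt$ part of $\Par^{\Is}(\zeta_{K,\delta})$, while the sign rounding produces the $\E|X_K|$ term corresponding to $\Phi_\zeta(1,x)=|x|$. Summing, $H_N(\bsig)/N$ concentrates at $\Par^{\Is}(\zeta_{K,\delta})$, so it exceeds $\ALG^{\Is}-\eps$ with probability $1-o(1)$.

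\textbf{Main obstacle.} The novelty over the $\cuU$-case treated in~\cite{ams20} is the extension to $\zeta_*\in\cuL\setminus\cuU$. In this regime the limiting overlap measure is not a probability measure and $\zeta_*$ can diverge near $t=1$, encoding a singular boundary layer of full-RSB type. The delicate step is controlling the error from truncating $\zeta_*$ at $t=1-\delta$ and absorbing the resulting boundary into the sign rounding: one needs uniform-in-$\delta$ estimates on $\Phi_{\zeta_{K,\delta}}$ and its derivatives together with $L^1$-continuity of $\zeta\mapsto\Par^{\Is}(\zeta)$ along the approximating sequence, so that both the AMP state-evolution guarantee and the Parisi value survive the limits $\delta\to 0$ and $K\to\infty$. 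This is the core of the extension carried out in~\cite{sellke2021optimizing}.
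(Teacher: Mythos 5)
The theorem you are proving is stated in the paper only as a citation to \cite{ams20,sellke2021optimizing}; no proof appears in the present paper, so the comparison is necessarily against the approach of those references. Your sketch captures that approach correctly: discretize the (near-)minimizer $\zeta_*$, run an incremental AMP whose state evolution realizes the Auffinger--Chen stochastic control representation of the Parisi PDE, and round the final iterate by coordinatewise sign (the rounding map $x\mapsto\sign(x)$ being exactly $\partial_x\Phi_\zeta(1,\cdot)$ for the boundary datum $\Phi_\zeta(1,x)=|x|$). That is the mechanism in \cite{mon18,ams20,sellke2021optimizing}.

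One point in your ``main obstacle'' discussion is misattributed, though not in a way that damages the argument. You identify the novelty of \cite{sellke2021optimizing} as passing from $\cuU$ to $\cuL$ and controlling the singular layer of $\zeta_*$ near $t=1$. In fact the relaxation to non-monotone controls (the space $\cuL$) already appears in \cite{ams20}; the principal new contribution of \cite{sellke2021optimizing} is the extension to nonzero external field $h>0$ and the treatment of the resulting ``optimizability'' condition, together with the branching/ultrametric constructions it enables (see \cite[Section 6, Theorem 4]{sellke2021optimizing} and the references to it in the present paper). The truncation-near-$t=1$ analysis you describe does occur, but it is not the locus of the $\cuU$-to-$\cuL$ extension. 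You should also be careful in Step~3: the split of $H_N(\bsig)/N$ into ``bulk increments'' equal to $-\tfrac12\int t\xi''\zeta\,dt$ and a rounding term equal to $\Phi_\zeta(0,h)$ is not literally a telescoping decomposition of the Hamiltonian; rather, state evolution gives the joint distributional limit of the iterates and of $\nabla\wtH_N$ along the trajectory, from which the energy is computed as a single expectation matching $\Par^{\Is}(\zeta_{K,\delta})$. The two pieces you name both appear, but as pieces of that limiting expectation rather than as a pathwise telescoping. This is a presentational issue, not a gap.
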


All of the algorithms in \cite{subag2018following,mon18,ams20,sellke2021optimizing} are computationally efficient. 
The latter three works use a class of iterative algorithms known as approximate message passing (AMP).
In particular they require only a constant number of queries of $\nabla H_N(\cdot)$; this results in computation time linear in the description length of $H_N$ when $\xi$ is a polynomial, assuming oracle access to $\zeta_*$ and the function $\Phi_{\zeta_*}$. 
AMP offers a great deal of flexibility, and the idea introduced in \cite{mon18} was to use it to encode a stochastic control problem which is in some sense dual to the Parisi formula. 
Based on this idea it was shown in \cite{ams20} that no AMP algorithm of this powerful but specific form can achieve asymptotic value $\ALG^{\Is}+\eps$ in the case $h=0$. 
The non-equality $\ALG^{\Is} < \OPT^{\Is}$ also has a natural interpretation in terms of the optimizer $\zeta_*$ of \eqref{eq:opt-variational-formula-is}. Namely, it implies that $\zeta_*$ is not strictly increasing; see \cite{sellke2021optimizing} for a more precise condition called ``optimizability'' therein. As explained in \cite[Section 6]{sellke2021optimizing}, in the case of even Ising spin glasses this non-equality exactly coincides with the presence of an \emph{overlap gap property} (discussed below) associated with forms of algorithmic hardness. 
It is therefore natural to conjecture that the aforementioned AMP algorithms achieve the best asymptotic energy possible for efficient algorithms.

This belief was also aligned with results on the ``critical point complexity'' of pure spherical spin glasses with $\xi(x) = x^p$ and $h=0$. 
In this case, the analogous value $\ALG^{\Sp}$ is the one obtained by \cite{subag2018following} and coincides with the onset of exponentially many bounded index critical points, as established in \cite{auffinger2013random,subag2017complexity}. 
In this case almost all local optima have energy value $\ALG^{\Sp}\pm o(1)$ with high probability, which suggests from another direction that exceeding the energy $\ALG^{\Sp}$ might be computationally intractable. 
On the other hand, this threshold (see \cite{arous2020geometry}) does not coincide with $\ALG^{\Sp}$ beyond the pure case.

It unfortunately seems difficult to establish any limitations on the power of general polynomial-time algorithms for such a task. 
However one might still hope to characterize the power of natural classes of algorithms that include gradient descent and AMP. 
To this end, we define the following distance on the space $\sH_N$ of Hamiltonians $H_N$.
We identify $H_N$ with its disorder coefficients $(\Gp{p})_{p\in 2\bbN}$, which we concatenate (in an arbitrary but fixed order) into an infinite vector $\bg(H_N)$. 
We equip $\sH_N$ with the (possibly infinite) distance
\[
    \norm{H_N - H'_N}_2 
    = 
    \norm{\bg(H_N) - \bg(H'_N)}_2.
\]
Let $B_N=\{\bsig \in \bbR^N : \norm{\bsig}_2^2 \le N\}$ and $C_N=[-1,1]^N$ be the convex hulls of $S_N$ and $\Sigma_N$, which we equip with the standard $\norm{\cdot}_2$ distance.
A consequence of our main result is that no suitably \emph{Lipschitz} function $\cA:\sH_N\to C_N$ can surpass the asymptotic value $\ALG^{\Is}$. 
(And similarly in the spherical case for $\cA : \sH_N \to B_N$ and an analogous $\ALG^{\Sp}$.) 

\begin{theorem}
    \label{thm:main-lip}
    Let $\tau,\eps>0$ be constants.
    For $N$ sufficiently large, any $\tau$-Lipschitz $\cA:\sH_N\to C_N$ satisfies
    \[
        \bbP\lt[
            H_N(\cA(H_N))/N \ge \ALG^{\Is}+\eps
        \rt]
        \le 
        \exp(-cN),
        \quad c=c(\xi,h,\eps,\tau)>0.
    \]
\end{theorem}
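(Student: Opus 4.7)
The plan is to extend the overlap gap property (OGP) framework of Gamarnik--Sudan to a branching/ultrametric forbidden structure matched to the hierarchy encoded by minimizers of the extended Parisi functional $\Par^{\Is}$ on $\cuL$. The intuition is that the threshold $\ALG^{\Is}$ is precisely the point above which the ultrametric tree of near-maximizers predicted by the Parisi picture ceases to be realizable, and a Lipschitz algorithm applied to a correlated family of disorders must produce outputs respecting such a tree structure, so its achievable value cannot exceed $\ALG^{\Is}$.

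\textbf{Ultrametric disorder ensemble.} Fix a near-minimizer $\zeta_* \in \cuL$ of $\Par^{\Is}$ with $\Par^{\Is}(\zeta_*) \le \ALG^{\Is} + \eps/4$ and discretize its associated overlap cumulative distribution on a sequence $0 = q_0 < q_1 < \cdots < q_k = 1$. Build a rooted tree $T$ of depth $k$ whose internal nodes at depth $i$ encode overlap $q_i$, and attach to each edge $e$ an independent Gaussian increment $\bW^{(e)}$ of the disorder coefficients, scaled so that summing $\bW^{(e)}$ along the root-to-leaf path of any leaf $v$ yields $\bg(H_N^{(v)})$ with the correct marginal law and so that $\E \la \bg(H_N^{(v)}), \bg(H_N^{(v')}) \ra$ depends only on the depth of the common ancestor of $v,v'$. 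Running $\cA$ at each leaf produces configurations $\bsig^{(v)} = \cA(H_N^{(v)}) \in C_N$.

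\textbf{Lipschitz rigidity and energy boost.} Because $\cA$ is a $\tau$-Lipschitz function of the underlying Gaussian, Borell--TIS shows that the outputs $\bsig^{(v)}$, their pairwise overlaps $\la \bsig^{(v)}, \bsig^{(v')} \ra/N$, and the energies $H_N^{(v)}(\bsig^{(v)})/N$ all concentrate exponentially well around their means, which by symmetry depend only on the tree-distance between leaves. If $\cA$ achieves $H_N(\cA(H_N))/N \ge \ALG^{\Is} + \eps$ with probability at least $\exp(-c_0 N)$ for a sufficiently small $c_0$, a union bound over the $O(1)$-many leaves gives positive probability that every $\bsig^{(v)}$ simultaneously satisfies $H_N^{(v)}(\bsig^{(v)})/N \ge \ALG^{\Is} + \eps/2$, while the overlap matrix $(\la \bsig^{(v)}, \bsig^{(v')} \ra/N)$ approximates the ultrametric profile prescribed by the tree.

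\textbf{Branching OGP --- the main obstacle.} The core analytic step is to rule out the simultaneous existence of such an ultrametric family of near-maximizers. I plan to establish this via a Guerra--Talagrand interpolation between $H_N$ and a tree-structured reference Hamiltonian whose independent Gaussian pieces live on the edges of $T$; summing the interpolation inequality over the leaves and optimizing over the tree parameters should yield a deterministic upper bound on the average leaf-energy of essentially $\Par^{\Is}(\zeta_*) \le \ALG^{\Is} + \eps/4$, contradicting the previous step. The delicate points are: obtaining this inequality uniformly over the required approximately-ultrametric overlap patterns; designing $T$ so that the interpolation recovers $\Par^{\Is}(\zeta_*)$ even when $\zeta_* \in \cuL \setminus \cuU$, which requires carefully handling large or unbounded jumps in $\zeta_*$ while keeping the discretization error $o_\eps(1)$ as $k \to \infty$; and ensuring the bound is robust to the $o(1)$ slack coming from the Lipschitz overlap concentration. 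This generalization of OGP from single pairs or fixed-depth families to arbitrary ultrametric forbidden structures --- as advertised in the abstract --- is the step I expect to be the main technical obstacle, and once it is in hand, combining with the previous steps yields the exponential bound on the success probability claimed by Theorem~\ref{thm:main-lip}.
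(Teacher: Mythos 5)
Your high-level plan matches the paper's --- ultrametrically correlated disorders, Lipschitz concentration of the algorithm's outputs, and a many-replica Guerra--Talagrand interpolation recovering the extended Parisi functional over $\cuL$ --- but three steps in your sketch are gaps, not details.

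First, the ``union bound over the $O(1)$-many leaves'' cannot convert a single-leaf success probability of $\exp(-c_0 N)$ into positive probability that all leaves succeed simultaneously: a union bound over failures only helps when each failure probability is already small. You must first upgrade $\exp(-c_0 N)$ to $1-\exp(-\Omega(N))$, which is possible here because Kirszbraun together with Proposition~\ref{prop:gradients-bounded} makes $H_N\mapsto H_N(\cA(H_N))/N$ approximately $O(N^{-1/2})$-Lipschitz, forcing its mean above $\ALG^{\Is}+\eps-o_{c_0}(1)$ and then concentrating tightly around it --- or follow the paper's route in Proposition~\ref{prop:ogp-prob-bds}(\ref{itm:ogp-prob-bd-ssolve}), a recursive Jensen argument down the tree giving $\P(\Ssolve)\geq\alpha_N^K$ with no union bound, which is what makes the argument work under the weaker overlap-concentration hypothesis of Theorem~\ref{thm:main}. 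Second, the Guerra--Talagrand free-energy bound controls configurations in $\Sigma_N^K$, while $\cA$ outputs in $C_N^K$; you cannot run the Ising interpolation on the solid cube. The paper bridges this with Lemma~\ref{lem:extra-tree}: on the high-probability event $\Seigen$, each $\cA(H_N^{(\uu)})\in C_N$ is pushed out to $\Sigma_N$ along non-negative Hessian directions, growing each output into its own subtree and losing only $O(\eps)$ in energy, so that the resulting full tree lands in $\cQ^{\Is}(Q,\bm,\eta)$. Without this step your forbidden structure lives in the wrong space. Third, the output overlap profile is not yours to choose: it is dictated by the algorithm's correlation function $\chi(p)=\E R(\cA(\HNp{1}),\cA(\HNp{2}))$, and the leaves are additionally constrained near $\bm=\E[\cA(H_N)]$, which can be an arbitrary vector of squared norm $\chi(0)$. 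The interpolation bound must therefore be uniform over all $\chi$ and all such $\bm$; the paper encodes this via the $\chi$-alignment condition on $(\vp,\vq)$ and an external-field centering at $\bm$ in Proposition~\ref{prop:uniform-multi-opt}. Your sketch prescribes the $q_i$ by hand, which silently fixes $\chi$ and ignores $\bm$ --- both of which the adversary (the algorithm) controls.
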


Note that the Lipschitz condition $\norm{\cA(H_N) - \cA(H'_N)}_2 \le \tau \norm{H_N - H'_N}_2$ holds vacuously when the latter distance is infinite.

The algorithms of \cite{mon18,ams20,sellke2021optimizing} are $O(1)$-Lipschitz in the sense above\footnote{Technically the algorithms in these papers round their outputs to the discrete set $\Sigma_N$ at the end, making them discontinuous. Removing the rounding step yields Lipschitz maps $\cA:\sH_N\to C_N$ with the same performance.}. While the approach of \cite{subag2018following} is not Lipschitz, its performance is captured by AMP as explained in \cite[Remark 2.2]{ams20}.\footnote{We also outline a similar impossibility result for a family of variants of \cite{subag2018following} in Subsection~\ref{subsec:subag-bopg-2}.} Hence in tandem with these constructive results, Theorem~\ref{thm:main-lip} identifies the exact asymptotic value achievable by Lipschitz functions $\cA:\sH_N\to C_N$ (assuming the existence of a minimizer $\zeta_*\in\cuL$ as required in Theorem~\ref{thm:ams20}). We also give an analogous result for spherical spin glasses, in which there is no question of existence of a minimizer on the algorithmic side. Let us remark that the rate $e^{-cN}$ in Theorem~\ref{thm:main-lip} is best possible up to the value of $c$, being achieved even for the trivial algorithm $\cA(H_N)=(1,1,\dots,1)$ which ignores its input entirely. 

Many natural optimization algorithms satisfy the Lipschitz property above on a set $K_N\subseteq \sH_N$ of inputs with $1 - \exp(-\Omega(N))$ probability; this suffices just as well for Theorem~\ref{thm:main-lip} thanks to the Kirszbraun extension theorem (see Subsection~\ref{subsec:approx-lip}).
As explained in Section~\ref{sec:overlap-conc-of-algs}, algorithms with this property include the following examples, all run for a constant (i.e. dimension-independent) number of iterations or amount of time.
\begin{itemize}
    \item Gradient descent and natural variants thereof;
    \item Approximate message passing;
    \item More general ``higher-order'' optimization methods with access to $\nabla^k H_N(\cdot)$ for constant $k$;
    \item Langevin dynamics for the Gibbs measure $e^{\beta H_N}$ with suitable reflecting boundary conditions and any positive constant $\beta$.
\end{itemize} 
In fact we will not require the full Lipschitz assumption on $\cA$, but only a consequence that we call overlap concentration. Roughly speaking, overlap concentration of $\cA$ means that given any fixed correlation between the disorder coefficients of $H_N^1$ and $H_N^2$, the overlap $\langle \cA(H_N^1),\cA(H_N^2)\rangle/N$ tightly concentrates around its mean. This property holds automatically for $\tau$-Lipschitz $\cA$ thanks to concentration of measure on Gaussian space. It also might plausibly be satisfied for some discontinuous algorithms such as the Glauber dynamics.

\subsection{The Overlap Gap Property as a Barrier to Algorithms}

As mentioned previously, mean field spin glasses are just one example of a random optimization problem; other examples include random constraint satisfaction problems, such as random $k$-SAT and MaxCut, as well as random perceptron models \cite{talagrand2011mean1,talagrand2011mean2,ding2019capacity}. 

The main heuristics proposed to understand computational hardness in random optimization problems have focused on geometric properties of the solution space.
One version of this connection was proposed in \cite{achlioptas2008phasetransitions,cojaoghlan2015independent} based on a \emph{shattering} phase transition: for suitable random instances of $k$-SAT, $q$-coloring, and maximum independent set, beyond a threshold constraint density the solution space breaks into exponentially many small components. 
Shattering defeats local search heuristics, suggesting that polynomial-time algorithms should not succeed.
Other predictions based on the \emph{clustering}, \emph{condensation} \cite{krzakala2007gibbs} and \emph{freezing} \cite{zdeborova2007phase} transitions have also been suggested.
While intuitively appealing, the hypothesis that some form of clustering is responsible for hardness has been shown incorrect in notable examples -- see Subsection~\ref{subsec:alg-signatures} for more discussion.

In the past several years, a line of work \cite{gamarnik2014limits, rahman2017independent, gamarnik2017performance, chen2019suboptimality, gamarnik2019overlap, gamarnik2020optimization, wein2020independent, gamarnik2021partitioning, bresler2021ksat, gamarnik2021circuit} on the Overlap Gap Property (OGP) has made substantial progress on rigorously linking solution geometry to hardness.
A survey can be found in \cite{gamarnik2021survey}.
Initiated by Gamarnik and Sudan in \cite{gamarnik2014limits}, this line of work links the absence of certain constellations of solutions in the super-level set $S_E(H_N) = \{\bsig \in C_N : H_N(\bsig) / N \ge E\}$ -- in its original form, a pair of solutions a medium distance apart -- with the failure of algorithms with certain stability properties.
Roughly, these works proceed by contradiction, arguing that any stable algorithm attaining value $E$ would be able to construct the forbidden constellation.
An important difference from the predictions above is that the shattering, clustering, and freezing transitions describe properties of a \emph{typical} solution, while OGP requires the forbidden constellation to \emph{not exist at all}.

Some of these works use a somewhat stronger claim, that such constellations $(\bsig^1,\bsig^2)$ are absent even from $S_E(\HNp{1}) \times S_E(\HNp{2})$, where $\HNp{1},\HNp{2}$ are two correlated copies of a Hamiltonian. 
That is, points $\bsig^i$ in the constellation can be input to different, correlated Hamiltonians.
We will use a similar construction in this work.

In many of these problems, the classic (2-solution, with or without correlated Hamiltonians) OGP shows the failure of stable algorithms above an intermediate value, smaller than the existential maximum but larger than the algorithmic limit.
The argument stalls because below this value, $S_E(H_N)$ does contain pairs of inputs at each possible distance.
To improve the lower bound, subsequent works have used ``multi-OGPs," which consider more complex forbidden structures; this is usually more difficult but often yields much sharper results.
Indeed, multi-OGPs have been used to show nearly-tight hardness results for finding maximum independent sets on $G(N,d/N)$ in the limit $N\to\infty$ followed by $d\to\infty$ \cite{rahman2017independent,wein2020independent} as well as for random $k$-SAT \cite{bresler2021ksat}. In both cases the threshold is attained by a simple local algorithm, which is shown to be optimal within the larger class of low degree polynomials.

The overlap gap property has also been applied previously to the spin glass Hamiltonians we consider.
For \emph{pure} spherical and Ising $p$-spin glasses where $h=0$ and $p\ge 4$ is even, $\ALG<\OPT$ always holds (recall \eqref{eq:opt-variational-formula-is}, \eqref{eq:def-alg-is}). In such models, \cite{gamarnik2020optimization} showed using the classic OGP that low degree polynomials cannot achieve some objective strictly smaller than $\OPT$, extending a similar hardness result of \cite{gamarnik2019overlap} for approximate message passing. \cite{gamarnik2021circuit} extended the conclusions of \cite{gamarnik2020optimization} to Boolean circuits of depth less than $\fr{\log n}{2 \log \log n}$.
As pointed out in \cite[Section 6]{sellke2021optimizing}, these results extend in the Ising case to any mixed even model where $\ALG^{\Is} < \OPT^{\Is}$.
In this paper, we will use a multi-OGP to show that overlap concentrated algorithms cannot reach any objective larger than $\ALG^\Is$, or in the spherical case $\ALG^\Sp$.
% optimize mixed even spherical or Ising spin glasses to any objective larger than $\ALG$.

The design of our multi-OGP is a significant departure from previous work.
Previous works all use one of the following three forbidden structures, see Figure~\ref{fig:ogps}. 
\begin{itemize}
    \item Classic OGP: two solutions with medium overlap \cite{gamarnik2014limits, chen2019suboptimality, gamarnik2019overlap, gamarnik2020optimization, gamarnik2021circuit}. 
    \item Star OGP: several solutions with approximately the same pairwise overlap \cite{rahman2017independent, gamarnik2017performance, gamarnik2021partitioning}. 
    \item Ladder OGP: several solutions, where the $i$-th solution ($i\ge 2$) has medium ``multi-overlap'' with the first $i-1$ solutions, for a problem-specific notion of multi-overlap of one solution with several solutions \cite{wein2020independent, bresler2021ksat}.
\end{itemize}

\begin{figure}[h]
\begin{subfigure}[b]{.5\textwidth}
\centering
\includegraphics[width=.8\linewidth]{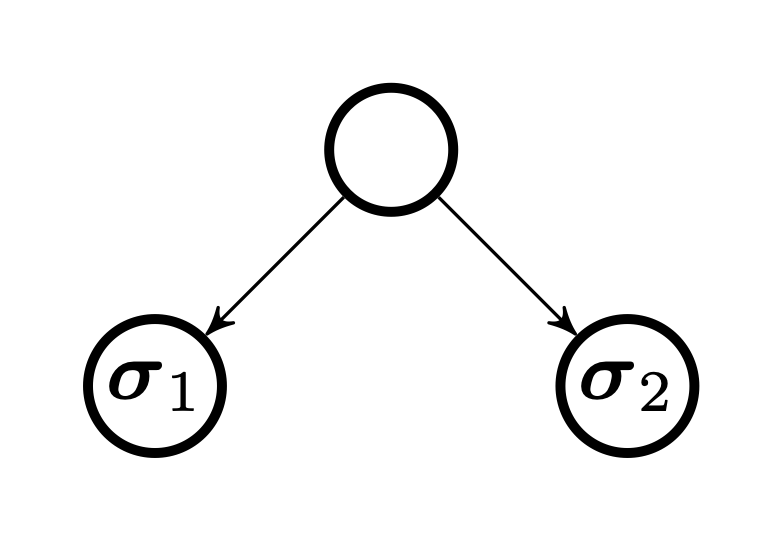}
\caption{\textbf{Classic OGP}: $\bsig_1,\bsig_2$ have medium overlap.}
\end{subfigure}
\hfill
\begin{subfigure}[b]{.5\textwidth}
\centering
\includegraphics[width=\linewidth]{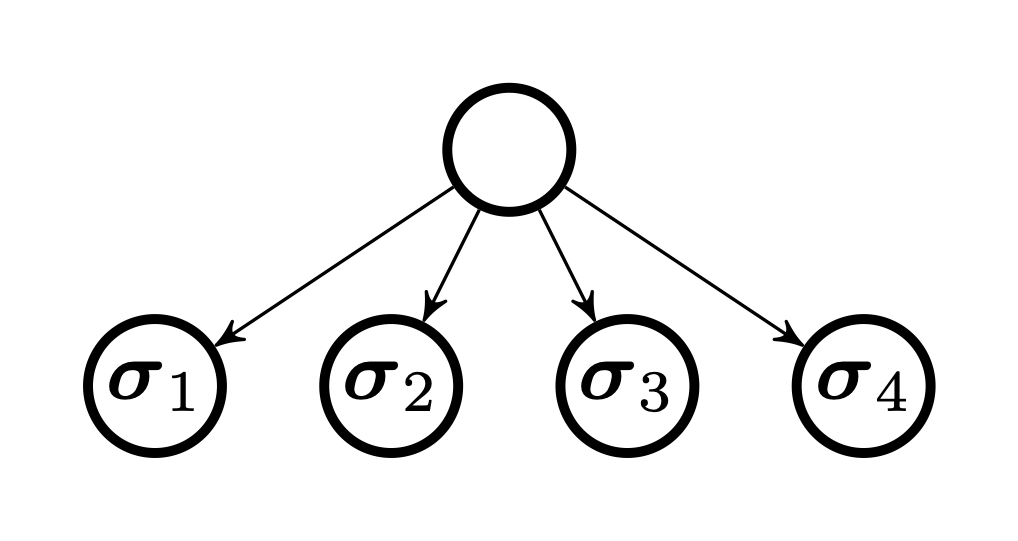}
\caption{\textbf{Star OGP}: many solutions, medium overlaps.}
\end{subfigure}
\hfill
\begin{subfigure}[b]{.4\textwidth}
\centering
\includegraphics[width=.8\linewidth]{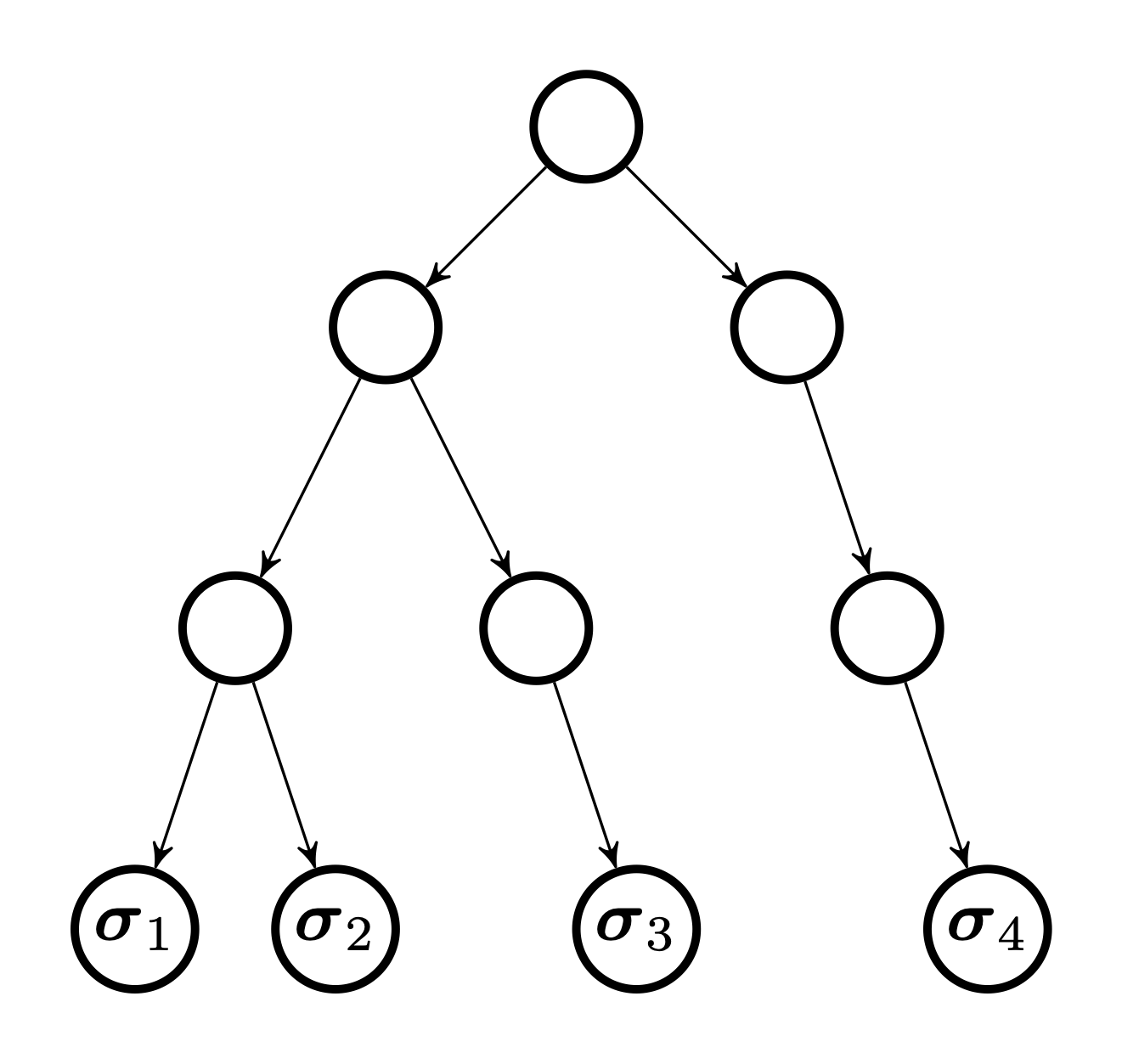}
\caption{\textbf{Ladder OGP}: medium ``multi-overlaps'' between $\bsig_i$ and $\{\bsig_1,\dots,\bsig_{i-1}\}$.}
\end{subfigure}
\begin{subfigure}[b]{.6\textwidth}
\centering

\includegraphics[width=\linewidth]{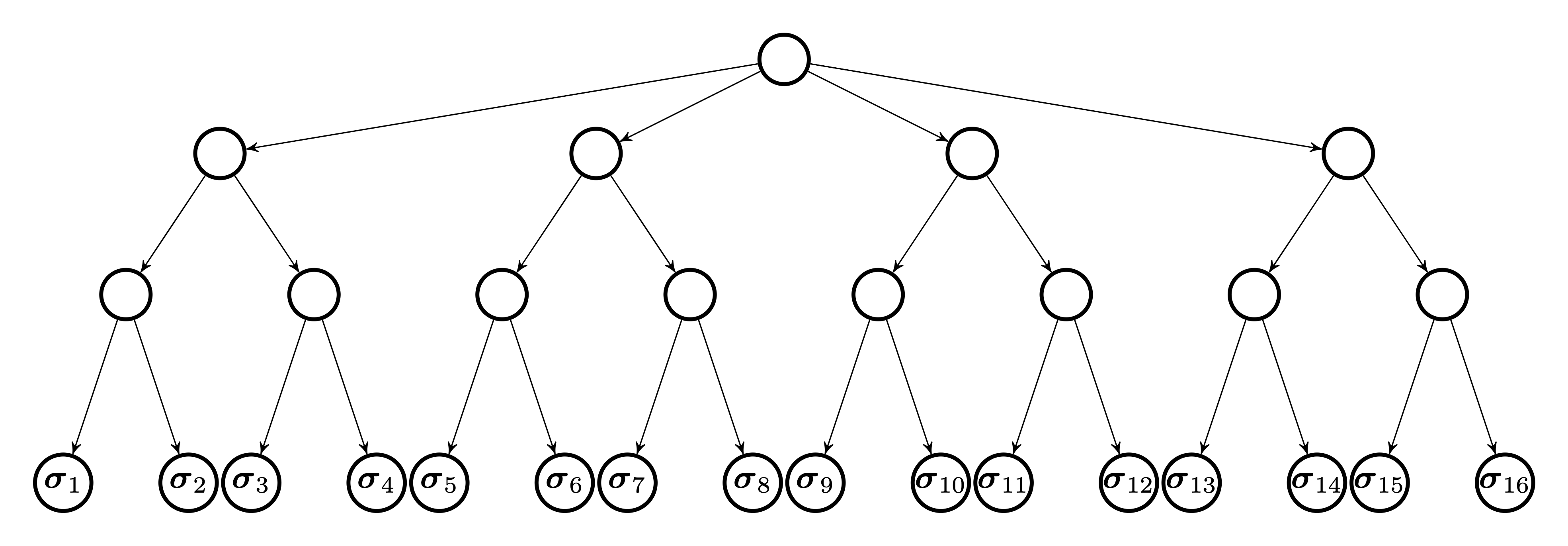}
\vspace{0.1cm}
\caption{\vspace{0.45cm}\textbf{Branching OGP}: many solutions in an ultrametric tree.}
\end{subfigure}

\caption{Schematics of forbidden structures in OGP arguments.}
\label{fig:ogps}
\end{figure}

In contrast, the constellation in our multi-OGP is an arbitrarily complicated ultrametric tree of solutions.
We call this the \emph{Branching OGP}.
Informally, the Branching OGP is the condition that for any fixed $\eps > 0$ and $E = \ALG + \eps$, the set $S_E(H_N)$ does not contain a sufficiently large ultrametric constellation, and this remains true if each point in the constellation is input to the corresponding member of a family of ``ultrametrically correlated" Hamiltonians. 

% no constellation of configurations with a certain ultrametric overlap structure has average energy $\ALG + \eps$.
% The definition involves a family of ``ultrametrically correlated'' Hamiltonians, with one input in the constellation per Hamiltonian.

We establish this branching OGP as follows. 
Using a version of the Guerra-Talagrand interpolation, which we take to zero temperature, we derive an upper bound for the maximum average energy of configurations arranged into the desired structure. 
This upper bound is a multi-dimensional analogue of the Parisi formula, and depends on an essentially arbitrary increasing function $\zeta:[0,1]\to\bbR^+$ (which we are free to minimize over). 
We show that for a symmetric branching tree, the resulting estimate can be upper bounded by $\Par(\kappa\zeta)$. 
Here $\Par$ is the Parisi functional $\Par^{\Is}$ or its spherical analogue $\Par^{\Sp}$, and $\kappa$ is a decreasing piecewise-constant function that depends on the tree. 
By making the tree branch rapidly, the function $\kappa$ can be arranged to decrease as rapidly as desired. As a result, the functions $\kappa\zeta$ are dense in the space $\cuL$.
Thus, we may choose a tree and $\zeta$ such that $\Par(\kappa \zeta)$ is arbitrarily close to $\ALG$.

Roughly speaking, we show that an overlap concentrated $\cA$ allows the construction of an arbitrary ultrametric constellation of outputs. 
Consequently, if $\cA$ outputs points with energy at least $\ALG+\eps$, then $\cA$ run on the appropriate family of ultrametrically correlated Hamiltonians will output the forbidden structure above, a contradiction. 
Some additional complications are created by the fact that $\E[\cA(H_N)]$ may be arbitrary, and that $\cA(H_N)$ may be in the interior of $C_N$ (or in the spherical case, $B_N$). 
The former issue requires us to control the maximum average energy of ultrametric constellations of points that all have approximately a fixed overlap with $\E[\cA(H_N)]$. 
We deal with the latter issue by composing $\cA$ with an additional phase that grows each output of $\cA$ into its own ultrametric tree of points in $\Sigma_N$ (or $S_N$), so that the resulting set of points has the forbidden ultrametric structure.

We also show that the full strength of the branching OGP is necessary to establish Lipschitz hardness at all objectives above $\ALG$, in the sense that any less complex ultrametric structure fails to be forbidden at an energy bounded away from $\ALG$.
More precisely, consider a spherical model $\xi$ without external field; we restrict to this case for convenience. 
Consider a fixed ultrametric overlap structure of inputs, whose corresponding rooted tree (cf. Subsection~\ref{subsec:trees-ultrametrics}) does not contain a full depth-$D$ binary tree.
We prove that if $\ALG^{\Sp} < \OPT^{\Sp}$, with high probability there exists a constellation of inputs with this overlap structure where each input achieves energy at least $\ALG^{\Sp} + \eps_{\xi, D}$, for a constant $\eps_{\xi, D} > 0$ depending only on $\xi, D$.

\begin{remark}
    To our knowledge, this is the first hardness result in any random optimization problem that is tight in the strong sense of characterizing the exact point $\ALG$ where hardness occurs. 
    The aforementioned hardness results for maximum independent set on $G(N,d/N)$ are tight in the sense of matching the best algorithms within a $1+o_d(1)$ factor in the limit $d\to\infty$, while there is still a constant factor gap for random $k$-SAT.
    In fact, prior to this work, all outstanding \emph{predictions} for the algorithmic threshold in any random optimization problem have only matched the best algorithms within a $1+o_d(1)$ factor in the large-degree limit.
    Consequently we believe that the branching OGP elucidates the fundamental reason for algorithmic hardness and may provide a framework for exact algorithmic thresholds in other problems.
\end{remark}
\begin{remark}
The significance of ultrametricity in mean-field spin glasses began with \cite{parisi1979infinite} and has played an enormous role in guiding the mathematical understanding of the low temperature regime in works such as \cite{ruelle1987mathematical,panchenko2013parisi,jagannath2017approximate,chatterjee2019average}. Ultrametricity also appears naturally in the context of optimization algorithms. Indeed in \cite[Remark 6]{subag2018following}, \cite[Section 3.4]{alaoui2020algorithmic} and \cite[Theorem 4]{sellke2021optimizing} it was realized that the aforementioned algorithms achieving asymptotic energy $\ALG$ are capable of more. Namely, they can construct arbitrary ultrametric constellations of solutions (subject to a suitable diameter upper bound), each with energy $\ALG$. Our proof via branching OGP establishes a sharp converse --- the existence of essentially arbitrary ultrametric configurations at a given energy level is \emph{equivalent} to achievability by Lipschitz $\cA$.

The aforementioned results on ultrametricity in \cite{ruelle1987mathematical,panchenko2013parisi,jagannath2017approximate,chatterjee2019average} state that the Gibbs measure $e^{\beta H_N(\bsig)}/Z~\de \bsig$ is, very roughly speaking, supported on an ultrametric subset $S$ of $S_N$ or $\Sigma_N$. For large $\beta$, this Gibbs measure describes the typical near maxima of $H_N$. However, the pairwise overlaps in $S$ may not cover the entire interval $[0,1]$, which means that $S$ is highly disconnected. By contrast, the ultrametric structures we link with algorithms are forced to branch continuously, which implies that the pairwise overlaps are dense in $[0,1]$. The condition that the Gibbs measure is supported on a continuously branching tree is a strong form of \emph{full replica symmetry breaking}. It was under such a condition that the works \cite{mon18,subag2018following,ams20} gave algorithms achieving the value $\OPT$.
\end{remark}

\begin{remark}
\label{rem:subag-bopg-1}
Since the algorithm of Subag in \cite{subag2018following} uses the top eigenvector of the Hessian $\nabla^2 H_N(\bx)$ for various $\bx\in B_N$, it is not Lipschitz in $H_N$ in the sense we require. However a different branching OGP argument shows that a stylized class of algorithms which includes a natural variant of Subag's approach is also incapable of achieving energy $\ALG+\eps$. This argument uses only a single Hamiltonian, constructing a branching tree structure using the internal randomness of the algorithm. In this sense, it bears resemblance to the original OGP analysis of \cite{gamarnik2014limits}. An outline is given in Subsection~\ref{subsec:subag-bopg-2}.
\end{remark}

\subsection{On Algorithmic Signatures of Hardness}
\label{subsec:alg-signatures}

While it has long been believed that algorithmic hardness in random optimization problems is caused by a transition in the solution geometry, the precise geometric phenomenon giving rise to hardness has been the subject of much debate.
A popular belief has been that hardness is caused by a clustering transition.
Indeed, the influential work \cite{achlioptas2008phasetransitions} shows that in random $k$-SAT and $q$-coloring, the maximal constraint density where algorithms succeed coincides (up to leading order in the $k,q\to\infty$ limit) with a \emph{shattering} phase transition.
The intuition justifying this belief was that in the shattered regime, the solution geometry becomes rugged, meaning that local search and potentially other algorithms fail.
A related conjecture was put forward in \cite{krzakala2007gibbs}, that in random CSPs local Markov chains fail above a different \emph{clustering} (or \emph{dynamic RSB}) threshold.

However, for random CSPs with bounded typical degree, it is known that algorithms succeed at constraint densities beyond the clustering transition \cite{achlioptas2003almost, zdeborova2007phase}. 
Moreover, it was later observed that in random perceptron models, neither clustering nor shattering coincides with hardness!
Indeed, \cite{baldassi2015subdominant} empirically demonstrated an algorithm that finds solutions even when (according to physics heuristics) the overall solution space is dominated by well-separated isolated solutions, i.e. clusters of size one; they conjecture that algorithms find rare connected clusters of solutions.
As rigorous evidence for this perspective, for the symmetric binary perceptron \cite{perkins2021frozen, abbe2022proof} proved the isolated solutions phenomenon and \cite{abbe2022binary} gives an algorithm to construct a cluster of solutions with macroscopic diameter when the clause density is small. Still, it was not clear even heuristically what type of solution cluster should correspond to computational tractability.

For the spin glass models we consider, our results confirm that the the signature of algorithmic hardness is not clustering properties of typical solutions, but the existence of special dense clusters.
Furthermore, we show that \emph{the relevant dense clusters are precisely ``everywhere-branching" ultrametric trees}. 
We expect that this characterization generalizes to other random optimization problems.
% For the spin glass models we consider, our results show that the signature of algorithmic hardness is not clustering properties of typical solutions, but the existence of dense clusters of a particular form. Namely, optimization of $H_N$ to energy $E$ is possible for overlap-concentrated algorithms if and only if the super-level set $S_E(H_N)$ contains an ``everywhere-branching" ultrametric tree of solutions.
% We expect that this characterization generalizes to other random optimization problems.

\subsection{Related Work}

Several previous works have studied the computational complexity of optimizing spin glass Hamiltonians. First, in the worst case over the disorder $\Gp{p}$, achieving any constant approximation ratio to the true maximum value is known to be quasi-NP hard even for degree $2$ polynomials \cite{arora2005non,barak2012hypercontractivity}. For the Sherrington-Kirkpatrick model with $\xi(t)=t^2/2$ on the cube, it was recently shown to be NP-hard on average to compute the \emph{exact} value of the partition function \cite{gamarnik2021computing}. Of course, these computational hardness results demand much stronger guarantees than the approximate optimization with high probability that we consider.

Another important line of work, alluded to above, has studied the \emph{landscape complexity} of $H_N$ on the sphere, defined as the exponential growth rate for the number of local optima and saddle points of finite-index at a given energy level. These are understood to serve as barriers to efficient optimization, and a non-rigorous study was undertaken in \cite{crisanti2003complexity,crisanti2005complexity,parisi2006computing} followed by a great deal of recent progress in \cite{auffinger2013random,auffinger2013complexity,subag2017complexity,mckenna2021complexity,kivimae2021ground,subag2021concentration}. Notably because the true maximum value of $H_N$ is nothing but its largest critical value, the first moment results of \cite{auffinger2013random} combined with the second moment results of \cite{subag2017complexity} gave an alternate self-contained proof of the Parisi formula for the ground state in pure spherical models. In a related spirit, \cite{chatterjee2009disorder,ding2015multiple,chen2017parisi,chen2018energy} have shown that mixed even $p$-spin Hamiltonians typically contain exponentially many well-separated near-global maxima.

Other works such as \cite{cugliandolo1994out,bouchaud1998out,arous2006cugliandolo,arous2020bounding,celentano2021high} have studied natural algorithms such as Langevin and Glauber dynamics on short (independent of $N$) time scales. These approaches yield (often non-rigorous) predictions for the energy achieved after a fixed amount of time. However these predictions involve complicated systems of differential equations, and to the best of our knowledge it is not known how to cleanly describe the long-time limiting energy achieved. Let us also mention the recent results of \cite{eldan2021spectral,anari2021entropic} showing that the Glauber dynamics for the Sherrington-Kirkpatrick model mix rapidly at high temperature. By contrast the problem of optimization considered in this work is related to the \emph{low} temperature behavior of the model.

% \bhcomment{Is there a better place to put this?}]
% \mscomment{Moved selector paragraph here}

From a geometric point of view, our requirement that $\cA$ be Lipschitz resembles the setting of Lipschitz selection \cite{shvartsman1984lipshitz,przeslawski1995lipschitz,shvartsman2002lipschitz,fefferman2018sharp}. Here one is given a metrized family $\cS$ of subsets inside a metric space $X$. The goal is to find a function $f:\cS\to X$ with the \emph{selector} property that $f(S)\in S$ for all $S\in \cS$, and such that $f$ has a small Lipschitz constant. Indeed a Lipschitz function $\cA:\sH_N\to C_N$ achieving energy $E$ is almost the same as a Lipschitz selector for the super-level sets $S_E(H_N)$
% \[
%     S_E(H_N)=\{\bsig\in C_N:H_N(\bsig)/N\geq E\}
% \]
metrized by the norm on $\sH_N$ defined above (and leaving aside the fact that $S_E(H_N)$ may not determine $H_N$).
Of course we can only hope for $\cA(H_N)\in S_E(H_N)$ to hold with high probability, since $S_E(H_N)$ is empty with small but positive probability for each $N$.

Finally, in the large degree limit, the maxima of random constraint satisfaction problems such as random max-$k$-SAT and MaxCut are known to be described by mean-field spin glasses \cite{dembo2017extremal, panchenko2018k}, see also \cite{alaoui2021local} for an algorithmic analogue.

\section{The Optimal Energy of Overlap Concentrated Algorithms}
\label{sec:results}

\subsection{Overlap Concentrated Algorithms}

For any $p\in [0,1]$, we may construct two correlated copies $\HNp{1}, \HNp{2}$ of $H_N$ as follows.
Construct three i.i.d. Hamiltonians $\wtH_N^{[0]}, \wtH_N^{[1]}, \wtH_N^{[2]}$ with mixture $\xi$, as in \eqref{eq:def-hamiltonian-no-field}.
For $i=1,2$, let 
\begin{align*}
    \HNp{i}(\bsig) &= \la \bh, \bsig \ra + \wtHNp{i}(\bsig), \quad \text{where}\\
    \wtHNp{i}(\bsig) &= \sqrt{p} \wtH_N^{[0]} + \sqrt{1-p} \wtH_N^{[i]}.
\end{align*}
We say the pair of Hamiltonians $\HNp{1}, \HNp{2}$ is $p$-correlated.
Note that pairs of corresponding entries in $\bgp{1} = \bg(\HNp{1})$ and $\bgp{2} = \bg(\HNp{2})$ are Gaussian with covariance $\lt[\begin{smallmatrix}1 & p \\ p & 1\end{smallmatrix}\rt]$.

We will determine the maximum energy attained by algorithms $\cA_N : \sH_N \to B_N$ or $\cA_N : \sH_N \to C_N$ (always assumed to be measurable) obeying the following overlap concentration property.

\begin{definition}
    Let $\lambda, \nu > 0$. 
    An algorithm $\cA = \cA_N$ is $(\lambda,\nu)$ overlap concentrated if for any $p\in [0,1]$ and $p$-correlated Hamiltonians $\HNp{1}, \HNp{2}$,
    \begin{equation}
        \label{eq:overlap-concentrated}
        \P
        \lt[
            \lt| 
                R \lt( \cA(\HNp{1}), \cA(\HNp{2}) \rt) -
                \E R \lt( \cA(\HNp{1}), \cA(\HNp{2}) \rt)
            \rt|\geq \lambda
        \rt]
        \le \nu.
    \end{equation}
\end{definition}

\subsection{The Spherical Zero-Temperature Parisi Functional}

We introduce a Parisi functional $\Par^{\Sp}$ for the spherical setting, analogous to the Parisi functional $\Par^{\Is}$ for the Ising setting introduced in \eqref{eq:def-parisi-functional-is}.
Similarly to Theorem~\ref{thm:opt-is}, Auffinger and Chen \cite{auffinger2017energy}, see also \cite{chen2017parisi}, characterize the ground state energy of the spherical spin glass by a variational formula in terms of this Parisi functional.
Recall the set $\cuU$ defined in \eqref{eq:def-cuU}.
Let
\[
    \cuV(\xi) = \lt\{
        (B, \zeta) \in \bbR^+ \times \cuU :
        B > \int_0^1 \xi''(t)\zeta(t) \diff{t}
    \rt\}.
\]
Define the spherical Parisi functional $\Par^{\Sp} = \Par^{\Sp}_{\xi,h} : \cuV(\xi) \to \bbR$ by
\begin{equation}
    \label{eq:def-parisi-functional-sp}
    \Par^{\Sp}(B, \zeta) =
    \fr12 \lt[
        \fr{h^2}{B_\zeta(0)} + \int_0^1 \lt(\fr{\xi''(t)}{B_\zeta(t)} + B_\zeta(t)\rt) \diff{t}
    \rt],
\end{equation}
where for $t\in [0,1]$
\begin{equation}
    \label{eq:def-B}
    B_\zeta(t) = B - \int_t^1 \xi''(q)\zeta(q) \diff{q}.
\end{equation}

\begin{theorem}[{\cite[Theorem 10]{auffinger2017energy}}]
    The following identity holds.
    \begin{equation}
        \label{eq:opt-variational-formula-sp}
        \OPT^{\Sp} =
        \inf_{(B, \zeta) \in \cuV(\xi)}
        \Par^{\Sp} (B, \zeta).
    \end{equation}
    The infimum is attained at a unique $(B_*, \zeta_*) \in \cuV(\xi)$.
\end{theorem}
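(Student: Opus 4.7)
The natural route is to obtain the zero-temperature formula as a $\beta \to \infty$ limit of the positive-temperature Parisi formula for spherical models, in its Crisanti--Sommers form as established by Talagrand and Chen. Write $F_N(\beta) = \fr{1}{N\beta}\log \int_{S_N} e^{\beta H_N(\bsig)}\, d\sigma_N(\bsig)$, where $\sigma_N$ is the uniform measure on $S_N$. Standard Laplace/concentration arguments give $F_N(\beta) \to F(\beta)$ in probability, and $F(\beta) \to \OPT^{\Sp}$ as $\beta\to\infty$ (the Gibbs measure concentrates on near-maximizers). The positive-temperature formula expresses $F(\beta)$ as the infimum of an explicit functional $\mathcal{P}(\beta,\mu)$, where $\mu$ is the CDF of a probability measure on $[0,1]$ encoding the replica overlap distribution; for spherical models this functional is in closed form (no PDE is needed, unlike the Ising case handled by the Auffinger--Chen formula already recalled).

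The core computation is then to identify the correct rescaling under $\beta\to\infty$. One writes $\zeta(t) = \beta\,\mu(t)$ on the support where $\mu < 1$, and treats the ``atom at $1$'' as producing the scalar parameter $B$. Explicitly, if $q^* < 1$ is the supremum of $\{t : \mu(t) < 1\}$, the contribution of $[q^*,1]$ to the Crisanti--Sommers functional, after multiplying by $\beta$ and sending $\beta\to\infty$, yields the boundary terms $\fr{h^2}{B_\zeta(0)}$ and $\int \lt(\fr{\xi''(t)}{B_\zeta(t)}+B_\zeta(t)\rt)\diff{t}$ appearing in \eqref{eq:def-parisi-functional-sp}, with $B_\zeta$ defined by \eqref{eq:def-B}. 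The condition $B > \int_0^1 \xi''(t)\zeta(t)\diff{t}$ guaranteeing $(B,\zeta)\in\cuV(\xi)$ corresponds to $B_\zeta(0) > 0$, i.e.\ the rescaled measure does not saturate the sphere constraint.

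The two directions of the limit are then routine but require care. For the upper bound, given $(B,\zeta)\in\cuV(\xi)$, construct $\mu_\beta$ with the appropriate atom structure so that $\beta\cdot \mathcal{P}(\beta,\mu_\beta) \to \Par^{\Sp}(B,\zeta)$ as $\beta\to\infty$; this yields $\OPT^{\Sp}\le \Par^{\Sp}(B,\zeta)$. For the lower bound, take any sequence of near-minimizers $\mu_\beta$ and prove tightness: the integrability $\int\zeta\, dt < \infty$ and finiteness of $B$ must be controlled uniformly, after which lower semicontinuity in $(B,\zeta)$ delivers $\OPT^{\Sp}\ge \inf_{\cuV(\xi)} \Par^{\Sp}$. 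Uniqueness of the minimizer follows from strict convexity of $\Par^{\Sp}$ in the appropriate parametrization; for the spherical model this can be checked explicitly by computing the second variation of the closed-form expression \eqref{eq:def-parisi-functional-sp} and using the strict convexity of $x\mapsto 1/x$ on $(0,\infty)$ together with the strict monotonicity required of $\zeta$ near the minimizer.

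The main obstacle is the tightness step in the lower bound: a priori bounds on $B$ and on the tail behavior of $\zeta$ near $t=1$ for near-minimizers must be extracted from the positive-temperature formula. One useful tool is the Guerra--Talagrand upper bound, which directly produces $F_N(\beta)\le \mathcal{P}(\beta,\mu)$ nonasymptotically and enables quantitative control of the competitor $\mu_\beta$ achieving the infimum. A secondary technical point is justifying the $\beta\to\infty$ interchange on the support of the measure away from $1$: here one uses that $\mu_\beta$ assigns mass $O(1/\beta)$ to any bounded-distortion interval, so the rescaled $\zeta_\beta = \beta\mu_\beta$ remains locally bounded and convergence along subsequences in an appropriate topology (e.g.\ weak-$*$ on $[0,1-\delta]$ for every $\delta>0$) can be obtained, with the limiting object lying in $\cuU$.
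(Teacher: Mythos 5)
Note first that the paper does not prove this theorem; it is quoted directly from \cite[Theorem 10]{auffinger2017energy}, so there is no in-paper argument to compare your proposal against. It is therefore evaluated on its own merits as a sketch of the cited proof.

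The route you describe --- express $\OPT^{\Sp}$ as the $\beta\to\infty$ limit of the quenched free energy, invoke the Crisanti--Sommers representation for the spherical free energy, and rescale the positive-temperature order parameter $\mu$ into the pair $(B,\zeta)$ --- is indeed the standard path and is consistent with what Auffinger and Chen do. Your uniqueness argument is also sound: the change of variables $(B,\zeta)\mapsto B_\zeta$ is affine and injective (since $B_\zeta(1)=B$ and $B_\zeta'(t)=\xi''(t)\zeta(t)$, with $\xi''>0$ a.e.), and $\Par^{\Sp}$ is strictly convex as a function of $B_\zeta$ because $x\mapsto 1/x$ is strictly convex on $(0,\infty)$.

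Two concrete issues remain. First, the scaling in your upper-bound step is inconsistent. Having defined $F_N(\beta)=\frac{1}{N\beta}\log\int e^{\beta H_N}\,d\sigma_N$ and declared $F(\beta)=\inf_\mu\mathcal{P}(\beta,\mu)$, the quantity that must converge to $\Par^{\Sp}(B,\zeta)$ is $\mathcal{P}(\beta,\mu_\beta)$ itself (since $F(\beta)\to\OPT^{\Sp}$); the expression ``$\beta\cdot\mathcal{P}(\beta,\mu_\beta)\to\Par^{\Sp}(B,\zeta)$'' diverges under the normalization you fixed. Second, you fold the lower bound on the infimum and the attainment claim into a single ``tightness'' step, but these must be separated: to prove attainment you need a minimizing sequence $(B_n,\zeta_n)\subseteq\cuV(\xi)$ to have a subsequential limit \emph{still lying in} $\cuV(\xi)$, which in particular requires a coercivity estimate keeping $B_{\zeta_n}(0)$ uniformly bounded away from zero, so that the strict inequality $B>\int_0^1\xi''\zeta$ survives in the limit. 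This is not automatic; over the larger set $\cuK(\xi)$ the corresponding infimum can fail to be attained precisely because minimizing sequences drive $B_\zeta(0)\to 0$ (cf.\ the non-attainment case in Proposition~\ref{prop:alg-sp-value}). The monotonicity constraint in $\cuV(\xi)$ is what rescues compactness here, and your proposal gestures at this without supplying it --- that is the substantive content of the cited proof.
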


\subsection{Main Results}

We defined $\ALG^{\Is}$ in \eqref{eq:def-alg-is} by a non-monotone extension of the variational formula in \eqref{eq:opt-variational-formula-is}.
We can similarly define $\ALG^{\Sp}$ by a non-monotone extension of \eqref{eq:opt-variational-formula-sp}.
Recall the set $\cuL$ defined in \eqref{eq:def-cuL}. 
Let $\cuK(\xi) \supseteq \cuV(\xi)$ denote the set
\[
    \cuK(\xi) = \lt\{
        (B, \zeta) \in \bbR^+ \times \cuL :
        B > \int_0^1 \xi''(t)\zeta(t) \diff{t}
    \rt\}.
\]
The Parisi functional $\Par^{\Sp}$ can clearly be defined on $\cuK(\xi)$.
We define $\ALG^{\Sp} = \ALG^{\Sp}_{\xi, h}$ by
\begin{equation}
    \label{eq:def-alg-sp}
    \ALG^{\Sp} =
    \inf_{(B, \zeta) \in \cuK(\xi)}
    \Par^{\Sp} (B, \zeta).
\end{equation}
Note that $\ALG^{\Sp} \le \OPT^{\Sp}$ trivially.

We are now ready to state the main result of this work. 
We will show that for any mixed even spherical or Ising spin glass, no overlap concentrated algorithm can attain an energy level above the algorithmic thresholds $\ALG^{\Sp}$ and $\ALG^{\Is}$ with nontrivial probability.

\begin{theorem}[Main Result]
    \label{thm:main}
    Consider a mixed even Hamiltonian $H_N$ with model $(\xi,h)$. 
    Let $\ALG = \ALG^{\Sp}$ (resp. $\ALG^{\Is}$). 
    For any $\eps>0$ there are $\lambda,c,N_0>0$ depending only on $\xi,h, \eps$ such that the following holds for any $N\geq N_0$ and any $\nu\in [0,1]$. 
    For any $(\lambda,\nu)$ overlap concentrated $\cA = \cA_N : \sH_N \to B_N$ (resp. $C_N$),
    \[
        \P \lt[
            \fr{1}{N} 
            H_N\lt(\cA(H_N)\rt)
            \ge
            \ALG + \eps
        \rt]
        \le
        \exp(-cN) + 3(\nu/\lambda)^c.
    \]
\end{theorem}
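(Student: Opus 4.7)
The plan is to prove a \emph{Branching Overlap Gap Property} (BOGP) ruling out, with probability $1-e^{-\Omega(N)}$, any constellation of configurations that (i) is indexed by the leaves of a deep ultrametric tree, (ii) has pairwise overlaps dictated by tree distance, and (iii) achieves average energy $\ge \ALG+\eps/2$ under a family of correlated copies of $H_N$. I would then argue that an overlap concentrated algorithm $\cA$ which succeeds with probability significantly larger than $3(\nu/\lambda)^c$ can be used to manufacture such a forbidden constellation, producing a contradiction.

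Concretely, I would fix an integer-labeled rooted tree $\cT$ of depth $D$ and a sequence $0=q_0<q_1<\cdots<q_D\le 1$. For each leaf $u$ of $\cT$ I would build a Hamiltonian $H_N(u)$ by assigning an independent replica of $\wtH_N$ to each edge and summing with weights $\sqrt{q_d-q_{d-1}}$ along the root-to-leaf path, so that if $u,u'$ share an ancestor at depth $d$ then $(H_N(u),H_N(u'))$ is $q_d$-correlated in the sense of Section~\ref{sec:results}. Setting $\bsig(u)=\cA(H_N(u))$, the overlap concentration hypothesis (plus a union bound over the $O(1)$ pairs) forces the overlaps $R(\bsig(u),\bsig(u'))$ to lie within $\lambda$ of deterministic values $p_d$ with probability at least $1-O(\nu/\lambda^{\Theta(1)})$ on the event that $\cA$ succeeds on every $H_N(u)$; one further leverages Markov on a well-chosen amplification to convert a success probability of $P$ on a single Hamiltonian into joint success with probability $P^{|\text{leaves}|}$ times a correction, yielding an event of probability comparable to $P - (\nu/\lambda)^c$ on which the constellation realizes the forbidden ultrametric overlap pattern at average energy $\ge \ALG+\eps/2$.

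To kill this event I would set up a many-replica Guerra--Talagrand interpolation for the correlated Hamiltonians on $\cT$, taken to the zero-temperature limit. This yields, for any nondecreasing order parameter $\zeta:[0,1]\to\bbR_{\ge 0}$, an upper bound on the average energy of the desired form of a multi-dimensional Parisi functional. The next step is a symmetrization: because $\cT$ is regular, the multi-dimensional bound simplifies to $\Par(\kappa\zeta)$, where $\Par\in\{\Par^{\Is},\Par^{\Sp}\}$ and $\kappa:[0,1)\to\bbR_{\ge 0}$ is a piecewise-constant decreasing step function whose step sizes are controlled by the inverse branching numbers of $\cT$. By letting the branching factors grow, $\kappa$ can be made to decay arbitrarily rapidly, so the family $\{\kappa\zeta:\zeta\in\cuU,\,\cT\}$ becomes dense in $\cuL$ (resp.\ its spherical analogue). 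Picking $\cT$ and $\zeta$ with $\Par(\kappa\zeta)\le \ALG+\eps/4$ then closes the contradiction once standard concentration of $H_N(\bsig)/N$ around its expectation is used to pass from the mean bound to an exponentially high-probability bound.

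Two technicalities need handling before this loop closes. First, $\cA(H_N)$ may live in the interior of $B_N$ or $C_N$ whereas the Parisi formula naturally lives on $S_N$ or $\Sigma_N$; I would compose $\cA$ with an auxiliary branching step (of the kind performed by the AMP algorithms in Theorem~\ref{thm:ams20}) that grows each output into its own short ultrametric tree of boundary points at the same energy up to $o(1)$, thereby obtaining a constellation on the boundary whose overlap structure integrates with the tree $\cT$ above. Second, $\bm=\E\cA(H_N)$ is arbitrary, so the BOGP must hold uniformly over an additional constraint of the form $R(\bsig(u),\bm)\approx q$; accordingly, the interpolation/minimization must be carried out conditional on this overlap, and one must verify that the conditional variational bound remains controlled by $\ALG$. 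The main obstacle I expect is precisely this step: executing the multi-replica interpolation on an arbitrary ultrametric tree, symmetrizing it cleanly to a one-dimensional Parisi object, and verifying that the resulting upper bound still collapses to $\ALG$ in the presence of the conditional-mean constraint. Once that algebraic/analytic core is in place, the reduction from Theorem~\ref{thm:main} to the BOGP (stated as Proposition~\ref{prop:uniform-multi-opt} in the paper) is the relatively softer probabilistic argument sketched above.
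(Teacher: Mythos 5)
Your proposal follows essentially the same route as the paper: construct hierarchically correlated Hamiltonians indexed by a branching tree, use overlap concentration plus a hierarchical Jensen-type amplification to show that a successful $\cA$ would exhibit a forbidden ultrametric constellation, and refute this via a multi-replica Guerra--Talagrand interpolation symmetrized to $\Par(\kappa\zeta)$ with $\kappa$ a rapidly-decreasing step function, while addressing the interior-output and $\bm=\E\cA(H_N)$ issues exactly as you indicate. The only small slip is attributing the probability amplification $\P[\text{joint success}]\ge\alpha_N^K$ to Markov; the mechanism is a recursive application of Jensen's inequality to the conditional success probability along the levels of the tree (Proposition~\ref{prop:ogp-prob-bds}(\ref{itm:ogp-prob-bd-ssolve})), but this does not affect the soundness of the outline.
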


\begin{remark}
    If $\cA$ is $\tau$-Lipschitz, $(\lambda,\nu)$ overlap concentration holds with $\nu = \exp(-c_{\lambda,\tau}N)$ by concentration of measure on Gaussian space, see Proposition~\ref{prop:lp-overlap-conc}. 
    Hence in this case the probability on the right-hand side above is exponentially small in $N$.
    The same property holds when $\cA$ is $\tau$-Lipschitz on a set of inputs with $1-\exp(-\Omega(N))$ probability, see Proposition~\ref{prop:approx-lp-overlap-conc}.
\end{remark}

In tandem with Theorem~\ref{thm:ams20} and its spherical analogue Theorem~\ref{thm:ams20sphere} below, Theorem~\ref{thm:main} exactly characterizes the maximum energy attained by overlap concentrated algorithms (again with the caveat on the algorithmic side in the Ising case that a minimizer $\gamma_*\in\cuL$ exists in Theorem~\ref{thm:ams20}).
We will see in Section~\ref{sec:overlap-conc-of-algs} that the algorithms in these two theorems are overlap concentrated.

\begin{theorem}[\cite{ams20,sellke2021optimizing}]
    \label{thm:ams20sphere}
    For any $\eps>0$, there exists an efficient and $O_{\eps}(1)$-Lipschitz AMP algorithm $\cA:\sH_N\to B_N$ such that
    \[
        \P[H_N(\cA(H_N))/N\geq \ALG^{\Sp}-\eps]\geq 1-\exp(-cN),\quad c=c(\eps)>0.
    \] 
\end{theorem}

% In fact, we can show the success probability of the algorithm in Theorem~\ref{thm:ams20sphere} (resp. Theorem~\ref{thm:ams20}) is exponentially high.
% By Theorem~\ref{thm:opt-algs-approx-lp}, for this algorithm $\cA$ the map $H_N \mapsto \fr{1}{N} H_N(\cA(H_N))$ is $O(N^{-1/2})$-Lipschitz with respect to the $\norm{\cdot}_2$ norm when restricted to $H_N \in K_N$ (see Proposition~\ref{prop:gradients-bounded}).
% By Kirszbraun's Theorem, $H_N \mapsto \fr{1}{N} H_N(\cA(H_N))$ agrees on $K_N$ with an $O(N^{-1/2})$-Lipschitz $\cA' : \sH_N \to \bbR$.
% By Gaussian concentration and the bound $\P[H_N\in K_N] \ge 1-e^{-cN}$, (after adjusting $\eps$) we have $\fr{1}{N} H_N(\cA(H_N)) \ge \ALG^{\Sp}-\eps$ (resp. $\ALG^{\Is}-\eps$) with probability at least $1-e^{-c_\eps N}$ for a constant $c_\eps > 0$.

In the case of the spherical spin glass, the value of $\ALG^{\Sp}$ is explicit, and is given by the following proposition.
We will prove this proposition in Appendix~\ref{sec:alg-sp-value}.
\begin{proposition}
    \label{prop:alg-sp-value}
    If $h^2 + \xi'(1) \ge \xi''(1)$, then
    \[
        \ALG^{\Sp}
        =
        (h^2 + \xi'(1))^{1/2},
    \]
    and the infimum in \eqref{eq:def-alg-sp} is uniquely attained by $B = (h^2 + \xi'(1))^{1/2}$, $\zeta = 0$.
    Otherwise, 
    \[
        \ALG^{\Sp}
        = 
        \hq \xi''(\hq)^{1/2} + 
        \int_{\hq}^1 \xi''(q)^{1/2} \diff{q}
    \]
    where $\hq\in [0,1)$ is the unique number satisfying $h^2 + \xi'(\hq) = \hq\xi''(\hq)$. 
    If $h > 0$, the infimum in \eqref{eq:def-alg-sp} is uniquely attained by $B = \xi''(1)^{1/2}$ and
    \begin{equation}
        \label{eq:best-zeta}
        \zeta(q) 
        = 
        \ind\{q \ge \hq\} \fr{\xi'''(q)}{2\xi''(q)^{3/2}}
        =
        -\ind\{q \ge \hq\} \fr{\diff{}}{\diff{q}} \xi''(q)^{-1/2}.
    \end{equation}
    If $h = 0$, the infimum is not attained.
    It is achieved by $B = \xi''(1)^{1/2}$ and $\zeta$ given by \eqref{eq:best-zeta} in the limit as $\hq \to 0^+$.\footnote{When $h=0$, we cannot take $\hq = 0$ in \eqref{eq:best-zeta} because then $B = \int_0^1 \xi''(q)\zeta(q)\diff{q}$, so $(B, \zeta) \not\in \cuK(\xi)$.}
\end{proposition}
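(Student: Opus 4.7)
The plan is to reduce the minimization of $\Par^{\Sp}$ over $(B,\zeta)\in\cuK(\xi)$ to a constrained one-dimensional calculus problem by changing variables to the cumulative function $D(t) := B_\zeta(t) = B - \int_t^1 \xi''(q)\zeta(q)\diff q$. Since $\zeta \ge 0$ and $\xi''\ge 0$ (because $\xi$ has nonnegative coefficients), the function $D:[0,1]\to\bbR$ is nondecreasing, has $D(1)=B$, and satisfies $D(0)>0$ precisely by the constraint defining $\cuK(\xi)$. Conversely, any nondecreasing absolutely continuous $D$ with $D(0)>0$ arises from a pair $(B,\zeta)=(D(1),\,D'/\xi'')\in\cuK(\xi)$ (at least for the $D$'s we need), and in these variables the Parisi functional reads
\[
    \Par^{\Sp}(D) \;=\; \fr{1}{2}\lt[\fr{h^2}{D(0)} + \int_0^1\lt(\fr{\xi''(t)}{D(t)} + D(t)\rt)\diff{t}\rt].
\]

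First I would optimize pointwise for fixed $D(0)=a$. By AM-GM, the integrand is minimized at $D(t)=\xi''(t)^{1/2}$ with value $2\xi''(t)^{1/2}$; taking the monotonicity constraint $D(t)\ge a$ into account, the minimizer over nondecreasing $D$ with $D(0)=a$ is $D^\star_a(t)=\max(a,\xi''(t)^{1/2})$. Because $\xi''$ is nondecreasing on $[0,1]$, this $D^\star_a$ is automatically nondecreasing, and it agrees with $a$ on $[0,\hq(a)]$ and with $\xi''(t)^{1/2}$ on $[\hq(a),1]$, where $\hq(a)$ is defined by $\xi''(\hq(a))=a^2$ (and $\hq(a)=1$ if $a\ge\xi''(1)^{1/2}$). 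Substituting and using $\int_0^{\hq}\xi''(t)\diff t=\xi'(\hq)$, the problem reduces to minimizing over $a>0$ the scalar function
\[
    G(a) \;=\; \fr{1}{2}\lt[\fr{h^2+\xi'(\hq(a))}{a} + a\,\hq(a) + 2\int_{\hq(a)}^1 \xi''(t)^{1/2}\diff{t}\rt].
\]

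I would then split into two regimes. In the regime $a\ge\xi''(1)^{1/2}$ (so $\hq(a)=1$), $G$ reduces to $\tfrac{1}{2}[(h^2+\xi'(1))/a + a]$, minimized at $a=(h^2+\xi'(1))^{1/2}$ with value $(h^2+\xi'(1))^{1/2}$; this interior optimum lies in the regime exactly when $h^2+\xi'(1)\ge \xi''(1)$, giving the first case of the proposition. In the regime $a<\xi''(1)^{1/2}$, reparametrize by $\hq\in[0,1)$ via $a=\xi''(\hq)^{1/2}$. A direct differentiation yields
\[
    \fr{dG}{d\hq} \;=\; \fr{\xi'''(\hq)}{4\,\xi''(\hq)^{3/2}}\lt[\hq\,\xi''(\hq) - h^2 - \xi'(\hq)\rt],
\]
so (since $\xi'''>0$ on $(0,1)$ for an even non-constant mixture) the critical equation is exactly $h^2+\xi'(\hq)=\hq\,\xi''(\hq)$; existence and uniqueness of $\hq\in[0,1)$ follow from monotonicity of the function $\hq\xi''(\hq)-\xi'(\hq)$ on $[0,1]$ (it equals $0$ at $\hq=0$ and has derivative $\hq\xi'''(\hq)\ge 0$). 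Substituting back, the two terms $\tfrac{h^2+\xi'(\hq)}{2\xi''(\hq)^{1/2}}$ and $\tfrac{1}{2}\xi''(\hq)^{1/2}\hq$ collapse via this identity into $\hq\,\xi''(\hq)^{1/2}$, yielding the claimed value. The formula \eqref{eq:best-zeta} is then just $\zeta=D'/\xi''$ applied to $D^\star_a$.

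The main obstacle I foresee is the $h=0$ boundary case, where the critical equation has $\hq=0$ but $\zeta$ from \eqref{eq:best-zeta} fails the $\cuK(\xi)$ constraint $B_\zeta(0)>0$ (since $B-\int_0^1\xi''\zeta = \xi''(0)^{1/2}$ may vanish, and in any case one gets $h^2/D(0)=0/0$). I would handle this by approximation: for any $\hq>0$ the function $\zeta_{\hq}(q)=\ind\{q\ge\hq\}\,\xi'''(q)/(2\xi''(q)^{3/2})$ lies in $\cuK(\xi)$ with $B=\xi''(1)^{1/2}$, and direct computation gives $\Par^{\Sp}(B,\zeta_{\hq})=\hq\,\xi''(\hq)^{1/2}+\int_{\hq}^1\xi''(t)^{1/2}\diff t \to \int_0^1\xi''(t)^{1/2}\diff t$ as $\hq\to 0^+$, matching the formula at $\hq=0$. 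A matching lower bound follows from the general computation above taking $a\to 0$. Non-attainment when $h=0$ reduces to observing that any admissible $(B,\zeta)$ has $D(0)>0$, which forces $G(a)>\int_0^1\xi''(t)^{1/2}\diff t$ strictly.
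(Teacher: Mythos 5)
Your reduction is correct and takes a genuinely cleaner route than the paper. Both proofs reparametrize the Parisi functional in terms of the nondecreasing function $D(t) = B_\zeta(t)$, but the paper then \emph{guesses} the minimizer (defines $\hB = ((h^2+\xi'(\hq))/\hq)^{1/2}$, truncates the functional at $\hq$, and runs two direct inequality-chasing cases according to $B(0)<\hB$ versus $B(0)\ge\hB$). You instead observe that for fixed $D(0)=a$ the integral is minimized \emph{pointwise} by $D(t)=\max(a,\xi''(t)^{1/2})$ --- and the crucial point that this pointwise optimizer is actually feasible, because $\xi''$ is nondecreasing --- so the infinite-dimensional problem collapses to a scalar minimization of $G(a)$. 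Your computation of $dG/d\hq$ is correct and produces the critical equation $h^2 + \xi'(\hq) = \hq\,\xi''(\hq)$ directly rather than verifying it after the fact; the trade-off is that you need to dispose of the regime $a<\xi''(0)^{1/2}$ (where $\max(a,\xi''(t)^{1/2})$ does not satisfy $D(0)=a$), which is an easy extra sentence since there $G(a)=h^2/(2a)+\int_0^1\xi''(t)^{1/2}\diff t$ is strictly decreasing, pushing the minimum to the boundary $a=\xi''(0)^{1/2}$.

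One caveat, shared with the paper's own proof and the proposition's footnote: the non-attainment claim at $h=0$ is only correct when $\gamma_2 = 0$. With $B=\xi''(1)^{1/2}$ and $\zeta$ as in \eqref{eq:best-zeta} with $\hq=0$, one computes $B-\int_0^1\xi''\zeta = \xi''(0)^{1/2} = (2\gamma_2^2)^{1/2}$, which is \emph{positive} when $\gamma_2>0$, so $(B,\zeta)\in\cuK(\xi)$ and the infimum is attained. Your last sentence (that $D(0)>0$ forces $G(a)$ strictly above $\int_0^1\xi''^{1/2}$) makes the same slip as the paper: the AM-GM equality $D(t)=\xi''(t)^{1/2}$ is compatible with $D(0)>0$ exactly when $\gamma_2>0$. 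This does not affect the claimed value of $\ALG^{\Sp}$, only the attainment clause.
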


Note that $\ALG^{\Sp} = \OPT^{\Sp}$ if and only if the infimum in \eqref{eq:def-alg-sp} is attained at a pair $(B, \zeta) \in \cuV(\xi)$.
Thus, Proposition~\ref{prop:alg-sp-value} implies that $\ALG^{\Sp} = \OPT^{\Sp}$ if and only if $h^2 + \xi'(1) \ge \xi''(1)$ or $\xi''(q)^{-1/2}$ is concave on $[\hq, 1]$. 
In the former case, the model is replica symmetric at zero temperature; in the latter case it is full replica symmetry breaking on $[\hq, 1]$ at zero temperature. 
Interestingly, in the case $h^2 + \xi'(1) > \xi''(1)$, \cite{fyodorov2013high,belius2021triviality} showed that $H_N$ has ``trivial complexity'': no critical points on $S_N$ with high probability except for the unique global maximizer and minimizer.

In the important case of the pure $p$-spin model, with $h=0$ and $\xi(x) = x^p$ for $p\ge 4$ even,
\[
    \ALG^{\Sp}
    =
    \int_0^1 \xi''(q)^{1/2} \diff{q}
    = 
    2 \sqrt{\fr{p-1}{p}}.
\]
This coincides with the threshold $E_\infty(p)$ identified in \cite{auffinger2013random}.
As conjectured in \cite{auffinger2013random} and proved in \cite{subag2017complexity}, with high probability an overwhelming majority of local maxima of $H_N$ on $S_N$ have energy value $E_\infty(p) \pm o(1)$.
This suggests that it may be computationally intractable to achieve energy at least $E_\infty(p) + \eps$ for any $\eps > 0$; our results confirm this hypothesis for overlap concentrated algorithms.

\begin{remark}
    Our results generalize with no changes in the proofs to arbitrary external fields $\bh=(h_1,\ldots,h_N)$ independent of $\wt{H}_N$ -- one only needs to replace $h^2$ by $\frac{\norm{\bh}^2}{N}$ in \eqref{eq:def-parisi-functional-sp} and replace $\Phi(0,h)$ by $\frac{1}{N}\sum_{i=1}^N \Phi(0,h_i)$ in \eqref{eq:def-parisi-functional-is}. This includes for instance the natural case of Gaussian external field $\bh\sim\cN(0,I_N)$. Here $\cA$ can depend arbitrarily on $\bh$ as long as overlap concentration holds conditionally on $\bh$.
\end{remark}

\subsection{Notation and Preliminaries}

We generally use ordinary lower-case letters $(x,y,\ldots)$ for scalars and bold lower-case $(\bx,\by,\ldots)$ for vectors.
For $\bx, \by \in \bbR^N$, we denote the ordinary inner product by $\la \bx,\by \ra = \sum_{i=1}^N x_iy_i$ and the normalized inner product by $R(\bx, \by) = \fr{1}{N} \la \bx,\by\ra$.
We associate with these inner products the norms $\norm{\bx}_2^2 = \la \bx, \bx\ra$ and $\norm{\bx}_N^2 = R(\bx,\bx)$.
There is no confusion between the $\norm{\cdot}_N$ norm and the $\ell_p$ norm, which will not appear in this paper. We use the standard notations $O(\cdot), \Omega(\cdot), o(\cdot)$ to indicate asymptotic behavior in $N$.

Ensembles of scalars over an index set $\bbL$ are denoted with an arrow $(\vx,\vy,\ldots)$, and the entry of $\vx$ indexed by $u\in \bbL$ is denoted $x(u)$.
Similarly, ensembles of vectors are written in bold and with an arrow $(\vbx,\vby,\ldots)$, and the entry of $\vbx$ indexed by $u\in \bbL$ are denoted $\bx(u)$.
Sequences of scalars parametrizing these ensembles are also denoted with an arrow, for example $\vk = (k_1,\ldots,k_D)$.

We reiterate that $S_N = \{\bx \in \bbR^N : \norm{\bx}_2^2 = N\}$ and $\Sigma_N = \{-1,1\}^N$, and that $B_N = \{\bx\in \bbR^N : \norm{\bx}_2^2 \le N\}$ and $C_N=[-1,1]^N$ are their convex hulls.
The space of Hamiltonians $H_N$ is denoted $\sH_N$.
We identify each Hamiltonian $H_N$ with its disorder coefficients $(\Gp{p})_{p\in 2\bbN}$, which we concatenate into a vector $\bg = \bg(H_N)$.

For any tensor $A_p \in (\bbR^N)^{\otimes p}$, where $p\ge 1$, we define the operator norm 
\[
    \norm{A_p}_{\op}
    =
    \fr{1}{N} 
    \max_{\bsig^1, \ldots, \bsig^p \in S_N} 
    \lt|\la 
        A_p, \bsig^1 \otimes \cdots \otimes \bsig^p
    \ra\rt|.
\]
Note that when $p=1$, $\norm{A_p}_{\op} = \norm{A_p}_N$.
The following proposition shows that with exponentially high probability, the operator norms of all constant-order gradients of $H_N$ are bounded and $O(1)$-Lipschitz.
We will prove this proposition in Appendix~\ref{sec:basic-estimates}.
\begin{proposition}
\label{prop:gradients-bounded}
    For fixed model $(\xi,h)$ and $r\in [1, \sqrt{2})$, there exists a constant $c>0$, sequence $(K_N)_{N\geq 1}$ of sets $K_N\subseteq \sH_N$, and sequence of constants $(C_{k})_{k\geq 1}$ independent of $N$, such that the following properties hold.
    \begin{enumerate}
        \item $\P[H_N\in K_N]\geq 1-e^{-cN}$;
        \item If $H_N\in K_N$ and $\bx, \by\in \bbR^N$ satisfy $\norm{\bx}_N, \norm{\by}_N \le r$, then
        \begin{align}
            \label{eq:gradient-bounded}
            \norm{\nabla^k H_N(\bx)}_{\op}
            &\le 
            C_{k}, \\
            \label{eq:gradient-lipschitz}
            \norm{\nabla^k H_N(\bx) - \nabla^k H_N(\by)}_{\op}
            &\le 
            C_{k+1} \norm{\bx - \by}_N.
        \end{align}
    \end{enumerate}
\end{proposition}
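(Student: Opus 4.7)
The plan is to decompose $\nabla^k H_N(\bx)$ as a sum over even $p \ge k$ of $p$-spin contributions $T_p^{(k)}(\bx) \in (\bbR^N)^{\otimes k}$, each linear in $\Gp{p}$, and control each by an $\eps$-net plus Borell-TIS argument. The deterministic linear term $\la\bh,\bsig\ra$ contributes $\bh$ to $\nabla H_N$ when $k=1$ and vanishes for $k\ge 2$. For each $p \ge k$, differentiating and contracting with $\bsig^1, \dots, \bsig^k \in S_N$ yields
\[
\|T_p^{(k)}(\bx)\|_{\op}\ \le\ \frac{\gamma_p \cdot p!/(p-k)!}{N^{(p+1)/2}}\ \sup_{\bsig^1,\ldots,\bsig^k \in S_N}\ \lt|\la \Gp{p},\ \bx^{\otimes(p-k)} \otimes \bsig^1 \otimes \cdots \otimes \bsig^k\ra\rt|,
\]
so $\sup_{\bx\in B_N(r)} \|T_p^{(k)}(\bx)\|_{\op}$ is controlled by the supremum of a centered Gaussian process on $B_N(r) \times S_N^k$ whose per-point variance, computed from the rank-one Frobenius identity and $p!/(p-k)!\le p^k$, is at most $\sigma_p^2 := \gamma_p^2 p^{2k} r^{2(p-k)}/N$.

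Covering $B_N(r)$ and $S_N$ by Euclidean $\eps$-nets of log-cardinality $O(N)$ and combining a union bound with a self-bounding Lipschitz estimate on the multilinear process yields
\[
\E \sup_{\bx\in B_N(r)} \|T_p^{(k)}(\bx)\|_{\op}\ \le\ A_k\, \gamma_p\, p^k\, r^{p-k}
\]
for a constant $A_k$ depending only on $k$. Borell-TIS applied at deviation $u_p = \gamma_p p^{k+1/2} r^{p-k}$ then gives
\[
\P\lt[\sup_{\bx\in B_N(r)} \|T_p^{(k)}(\bx)\|_{\op} > (A_k+1)\,\gamma_p\, p^{k+1/2}\, r^{p-k}\rt]\ \le\ 2 e^{-pN/2}.
\]
Summing over even $p \ge k$ costs only $4 e^{-N/2}$, while by Cauchy-Schwarz
\[
\sum_{p\ge k} \gamma_p\, p^{k+1/2}\, r^{p-k}\ \le\ \lt(\sum_p 2^p \gamma_p^2\rt)^{1/2} \lt(\sum_p p^{2k+1}(r^2/2)^{p-k}\rt)^{1/2}\ =:\ B_k < \infty,
\]
where the second factor converges precisely because $r < \sqrt{2}$. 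This gives the operator norm bound at each fixed level $k$ on an event of probability $1 - e^{-\Omega(N)}$.

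To obtain all $k\ge 1$ simultaneously, I would choose $C_k \ge (A_k+1) B_k + h \cdot \mathbf{1}_{\{k=1\}}$ and inflate the Borell-TIS thresholds so that the level-$k$ failure probability is at most $e^{-k^2 N}$; intersecting the resulting good events across $k$ produces a set $K_N$ with $\P[H_N \in K_N] \ge 1 - e^{-cN}$ on which the operator norm bound holds for every $k$. The Lipschitz bound is a formal consequence of the operator norm bound one level higher: by the fundamental theorem of calculus,
\[
\nabla^k H_N(\bx) - \nabla^k H_N(\by) = \int_0^1 \nabla^{k+1} H_N\bigl(\by + t(\bx-\by)\bigr)[\bx-\by,\,\cdot,\,\ldots,\,\cdot]\,dt,
\]
and contracting a $(k+1)$-tensor of operator norm at most $C_{k+1}$ with a vector $\bw$ produces a $k$-tensor of operator norm at most $C_{k+1}\|\bw\|_N$, which is direct from the definition of $\|\cdot\|_{\op}$ and the fact that $\bw/\|\bw\|_N \in S_N$.

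The main obstacle is threading the infinite union bound over $p$. Since the metric entropy of the index set $B_N(r) \times S_N^k$ is $\Theta(N)$ independent of $p$, Borell-TIS alone does not produce a tail decaying in $p$; paying the extra $\sqrt{p}$ factor in the deviation is exactly what converts the per-$p$ bound into a $p$-summable series $e^{-pN/2}$. The hypothesis $r<\sqrt{2}$ is then precisely matched to the growth condition $\sum_p 2^p \gamma_p^2 < \infty$ through the Cauchy-Schwarz estimate above, producing a finite $B_k$ and closing the argument.
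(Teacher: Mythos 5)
Your proposal is correct, and it takes a meaningfully different route from the paper's, so let me compare. The paper first establishes (Proposition~\ref{prop:hnp-op}, citing \cite{arous2020geometry}) a single high-probability event on which $\norm{H_{N,p}}_{\op}\le C\sqrt{p}$ holds simultaneously for all $p$; this event has no reference to $k$ or to $\bx$. It then defines $K_N$ to be exactly this event and derives \emph{both} bounds for all $k$ and all $\bx,\by$ by a purely deterministic chain-rule computation: $\nabla^k H_{N,p}(\bx)[\bsig^1,\dots,\bsig^k] = p^{\uk}\la H_{N,p}, \bx^{\otimes(p-k)}\otimes\bsig^1\otimes\cdots\otimes\bsig^k\ra$, so plugging $\norm{\bx}_N\le r$ factors out $r^{p-k}$, and the Lipschitz bound comes from telescoping $\bx^{\otimes(p-k)}-\by^{\otimes(p-k)}$. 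You instead apply a Gaussian process argument ($\eps$-net plus Borell--TIS) directly to the $\bx$-dependent supremum $\sup_{\bx\in B_N(r)}\norm{\nabla^k H_{N,p}(\bx)}_{\op}$, separately for each $k$, which forces an additional union bound over $k$; you handle that correctly by inflating the deviation to get failure probability $e^{-k^2 N}$, at the cost of a $k$-dependent constant already absorbed into $C_k$. Both proofs share the two essential analytic points: paying an extra $\sqrt{p}$ in the deviation makes the $p$-union bound geometrically summable via $e^{-pN/2}$, and $r<\sqrt{2}$ together with $\sum_p 2^p\gamma_p^2<\infty$ makes the resulting series converge. Your Cauchy--Schwarz justification for that convergence is actually cleaner than the paper's informal ratio remark. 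Your FTC argument for the Lipschitz bound is a continuous version of the paper's telescoping identity; both are fine. One small imprecision worth flagging: since the process is degree $(p-k)$ in $\bx$, the net in $\bx$ needs resolution roughly $1/p$ to close the self-bounding loop, so the metric entropy is $\Theta(N\log p)$ rather than $O(N)$ uniformly in $p$; this only costs a $\sqrt{\log p}$ factor in $\E\sup$, which is dominated by your $p^{1/2}$ deviation, so the conclusion is unaffected. Net assessment: the paper's factorization into a $k$-free probabilistic event followed by a deterministic derivative bound is cleaner and avoids the union bound over $k$ altogether, but your more self-contained approach is sound.
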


\paragraph{Organization.}
The rest of the paper is structured as follows.
% In Section~\ref{sec:results} we formally state our main result, Theorem~\ref{thm:main}.
In Section~\ref{sec:main-proof}, we formulate Proposition~\ref{prop:uniform-multi-opt}, which establishes our main branching OGP, and prove Theorem~\ref{thm:main} assuming this proposition. 
Sections~\ref{sec:interpolation} through \ref{sec:grand-hamiltonian-is} prove Proposition~\ref{prop:uniform-multi-opt} using a many-replica version of the Guerra-Talagrand interpolation.
Section~\ref{sec:necessity-fully-branching-trees} shows that (for spherical models with $h=0$) the full strength of our branching OGP is necessary to show tight algorithmic hardness.
Section~\ref{sec:overlap-conc-of-algs} shows that approximately Lipschitz algorithms are overlap concentrated, and that natural optimization algorithms including gradient descent, AMP, and Langevin dynamics are approximately Lipschitz.
\section{Proof of Main Impossibility Result}
\label{sec:main-proof}

In this section, we prove Theorem~\ref{thm:main} assuming Proposition~\ref{prop:uniform-multi-opt}, which establishes the main OGP.
Throughout, we fix a model $(\xi,h)$ and $\eps > 0$.
Let $H_N$ be a Hamiltonian \eqref{eq:def-hamiltonian} with model $(\xi,h)$.
Let $\lambda > 0$ be a constant we will set later, and let $\cA : \sH_N \to B_N$ (resp. $C_N$) be $(\lambda, \nu)$ overlap concentrated.

\subsection{The Correlation Function}

We define the correlation function $\chi:[0,1] \to \bbR$ by
\begin{equation}
    \label{eq:def-correlation-fn}
    \chi(p) 
    = 
    \E R \lt(
        \cA(\HNp{1}), 
        \cA(\HNp{2})
    \rt),
\end{equation}
where $\HNp{1}, \HNp{2}$ are $p$-correlated copies of $H_N$.
The following proposition establishes several properties of correlation functions, which we will later exploit.

\begin{proposition}
    \label{prop:corelation-fn-properties}
    The correlation function $\chi$ has the following properties. 
    \begin{enumerate}[label=(\roman*), ref=\roman*] 
        \item \label{itm:cor-fn-props-zeroone} For all $p\in [0,1]$, $\chi(p) \in [0,1]$. 
        \item \label{itm:cor-fn-props-increasing} $\chi$ is either strictly increasing or constant on $[0,1]$.
        \item \label{itm:cor-fn-props-sublinear} For all $p\in [0,1]$, $\chi(p) \le (1-p)\chi(0) + p\chi(1)$.
    \end{enumerate}
\end{proposition}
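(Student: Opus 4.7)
The plan is to derive a power-series representation $\chi(p) = \sum_{d \ge 0} c_d p^d$ with coefficients $c_d \ge 0$, from which all three assertions follow immediately.

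First, I rewrite $\chi(p)$ by conditioning on the common disorder $\wtH_N^{[0]}$, since conditionally on it $\HNp{1}$ and $\HNp{2}$ are independent. Introducing the Ornstein--Uhlenbeck operator $Q_p f(\bx) := \E f(\sqrt{p}\,\bx + \sqrt{1-p}\,Z)$ (with $Z$ an independent standard Gaussian) acting on $L^2(\gamma)$, where $\gamma$ denotes the product Gaussian measure on the infinite-dimensional disorder space, the conditional independence gives
\[
    \chi(p) = \frac{1}{N} \sum_k \E\lt[(Q_p \cA_k)(\bg^{[0]})^2\rt] = \frac{1}{N} \sum_k \norm{Q_p \cA_k}_{L^2(\gamma)}^2.
\]
Each coordinate $\cA_k$ lies in $L^2(\gamma)$ because $\cA$ takes values in a $\norm{\cdot}_N$-bounded set, so $\E \sum_k \cA_k^2 \le N$.

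Next, let $\{\He_\alpha\}$ denote the Hermite basis of $L^2(\gamma)$, indexed by finitely supported multi-indices on the disorder coordinates. Since $Q_p \He_\alpha = p^{|\alpha|/2} \He_\alpha$, Parseval's identity yields
\[
    \chi(p) = \sum_{d \ge 0} c_d p^d,
    \qquad
    c_d := \frac{1}{N} \sum_k \sum_{|\alpha| = d} \hat\cA_{k,\alpha}^2 \ge 0,
\]
where $\hat\cA_{k,\alpha}$ is the Hermite coefficient of $\cA_k$. In particular, $\chi(0) = c_0 = \norm{\E \cA(H_N)}_N^2$ and $\chi(1) = \sum_d c_d = \E \norm{\cA(H_N)}_N^2 \le 1$.

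Each of the three assertions then follows by inspection. For (\ref{itm:cor-fn-props-zeroone}), nonnegativity of the $c_d$ together with $p^d \in [0,1]$ give $0 \le \chi(p) \le \chi(1) \le 1$. For (\ref{itm:cor-fn-props-increasing}), either $c_d = 0$ for all $d \ge 1$, in which case $\chi \equiv c_0$ is constant, or some $c_{d_0}$ with $d_0 \ge 1$ is positive, whence $\chi(q) - \chi(p) = \sum_{d \ge 1} c_d (q^d - p^d) > 0$ for any $0 \le p < q \le 1$. For (\ref{itm:cor-fn-props-sublinear}), using $p^d \le p$ for $d \ge 1$ and $p \in [0,1]$,
\[
    (1-p)\chi(0) + p\chi(1) - \chi(p) = \sum_{d \ge 1} c_d (p - p^d) \ge 0.
\]

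I do not anticipate any substantive obstacle. The only point of care is the infinite-dimensional Gaussian setup: the disorder vector $\bg(H_N)$ lies in $\bbR^{\bbN}$, but the summability assumption $\sum_p 2^p \gamma_p^2 < \infty$ makes this a genuine Gaussian process, and the Hermite basis and OU semigroup extend routinely to $L^2$ of the corresponding product Gaussian measure.
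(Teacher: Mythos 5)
Your proof is correct and follows essentially the same strategy as the paper's: expand $\cA$ in the Hermite basis to obtain the power-series representation $\chi(p)=\sum_{d\ge0}c_d p^d$ with nonnegative coefficients, from which the three assertions drop out. There are two cosmetic differences. First, the paper reaches $\chi(p)=\sum_d W_d p^d$ by writing $\chi(p)=\tfrac1N\E\langle \cA(\bg),T_p\cA(\bg)\rangle=\tfrac1N\E\|T_{\sqrt p}\cA(\bg)\|_2^2$ directly via the Ornstein--Uhlenbeck operator, whereas you condition on the shared disorder $\wtH_N^{[0]}$ and note that $\E[\cA_k(\bg^{(i)})\mid\bg^{[0]}]=(Q_p\cA_k)(\bg^{[0]})$ to arrive at $\chi(p)=\tfrac1N\sum_k\|Q_p\cA_k\|_{L^2(\gamma)}^2$; these are of course the same identity, and your conditioning step arguably makes the nonnegativity and self-adjointness structure more transparent. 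Second, for item (iii) the paper appeals to convexity of the power series, while you verify $p-p^d\ge0$ directly; these are interchangeable. No gaps.
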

We call any $\chi:[0,1]\to\bbR$ satisfying the conclusions of Proposition~\ref{prop:corelation-fn-properties} a \emph{correlation function}.
\begin{proof}
    In this proof, we will write $\cA(\bg)$ to mean $\cA(H_N)$ for the Hamiltonian $H_N$ with disorder coefficients $\bg = \bg(H_N)$. 
    We introduce the Fourier expansion of $\cA$.
    For each nonnegative integer $j$, let $\He_j$ denote the $j$-th univariate Hermite polynomial.
    These are defined by $\He_0(x) = 1$ and for $n\ge 0$, 
    \[
        \He_{n+1}(x)
        =
        x\He_{n}(x)-\He_{n}'(x).
    \]
    Recall that the renormalized Hermite polynomials $\tHe_n = \fr{1}{\sqrt{n!}} \He_n$ form an orthonormal basis of $L^2(\bbR)$ with the standard Gaussian measure, i.e. they form a complete basis and satisfy
    \[
        \E_{g\sim \cN(0,1)} \tHe_n(g) \tHe_m(g) = \ind[n=m].
    \]
    For each multi-index $\alpha = (\alpha_1, \alpha_2, \ldots,)$ of nonnegative integers that are eventually zero, define the multivariate Hermite polynomial
    \[
        \tHe_\alpha(\bg) = \prod_{i} \tHe_{\alpha_i}(\bg_i),
    \]
    These polynomials form an orthonormal basis of $L^2(\bbR^{\bbN})$ with the standard Gaussian measure, see e.g. \cite[Theorem 8.1.7]{lunardi2015infinite}.
    Hence for each $1\le i\le N$, we can write
    \[
        \cA_i(\bg)
        =
        \sum_{\alpha}
        \hcA_i(\alpha) \tHe_\alpha(\bg)
        \qquad
        \text{where}
        \qquad
        \hcA_i(\alpha) = \E \lt[\cA_i(\bg) \tHe_\alpha(\bg)\rt].
    \]
    For each multi-index $\alpha$, let $|\alpha| = \sum_{i\ge 1} \alpha_i$.
    For each nonnegative integer $j$, introduce the Fourier weight
    \[
        W_j = \fr{1}{N} \sum_{i=1}^N \sum_{|\alpha| = j} \hcA_i(\alpha)^2 \ge 0.
    \]
    For $i=1,2$, let $\bgp{i} = \bg(\HNp{i})$.
    Let $T_p$ denote the Ornstein-Uhlenbeck operator.
    We compute that
    \begin{align*}
        \chi(p)
        &=
        \fr{1}{N}
        \E
        \lt\la
            \cA(\bgp{1}),
            \cA(\bgp{2})
        \rt\ra
        =
        \fr{1}{N}
        \E
        \lt\la
            \cA(\bg),
            T_{p}
            \cA(\bg)
        \rt\ra
        =
        \fr{1}{N}
        \E
        \norm{
            T_{\sqrt{p}}
            \cA(\bg)
        }_2^2 \\
        &=
        \fr{1}{N}
        \sum_{i=1}^N
        \norm{
            T_{\sqrt{p}}
            \cA_i(\bg)
        }_2^2
        =
        \fr{1}{N}
        \sum_{i=1}^N
        \sum_{\alpha}
        p^{|\alpha|}
        \hcA_i(\alpha)^2
        =
        \sum_{j\ge 0} p^j W_j.
    \end{align*}
    It is now clear that $0\le \chi(p) \le \chi(1)$.
    Since $\chi(1) = \E \norm{\cA(H_N)}_N^2 \le 1$, this proves part (\ref{itm:cor-fn-props-zeroone}).
    Part (\ref{itm:cor-fn-props-increasing}) follows because $\chi(p)$ is strictly increasing unless $W_j=0$ for all $j\ge 1$, in which case $\chi(p)$ is constant.
    Finally part (\ref{itm:cor-fn-props-sublinear}) follows since $\chi$ is manifestly convex.
\end{proof}

\subsection{Hierarchically Correlated Hamiltonians}
\label{subsec:grand-hamiltonian}

Here we define the hierarchically organized ensemble of correlated Hamiltonians that will play a central role in our proofs of impossibility.
Let $D$ be a nonnegative integer and $\vk = (k_1,\ldots,k_D)$ for positive integers $k_1,\ldots,k_D$. 
For each $0\le d \le D$, let $V_d = [k_1]\times \cdots \times [k_d]$ denote the set of length $d$ sequences with $j$-th element in $[k_j]$. 
The set $V_0$ consists of the empty tuple, which we denote $\emptyset$.
Let $\bbT(\vk)$ denote the depth $D$ tree rooted at $\emptyset$ with depth $d$ vertex set $V_d$, where $u\in V_d$ is the parent of $v\in V_{d+1}$ if $u$ is an initial substring of $v$.
For nodes $u^1,u^2\in \bbT(\vk)$, let 
\[
    u^1 \wedge u^2
    =
    \max \lt\{
        d \in \bbZ_{\ge 0}: 
        \text{$u^1_{d'} = u^2_{d'}$ for all $1\le d' \le d$}
    \rt\},
\]
where the set on the right-hand side always contains $0$ vacuously.
This is the depth of the least common ancestor of $u^1$ and $u^2$.
Let $\bbL(\vk) = V_D$ denote the set of leaves of $\bbT(\vk)$.
When $\vk$ is clear from context, we denote $\bbT(\vk)$ and $\bbL(\vk)$ by $\bbT$ and $\bbL$.
Finally, let $K = |\bbL| = \prod_{d=1}^D k_d$.

Let sequences $\vp = (p_0, p_1, \ldots, p_D)$ and $\vq = (q_0, q_1, \ldots, q_D)$ satisfy
\begin{align*}
    0 = p_0 \le p_1 \le \cdots \le p_D &= 1, \\
    0 \le q_0 < q_1 < \cdots < q_D &= 1.
\end{align*}
The sequence $\vp$ controls the correlation structure of our ensemble of Hamiltonians, while the sequence $\vq$ controls the overlap structure that we will require the inputs to these Hamiltonians to have.

We now construct an ensemble of Hamiltonians $(\HNp{u})_{u\in \bbL}$, such that each $\HNp{u}$ is marginally distributed as $H_N$ and each pair of Hamiltonians $\HNp{u^1}, \HNp{u^2}$ is $p_{u^1\wedge u^2}$-correlated.
For each $u\in \bbT$, including non-leaf nodes, let $\wtH_N^{[u]}$ be an independent copy of $\wtH_N$, generated by \eqref{eq:def-hamiltonian-no-field}.
For each $u\in \bbL$, we construct
\begin{align}
    \notag
    \HNp{u}(\bsig) &= 
    \la \bh, \bsig \ra + 
    \wtHNp{u}(\bsig), \quad \text{where} \\
    \label{eq:def-correlated-hamiltonian}
    \wtHNp{u} &= 
    \sum_{d=1}^D 
    \sqrt{p_d - p_{d-1}}\cdot
    \wtH_N^{[(u_1,\ldots,u_d)]}.
\end{align}
It is clear that this ensemble has the stated properties.
Consider a state space of $K$-tuples 
\[
    \vbsig = (\bsig(u))_{u\in \bbL} \in (\bbR^N)^K.
\]
We define a grand Hamiltonian on this state space by
\[
    \cH_N^{\vk,\vp}(\vbsig)
    \equiv
    \sum_{u\in \bbL} 
    \HNp{u}(\bsig(u)).
\]
We will denote this by $\cH_N$ when $\vk, \vp$ are clear from context.
For states $\vbsig^1, \vbsig^2 \in (\bbR^N)^K$, define the overlap matrix $R = R(\vbsig^1, \vbsig^2) \in \bbR^{K\times K}$ by
\[
    R_{u^1,u^2} = R(\bsig^1(u^1), \bsig^2(u^2))
\]
for all $u^1,u^2\in \bbL$.
We now define an overlap matrix $Q = Q^{\vk, \vq} \in \bbR^{K\times K}$; we will control the maximum energy of $\cH_N$ over inputs $\vbsig$ with approximately this self-overlap.
Let $Q$ have rows and columns indexed by $u^1,u^2\in \bbL$ and entries
\[
    Q_{u^1,u^2} = q_{u^1\wedge u^2}.
\]
Fix a point $\bm\in \bbR^N$ such that $\norm{\bm}_N^2=q_{0}$, which we will later take to be $\bm=\E[\cA(H_N)]$.
For a tolerance $\eta \in (0,1)$, define the band 
\[
    B(\bm, \eta) = 
    \lt\{
        \bsig \in \bbR^N : 
        \lt|R(\bsig, \bm) - q_0\rt| \le \eta
    \rt\}.
\]
Define the sets of points in $S_N^K$ and $\Sigma_N^K$ with self-overlap approximately $Q$ and overlap with $\bm$ approximately $q_0$ by
\begin{align*}
    \cQ^{\Sp}(Q, \bm, \eta) 
    &= 
    \lt\{
        \vbsig \in (S_N\cap B(\bm, \eta))^K : 
        \norm{R(\vbsig, \vbsig) - Q}_\infty \le \eta
    \rt\}, \\
    \cQ^{\Is}(Q, \bm, \eta) 
    &= 
    \lt\{
        \vbsig \in (\Sigma_N\cap B(\bm, \eta))^K : 
        \norm{R(\vbsig, \vbsig) - Q}_\infty \le \eta
    \rt\}.
\end{align*}

Let $\chi$ be a correlation function (recall Proposition~\ref{prop:corelation-fn-properties}).
We say $\vp=(p_0,\ldots,p_D)$ and $\vq=(q_0,\ldots,q_D)$ are \emph{$\chi$-aligned} if the following properties hold for all $0\le d\le D$. 
\begin{itemize}
    \item If $q_d \le \chi(1)$, then $\chi(p_d) = q_d$. 
    \item If $q_d > \chi(1)$, then $p_d = 1$.
\end{itemize}

The following proposition controls the expected maximum energy of the grand Hamiltonian constrained on the sets $\cQ^{\Sp}(Q, \bm, \eta)$ and $\cQ^{\Is}(Q, \bm, \eta)$, and is the main ingredient in our proof of impossibility.
We defer the proof of this proposition to Sections~\ref{sec:interpolation} through \ref{sec:grand-hamiltonian-is}.

\begin{proposition}
    \label{prop:uniform-multi-opt}
    For any mixed even model $(\xi,h)$ and $\eps > 0$, there exists a small constant $\eta_0 \in (0,1)$ and large constants $N_0, K_0 > 0$, dependent only on $\xi, h, \eps$, such that for all $N\ge N_0$ the following holds.
    
    Let $\ALG = \ALG^{\Sp}$ (resp. $\ALG^{\Is}$).
    For any correlation function $\chi$ and vector $\bm\in \bbR^N$ with $\norm{\bm}^2_N = \chi(0)$, there exist $D, \vk, \vp, \vq, \eta$ as above such that $\vp$ and $\vq$ are $\chi$-aligned, $\eta \ge \eta_0$, $K \le K_0$, and
    \[
        \fr{1}{N} 
        \E 
        \max_{\vbsig \in \cQ(\eta)}
        \cH_N(\vbsig)
        \le 
        K (\ALG + \eps),
    \]
    where $\cQ(\eta) = \cQ^{\Sp}(Q, \bm, \eta)$ (resp. $\cQ^{\Is}(Q, \bm, \eta)$).
\end{proposition}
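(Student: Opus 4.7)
The plan is to bound the constrained expected maximum of the grand Hamiltonian $\cH_N$ by a zero-temperature many-replica Guerra--Talagrand interpolation indexed by a Ruelle probability cascade on the tree $\bbT(\vk)$, to show that for uniformly branching trees the resulting multidimensional Parisi-type upper bound collapses to the scalar Parisi functional $\Par$ evaluated at $\kappa\zeta$ for a piecewise constant weight $\kappa$, and finally to use density of such $\kappa\zeta$ in $\cuL$ (resp.\ $\cuK$) to approach the threshold $\ALG$.

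First I would pass to positive temperature by estimating, for $\beta>0$,
\begin{align*}
\frac{1}{N}\E\max_{\vbsig\in \cQ(\eta)}\cH_N(\vbsig)\;\le\;\frac{1}{\beta N}\E\log\int_{\cQ(\eta)} e^{\beta \cH_N(\vbsig)}\,d\vbsig + \frac{1}{\beta N}\log\bigl|\cQ(\eta)\bigr|,
\end{align*}
where $|\cQ(\eta)|$ is the volume under a reference measure on $S_N^K$ (resp. $\Sigma_N^K$), so the entropic correction is $O_{\eta}(K/\beta)$ and is negligible against $K(\ALG+\eps)$ once $\beta$ is large. I would then construct a Guerra--Talagrand interpolation whose endpoints couple $\cH_N$ (at $t=1$) to a decoupled sum over leaves $u\in\bbL$ of one-replica cavity Hamiltonians (at $t=0$) built from a Poisson--Dirichlet cascade whose $D$-level overlap distribution is prescribed by a functional order parameter $\zeta\in\cuL$ (and, in the spherical case, an auxiliary parameter $B$). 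The covariance decomposition \eqref{eq:def-correlated-hamiltonian} aligns the $D$ levels of the branching tree with $D$ cavity levels of $\zeta$: level $d$ contributes the increment $p_d-p_{d-1}$ to the correlation of any pair of replicas whose least common ancestor lies at depth $\ge d$.

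Differentiating the interpolated free energy in $t$ and applying Gaussian integration by parts together with the Parisi PDE \eqref{eq:intro-1dParisiPDEdefn} cancels the on-diagonal contributions; the off-diagonal contributions produce a quadratic form in the overlap matrix $R(\vbsig^1,\vbsig^2)$ whose coefficients vanish precisely on the ultrametric set $R_{u^1,u^2}=q_{u^1\wedge u^2}$, provided $\vp,\vq$ are $\chi$-aligned and the jumps of $\zeta$ are placed at the levels $q_d$. The soft constraints $\norm{R(\vbsig,\vbsig)-Q}_\infty\le\eta$ and $|R(\bsig(u),\bm)-q_0|\le\eta$ then make this quadratic form nonpositive up to an $O(\eta)$ error. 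Evaluating the $t=0$ endpoint via the Ruelle cascade calculation, and subtracting the standard $-\frac{1}{2}\int t\xi''\,\kappa\zeta\,dt$ correction, I expect to obtain an upper bound of exactly $K\,\Par(\kappa\zeta)+o_{\beta}(1)+o_{\eta}(1)$, where $\kappa:[0,1]\to(0,1]$ is a piecewise constant nonincreasing function whose jump at $q_d$ is determined by the branching number $k_d$; the overlap constraint with $\bm$ reproduces the boundary term $\Phi_{\kappa\zeta}(0,h)$ in the Ising case (resp.\ $\frac{h^2}{2B_{\kappa\zeta}(0)}$ in the spherical case) because $\norm{\bm}_N^2=\chi(0)=q_0$. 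Fixing $\zeta_\star\in\cuL$ with $\Par(\zeta_\star)\le\ALG+\eps/2$ (resp.\ an analogous spherical choice) and approximating $\zeta_\star$ in $L^1(\xi''\,dt)$ by $\kappa\zeta$ for uniform branching $k_1=\dots=k_D=k$ large, a fine partition $\vq$ of $[0,1]$, and the unique $\chi$-aligned $\vp$, one gets $\Par(\kappa\zeta)\le\ALG+\eps$, completing the proof with $K=k^D$ bounded by a constant $K_0(\xi,h,\eps)$.

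The main obstacle will be executing the monotonicity step of the interpolation in the presence of the external reference vector $\bm$ and the soft overlap constraints. The cavity field must be recentered at $\bm$ rather than at the constant $\bh$, so that the boundary computation at $t=0$ produces the correct term $\Phi_{\kappa\zeta}(0,h)$ on the band $\{R(\bsig,\bm)\approx q_0\}$, and the error from the $\eta$-softness must be shown uniformly in $\bm$ and in the branching data so that $\eta\ge\eta_0$ and $K\le K_0$ hold independently of $\chi,\bm$. A related technical difficulty is verifying the algebraic collapse of the multidimensional Parisi functional to $K\,\Par(\kappa\zeta)$: this collapse requires uniform branching and is the structural reason fully branching trees are needed for tight hardness, mirroring the converse in Section~\ref{sec:necessity-fully-branching-trees}. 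Finally, degenerate cases of $\chi$ (flat, or constant on an interval) force some $p_d=1$ and hence identical copies of $H_N$ at corresponding levels; these must be handled by a limiting argument but introduce no new conceptual difficulty.
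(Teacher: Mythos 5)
Your proposal matches the paper's strategy in all essentials: pass to positive temperature, run a many-replica Guerra--Talagrand interpolation indexed by Ruelle cascades on the tree, use the soft overlap constraints to make the $t$-derivative nonpositive up to $O(K^2\eta)$, evaluate $t=0$ by the cascade recursion, and then choose the branching data so that $\kappa\zeta$ densely approximates an arbitrary element of $\cuL$ near the Parisi minimizer. The recentering of the cavity field at $\bm$ via an added linear term $\lambda\bm$ with $\lambda=\int\xi''\kappa\zeta$, which you identify as the main obstacle, is exactly the device the paper uses (the Hamiltonian $\cH_{N,\lambda}$ and Lemmas~\ref{lem:spherical-off-center-worse-than-on-center} and~\ref{lem:Ising-shift-bound}).

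Two corrections worth flagging. First, uniform branching is \emph{not} required for the collapse to $K\,\Par(\kappa\zeta)$: the mechanism is the Loewner bound $M(q)\preceq\kappa(q)I_K$ (resp.\ the stochastic-control comparison in Lemma~\ref{lem:ineqising}), which holds because $M(q)$ is a nonnegative symmetric matrix with constant row sums $K\kappa(q)$ for \emph{any} $\vk$; the paper's spherical proof in fact takes $k_1=1$ and $k_{d+1}$ depending on the jumps of the target $\zeta^*$, while only the Ising proof uses uniform $k_d=k_*$. What one actually needs is that $\kappa$ can be made to decrease as fast as desired, which requires rapid but not necessarily uniform branching. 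Second, the $t$-derivative does not become a quadratic form whose coefficients vanish on the ultrametric set: rather, using the function $C(x,y)=\xi(x)-\xi(y)-(x-y)\xi'(y)\ge 0$, the same-replica terms are bounded by $\xi''(1)\eta^2$ via the soft constraint and the cross-replica terms are dropped by nonnegativity of $C$. This gives the $O(K^2\eta)$ error rather than an identity. Neither of these affects the validity of the overall plan.
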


\subsection{Extending a Branching Tree to $S_N$ and $\Sigma_N$}

To account for the possibility that $\cA$ outputs solutions in $B_N$ (resp. $C_N$) not in $S_N$ (resp. $\Sigma_N$), we will show that a branching tree of solutions in $B_N$ (resp. $C_N$) output by $\cA$ can always be extended into a branching tree of solutions in $S_N$ (resp. $\Sigma_N$), with only a small cost to the energies attained.

Consider $\chi$-aligned $\vp, \vq$ as above.
Let $\uD \le D$ be the smallest integer such that $p_{\uD} = 1$.
Define $\uvk = (k_1,\ldots,k_{\uD})$, $\uvp = (p_0, \ldots, p_{\uD})$, and $\uvq = (q_0, \ldots, q_{\uD})$.
Let $\ubbL = V_{\uD}$ denote the nodes of $\bbT$ at depth $\uD$, and let $\uK = |\ubbL| = \prod_{d=1}^{\uD} k_d$.

Consider an analogous state space of $\uK$-tuples 
\[
    \uvbsig = (\ubsig(\uu))_{\uu \in \ubbL} \in (\bbR^N)^{\uK}.
\]
Define $\uQ = \uQ^{\uvk, \uvq} \in \bbR^{\uK \times \uK}$ analogously as the matrix indexed by $\uu^1,\uu^2\in \ubbL$, where
\[
    \uQ_{\uu^1,\uu^2} = q_{\uu^1 \wedge \uu^2} \wedge \chi(1).
\]
Note that because $\vp, \vq$ are $\chi$-aligned, $q_{\uD-1} < \chi(1) \le q_{\uD}$. 
So, the right-hand side is $\chi(1)$ if $\uu^1 \wedge \uu^2 = \uD$ (i.e. $\uu^1=\uu^2$) and $q_{\uu^1\wedge \uu^2}$ otherwise.
The following sets capture the overlap structure of outputs of $\cA$.
\begin{align*}
    \ucQ^{\Sp}(\uQ, \bm, \eta) 
    &= 
    \lt\{
        \uvbsig \in (B_N\cap B(\bm, \eta))^{\uK} : 
        \norm{R(\uvbsig, \uvbsig) - \uQ}_\infty \le \eta
    \rt\}, \\
    \ucQ^{\Is}(\uQ, \bm, \eta) 
    &= 
    \lt\{
        \uvbsig \in (C_N\cap B(\bm, \eta))^{\uK}  : 
        \norm{R(\uvbsig, \uvbsig) - \uQ}_\infty \le \eta
    \rt\}.
\end{align*}
By the construction \eqref{eq:def-correlated-hamiltonian}, for each $\uu \in \ubbL$ the Hamiltonians 
\[
    \lt\{
        \HNp{u} : \text{$u\in \bbL$ is a descendant of $\uu$ in $\bbT$}
    \rt\}
\]
are equal almost surely.
Let $\HNp{\uu}$ denote any representative from this set.

We next define the condition $\Seigen$ which guarantees existence of a suitable ``extension'' $\vbsig$ of $\uvbsig = \lt(\cA(\HNp{\uu})\rt)_{\uu\in\ubbL}$. 
First, given a subset $S\subseteq [N]$, denote by $W_S$ the $|S|$ dimensional subspace spanned by the elementary basis vectors $\{ e_s: s\in S\}$. 
% We set $W_S(x)=W_S\cap x^{\perp}$ so that $\dim(W_S(x))\in \{|S|-1,|S|\}$. 
Below, $\lambda_{j}$ denotes the $j$-th largest eigenvalue and $(\cdot)|_{W_S}$ denotes restriction to the subspace $W_S$ as a bilinear form, or equivalently $A|_{W_S}=P_{W_S}AP_{W_S}$, where $P_{W_S}$ is the projection onto $W_S$.

\begin{definition}
    For constants $\delta$ and $K$, let $\Seigen(\delta,K)$ denote the event that both of the below hold for all $\uu \in \ubbL$.
    \begin{enumerate}
        \item $\lambda_{2K+1}\lt(\nabla^2 \HNp{\uu}(\bx)|_{W_S}\rt)\ge 0$ for all $S\subseteq [N]$ of size $|S|\geq \delta N$.
        \item $\HNp{\uu} \in K_N$, for the $K_N$ given by Proposition~\ref{prop:gradients-bounded}.
    \end{enumerate}
\end{definition}

We will use the following lemma, whose proof is deferred to Subsection~\ref{subsec:proof-of-extra-tree}.

\begin{lemma}\label{lem:extra-tree}
    Fix a model $\xi, h$, constants $\eps,\eta >0$, and $\vk,\vq$ as above. 
    Let $\delta$ be sufficiently small depending on $\xi,h,\eta,\eps$, and assume that $\Seigen(\delta,K)$ holds. 
    For any $\uvbsig\in \ucQ(\eta/2)$, there exists $\vbsig\in \cQ(\eta)$ such that
    \[
        \HNp{\uu}(\bsig(u))
        \ge
        \HNp{\uu}(\ubsig(\uu)) - N\eps
    \]
    whenever $\uu\in\ubbL$ is an ancestor of $u\in\bbL$.
\end{lemma}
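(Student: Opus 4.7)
The key observation is that since $p_\uD = 1$, all Hamiltonians $\HNp{u}$ for $u$ a leaf descendant of $\uu \in \ubbL$ coincide almost surely with $\HNp{\uu}$. The construction therefore decouples into $\uK$ separate ``local extension'' problems, one per $\uu \in \ubbL$: starting from $\ubsig(\uu)$, produce points $\bsig(u) \in S_N$ (resp.\ $\Sigma_N$) for each descendant $u$ of $\uu$ with the correct ultrametric self-overlap structure inside the subtree, such that $\HNp{\uu}(\bsig(u)) \ge \HNp{\uu}(\ubsig(\uu)) - N\eps$. Cross-subtree overlap constraints between descendants of distinct $\uu^1 \ne \uu^2$ follow automatically provided the extension increments are jointly mutually orthogonal and orthogonal to every $\ubsig(\uu')$, since then $R(\bsig(u^1), \bsig(u^2)) = R(\ubsig(\uu^1), \ubsig(\uu^2)) \approx q_{\uu^1 \wedge \uu^2} = q_{u^1 \wedge u^2}$.

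For the spherical case, I would write $\bsig(u) = \ubsig(\uu) + \bv(u)$ and place each $\bv(u)$ inside a subspace $W_\uu \subseteq \bbR^N$ orthogonal to (i) $\mathrm{span}(\ubsig(\uu'))_{\uu' \in \ubbL}$, (ii) $\nabla \HNp{\uu}(\ubsig(\uu))$, (iii) the negative eigenspace of $\nabla^2 \HNp{\uu}(\ubsig(\uu))$, and (iv) the increment subspaces already allocated for other $\uu' \ne \uu$ (built greedily). The condition $\Seigen(\delta,K)$, applied with $S = [N]$, guarantees that the space in (iii) has dimension at most $2K$, so $\dim W_\uu \ge N - O(K)$. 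Inside $W_\uu$ I would decompose ultrametrically $\bv(u) = \sum_{d=\uD}^{D} \bb_{u_{\le d}}$ for a pairwise orthogonal family $\{\bb_v\}$ with $\|\bb_{u_{\le \uD}}\|_N^2 = q_\uD - \chi(1)$ and $\|\bb_{u_{\le d}}\|_N^2 = q_d - q_{d-1}$ for $d > \uD$. A direct computation gives $\|\bv(u)\|_N^2 = 1 - \chi(1)$ and $R(\bv(u^1), \bv(u^2)) = q_{u^1 \wedge u^2} - \chi(1)$ for descendants of the same $\uu$, so that together with (i) and (iv) the full self-overlap of $\vbsig$ matches $Q$ up to $O(\eta)$ errors inherited from $\uvbsig \in \ucQ(\eta/2)$.

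The central technical step is the energy estimate. Taylor's theorem along the segment $\bx_s = \ubsig(\uu) + s\bv(u)$, $s\in[0,1]$, yields
\[
    \HNp{\uu}(\bsig(u)) - \HNp{\uu}(\ubsig(\uu))
    = \int_0^1 (1-s)\,\la \bv(u), \nabla^2 \HNp{\uu}(\bx_s)\, \bv(u)\ra\,ds,
\]
since the linear term vanishes by (ii). The integrand at $s=0$ is nonnegative by (iii), and Proposition~\ref{prop:gradients-bounded} supplies the Hessian-Lipschitz bound $\|\nabla^2 \HNp{\uu}(\bx_s) - \nabla^2 \HNp{\uu}(\ubsig(\uu))\|_{\op} \le C_3 s \|\bv(u)\|_N$. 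A naive application yields an $O(\|\bv(u)\|_N^3)\cdot N = O(N)$ loss, which is insufficient. To push the loss below $N\eps$ I would subdivide the segment into $J$ equal sub-increments and at each step re-project the local direction onto the orthogonal complement of the current gradient and the non-negative eigenspace of the current Hessian (both large by $\Seigen$ applied at every point). The per-step cubic error then scales as $O((\|\bv(u)\|_N/J)^3) \cdot N$ and sums to $O(\|\bv(u)\|_N^3 N / J^2)$, which vanishes for $J$ large; the re-projection perturbations are $O(\|\bv(u)\|_N/J)$ per step and accumulate to deviations in the overlap structure absorbed within the $\eta$ slack. For the Ising case the same procedure produces an intermediate real-valued tree $\hbsig(u) \in C_N$, which is then rounded to $\{-1,1\}^N$ by coordinatewise coupled Bernoulli sampling with $\E[\bsig(u)_i \mid \hbsig(u)] = \hbsig(u)_i$; overlaps concentrate by bounded-differences, and energy changes are controlled by a Lindeberg-type swap against the operator-norm bounds of Proposition~\ref{prop:gradients-bounded}. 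The main obstacle throughout is this subdivision-and-reprojection step for making the cubic Taylor remainder negligible; the remaining geometric, linear-algebraic, and rounding ingredients are standard.
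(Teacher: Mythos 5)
Your overall strategy — growing each $\ubsig(\uu)$ into a subtree by small orthogonal increments chosen so that the gradient term vanishes and the Hessian quadratic form is nonnegative, with re-projection at every step to cope with the moving gradient and Hessian — is genuinely close to what the paper does. The ultrametric increment bookkeeping and the observation that the problem decouples over $\uu\in\ubbL$ are also correct. However, there is a concrete gap in the Ising case that your proposal does not close.

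The paper's $\Seigen(\delta,K)$ requires $\lambda_{2K+1}(\nabla^2\HNp{\uu}(\bx)\vert_{W_S})\ge 0$ for \emph{every} coordinate subset $S$ of size $\ge\delta N$, and you invoke it only with $S=[N]$. The strong form is not a technicality: in the Ising case the iterates must stay inside $C_N=[-1,1]^N$, and the moment a coordinate hits $\pm 1$ all subsequent increments must vanish in that coordinate (this is item 2 of Lemma~\ref{lem:increment}). That forces the search for nonnegative-eigenvalue directions to live inside the coordinate subspace $W_S$ of the still-free coordinates, which is a set $S$ that shrinks as the walk progresses — hence the ``for all $S$ with $|S|\ge\delta N$'' quantifier. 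Your $W_\uu$ is built by orthogonalizing against gradients, the $\le 2K$-dimensional global negative eigenspace, and previously allotted subspaces, but none of that keeps $\ubsig(\uu)+\bv(u)$ inside $C_N$; a direction that looks good for the full Hessian may drive many coordinates out of $[-1,1]$. Without staying near a corner of the cube, the final Bernoulli rounding is not a small perturbation: the rounding error $\norm{\bsig(u)-\hbsig(u)}_N^2$ has expectation $1-\norm{\hbsig(u)}_N^2$, which must itself be forced to be $O(\delta)$ before your ``Lindeberg swap against operator-norm bounds'' can yield an $O(\delta^{1/2})N$ energy change. The paper handles this by continuing the walk (inside the cube, restricted to free coordinates) until $\norm{\bx^u}_N^2\ge 1-\delta$, and only then rounding.

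A secondary issue: you aim for a pre-chosen target $\ubsig(\uu)+\bv(u)$ with $\bv(u)=\sum_d\bb_{u_{\le d}}$, and then propose to correct the straight-line Taylor error by re-projecting the direction at each of $J$ sub-steps. But if each direction is re-chosen, the sub-steps no longer sum to $\bv(u)$, and a naive bound on the accumulated drift is $J\cdot O(\norm{\bv(u)}_N/J)=O(\norm{\bv(u)}_N)$, which is a constant, not $o(1)$. Your assertion that the drift is ``absorbed within the $\eta$ slack'' is not established. The paper sidesteps this entirely: it never fixes a target, it just walks in directions orthogonal to all contemporaneous iterates and to $\bm$ until a norm threshold, so the overlap structure is correct by construction, and the per-step loss $\delta\norm{\by}_2^2$ telescopes via $\norm{\by}_N^2=\norm{\bx+\by}_N^2-\norm{\bx}_N^2$ to a total loss of $N\delta$ with no $J\to\infty$ limit needed. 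I'd recommend adopting the free-walk, telescoping bookkeeping and, for Ising, restricting increments to the coordinate subspace of free coordinates (invoking $\Seigen$ at the actual set $S$ of free coordinates rather than $[N]$).
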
 

\subsection{Completion of the Proof}

We will now finish the proof of Theorem~\ref{thm:main}.
Below we give the proof in the spherical setting; the Ising case follows verbatim up to replacing $B_N$ by $C_N$ and $\ALG^{\Sp}$ by $\ALG^{\Is}$ (since $C_N\subseteq B_N$).

Let $\ALG = \ALG^{\Sp}$.
Let $\chi$ be the correlation function of $\cA$ defined in \eqref{eq:def-correlation-fn} and set $\bm = \E[\cA(H_N)]$. 
Note that $\norm{\bm}_N^2=\chi(0)$ by definition. For small $\eps/2>0$ there exist $N_0, K_0, \eta_0$ and $D, \vk, \vp, \vq, \eta, K$ as in Proposition~\ref{prop:uniform-multi-opt} such that
\begin{equation}
    \label{eq:multi-opt-ev-guarantee}
    \fr{1}{N} 
    \E 
    \max_{\vbsig \in \cQ(\eta)}
    \cH_N(\vbsig) 
    \le 
    K (\ALG + \eps/2).
\end{equation}
For $N \ge N_0$ let
\[
    \alpha_N 
    = 
    \P \lt[
        \fr{1}{N}
        H_N(\cA(H_N))
        \ge 
        \ALG + \eps
    \rt].
\]
For each $\uu\in \ubbL$, let $\ubsig(\uu) = \cA(\HNp{\uu})$, and let $\uvbsig = (\ubsig(\uu))_{\uu\in \ubbL}$. 
We define the following events, where $\delta>0$ is chosen so that Lemma~\ref{lem:extra-tree} holds with parameters $\eps/4,\eta,\vk,\vq$.
In the statement of Theorem~\ref{thm:main}, we take $\lambda = \eta_0/4 \le \eta/4$.

Define the following events.
\begin{align*}
    \Ssolve &= \lt\{
        \fr{1}{N} 
        \HNp{\uu}(\ubsig(\uu)) 
        \ge 
        \ALG + \eps
        \text{~for all~}
        \uu\in \ubbL(\uvk)
    \rt\}, \\
    \Soverlap &= \lt\{
        \uvbsig \in \ucQ(\eta/2)
    \rt\}, \\
    \Seigen &= \lt\{
        \Seigen(\delta,K)
    \rt\}, \\
    \Sogp &= \lt\{
        \fr{1}{N}
        \max_{\vbsig \in \cQ(\eta)} 
        \cH_N(\vbsig) 
        <
        K(\ALG + 3\eps/4)
    \rt\}.
\end{align*}

\begin{proposition}
    \label{prop:s-intersection-empty}
    With parameters as above,
    \[
        \Ssolve \cap \Soverlap\cap \Seigen \cap \Sogp = \emptyset.
    \]
\end{proposition}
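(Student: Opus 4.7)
My plan is to prove Proposition~\ref{prop:s-intersection-empty} by contradiction: I would suppose a realization of the disorder lies in all four events simultaneously and derive an inequality that violates $\Sogp$.

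First, I would record a deterministic consequence of the construction \eqref{eq:def-correlated-hamiltonian}: because $p_{\uD}=1$ and $\vp$ is non-decreasing, we have $p_d - p_{d-1} = 0$ for all $d > \uD$, so $\wtHNp{u}$ depends only on the length-$\uD$ prefix of $u$. Hence for every $u\in\bbL$ with ancestor $\uu\in\ubbL$ at depth $\uD$, one has the exact equality $\HNp{u} = \HNp{\uu}$ (not merely almost sure), which makes the inference uniform over realizations.

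Next, I would choose $\delta > 0$ exactly as in the hypothesis of Lemma~\ref{lem:extra-tree} applied with tolerance $\eps/4$ and parameters $\eta,\vk,\vq$, matching the $\delta$ used to define $\Seigen$. Since $\Soverlap$ gives $\uvbsig\in\ucQ(\eta/2)$ and $\Seigen$ provides the spectral bound, Lemma~\ref{lem:extra-tree} furnishes some $\vbsig\in\cQ(\eta)$ with
\[
    \HNp{\uu}(\bsig(u)) \ge \HNp{\uu}(\ubsig(\uu)) - N\eps/4
\]
whenever $\uu\in\ubbL$ is the ancestor of $u\in\bbL$. Combining with the identity $\HNp{u}=\HNp{\uu}$ and $\Ssolve$ yields
\[
    \HNp{u}(\bsig(u)) \ge N(\ALG + \eps) - N\eps/4 = N(\ALG + 3\eps/4).
\]

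Summing this over all $u\in\bbL$ gives
\[
    \cH_N(\vbsig) = \sum_{u\in\bbL}\HNp{u}(\bsig(u)) \ge KN(\ALG + 3\eps/4),
\]
so $\frac{1}{N}\max_{\vbsig'\in\cQ(\eta)} \cH_N(\vbsig') \ge K(\ALG + 3\eps/4)$, directly contradicting $\Sogp$. There is no real obstacle here once the parameters are chosen compatibly; the only delicate bookkeeping is lining up the tolerance $\eps/4$ used in Lemma~\ref{lem:extra-tree} with the $\eps$ in $\Ssolve$ and the $3\eps/4$ slack in $\Sogp$, and confirming that the $\delta$ appearing in $\Seigen$ is the one demanded by Lemma~\ref{lem:extra-tree}.
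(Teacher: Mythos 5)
Your proof is correct and follows the same route as the paper: apply Lemma~\ref{lem:extra-tree} (with tolerance $\eps/4$ as fixed by the choice of $\delta$) to the outputs guaranteed by $\Ssolve\cap\Soverlap\cap\Seigen$, use $\HNp{u}=\HNp{\uu}$ to transfer the energy bound, and sum over $u\in\bbL$ to contradict $\Sogp$. The extra bookkeeping you spell out (the exact Hamiltonian identity, the tolerance matching) is exactly what the paper's shorter writeup is implicitly relying on.
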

\begin{proof}
    Suppose that the first three events hold. 
    Then $\cA$ outputs $\uvbsig\in\ucQ(\eta/2)$ such that for all $\uu \in \ubbL$,
    \[
        \HNp{\uu}(\ubsig(\uu))
        \ge 
        \ALG+\eps.
    \]
    Lemma~\ref{lem:extra-tree} now implies the existence of $\vbsig\in\cQ(\eta)$ such that for all $\uu \in \ubbL$,
    \[
        \HNp{u}(\bsig(u))
        \ge 
        \ALG+ 3\eps/4.
    \]
    This contradicts $\Sogp$.
\end{proof}

\begin{proposition}
    \label{prop:ogp-prob-bds}
    The following inequalities hold.
    \begin{enumerate}[label=(\alph*), ref=\alph*]
        \item \label{itm:ogp-prob-bd-ssolve} $\P(\Ssolve) \ge \alpha_N^K$. 
        \item \label{itm:ogp-prob-bd-soverlap} $\P(\Soverlap) \ge 1 -K^2 \nu - \fr{2K\nu}{\lambda}$.
        \item \label{itm:ogp-prob-bd-seigen} $\P(\Seigen) \ge 1 - \exp(-cN)$ for $c>0$ depending only on $\xi,h,\eps$.
        \item \label{itm:ogp-prob-bd-sogp} $\P(\Sogp) \ge 1 - 2\exp\lt(-\fr{\eps^2}{32\xi(1)}N\rt)$. 
    \end{enumerate}
\end{proposition}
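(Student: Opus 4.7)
The plan is to prove the four inequalities largely independently, using different probabilistic tools for each sub-event.

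For (\ref{itm:ogp-prob-bd-ssolve}), I would exploit the tree structure to handle correlations among the events $E_\uu := \{\HNp{\uu}(\cA(\HNp{\uu}))/N \ge \ALG + \eps\}$, each of marginal probability $\alpha_N$. Let $\cF_d$ be the $\sigma$-algebra generated by $\{\wtH_N^{[v]} : \text{depth}(v) \le d\}$. Since the only fresh randomness in $\HNp{\uu}$ beyond $\cF_{\uD-1}$ is the i.i.d.\ $\wtH_N^{[\uu]}$, entering with the nonzero weight $\sqrt{p_{\uD} - p_{\uD-1}}$, the events $\{E_\uu\}_{\uu \in \ubbL}$ are conditionally independent given $\cF_{\uD-1}$. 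By sibling symmetry, $\P(\cap_\uu E_\uu \mid \cF_{\uD-1}) = \prod_{v \in V_{\uD-1}} f_v^{k_{\uD}}$ for $f_v := \P(E_\uu \mid \cF_{\uD-1})$ for any child $\uu$ of $v$. Iterating up the tree, each step alternates conditional independence across siblings with Jensen's inequality applied to the convex map $x \mapsto x^{k_d}$ on $[0,1]$, multiplying the exponent by $k_d$ at each depth. Unwinding to depth $0$ gives $\P(\Ssolve) \ge \alpha_N^{\uK} \ge \alpha_N^K$, using $\uK \le K$ and $\alpha_N \in [0,1]$.

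For (\ref{itm:ogp-prob-bd-soverlap}), the pairwise overlap conditions are straightforward: $\chi$-alignment ensures $\E R(\cA(\HNp{\uu^1}), \cA(\HNp{\uu^2})) = \chi(p_{\uu^1 \wedge \uu^2}) = q_{\uu^1 \wedge \uu^2} = \uQ_{\uu^1,\uu^2}$ for $\uu^1 \ne \uu^2$ (since $d < \uD$ forces $p_d < 1$ and hence $q_d \le \chi(1)$) and $\chi(1)$ on the diagonal, so overlap concentration with $\lambda \le \eta/2$ yields the $K^2 \nu$ term by union bound over $\le K^2$ pairs. The band condition $\cA(\HNp{\uu}) \in B(\bm,\eta/2)$ is trickier because overlap concentration compares two independent random copies rather than one random point with the deterministic $\bm = \E \cA(H_N)$. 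The bridge is: $R(\cA(\HNp{\uu}), \bm) = \E_{H'} R(\cA(\HNp{\uu}), \cA(H'))$ for $H'$ an independent copy of $H_N$, and the nonnegative variable $Y := (|R(\cA(\HNp{\uu}), \cA(H')) - q_0| - \lambda)_+ \in [0,2]$ satisfies $\E Y \le 2\nu$ by overlap concentration. Markov's inequality applied to $\E_{H'} Y$ at threshold $\eta/2 - \lambda \ge \eta/4$ then controls per-leaf band failure by $8\nu/\eta \le 2\nu/\lambda$, summing to $2K\nu/\lambda$.

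Part (\ref{itm:ogp-prob-bd-seigen}) follows by combining Proposition~\ref{prop:gradients-bounded} (for the $H_N \in K_N$ sub-event) with standard Gaussian random-matrix rigidity for the restricted-Hessian condition: the spectrum of $\nabla^2 \HNp{\uu}(\bx)$ concentrates with exponential rate around its limiting measure, so the number of eigenvalues below any fixed threshold is $O(1)$ with probability $1 - e^{-cN}$. Cauchy interlacing transfers this count to any restriction to $W_S$ with $|S| \ge \delta N$, uniformity over $\bx$ and $S$ comes from the Lipschitz bound \eqref{eq:gradient-lipschitz} combined with an $\eps$-net discretization, and a union bound over $\uK \le K$ Hamiltonians preserves the exponential decay.

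Finally, (\ref{itm:ogp-prob-bd-sogp}) is Borell--TIS Gaussian concentration applied to the supremum of the centered Gaussian process $\vbsig \mapsto \sum_{u \in \bbL} \wtHNp{u}(\bsig(u))$ over $\vbsig \in \cQ(\eta)$. Its pointwise variance is $\sum_{u^1, u^2 \in \bbL} p_{u^1 \wedge u^2} N \xi(R(\bsig(u^1), \bsig(u^2))) \le K^2 N \xi(1)$, using $p \le 1$ together with $\xi(R) \le \xi(1)$ for even mixtures on $|R| \le 1$. Hence $\max \cH_N/N$ has Borell--TIS variance parameter $\sigma^2 \le K^2 \xi(1)/N$; combined with the mean bound $\E \max \cH_N/N \le K(\ALG + \eps/2)$ from Proposition~\ref{prop:uniform-multi-opt}, a deviation of $K\eps/4$ yields the stated $2\exp(-\eps^2 N / (32 \xi(1)))$ bound. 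The main obstacles throughout are the nested conditional-independence peeling in (\ref{itm:ogp-prob-bd-ssolve}) and the Markov bridge from pairwise overlap concentration to band concentration in (\ref{itm:ogp-prob-bd-soverlap}); the remaining parts reduce to Proposition~\ref{prop:gradients-bounded} and textbook Gaussian-process concentration.
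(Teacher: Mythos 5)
Parts (a), (b), and (d) are correct and follow essentially the same route as the paper. For (a), the paper fixes the path $1^0, 1^1, \dots, 1^{\uD}$ and defines $P_d$ as the success probability for descendants of $1^d$, peeling one level at a time; your version conditions on the full depth-$(\uD-1)$ $\sigma$-algebra and then iterates, which after expanding the nested conditional expectations reduces to the same alternation of sibling conditional independence and Jensen applied to $x\mapsto x^{k_d}$. For (b), your Jensen-plus-Markov bridge from pairwise overlap concentration to the band condition is exactly Lemma~\ref{lem:overlap-with-m}; your threshold bookkeeping applies Markov at $\eta/2 - \lambda = \eta/4 = \lambda$, giving $2\nu/\lambda$, which coincides with the paper's version. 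For (d), you compute the variance directly from the covariance structure while the paper uses Cauchy--Schwarz; both give $K^2 N \xi(1)$, and the rest is Borell--TIS.

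Part (c) is where your reasoning breaks down. The condition to verify is $\lambda_{2K+1}\lt(\nabla^2 \HNp{\uu}(\bx)|_{W_S}\rt)\ge 0$, i.e.\ that the restriction has \emph{at least $2K+1$ nonnegative eigenvalues}, which is a mild condition. Your claim that ``the number of eigenvalues below any fixed threshold is $O(1)$'' is false: at a generic $\bx$ with $\norm{\bx}_N^2 = q < 1$ the Hessian has a shifted semicircle spectrum of width $\sim 2\sqrt{\xi''(q)}$, so the number of negative eigenvalues is $\Theta(N)$, not $O(1)$ (unless the threshold is below the left edge, which gives no information about $\lambda_{2K+1}$). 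Cauchy interlacing also points the wrong way: for an $m$-dimensional restriction one has $\lambda_{2K+1}(\text{restriction}) \ge \lambda_{2K+1+N-m}(\text{full})$, so to conclude $\lambda_{2K+1}(\text{restriction}) \ge 0$ via interlacing you would need the full Hessian to have $N - m + 2K+1$ nonnegative eigenvalues, i.e.\ almost all of them nonnegative, which is false. What actually works (and what \cite[Lemma 2.6]{sellke2020approximate}, the paper's citation, does) is to observe that the restriction of a Gaussian-matrix Hessian to an arbitrary $\ge \delta N$-dimensional axis-aligned subspace is itself a semicircle-type Gaussian matrix of narrower width, hence has $\Omega(\delta N) \ge 2K+1$ nonnegative eigenvalues, and then union bound over the $\binom{N}{\ge\delta N}$ choices of $S$ and a net of points $\bx$, using \eqref{eq:gradient-lipschitz} for the net argument. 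The conclusion of (c) is correct, but the intermediate claim and the interlacing step as you stated them would not prove it.
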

We defer the proof of this proposition to after the proof of Theorem~\ref{thm:main}.

\begin{proof}[Proof of Theorem~\ref{thm:main}]
    Lemma~\ref{prop:s-intersection-empty} implies that $\P(\Ssolve) + \P(\Soverlap) +\P(\Seigen)+\P(\Sogp) \le 3$.
    Because $(K^2+2K)^{1/K}\leq 3$ for any positive integer $K$ and $\lambda<1$,
    \begin{align*}
        \alpha_N 
        &\le \lt(K^2 \nu + \fr{2K\nu}{\lambda}\rt)^{1/K} + 2\exp\lt(-\fr{\eps^2}{32K\xi(1)}N\rt)+e^{-cN/K} \\
        &\le 3\lt(\fr{\nu}{\lambda}\rt)^{1/K} + 2\exp\lt(-\fr{\eps^2}{32K\xi(1)}N\rt)+e^{-cN/K}.
    \end{align*}
    Recall that $K \le K_0$ and $K_0$ is a constant depending only on $\xi,h,\eps$.
    The proof is complete up to choosing an appropriate $c$ in Theorem~\ref{thm:main}.
\end{proof}

\subsection{Proofs of Probability Lower Bounds}

In this section, we will prove Proposition~\ref{prop:ogp-prob-bds}.
As preparation we first give two useful concentration lemmas. 
The first shows that $R(\cA(H_N),\bm)$ concentrates around $\norm{\bm}_N^2$ for overlap concentrated algorithms with $\E[\cA(H_N)]=\bm$.

\begin{lemma}
    \label{lem:overlap-with-m}
    If $\cA = \cA_N$ is $(\lambda, \nu)$ overlap concentrated and $\E [\cA(H_N)] = \bm$, then
    \begin{equation}
        \label{eq:overlap-with-m}
        \P \lt[
            \lt|R(\cA(H_N),\bm) - \norm{\bm}_N^2\rt| > 2\lambda
        \rt]
        \le \fr{2\nu}{\lambda}.
    \end{equation}
\end{lemma}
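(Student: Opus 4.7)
The plan is to reduce the one-replica concentration statement to the overlap concentration property \eqref{eq:overlap-concentrated} at $p=0$, via a conditioning trick on an independent replica. Specifically, let $\HNp{1}, \HNp{2}$ be two $0$-correlated copies of $H_N$ (so they are independent). Since $R(\cdot,\cdot)$ is bilinear, independence gives
\[
\chi(0) \;=\; \E R(\cA(\HNp{1}), \cA(\HNp{2})) \;=\; R(\E\cA(\HNp{1}), \E\cA(\HNp{2})) \;=\; R(\bm,\bm) \;=\; \|\bm\|_N^2,
\]
so I can rephrase the claim as concentration of $R(\cA(H_N),\bm)$ around $\chi(0)$.

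Next I would view $Y := R(\cA(\HNp{1}), \cA(\HNp{2}))$ as a two-replica observable and let $X := R(\cA(H_N),\bm)$. By independence and bilinearity, conditioning on $\HNp{1} = H_N$ gives $\E[Y \mid H_N] = R(\cA(H_N), \E\cA(\HNp{2})) = X$. Also, since $\cA$ maps into $B_N$ (or $C_N\subseteq B_N$), Cauchy--Schwarz yields $|Y|\le 1$ and $|\chi(0)|\le 1$, hence $|Y-\chi(0)|\le 2$ pointwise.

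Now suppose $|X - \chi(0)| > 2\lambda$ for a given realization of $H_N$. Splitting on the event $A = \{|Y-\chi(0)|>\lambda\}$ and using Jensen,
\[
2\lambda \;<\; |X-\chi(0)| \;\le\; \E[\,|Y-\chi(0)| \mid H_N\,] \;\le\; \lambda\,\P[A^c\mid H_N] + 2\,\P[A\mid H_N] \;\le\; \lambda + 2\,\P[A\mid H_N],
\]
so $\P[A\mid H_N] > \lambda/2$. The overlap concentration hypothesis at $p=0$ gives $\P(A)\le \nu$, so by Markov's inequality applied to the nonnegative random variable $\P[A\mid H_N]$,
\[
\P\!\left[|X-\chi(0)|>2\lambda\right] \;\le\; \P\!\left[\P[A\mid H_N] > \lambda/2\right] \;\le\; \frac{\E\,\P[A\mid H_N]}{\lambda/2} \;=\; \frac{2\P(A)}{\lambda} \;\le\; \frac{2\nu}{\lambda},
\]
which is exactly the bound claimed. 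There is no real obstacle here; the only subtlety is noticing that overlap concentration applied with $p=0$, combined with the boundedness of $Y$ in $B_N$, is strong enough to upgrade two-replica concentration to one-replica concentration via a Markov-type averaging argument.
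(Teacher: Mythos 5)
Your proof is correct and follows essentially the same approach as the paper: both exploit that $R(\cA(H_N),\bm)$ is the conditional expectation of the two-replica overlap $R(\cA(H_N^{(1)}),\cA(H_N^{(2)}))$ given one replica (using $\bm=\E[\cA(H_N)]$ and independence at $p=0$), then combine Jensen, the pointwise bound $|Y-\chi(0)|\le 2$, overlap concentration, and a Markov step. The only difference is cosmetic: the paper packages the case split and Markov via the convex function $\psi(t)=(|t-\|\bm\|_N^2|-\lambda)_+$, whereas you unfold the same estimate explicitly through a bound on the conditional probability $\P[A\mid H_N]$.
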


\begin{proof}
    Define the convex function $\psi(t)=(|t-\norm{\bm}_N^2|-\lambda)_+$. 
    Then by Jensen's inequality, for independent Hamiltonians $H_N$ and $H_N'$, 
    \[
        \E\lt[
            \psi\lt(R(\cA(H_N),\bm)\rt)
        \rt]
        \le
        \E\lt[
            \psi\lt(R(\cA(H_N),\cA(H_N'))\rt)
        \rt].
    \]
    Because $\cA$ is $(\lambda, \nu)$ overlap concentrated, $\psi\lt(R(\cA(H_N),\cA(H_N'))\rt) = 0$ with probability at least $1-\nu$.
    Moreover, $\psi\lt(R(\cA(H_N),\cA(H_N'))\rt) \le 2$ pointwise.
    So, 
    \[ 
        \E \lt[ 
            \psi\lt(R(\cA(H_N),\bm)\rt) 
        \rt] 
        \le 
        2\nu.
    \]
    By Markov's inequality,
    \[
        \P \lt[
            \lt|R(\cA(H_N),\bm) - \norm{\bm}_N^2\rt| > 2\lambda
        \rt]
        =
        \P\lt[
            \psi\lt(R(\cA(H_N),\bm)\rt) > \lambda
        \rt]
        \le 
        \fr{2\nu}{\lambda}.
    \]
\end{proof}

The next lemma shows subgaussian concentration for $\fr{1}{N} \max_{\vbsig \in \cQ(\eta)} \cH_N(\vbsig)$. 

\begin{proposition}
    \label{prop:multi-opt-tail}
    The random variable
    \[
        Y = 
        \fr{1}{N}
        \max_{\vbsig \in \cQ(\eta)} 
        \cH_N(\vbsig)
    \]
    satisfies for all $t\ge 0$
    \[
        \P[|Y - \E Y| \ge t]
        \le 
        2\exp\lt( 
            - \fr{Nt^2}{2K^2 \xi(1)} 
        \rt).
    \]
\end{proposition}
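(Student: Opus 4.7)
The plan is to apply standard Gaussian concentration (the Borell--Tsirelson--Ibragimov--Sudakov inequality) to the supremum of the centered Gaussian process $\vbsig\mapsto \wt\cH_N(\vbsig)$ obtained by stripping off the deterministic external-field contribution. Since $\langle\bh,\bsig(u)\rangle$ is non-random, the centered version of $Y$ differs from $Y$ by a constant, so the concentration statement is unchanged when passing to $\wt\cH_N/N=\sum_{u\in\bbL}\wt{H}_N^{(u)}(\bsig(u))/N$.

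The key computation is to bound the maximal variance of this Gaussian process uniformly over $\vbsig\in\cQ(\eta)$. Using that pairs $\wt H_N^{(u^1)},\wt H_N^{(u^2)}$ are $p_{u^1\wedge u^2}$-correlated and that a single spin glass Hamiltonian has covariance $\E\wt{H}_N(\bsig^1)\wt{H}_N(\bsig^2)=N\xi(R(\bsig^1,\bsig^2))$, one gets
\[
\E\bigl[\wt\cH_N(\vbsig)^2\bigr]
=\sum_{u^1,u^2\in\bbL} p_{u^1\wedge u^2}\cdot N\,\xi\bigl(R(\bsig(u^1),\bsig(u^2))\bigr).
\]
Each term is at most $N\xi(1)$ by Cauchy--Schwarz (or directly, since $|\xi(r)|\le \xi(1)$ for $r\in[-1,1]$ and $p_{u^1\wedge u^2}\in[0,1]$), giving
\[
\sup_{\vbsig\in\cQ(\eta)}\Var\!\bigl(\wt\cH_N(\vbsig)/N\bigr)\le \frac{K^2\xi(1)}{N}.
\]

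Applying Borell--TIS to the centered Gaussian process $\{\wt\cH_N(\vbsig)/N:\vbsig\in\cQ(\eta)\}$ with this variance bound yields
\[
\P\bigl[|Y-\E Y|\ge t\bigr]\le 2\exp\!\left(-\frac{Nt^2}{2K^2\xi(1)}\right),
\]
exactly as claimed. Alternatively, one can view $Y$ as a $\sqrt{K^2\xi(1)/N}$-Lipschitz function of the underlying i.i.d.\ standard Gaussian disorder coefficients (since the Lipschitz constant of $\vbsig\mapsto \wt\cH_N(\vbsig)/N$ with respect to the $L^2$ norm on coefficients equals the square root of the maximal variance) and invoke Gaussian Lipschitz concentration.

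There is essentially no substantive obstacle here: the only point requiring care is verifying the $K^2\xi(1)/N$ variance bound, which follows immediately from the covariance structure of the $p$-correlated Hamiltonians and the fact that the sum contains $K^2$ pairs $(u^1,u^2)\in\bbL^2$. The measurability of the supremum and the applicability of Borell--TIS are standard since $\cQ(\eta)$ can be treated as a separable index set (e.g.\ by taking a countable dense subset; the supremum is unchanged by continuity of $\vbsig\mapsto\cH_N(\vbsig)$).
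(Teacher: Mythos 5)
Your proof follows the same route as the paper's: bound the pointwise variance of $\cH_N(\vbsig)$ by $NK^2\xi(1)$ uniformly over $\cQ(\eta)$ and invoke Borell--TIS (or equivalently Gaussian Lipschitz concentration). The only real difference is in how the variance bound is obtained: you use the explicit covariance $\E[\wtHNp{u^1}(\bsig^1)\wtHNp{u^2}(\bsig^2)]=p_{u^1\wedge u^2}N\xi(R(\bsig^1,\bsig^2))$ and that $\xi$ is even, nonnegative, and increasing on $[0,1]$, whereas the paper bounds each cross-covariance directly via Cauchy--Schwarz without unpacking the $p$-correlation structure; both give $NK^2\xi(1)$.

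One intermediate claim is not quite right: you say ``the centered version of $Y$ differs from $Y$ by a constant.'' This is false in general, since the mean $\E\cH_N(\vbsig)=\sum_u\la\bh,\bsig(u)\ra$ depends on $\vbsig$, so $\max_{\vbsig}\cH_N(\vbsig)$ and $\max_{\vbsig}\wt\cH_N(\vbsig)$ are maxima of genuinely different functions and need not differ by a constant. The conclusion is nonetheless correct, and your alternative argument (view $Y$ as a $\sqrt{K^2\xi(1)/N}$-Lipschitz function of the underlying standard Gaussian disorder) is the clean fix; equivalently, as the paper notes, Borell--TIS applies verbatim to noncentered Gaussian processes, with the concentration rate governed only by the variance of the centered part. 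I'd replace the ``differs by a constant'' sentence with one of those two justifications.
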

\begin{proof}
    For any $\vbsig \in S_N^K$, by Cauchy-Schwarz the variance of $\cH_N(\vbsig)$ is at most
    \begin{align*}
        \E \lt[
            \lt(
                \cH_N(\vbsig)-
                \E \cH_N(\vbsig)
            \rt)^2
        \rt]
        &=
        \sum_{u^1,u^2\in\bbL} \E\wtHNp{u^1}(\bsig(u^1))\E\wtHNp{u^2}(\bsig(u^2))\\
        &\le
        K\sum_{u\in \bbL} 
        \E \wtHNp{u}(\bsig(u))^2\\
        &= 
        NK^2\xi(1).
    \end{align*}
    The result now follows from the Borell-TIS inequality
    (\cite{borell1975brunn,cirel1976norms}, or see \cite[Theorem 2]{zeitouni2015gaussian}). Note that both the statement and proof of Borell-TIS hold for noncentered Gaussian processes with no modification.
\end{proof}

We now prove each part of Proposition~\ref{prop:ogp-prob-bds} in turn.

\begin{proof}[Proof of Proposition~\ref{prop:ogp-prob-bds}(\ref{itm:ogp-prob-bd-ssolve})]
    For $0\le d\le \uD$, let $1^d \in \bbT$ denote the node $(1,\ldots,1)$ with $d$ entries (so $1^0=\emptyset$ is the root of $\bbT$), and let $S_d$ be the event that $\HNp{\uu}(\ubsig(\uu)) \ge \ALG + \eps$ for all $\uu\in \ubbL$ descended from the node $1^d$.
    Let $P_d = \P[S_d]$.
    Note that $P_{\uD} = \alpha_N$. 
    We will show $P_0 \ge \alpha_N^{\uK}\ge \alpha_N^{K}$ by showing that for all $1\le d\le \uD$,
    \[
        P_{d-1} \ge P_d^{k_d}.
    \]
    The result will then follow by induction.

    Recall the construction \eqref{eq:def-correlated-hamiltonian} of the Hamiltonians $\HNp{u}$ in terms of i.i.d. Hamiltonians $(\wtH_N^{[u]})_{u\in \bbT}$.
    Conditioned on the Hamiltonians $\Omega_{d-1} = (\wtH_N^{[1^{d'}]})_{0\le d'\le d-1}$, 
    let $f_d(\Omega_{d-1})$ denote the conditional probability of $S_d$.
    Note that
    \[
        P_d = \E f_d(\Omega_{d-1}).
    \]
    By symmetry of the $k_d$ descendant subtrees of the node $1^{d-1}$,
    \[
        P_{d-1}=\E f_d(\Omega_{d-1})^{k_d}.
    \]
    Thus $P_{d-1} \ge P_d^{k_d}$ by Jensen's inequality.
\end{proof}

\begin{proof}[Proof of Proposition~\ref{prop:ogp-prob-bds}(\ref{itm:ogp-prob-bd-soverlap})]
    By definition of $\chi$, $\E R(\ubsig(\uu^1), \ubsig(\uu^2)) = \chi(p_{\uu^1\wedge \uu^2})$. 
    If $\uu^1\wedge \uu^2 < \uD$, then $p_{\uu^1\wedge \uu^2} < 1$. 
    Because $\vp, \vq$ are $\chi$-aligned, we have $\chi(p_{\uu^1\wedge \uu^2}) = q_{\uu^1\wedge \uu^2}$.
    If $\uu^1\wedge \uu^2 = \uD$, then $p_{\uu^1\wedge \uu^2} = 1$, so clearly $\chi(p_{\uu^1\wedge \uu^2}) = \chi(1)$. 
    So, in all cases, $\E R(\ubsig(\uu^1), \ubsig(\uu^2)) = \uQ_{\uu^1, \uu^2}$.
    
    Using \eqref{eq:overlap-concentrated} and a union bound over $\uu^1,\uu^2\in \ubbL$, we have
    \[
        \norm{R(\uvbsig, \uvbsig) - \uQ}_\infty
        \le \lambda
    \]
    with probability at least $1 - K^2 \nu$.
    By Lemma~\ref{lem:overlap-with-m} and a union bound, we have 
    \[
        \lt|R(\cA(\HNp{\uu}), \bm) - \norm{\bm}_N^2\rt| \le 2\lambda
    \]
    for all $\uu \in \ubbL$ with probability at least $1 - \fr{2K\nu}{\lambda}$.
    Recall that $\lambda = \eta_0/4 \le \eta/4$.
    By a final union bound, 
    \[
        \P[\uvbsig \in \ucQ(\eta/2)]
        \ge
        1-K^2 \nu - \fr{2K\nu}{\lambda}.
    \]
\end{proof}

\begin{proof}[Proof of Proposition~\ref{prop:ogp-prob-bds}(\ref{itm:ogp-prob-bd-seigen})]

We focus on a fixed $\uu\in\ubbL$. The requirements $H_N^{(u)}\in K_N$ follow from Proposition~\ref{prop:gradients-bounded}. The uniform eigenvalue lower bound follows by union bounding over subspaces $S$ and a net of points $\bx$. In fact it follows from exactly the same proof as \cite[Lemma 2.6]{sellke2020approximate} up to replacing each appearance of an eigenvalue $\lambda_i$ to $\lambda_{K+i}$.

\end{proof}

\begin{proof}[Proof of Proposition~\ref{prop:ogp-prob-bds}(\ref{itm:ogp-prob-bd-sogp})]
    By \eqref{eq:multi-opt-ev-guarantee} and Proposition~\ref{prop:multi-opt-tail} with $t = K\eps/4$,
    \begin{align*}
        \P \lt[
            \fr{1}{N}
            \max_{\vbsig \in \cQ(\eta)} 
            \cH_N(\vbsig)
            \ge K (\ALG + 3\eps/4)
        \rt]
        &\le
        \P \lt[
            \fr{1}{N}
            \max_{\vbsig \in \cQ(\eta)} 
            \cH_N(\vbsig)
            - 
            \fr{1}{N}
            \E 
            \max_{\vbsig \in \cQ(\eta)} 
            \cH_N(\vbsig)
            \ge \fr{K\eps}{4}
        \rt] \\
        &\le 
        2\exp\lt( 
            - \fr{\eps^2}{32 \xi(1)} N
        \rt).
    \end{align*}
\end{proof}

\subsection{Proof of Lemma~\ref{lem:extra-tree}}
\label{subsec:proof-of-extra-tree}

The spherical case of Lemma~\ref{lem:extra-tree} follows from \cite[Remark 6]{subag2018following} and does not require any of the axis-aligned subspace conditions. We therefore focus on the Ising case, which is a slight extension of the main result of \cite{sellke2020approximate}.

\begin{lemma}

\label{lem:increment}

Suppose $\Seigen(\delta,K)$ holds. Then for any $\bx\in [-1,1]^N$ with $||\bx||_N^2\leq 1-\delta$, any $u\in\bbL$ and any subspace $W\subseteq\R^N$ of dimension $\dim(W) \geq N-K-1$, there are mutually orthogonal vectors $\by^1,\dots,\by^K\in W\cap\bx^{\perp}$ such that for each $i\in[K]$ the following hold where $C_3$ is as in Proposition~\ref{prop:gradients-bounded}.
\begin{enumerate}
    \item $\bx+\by^i\in [-1,1]^N$.
    \label{it:in-cube}
    \item If $x_j\in \{-1,1\}$ then $y^i_j=0$.
    \label{it:orthogonal}
    \item $H_N^{(u)}(\bx+\by^i)-H_N^{(u)}(\bx)\geq 
        -\delta \norm{\by^i}_2^2.$
    \label{it:energy-gain}
    \item $\norm{\by^i}_N\leq \frac{\delta}{10C_3}.$
    \label{it:short}
    \item If $\norm{\bx}_N^2<q_d$ for some $1\leq d\leq D$, then $\norm{\bx+\by^i}_N^2\leq q_d$.
    \label{it:no-cross}
    \item At least one of the following three events holds.
    \label{it:3-events}
    \begin{enumerate}
        \item $\norm{\by^i}_N=\frac{\delta}{10C_3}$.
        \item $\bx+\by^i$ has strictly more $\pm 1$-valued coordinates than $\bx$.
        \item $||\bx||_N^2<q_d$ and $||\bx+\by^i||_N^2= q_d$ for some $1\leq d\leq D$.
    \end{enumerate}
       
\end{enumerate} 
\end{lemma}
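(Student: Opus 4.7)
The plan is to construct $\by^1,\ldots,\by^K$ as orthogonal vectors in a subspace on which the Hessian of $\HNp{u}$ is positive semidefinite, aligning $\by^1$ with the projected gradient so that the linear Taylor term is nonnegative along $\by^1$ and vanishes along the rest, and then setting each length via a stopping rule that enforces the geometric conditions.

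First, set $S=\{j\in[N]:x_j\in(-1,1)\}$, so that condition \ref{it:orthogonal} is automatic whenever $\by^i\in W_S$. Since $\|\bx\|_N^2\le 1-\delta$ gives $\sum_j(1-x_j^2)\ge \delta N$ and $1-x_j^2=0$ off $S$, we have $|S|\ge \delta N$. By $\Seigen(\delta,K)$, $\lambda_{2K+1}\bigl(\nabla^2\HNp{u}(\bx)|_{W_S}\bigr)\ge 0$, so there is a subspace $V_+\subseteq W_S$ of dimension at least $|S|-2K$ on which $\nabla^2\HNp{u}(\bx)$ is positive semidefinite. Let $V=V_+\cap W\cap \bx^\perp$; by dimension counting $\dim V\ge |S|-3K-2\ge \delta N/2$ for $N$ sufficiently large. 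Let $\bp$ be the orthogonal projection of $\nabla\HNp{u}(\bx)$ onto $V$, and choose orthonormal $\bv^1,\ldots,\bv^K\in V$ with $\bv^1\propto \bp$ when $\bp\ne \bzero$. Define $\by^i=t_i\bv^i$, where $t_i\ge 0$ is determined by the stopping rule: starting from $t_i=0$, increase $t_i$ until the first of the following events occurs: (a) $\|\by^i\|_N=\delta/(10C_3)$; (b) $\bx+\by^i$ has strictly more $\pm 1$-valued coordinates than $\bx$; (c) $\|\bx+\by^i\|_N^2=q_d$ for some $d$ with $\|\bx\|_N^2<q_d$. Since $\bv^i\in W_S\cap \bx^\perp$ has unit norm, $\|\bx+\by^i\|_N^2=\|\bx\|_N^2+t_i^2/N$ grows continuously and (a) triggers at $t_i=\sqrt{N}\delta/(10C_3)$ at the latest, so $t_i$ is well-defined. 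This construction immediately yields properties \ref{it:in-cube}, \ref{it:orthogonal}, \ref{it:short}, \ref{it:no-cross}, \ref{it:3-events}, and mutual orthogonality of the $\by^i$.

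To verify the energy bound \ref{it:energy-gain}, Taylor expand
\[
    \HNp{u}(\bx+\by^i)-\HNp{u}(\bx)=\langle\nabla\HNp{u}(\bx),\by^i\rangle+\tfrac12(\by^i)^\top \nabla^2\HNp{u}(\bz_i)\by^i
\]
for some $\bz_i$ on the segment from $\bx$ to $\bx+\by^i$, and split the Hessian term as $\tfrac12(\by^i)^\top \nabla^2\HNp{u}(\bx)\by^i\ge 0$ (since $\by^i\in V_+$) plus a remainder. Since $\HNp{u}\in K_N$, Proposition~\ref{prop:gradients-bounded} bounds the remainder in absolute value by $\tfrac{C_3}{2}\|\by^i\|_N\|\by^i\|_2^2\le \tfrac{\delta}{20}\|\by^i\|_2^2$ using the step-length cap $\|\by^i\|_N\le \delta/(10C_3)$. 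For $i=1$, $\langle\nabla\HNp{u}(\bx),\by^1\rangle=t_1\|\bp\|_2\ge 0$; for $i\ge 2$, $\bv^i\perp \bp$ forces $\langle\nabla\HNp{u}(\bx),\by^i\rangle=\langle\bp,\by^i\rangle=0$. Combining, $\HNp{u}(\bx+\by^i)-\HNp{u}(\bx)\ge -\tfrac{\delta}{20}\|\by^i\|_2^2\ge -\delta\|\by^i\|_2^2$.

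The main obstacle is that $\nabla^2\HNp{u}(\bx)$ is not positive semidefinite: only its restriction to an $(|S|-2K)$-dimensional subspace $V_+\subseteq W_S$ is. Confining $\by^i$ to $V_+$ neutralizes the negative Hessian directions, but still leaves open a potentially large negative linear term $\langle\nabla\HNp{u}(\bx),\by^i\rangle$. The gradient-alignment trick resolves this: putting $\by^1$ along $\bp$ makes its linear term nonnegative, and taking $\by^2,\ldots,\by^K$ orthogonal to $\bp$ annihilates theirs. This is precisely why the hypothesis uses $\lambda_{2K+1}\ge 0$ rather than $\lambda_{K+1}\ge 0$: one loses $K$ further dimensions of $V_+$ to the orthogonality requirement on top of the $K+1$ lost to intersecting with $W$ and the $1$ lost to $\bx^\perp$, while still leaving $|S|-3K-2\gg K$ dimensions available in $V$.
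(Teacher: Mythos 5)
Your proof is correct and follows the same overall skeleton as the paper's, but it handles the two technical sub-issues differently, in ways worth noting.

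The paper's proof gets the $K$ orthogonal vectors by applying Cauchy interlacing directly to the restricted operator: since $W$ has codimension at most $K+1$ in $\bbR^N$, $\lambda_K(\nabla^2 H_N^{(u)}(\bx)|_{W_S\cap W}) \ge \lambda_{2K+1}(\nabla^2 H_N^{(u)}(\bx)|_{W_S}) \ge 0$, and one takes $\by^1,\dots,\by^K$ to be the corresponding eigenvectors of the restricted Hessian. These are automatically orthogonal and each has nonnegative quadratic form. The sign of the linear term is then fixed by replacing $\by^i$ with $-\by^i$ if needed, which is harmless because the quadratic form and orthogonality are sign-invariant. Your route instead first carves out a PSD subspace $V_+\subseteq W_S$ of dimension $\ge|S|-2K$, intersects it with $W\cap\bx^{\perp}$, and handles the linear term with the gradient-projection trick: align $\bv^1$ with the projection $\bp$ of $\nabla H_N^{(u)}(\bx)$ onto $V$ and put the remaining $\bv^i$ orthogonal to $\bp$. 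This works, but the sign-flip observation is simpler and does not sacrifice a dimension to the orthogonality-against-$\bp$ constraint (so it needs slightly less from $\Seigen$). Both are minor variations on the same theme.

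One place where your argument is actually more careful than the paper's as written: the paper's proof text never ensures $\by^i\in\bx^{\perp}$; it relies on the fact that in the application (Lemma~\ref{lem:extra-tree}) the subspace $W$ is constructed to already contain $\bx^{\perp}$ inside it (since $\bx^{m,v}\in W^{\perp}$), so $W\cap\bx^{\perp}=W$ there. You enforce $\bx^{\perp}$ directly via the intersection, which makes the lemma's stated conclusion hold for arbitrary $W$ rather than only for the $W$ that arise downstream. The energy estimate itself — second-order Taylor with the quadratic term split into the nonnegative part at $\bx$ plus a remainder controlled by $C_3\|\by^i\|_N\le\delta/10$ — is the same in both proofs up to cosmetic differences.
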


\begin{proof}

By the Markov inequality, $\bx$ has a set $S$ of at least $(1-||\bx||_N^2) N$ coordinates not in $\{-1,1\}$. 
$\Seigen(\delta, K)$ and the Cauchy interlacing inequality imply
\[
    \lambda_K\lt(\nabla^2H_N^{(u)}(\bx)|_{W_S\cap W}\rt)\geq \lambda_{2K+1}\lt(\nabla^2H_N^{(u)}(\bx)|_{W_S}\rt)\geq 0.
\]

Let $\by^1,\dots,\by^K\in W_S(\bx)\cap W$ be a corresponding choice of orthogonal eigenvectors, each satisfying
\[
    \lt\langle \by^i,\nabla^2 H_N^{(u)}(\bx)\by^i\rt\rangle\geq 0.
\] 
% Since $\by^i\in W_S\subseteq\mathbb R^N$ for each $i\in[K]$, we may by slight abuse of notation treat each $\by^i$ as a vector in $\mathbb R^N$. 
Since $\by^i$ and $-\by^i$ play symmetric roles we may assume without loss of generality that $\langle\nabla H_N^{(u)}(\by),\by^i\rangle\geq 0$. Replacing $\by^i$ by $t\by^i$ for suitable $t\in[0,1]$ if needed, we may ensure that Items~\ref{it:in-cube}, \ref{it:orthogonal}, \ref{it:short}, \ref{it:no-cross}, and \ref{it:3-events} above hold.

Since $\Seigen(\delta,K)$ implies that $\norm{\nabla^3 H_N^{(u)}}_{\op}$ is uniformly bounded by $C_3$, it follows that along the line segment $\bx+[0,1] \by^i$ the Hessian of $H_N^{(u)}$ varies in operator norm by at most $\frac{\delta}{5}$. This combined with $\langle\nabla H_N^{(u)}(\bx),\by^i\rangle\geq 0$ implies
\[
    H_N^{(u)}(\bx+\by^i)\geq H_N^{(u)}(\bx)-\delta \norm{\by^i}_2^2. 
\]
This completes the proof. 
\end{proof}

\begin{proof}[Proof of Lemma~\ref{lem:extra-tree}]

Take 
\[
    \delta< \fr{\min(\eps,\eta,1-q_{D-1})^2}{16(C_1+C_3+1)}
\]
sufficiently small, where $C_1, C_3$ are given by Proposition~\ref{prop:gradients-bounded}. 
Enumerate $\uu^1,\dots,\uu^{\uK}\in\ubbL$. Assume the points $\bsig(u)$ for descendants $u\in\bbL$ of $\uu^1,\dots,\uu^{j-1}$ have already been chosen and satisfy the conclusions of Lemma~\ref{lem:extra-tree}. We show how to define the points $\bsig(u)$ for $u$ a descendant of $\uu^j$. 

% Let $L_j\subseteq \ubbL\cup\bbL$ consist of $\uu^{j},\uu^{j+1},\dots,\uu^{\uK}$ together with all descendants of $\uu^i$ for $i<j$. Then $|L_j|\leq K$. Define the subspace $W_j$ by its orthogonal complement
% \[
%     W_j^{\perp}=\mathrm{span}\lt(\bm,\bsig(u)_{u\in L_j},\ubsig(\uu)_{\uu\in\ubbL}\rt)
% \]
% so that $\dim(W_j)\geq N-2K-1$.

From the starting point $\bx^{0,\uu^j}=\ubsig(\uu^j)$, we produce iterates $\bx^{i,v}$ for $i\in \bbN$ and $v\in \bbT$ a descendant of $\uu^j$, similarly to \cite{subag2018following} and \cite[Proof of Theorem 1]{sellke2020approximate}.
First let $d_0=d_0(\uu^j)\in [D]$ be such that $||\bx^{0,\uu^j}||_N^2\in [q_{d_0-1},q_{d_0})$, and set $\bx^{0,v}=\bx^{0,\uu^j}$ for all depth $d_0$ descendants $v$ of $\uu^j$ if $d_0>\uD$. 

Given a point $\bx^{m,v}$ with $v$ a descendant of $\uu^j$, suppose that $||\bx^{m,v}||_N^2\in (q_{|v|-1},q_{|v|}\wedge (1-\delta))$. Then take the subspace $W^{\perp}$ (which changes from iteration to iteration) to be the span of $\bm$ as well as all currently defined leaves of the exploration tree (including $\bx^{m,v}$ itself). Hence $\dim(W^{\perp})\leq K+1$ and so $\dim(W)\geq N-K-1$. (The resulting exploration tree can be constructed in arbitrary order; at any time it will have at most $K$ leaves.) 

Then there exists $\by^{m,v}$ satisfying the properties of Lemma~\ref{lem:increment} with subspace $W$ and Hamiltonian $H^{(\uu^j)}_N$. 
We update 
\[
  \bx^{m+1,v}=\bx^{m,v}+\by^{m,v},\quad v\in\bbT.
\]
% If $||\bx^{m,v}||_N^2=q_{\uD}$ we also do this.
However if $||\bx^{m,v}||_N^2=q_{|v|}$, then we let $v^1,\dots,v^{k_{d+1}}$ be the children of $v$ in $\bbT$ and generate $\by^{m,v^1},\dots\by^{m,v^{k_{d+1}}}$ again using Lemma~\ref{lem:increment}. We then define
\[
  \bx^{m+1,v^j}=\bx^{m,v}+\by^{m,v^j},\quad j\in [k_{d+1}].
\]
Continuing in this way, we eventually reach points $\bx^{m+1,u}$ with $||\bx^{m+1,u}||_N^2\geq (1-\delta)$ for each $u\in \bbL$; indeed the last condition of Lemma~\ref{lem:increment} ensures that this eventually occurs for each $u\in\bbL$. We set $\bx^u=\bx^{m+1,u}$. Observe that by orthogonality of $\bx^{m,v}$ and $\by^{m,v}$,
\begin{align*}
    H_N^{(u)}(\bx^{m+1,v})&\geq H_N^{(u)}(\bx^{m,v})-N\delta \norm{\by^{m,v}}_N^2\\
    &\geq H_N^{(u)}(\bx^{m,v})-N\delta \cdot \lt(\norm{\bx^{m+1,v}}_N^2-\norm{\bx^{m,v}}_N^2\rt).
\end{align*}
It follows by telescoping that (recall $\uu^j\in\ubbL$ is an ancestor of $u\in\bbL$),
\[
    H_N^{(u)}(\bx^u)\geq H_N^{(u)}(\bx^{\uu^j})-N\delta\geq H_N^{(u)}(\bx^{\uu^j})-N\eps/2.
\]
Since every update above is made orthogonally to all contemporaneous iterates, it is not difficult to see that the final iterates $(\bx^u)_{u\in \bbL}$ satisfy the following.
\begin{itemize}
    \item $R(\bx^{u},\bx^{u})\geq 1-\delta \geq q_{u\wedge u}-\fr{\eta}{2}$.
    \item If $u^1\neq u^2$ are both descendants of $\uu^j\in\ubbL$ and $u^1\wedge u^2 < d_0(\uu^j)$, then
    \[
        R(\bx^{u^1},\bx^{u^2})= R(\bx^{\uu^j},\bx^{\uu^j})\leq q_{\uD}+\fr{\eta}{2}\leq q_{u^1\wedge u^2}+\fr{\eta}{2}.
    \]
    and
    \[
        R(\bx^{u^1},\bx^{u^2})\geq q_{d_0-1}\geq q_{u^1\wedge u^2},
    \]
    hence $\lt|R(\bx^{u^1},\bx^{u^2})-q_{u^1\wedge u^2}\rt|\leq \eta/2$.
    \item Otherwise, $R(\bx^{u^1},\bx^{u^2})=q_{u^1\wedge u^2}$.
\end{itemize}
Moreover all updates were also orthogonal to $\bm$, so $|R(\bm,\bx^u)|\leq \eta/2$ for all $u\in\bbL$.

Finally, to produce outputs in $\Sigma_N$, for each $u\in\bbL$ and $i\in [N]$ we independently round the coordinate $x^u_i$ at random to $\sigma(u)_i\in \{-1,1\}$ so that $\E[\bsig(u)]=\bx^u$. It is not difficult to see that 
\[
    \P[|R(\bx^{u^1},\bx^{u^2})-R(\bsig(u^1),\bsig(u^2))|\geq \delta]\leq e^{-c(\delta)N}
\]
for each $u^1,u^2\in \bbL$, and similarly for inner products with $\bm$. We conclude that $\vbsig\in \cQ(\eta)$ holds with probability $1-e^{-c(\delta)N}$ (since $\delta\leq \eta/2$). Similarly $\norm{\bsig(u)-\bx^u}_2^2$ is an independent sum of $N$ terms each at most $1$ and has expectation at most $\delta$. It follows that
\[
    \P[\norm{\bsig(u)-\bx^u}_N\geq 2\delta^{1/2}]\leq e^{-c(\delta)N}.
\]
Now using $\Seigen$, for every $(\uu,u)\in \ubbL\times \bbL$ with $\uu$ an ancestor of $u$,
\begin{align*}
    H_N^{(u)}(\bsig(u))
    &\geq H_N^{(u)}(\bx^u)-2C_1\delta^{1/2}N\\
    & \geq H_N^{(u)}(\bx^u)-N\eps/2\\
    &\geq H_N^{(u)}(\bx^{\uu})-N\eps
\end{align*}
holds with probability $1-e^{-c(\delta)N}$. In particular, the above events hold simultaneously over all $(\uu,u)$ with probability at least $\fr12$ over the random rounding step. Hence there exists some $\vbsig$ satisfying all desired conditions. This concludes the proof.
\end{proof}

\subsection{A Different Class of Algorithms Capturing The Approach of Subag}
\label{subsec:subag-bopg-2}

The optimization algorithm of \cite{subag2018following} in the spherical setting can be summarized as follows. Starting from any $\bx^1\in B_N$ with $\norm{\bx^1}_N^2=\delta$, repeatedly compute the maximum-eigenvalue unit eigenvector $\bv^i\in \bbR^N$ of $P_{(\bx^i)^{\perp}}\nabla^2 H_N(\bx^i)P_{(\bx^i)^{\perp}}$ (the Hessian of $H_N$ at $\bx^i$ restricted to the orthogonal complement of $\bx^i$). Then, set
\begin{equation}\label{eq:subag-update}
    \bx^{i+1}=\bx^i + \bv^i\sqrt{\delta N}
\end{equation}
where the sign of $\bv^i$ is chosen depending on the gradient $\nabla H_N(\bx^i)$. By construction, $\norm{\bx^i}_N^2=i\delta$, so if $\delta^{-1}=m\in\bbN$ then $\bx^m\in S_N$. By uniformly lower bounding the maximum eigenvalue of the Hessians, \cite{subag2018following} showed that this algorithm obtains energy at least $(\ALG^{\Sp}+o_{\delta}(1))N$ as $\delta\to 0$. Because the maximum eigenvalue is a discontinuous operation, our results do not apply to Subag's algorithm.

We consider the following variant. At each $\bx^i$, let the subspace $W(\bx^i)$ be the span of the top $\lfloor \delta N\rfloor$ eigenvectors of $P_{(\bx^i)^{\perp}}\nabla^2 H_N(\bx^i)P_{(\bx^i)^{\perp}}$. Next, choose $\bv^i$ \emph{uniformly at random} from the unit sphere of $W(\bx^i)$ and update using \eqref{eq:subag-update}. This modified algorithm obeys the same guarantees as that of \cite{subag2018following} by exactly the same proof.

More generally, we define the class of $\delta$-\emph{subspace random walk} algorithms for $\delta>0$ with $\delta^{-1}=m\in\bbN$, only in the spherical setting for convenience, as follows. Given $H_N$, let $W(\bx^i)\subseteq \bbR^N$ be an arbitrary (measurable in $(H_N,\bx)$) subspace of dimension $\lfloor \delta N\rfloor$. Starting from arbitrary $\bx^1\in B_N$ with $\norm{\bx_1}_N^2=\delta$, repeatedly choose a uniformly random unit vector $\bv^i\in W(\bx^i)$ and define $\bx^{i+1}$ via \eqref{eq:subag-update}, leading to the output $\bsig=\bx^m$. Note that in contrast to elsewhere in the paper, here the output $\bx^{i+1}$ is random even given $H_N$, i.e. $\bx^{i+1}=\cA(H_N,\omega)$ for some independent random variable $\omega$. As we now outline, for $\delta\leq \delta_0(\eps)$ sufficiently small depending on $\eps$, no $\delta$-subspace random walk algorithm can achieve energy than $\ALG^{\Sp}+\eps$ with non-negligible probability.

Fixing $H_N$ and $\bx^1$, for any $j\leq m$ we may generate coupled outputs $\bsig^1,\bsig^2$ as follows. First use shared iterates $\bx^{i,1}=\bx^{i,2}=\bx^i$ for $i\leq j$ and then proceed via 
\[
    \bx^{i+1,\ell}=\bx^{i,\ell}+\bv^{i,\ell}\sqrt{\delta N},\quad \ell\in \{1,2\}
\]
for independent update sequences $(\bv^{j,1},\dots,\bv^{m-1,1})$ and $(\bv^{j,2},\dots,\bv^{m-1,2})$. Finally output $\bsig^{\ell}=\bx^{m,\ell}$. It is not difficult to see that for $N$ sufficiently large,
\[
    \mathbb P\lt[\lt|R(\bsig^1,\bsig^2)-j\delta\rt|>\eta/2 \rt]\leq e^{-cN}
\]
for some $c=c(\delta,\eta)$ thanks to the random directions of the updates $\bv^{i,\ell}$. With $\bbL$ as in the earlier part of this section, we can now construct a branching tree of outputs $\bsig(u)$ for $u\in\bbL$. As $\delta\to 0$, for appropriate $j_d=\lfloor q_d\delta^{-1}\rfloor$, the solution configuration $\vbsig$ hence constructed satisfies
\[
    \mathbb P[\vbsig\in \cQ(\eta)]\leq e^{-cN}
\]
with $\bm$ the zero vector.
Because we consider a single Hamiltonian $H_N$, we use Proposition~\ref{prop:uniform-multi-opt} with $\chi(p)\to 0$ for all $p<1$. Since the statement is uniform in $\chi$, this does not present any difficulties (we are essentially ``defining'' $\vp=1^D$ to be $\chi$-aligned with arbitrary $\vq$). Mimicking the proofs earlier in this section (including the argument in the proof of Proposition~\ref{prop:ogp-prob-bds}(\ref{itm:ogp-prob-bd-ssolve}) which now uses Jensen's inequality on the randomness of $\cA$), we obtain the following result.

\begin{theorem}
    Consider a mixed even Hamiltonian $H_N$ with model $(\xi,h)$. 
    For any $\eps>0$ there are $\delta_0,c,N_0>0$ depending only on $\xi,h, \eps$ such that the following holds for any $N\geq N_0$ and $\delta<\delta_0$.
    For any $\delta$-subspace random walk algorithm $\cA$,
    \[
        \P \lt[
            \fr{1}{N} 
            H_N\lt(\cA(H_N,\omega)\rt)
            \ge
            \ALG^{\Sp} + \eps
        \rt]
        \le
        \exp(-cN).
    \]
\end{theorem}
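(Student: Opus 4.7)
My plan is to adapt the argument of Section~\ref{sec:main-proof}, but since only a single Hamiltonian $H_N$ is available, I would generate the required branching constellation of solutions using the algorithm's internal randomness $\omega$ instead of correlated copies of $H_N$.

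First I would apply Proposition~\ref{prop:uniform-multi-opt} with the constant correlation function $\chi\equiv 0$ and $\bm=\bzero$ (legal since $\norm{\bm}_N^2=0=\chi(0)$). Because $\chi(1)=0<q_d$ for every $d\geq 1$, the $\chi$-alignment condition forces $p_d=1$ throughout, so all ``correlated'' Hamiltonians $\HNp{u}$ collapse to the single Hamiltonian $H_N$ and the grand Hamiltonian becomes
\[
\cH_N(\vbsig)=\sum_{u\in\bbL}H_N(\bsig(u)).
\]
Proposition~\ref{prop:uniform-multi-opt} then yields parameters $D,\vk,\vq,\eta,K$, depending only on $\xi,h,\eps$, such that
\[
\fr{1}{N}\,\E\max_{\vbsig\in\cQ^{\Sp}(Q,\bzero,\eta)}\cH_N(\vbsig)\leq K(\ALG^{\Sp}+\eps/2),
\]
and the Borell--TIS bound in Proposition~\ref{prop:multi-opt-tail} upgrades this to subgaussian concentration of the maximum.

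Next, I would choose $\delta<\delta_0(\eps,\xi,h)$ small enough that $j_d:=\lfloor q_d/\delta\rfloor$ satisfies $|j_d\delta-q_d|\leq \eta/8$ for each $d$. For each leaf $u\in\bbL$, set $\bsig(u)=\cA(H_N,\omega^u)$ by coupling the random update sequences so that two leaves $u^1,u^2$ with $u^1\wedge u^2=d$ share the first $j_d$ random directions $\bv^1,\ldots,\bv^{j_d}$, and afterwards use conditionally independent uniform unit vectors in their respective adaptive subspaces $W(\bx^{i,u^\ell})$. Each marginal $\omega^u$ has the original law, so $\bsig(u)$ is distributed as $\cA(H_N,\omega)$. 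On the event $\Seigen(\delta,K)$ (which holds with probability $1-e^{-cN}$), the walks are well-defined and $\norm{\bsig(u)}_N^2=1$ exactly. The critical technical step is overlap concentration: conditional on the shared prefix $\bx^{j_d}$ and the algorithmic history, symmetry of the uniform distribution on each sphere gives $\E[\bv^{i,u}\mid\text{history}]=\bzero$, hence $\E[\bsig(u^\ell)\mid\bx^{j_d}]=\bx^{j_d}$ and $\E[R(\bsig(u^1),\bsig(u^2))\mid\bx^{j_d}]=\norm{\bx^{j_d}}_N^2=j_d\delta$. Concentration to within $\eta/2$ then follows from a Doob-martingale argument: each inner product $\la\bv^{i,u},\tilde\bsig\ra$ with a fixed vector $\tilde\bsig$ of norm $\sqrt{N}$ is conditionally subgaussian with parameter $O(1/\sqrt{\delta})$ by L\'evy's concentration on the $\lfloor \delta N\rfloor$-dimensional sphere; summing $O(1/\delta)$ such conditionally subgaussian increments via Azuma gives deviations of order $\eta N$ with probability at most $e^{-c(\delta,\eta)N}$. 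A union bound over the $O(K^2)$ leaf pairs (and over the overlaps with $\bm=\bzero$, handled identically) then yields $\vbsig\in\cQ^{\Sp}(Q,\bzero,\eta)$ with probability $1-e^{-c'N}$.

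To conclude, let $\alpha_N$ denote the single-run success probability of $\cA$. A level-by-level Jensen argument analogous to Proposition~\ref{prop:ogp-prob-bds}\ref{itm:ogp-prob-bd-ssolve}, now conditioning on the shared prefix at each branching level and exploiting that the $k_d$ subtrees are conditionally i.i.d.\ given that prefix, shows that all $K$ leaves simultaneously achieve energy $\geq\ALG^{\Sp}+\eps$ with probability at least $\alpha_N^K$. Intersecting this event with $\vbsig\in\cQ^{\Sp}(Q,\bzero,\eta)$, $\Seigen(\delta,K)$, and the concentration event $N^{-1}\max_{\vbsig\in\cQ^{\Sp}}\cH_N(\vbsig)<K(\ALG^{\Sp}+3\eps/4)$, these four events cannot simultaneously hold, and chasing constants exactly as in the proof of Theorem~\ref{thm:main} forces $\alpha_N\leq e^{-cN}$. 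The main obstacle I anticipate is precisely the overlap concentration step: because the subspaces $W(\bx^{i,u})$ are only measurable (not Lipschitz) in the iterates, one cannot apply Gaussian concentration to $\cA$ as a map on $\sH_N$; instead one must exploit that, conditionally on the walk history, each update is still uniform on a large-dimensional sphere, which provides conditionally subgaussian martingale increments and hence exponential-in-$N$ concentration at a rate depending only on $\delta$ and $\eta$ --- strong enough to survive the union bound and the $K$-th root extraction in the Jensen step.
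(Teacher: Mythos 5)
Your proposal matches the paper's outline in Subsection~\ref{subsec:subag-bopg-2} essentially step for step: use the constant correlation function so that $\vp\approx 1^D$ and the grand Hamiltonian collapses to $\sum_{u}H_N(\bsig(u))$, build the ultrametric constellation by coupling the random directions along the tree so that leaves $u^1,u^2$ with $u^1\wedge u^2=d$ share their first $j_d=\lfloor q_d\delta^{-1}\rfloor$ updates, check overlap concentration via the conditional mean-zero and conditional independence of the spherical directions, run the level-by-level Jensen argument on the randomness $\omega$ rather than on the correlated disorders, and intersect with the Borell--TIS concentration of the constrained maximum. The one place you give more than the paper does is the concentration of $R(\bsig(u^1),\bsig(u^2))$ around $\norm{\bx^{j_d}}_N^2$, which the paper waves away with ``it is not difficult to see''; your L\'evy-on-the-$\lfloor\delta N\rfloor$-sphere plus Azuma martingale argument is a correct way to fill that in, and you correctly identify that what saves the argument despite $W(\cdot)$ being only measurable in the iterates is that, conditionally on the history, each step is uniform on a large-dimensional sphere and hence has conditionally subgaussian projections at scale $O(1/\sqrt{\delta N})$. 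Two minor points worth being careful about if you write this out in full: (i) the identity $\norm{\bx^{j_d}}_N^2=j_d\delta$ requires that the update directions are orthogonal to the current iterate (true for the Subag variant; for the general class one should either impose this or note that $\norm{\bx^{j_d}}_N^2$ itself concentrates around $j_d\delta$ by the same martingale argument); and (ii) in the Jensen step the conditioning at level $d$ is jointly on $H_N$ and the shared prefix $\bv^1,\dots,\bv^{j_{d-1}}$, and one should keep $H_N$ in the conditioning throughout rather than integrating it out level by level.
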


\section{Guerra's Interpolation}\label{sec:interpolation}

% We structure the proof of Proposition~\ref{prop:uniform-multi-opt} as follows.
% In the first part of the proof, which includes all of this section, we first leave the variables $D, \vk, \vp, \vq, \eta$ free and derive (in both the spherical and Ising settings) an upper bound on the ground state energy $\fr{1}{N} \E \max_{\vbsig \in \cQ(\eta)} \cH_N^{\vk,\vp}(\vbsig)$ where we keep the dependence on these variables explicit.
% In this part of the proof, we do not set $\chi$ (and thus do not require $\chi(\vp) = \vq$) and let $\bm \in \bbR^N$ be any point with $\norm{\bm}_N = q_0$.
% In the end (sections BLAH and BLAH), we introduce $\chi$ and set these variables  such that $\chi(\vq) = \vp$ and the ground state energy upper bound is at most $\ALG + \eps$.

% In this section, we present the part of the ground state energy upper bound common to the spherical and Ising settings.
% Throughout this section, we leave $D, \vk, \vp, \vq, \eta$ free.
% This argument is continued in Section~\ref{sec:grand-hamiltonian-sp} and \ref{sec:grand-hamiltonian-is}, for the spherical and Ising settings respectively.

% We recall that $D$ is a nonnegative integer, $\vk = (k_1,\ldots,k_D)$ where the $k_d$ are positive integers, $\vp = (p_0,p_1,\ldots,p_D)$ and $\vq = (q_0,q_1,\ldots,q_D)$ satisfy
% \begin{align*}
%     0 = p_0 < p_1 < \cdots < p_D &= 1, \\
%     0 \le q_0 < q_1 < \cdots < q_D &= 1,
% \end{align*}
% and $\eta \in (0,1)$.
% Recall further that $K = \prod_{d=1}^D k_d$.
In this section, we begin the proof of Proposition~\ref{prop:uniform-multi-opt}.
We take either $\cQ(\eta) = \cQ^{\Sp}(Q,\bm,\eta)$ or $\cQ(\eta) = \cQ^{\Is}(Q,\bm,\eta)$ (recall $Q = Q^{\vk,\vq}$); the proofs in this section apply uniformly to both cases.
The goal of this section is to use Guerra's interpolation to upper bound the constrained free energy
\[
    F_N(\cQ(\eta))
    = 
    \fr{1}{N}
    \E 
    \log 
    \int_{\cQ(\eta)}
    \exp
    \cH_N(\vbsig) 
    \diff{\mu^K}(\vbsig),
\]
where $\mu$ is a (for now) arbitrary measure on $S_N$.
In the sequel, we will take $\mu$ to be the uniform measure on $S_N$ for spherical spin glasses, and the counting measure on $\Sigma_N$ for Ising spin glasses.
We develop a bound on $F_N(\cQ(\eta))$ that holds for all $D, \vk, \vp, \vq, \eta$, and will set these variables in the sequel to prove Proposition~\ref{prop:uniform-multi-opt}.

We will control this free energy by controlling the following related free energy.
Let $\lambda \in \bbR$ be a constant we will set later.
For all $\bsig \in \bbR^N$, let $\pi(\bsig) = \bsig - \bm$.
We define the following modified grand Hamiltonian, where we add an external field $\lambda \bm$ centered at $\bm$:
\begin{align*}
    \cH_{N,\lambda}(\vbsig)
    &=
    \cH_N(\vbsig)
    + 
    \sum_{u\in \bbL} 
    \la \lambda \bm, \pi(\bsig(u)) \ra \\
    &= 
    K \la \bh, \bm\ra
    +
    \sum_{u\in \bbL} \lt[
        \la \bh + \lambda \bm, \pi(\bsig(u))\ra + 
        \wtHNp{u}(\bsig(u))
    \rt].
\end{align*}
We define the free energy
\[
    F_{N, \lambda}(\cQ(\eta))
    = 
    \fr{1}{N}
    \log 
    \E 
    \int_{\cQ(\eta)}
    \exp
    \cH_{N,\lambda}(\vbsig) 
    \diff{\mu^K}(\vbsig).
\]
Since $\cQ(\eta) \subseteq B(\bm, \eta)^K$, we have $|\cH_N(\vbsig) - \cH_{N,\lambda}(\vbsig)| \le NK |\lambda| \eta$ for all $\vbsig \in \cQ(\eta)$, and so 
\begin{equation}
    \label{eq:feta-to-fetaw}
    |F_N(\cQ(\eta)) - F_{N, \lambda}(\cQ(\eta))|
    \le 
    K|\lambda| \eta.
\end{equation}
Define the matrices $M^{\vk,\vp,1}, \ldots, M^{\vk,\vp,D} \in \bbR^{K\times K}$, whose rows and columns are indexed by $\bbL$, by
\[
    M^{\vk,\vp,d}_{u^1,u^2} 
    = 
    \ind\{u^1\wedge u^2\ge d\} 
    p_{u^1\wedge u^2}.
\]
Further, define $M^{\vk,\vp,\vq} : [q_0, 1) \to \bbR^{K\times K}$ as the piecewise constant matrix-valued function such that for $q\in [q_{d-1}, q_d)$, $M^{\vk,\vp,\vq}(q) = M^{\vk,\vp,d}$.
Define $\kappa^{\vk,\vp,\vq} : [q_0,1) \to \bbR$ by 
\[
    \kappa^{\vk,\vp,\vq}(q) 
    = 
    \fr{1}{K} \Sum(M^{\vk,\vp,\vq}(q)),
\]
where $\Sum$ denotes the sum of entries of a matrix.
Explicitly, for $q\in [q_{d-1}, q_d)$,
\begin{equation}
    \label{eq:kappa-formula}
    \kappa^{\vk,\vp,\vq}(q) 
    = 
    \sum_{j = d}^{D-1}
    \lt[
        (k_{j+1}-1) \prod_{\ell = j+2}^D k_\ell
    \rt]
    p_j
    +
    p_D.
\end{equation}
When $\vk,\vp,\vq$ are clear, we will write $M^d = M^{\vk,\vp,d}$, $M(q) = M^{\vk,\vp,\vq}(q)$ and $\kappa(q) = \kappa^{\vk,\vp,\vq}(q)$.
Consider a sequence
\[
    0 = \zeta_{-1} < \zeta_{0} < \cdots < \zeta_{D} = 1,
\]
which we identify with the piecewise constant CDF $\zeta : [q_0,1) \to [0,1]$, where for $x\in [q_d, q_{d+1})$, 
\begin{equation}
    \label{eq:discrete-zeta}
    \zeta(x) = \zeta_d,
\end{equation}
corresponding to the discrete distribution $\zeta(\{q_d\}) = \zeta_d - \zeta_{d-1}$. 
We denote by $\cM_{\vq}$ the set of such CDFs $\zeta$ for a given $\vq$.

Let $\bbT_D = \bbN^0 \cup \bbN^1 \cup \cdots \cup \bbN^D$ and for $\omega \in \bbT_D$, let $|\omega|$ denote the length of $\omega$.
Let $\emptyset$ denote the empty tuple. 
We think of $\bbT_D$ as a tree rooted at $\emptyset$, where the parent of any $\omega\neq \emptyset$ is the initial substring of $\omega$ with length $|\omega|-1$.
For $\alpha \in \bbN^D$, let $p(\alpha) = \lt((\alpha_1), (\alpha_1,\alpha_2), \ldots, (\alpha_1,\ldots,\alpha_D)\rt)$ denote the path of vertices from the root to $\alpha$, not including the root.
For $\alpha^1, \alpha^2 \in \bbN^D$, let $\alpha^1\wedge \alpha^2$ denote the depth of the least common ancestor of $\alpha^1$ and $\alpha^2$.
Recall the Ruelle cascades $(\nu_{\alpha})_{\alpha\in\bbN^D}$ corresponding to $(\zeta_0, \zeta_1, \ldots, \zeta_{D-1})$ which were introduced in \cite{ruelle1987mathematical}, see also \cite[Section 2.3]{panchenko2013sherrington}.

For each increasing $\psi : [q_0,1] \to \bbR_{\ge 0}$, we define a Gaussian process $g_\psi^{(u)}(\alpha)$ indexed by $(u, \alpha) \in \bbL \times \bbN^D$ as follows.
Generate $\veta_\emptyset \in \bbR^K$ by
\[
    \veta_\emptyset 
    = 
    (\eta_\emptyset(u))_{u\in \bbL} 
    \sim 
    \cN(0, M^1).
\]
Furthermore, for each non-root $\omega \in \bbT_D$, independently generate $\veta_\omega \in \bbR^K$ by
\[
    \veta_\omega 
    =
    (\eta_\omega(u))_{u\in \bbL} 
    \sim \cN(0, M^{|\omega|}).
\]
Then, for each $u\in \bbL$, set
\[
    g_\psi^{(u)}(\alpha) 
    = 
    \eta_\emptyset(u)
    \psi(q_0)^{1/2}
    +
    \sum_{\omega \in p(\alpha)}
    \eta_\omega(u)
    (\psi(q_{|\omega|}) - \psi(q_{|\omega|-1}) )^{1/2}.
\]
This is the centered Gaussian process with covariance 
\[
    \E 
    g_\psi^{(u^1)}(\alpha^1) 
    g_\psi^{(u^2)}(\alpha^2) 
    = 
    p_{u^1\wedge u^2} 
    \psi(
        q_{\alpha^1 \wedge \alpha^2} 
        \wedge 
        q_{u^1\wedge u^2}
    ),
\]
where for $x,y\in \bbR$, $x\wedge y = \min(x,y)$.
Generate $N$ i.i.d. copies of the process $g_{\xi'}^{(u)}(\alpha)$, which we denote $g_{\xi',i}^{(u)}(\alpha)$ for $i=1,\ldots,N$. 
Similarly, for the function
\[
    \theta(q) = (q-q_0) \xi'(q) - \xi(q) + \xi(q_0),
\]
we generate $N$ i.i.d. processes $g_{\theta,i}^{(u)}(\alpha)$ for $i=1,\ldots,N$. 
Note that for $q\in [q_0,1)$,
\[
    \theta(q) = 
    \int_{q_0}^q (\xi'(q) - \xi'(q')) 
    \diff{q'} \ge 0
    \qquad
    \text{and}
    \qquad
    \theta'(q) = (q-q_0) \xi''(q) \ge 0,
\]
so $\theta$ is nonnegative and increasing, as required.
For $t\in [0,1]$, define the interpolating Hamiltonian
\begin{align}
    \notag
    \cH_{N,\lambda,t}(\vbsig, \alpha)
    &=
    \sum_{u\in \bbL}
    \lt[
        \sqrt{t} 
        \wtHNp{u} (\bsig(u)) 
        + 
        \sqrt{1-t}
        \sum_{i=1}^N 
        g_{\xi', i}^{(u)}(\alpha) 
        \pi(\bsig(u))_i
        +
        \sqrt{t}
        \sum_{i=1}^N 
        g_{\theta, i}^{(u)}(\alpha)
    \rt] \\
    \label{eq:interpolating-hamiltonian}
    &\qquad + 
    K \la \bh, \bm\ra +
    \la \bh + \lambda \bm, \pi(\bsig(u))\ra
\end{align}
and the interpolating free energy
\[
    \varphi(t) 
    = 
    \fr{1}{N}
    \E 
    \log
    \sum_{\alpha \in \bbN^D}
    \nu_\alpha
    \int_{\cQ(\eta)}
    \exp
    \cH_{N,\lambda,t}(\vbsig,\alpha)
    \diff{\mu^K}(\vbsig).
\]
The following bound on $F_N(\cQ(\eta))$ is the main result of this section.
\begin{proposition}
    \label{prop:interpolation-common}
    The free energy $F_N(\cQ(\eta))$ is upper bounded by
    \[
        F_N(\cQ(\eta)) 
        \le 
        \varphi(0)
        - 
        \fr{K}{2}
        \int_{q_0}^1
        (q-q_0)
        \xi''(q)
        \kappa(q)
        \zeta(q) 
        \diff{q}
        + 3K^2 \xi''(1) \eta 
        + K|\lambda| \eta.
    \]
    where $\zeta : [q_0,1) \to [0,1]$ is defined in \eqref{eq:discrete-zeta}.
\end{proposition}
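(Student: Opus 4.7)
The plan is to run a Guerra--Talagrand interpolation in $t$ along $\varphi$, combined with a clean factorization at $t=1$. Since the $\sqrt{t}\sum_i g_{\theta,i}^{(u)}(\alpha)$ summands in \eqref{eq:interpolating-hamiltonian} depend only on $\alpha$, at $t=1$ the integral over $\vbsig\in\cQ(\eta)$ and the cascade sum over $\alpha$ separate, yielding
\[
\varphi(1) = F_{N,\lambda}(\cQ(\eta)) + T_\theta, \qquad T_\theta := \fr{1}{N}\E\log\sum_{\alpha\in\bbN^D}\nu_\alpha \exp\Big(\sum_{u\in\bbL}\sum_{i=1}^N g_{\theta,i}^{(u)}(\alpha)\Big).
\]
Combining with \eqref{eq:feta-to-fetaw}, the proposition reduces to (i) $\varphi(1) \le \varphi(0) + 3K^2\xi''(1)\eta$ and (ii) $T_\theta = \fr{K}{2}\int_{q_0}^1 (q-q_0)\xi''(q)\kappa(q)\zeta(q)\,\diff{q}$.

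For (i), I would differentiate $\varphi$ by applying Gaussian integration by parts to each of the three Gaussian building blocks of $\cH_{N,\lambda,t}$, obtaining the standard Guerra expression $\varphi'(t) = \fr{1}{2N}\E\langle\partial_t\mathrm{Var}\rangle^{(1)}_t - \fr{1}{2N}\E\langle\partial_t\mathrm{Cov}\rangle^{(2)}_t$, where the superscripts indicate one versus two i.i.d. Gibbs replicas. Writing $R^{ab}_{u,v} = R(\bsig^a(u),\bsig^b(v))$ and $q' = q_{\alpha^1\wedge\alpha^2}\wedge q_{u^1\wedge u^2}$, and using $|R(\bsig(u),\bm)-q_0|\le \eta$ on $B(\bm,\eta)$ to replace overlaps of the $\pi$-centered vectors by $R^{ab}_{u^1,u^2}-q_0$ up to additive $O(\eta)$ errors, a direct computation gives
\[
\fr{1}{N}\partial_t\mathrm{Var} = \sum_{u^1,u^2\in\bbL} p_{u^1\wedge u^2}\,\Delta(R^{11}_{u^1,u^2},q_{u^1\wedge u^2}) + O(K^2\xi''(1)\eta),
\]
with the analogous expression for $\partial_t\mathrm{Cov}$ using $R^{12}_{u^1,u^2}$ and $q'$, where $\Delta(R,q) := \xi(R) - \xi'(q)(R-q_0) + \theta(q)$. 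The constraint $\norm{R(\vbsig,\vbsig) - Q}_\infty \le \eta$ forces $R^{11}_{u^1,u^2} = q_{u^1\wedge u^2} + O(\eta)$, so each diagonal summand equals $\xi(q_0) + O(\eta\xi''(1))$, while each off-diagonal summand is $\ge \xi(q_0) - O(\eta\xi''(1))$ by the convexity argument below. Summing over at most $K^2$ pairs and integrating in $t$ gives $\varphi(1)\le\varphi(0)+3K^2\xi''(1)\eta$.

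For (ii), the process $Y(\alpha) := \sum_u\sum_i g_{\theta,i}^{(u)}(\alpha)$ is centered Gaussian on the Ruelle cascade, and its level decomposition, read off the construction of $g_\theta^{(u)}(\alpha)$, consists of independent level-$d$ increments of variance $NK\kappa(q_{d-1})(\theta(q_d)-\theta(q_{d-1}))$ for $1\le d\le D$, since $\Sum(M^d) = K\kappa(q_{d-1})$ by definition of $\kappa$; the root piece is an additive stochastic constant independent of $\alpha$ and contributes $0$ in expectation. The standard Ruelle cascade identity $\fr{1}{N}\E\log\sum_\alpha\nu_\alpha \exp(\sum_d X_d(\alpha)) = \fr{1}{2N}\sum_{d=1}^D\zeta_{d-1}v_d$ for independent Gaussian level-$d$ increments of variance $v_d$ then yields
\[
T_\theta = \fr{K}{2}\sum_{d=1}^D \zeta_{d-1}\kappa(q_{d-1})(\theta(q_d)-\theta(q_{d-1})) = \fr{K}{2}\int_{q_0}^1 \theta'(q)\kappa(q)\zeta(q)\,\diff{q},
\]
using that $\kappa,\zeta$ are piecewise constant on $[q_{d-1},q_d)$ and that $\theta'(q) = (q-q_0)\xi''(q)$.

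The main non-routine step is the algebraic identity that activates convexity: a direct expansion using the definition of $\theta$ gives
\[
\Delta(R,q) = \xi(q_0) + \big[\xi(R) - \xi(q) - \xi'(q)(R-q)\big],
\]
and the bracketed quantity is nonnegative on $[-1,1]$ because the mixture is even (so $\xi''\ge 0$). This yields the off-diagonal lower bound used in (i); the additive constant $\xi(q_0)$ then cancels against the diagonal leading term, leaving only the $O(K^2\xi''(1)\eta)$ errors from the $\bm$-centering and the overlap constraint. The rest is standard Guerra--Talagrand bookkeeping plus the Ruelle cascade moment computation.
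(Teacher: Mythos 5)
Your proposal is correct and follows essentially the same route as the paper: the interpolation bound $\varphi'(t) \le 3K^2\xi''(1)\eta$ (your step (i)) is the paper's Lemma~\ref{lem:gt-monotonicity}, the decoupling and Ruelle-cascade evaluation of the $\theta$-term (your step (ii)) is Lemma~\ref{lem:varphi-1} and Corollary~\ref{cor:varphi-1}, and the final assembly with \eqref{eq:feta-to-fetaw} is identical. The algebraic identity $\Delta(R,q) = \xi(q_0) + C(R,q)$ that you single out is precisely the paper's function $C(x,y)$ from \eqref{eq:def-C}, whose nonnegativity drives the Guerra bound.
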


\begin{lemma}[Guerra's interpolation bound]
    \label{lem:gt-monotonicity}
    For all $t\in [0,1]$ and $\eta \in (0,1)$,
    \[
        \varphi'(t)
        \le
        3K^2 \xi''(1) \eta.
    \]
\end{lemma}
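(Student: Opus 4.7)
The plan is a multi-replica Guerra-Talagrand interpolation: compute $\varphi'(t)$ by Gaussian integration by parts on the three Gaussian pieces in $\cH_{N,\lambda,t}$, then identify the resulting expression as a nonpositive Bregman-divergence term plus a controlled $O(K^2\xi'(1)\eta)$ error coming from the approximate constraints encoded in $\cQ(\eta)$.

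First I compute $\varphi'(t)$. Applying Gaussian integration by parts to the three Gaussian pieces of $\cH_{N,\lambda,t}$ using the covariances listed in the text (for $\wtHNp{u}(\bsig(u))$, the cavity field $g_{\xi',i}^{(u)}(\alpha)\pi(\bsig(u))_i$, and the scalar $g_{\theta,i}^{(u)}(\alpha)$) yields, after the standard manipulations of \cite{panchenko2013sherrington},
\[
    \varphi'(t) = \frac{1}{2}\E\lt\langle \sum_{u^1,u^2\in\bbL} p_{u^1\wedge u^2}\bigl[T_{\mathrm{sm}}(u^1,u^2) - T_{\mathrm{cr}}(u^1,u^2)\bigr]\rt\rangle_t,
\]
where the bracket is the annealed Gibbs expectation over two independent replicas $(\vbsig^1,\alpha^1),(\vbsig^2,\alpha^2)$ under the measure defining $\varphi(t)$, and, with $q_\star = q_{\alpha^1\wedge\alpha^2}\wedge q_{u^1\wedge u^2}$ (using $q_{\alpha^1\wedge\alpha^1}\wedge q_{u^1\wedge u^2} = q_{u^1\wedge u^2}$ in the same-replica case),
\begin{align*}
    T_{\mathrm{sm}}(u^1,u^2) &= \xi(R^{1,1}_{u^1,u^2}) - \xi'(q_{u^1\wedge u^2})R(\pi(\bsig^1(u^1)),\pi(\bsig^1(u^2))) + \theta(q_{u^1\wedge u^2}), \\
    T_{\mathrm{cr}}(u^1,u^2) &= \xi(R^{1,2}_{u^1,u^2}) - \xi'(q_\star)R(\pi(\bsig^1(u^1)),\pi(\bsig^2(u^2))) + \theta(q_\star).
\end{align*}

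Next I rewrite both in Bregman form. The identity
\[
    -\xi'(q)(r - q_0) + \theta(q) = -\xi(q) - \xi'(q)(r - q) + \xi(q_0),
\]
which follows directly from the definition $\theta(q) = (q-q_0)\xi'(q) - \xi(q) + \xi(q_0)$, is tailored to this cancellation. Because $\vbsig^1,\vbsig^2\in\cQ(\eta)\subseteq B(\bm,\eta)^K$ and $\|\bm\|_N^2 = q_0$, expanding $R(\pi(\bsig),\pi(\bsig')) = R(\bsig,\bsig') - R(\bsig,\bm) - R(\bm,\bsig') + q_0$ gives $|R(\pi,\pi') - (R_{\bsig,\bsig'} - q_0)|\le 2\eta$. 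Substituting together with the self-overlap bound $|R^{1,1}_{u^1,u^2} - q_{u^1\wedge u^2}|\le\eta$ produces
\[
    T_{\mathrm{sm}} = \xi(q_0) + E^{\mathrm{sm}}_{u^1,u^2}, \qquad T_{\mathrm{cr}} = \Phi(R^{1,2}_{u^1,u^2},q_\star) + \xi(q_0) + E^{\mathrm{cr}}_{u^1,u^2},
\]
where $\Phi(r,q) := \xi(r) - \xi(q) - \xi'(q)(r - q)$ is the Bregman divergence of $\xi$ and the errors satisfy $|E^{\mathrm{sm}}|\le 4\xi'(1)\eta$, $|E^{\mathrm{cr}}|\le 2\xi'(1)\eta$. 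The constants $\xi(q_0)$ cancel between $T_{\mathrm{sm}}$ and $T_{\mathrm{cr}}$. Since $\xi$ has only even nonnegative coefficients, it is convex on $[-1,1]$, so $\Phi\ge 0$ on the relevant domain ($R^{1,2}_{u^1,u^2}\in[-1,1]$ and $q_\star\in[q_0,1]$). Hence
\[
    \varphi'(t) \le \frac{1}{2}\E\lt\langle \sum_{u^1,u^2} p_{u^1\wedge u^2}\bigl[-\Phi(R^{1,2}_{u^1,u^2},q_\star) + 6\xi'(1)\eta\bigr]\rt\rangle_t \le 3K^2\xi'(1)\eta,
\]
using $p_{u^1\wedge u^2}\le 1$ and $|\bbL|=K$. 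The bound $\xi'(1) = \sum_p\gamma_p^2 p \le \sum_p\gamma_p^2 p(p-1) = \xi''(1)$ for mixed even models with $p\ge 2$ then gives the desired conclusion.

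The main subtlety to keep straight is the $\bm$-shift: coupling $g_{\xi'}$ to the centered coordinate $\pi(\bsig)$ and using the offset-$q_0$ variant of $\theta$ (with $\theta'(q) = (q-q_0)\xi''(q)$ rather than the usual $q\xi''(q)$) are both necessary to produce the Bregman cancellation at $q_\star$ and to have the constants $\xi(q_0)$ match between same- and cross-replica. Without these modifications an unmatched residual proportional to $\xi(q_0)$ per pair $(u^1,u^2)$ would remain, spoiling the bound.
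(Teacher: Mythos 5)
Your proof is correct and follows essentially the same route as the paper's: Gaussian integration by parts, rewriting in terms of the Bregman divergence $C(x,y)=\xi(x)-\xi(y)-(x-y)\xi'(y)$ (your $\Phi$) via the offset identity for $\theta$, dropping the nonnegative cross-replica Bregman term, and bounding the $\eta$-errors from the band and self-overlap constraints. The only cosmetic difference is bookkeeping: the paper bounds the same-replica Bregman term by $\xi''(1)\eta^2$ while you fold it into a linear $\xi'(1)\eta$ error; both yield the stated $3K^2\xi''(1)\eta$ bound after using $\eta<1$ and $\xi'(1)\le\xi''(1)$.
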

\begin{proof}
    Let $\la \cdot \ra_t$ denote the average with respect to the Gibbs measure on $\cQ(\eta) \times \bbN^D$ given by
    \[
        G(\vbsig, \alpha) 
        \propto 
        \nu_\alpha 
        \exp 
        \cH_{N,\lambda,t}(\vbsig, \alpha).
    \]
    By Gaussian integration by parts \cite[Lemma 1.4]{panchenko2013sherrington},
    \begin{align}
        \notag
        \varphi'(t) 
        &= 
        \fr{1}{N}
        \E 
        \lt\la
            \fr{\partial \cH_{N,\lambda,t}}{\partial t}
            (\vbsig, \alpha)
        \rt\ra_t \\
        \label{eq:varphi}
        &=
        \fr{1}{N}
        \E 
        \lt\la
            \E 
            \fr{\partial \cH_{N,\lambda,t}}{\partial t} (\vbsig^1, \alpha^1) 
            \cH_{N,\lambda,t}(\vbsig^1, \alpha^1) -
            \E 
            \fr{\partial \cH_{N,\lambda,t}}{\partial t} (\vbsig^1, \alpha^1) 
            \cH_{N,\lambda,t}(\vbsig^2, \alpha^2) 
        \rt\ra_t,
    \end{align}
    where $(\vbsig^1, \alpha^1)$ and $(\vbsig^2, \alpha^2)$ are independent samples from the Gibbs measure.
    Recall \eqref{eq:interpolating-hamiltonian}.
    For any realizations $(\vbsig^1, \alpha^1)$ and $(\vbsig^2, \alpha^2)$,
    \begin{align*}
        &\fr{2}{N} 
        \E 
        \fr{\partial \cH_{N,\lambda,t}}{\partial t}
        (\vbsig^1, \alpha^1)
        \cH_{N,t}(\vbsig^2, \alpha^2) \\
        &= 
        \sum_{u^1,u^2\in \bbL}
        p_{u^1\wedge u^2}
        \lt[
            \xi(R(\bsig^1(u^1), \bsig^2(u^2))) 
            - 
            R(\pi(\bsig^1(u^1)), \pi(\bsig^2(u^2))) 
            \xi'(q_{\alpha^1 \wedge \alpha^2} \wedge q_{u^1\wedge u^2})
            +
            \theta(q_{\alpha^1 \wedge \alpha^2} \wedge q_{u^1\wedge u^2})
        \rt] \\
        &= 
        \sum_{u^1,u^2\in \bbL}
        p_{u^1\wedge u^2}
        \big[
            \xi(R(\bsig^1(u^1), \bsig^2(u^2))) \\
            &\qquad -
            \lt(
                R(\bsig^1(u^1), \bsig^2(u^2))
                -
                R(\bsig^1(u^1), \bm)
                -
                R(\bsig^2(u^2), \bm)
                +
                R(\bm,\bm)
            \rt)
            \xi'(q_{\alpha^1 \wedge \alpha^2} \wedge q_{u^1\wedge u^2}) \\
            &\qquad +
            \theta(q_{\alpha^1 \wedge \alpha^2} \wedge q_{u^1\wedge u^2})
        \big] \\
        &= 
        \sum_{u^1,u^2\in \bbL}
        p_{u^1\wedge u^2}
        \big[
            C\lt(R(\bsig^1(u^1), \bsig^2(u^2)), q_{\alpha^1 \wedge \alpha^2} \wedge q_{u^1\wedge u^2}\rt) \\
            &\qquad +
            \lt(
                R(\bsig^1(u^1), \bm)
                + R(\bsig^2(u^2), \bm)
                - 2q_0
            \rt)
            \xi'(q_{\alpha^1 \wedge \alpha^2} \wedge q_{u^1\wedge u^2}) 
            +
            \xi(q_0)
        \big],
    \end{align*}
    where
    \begin{equation}
        \label{eq:def-C}
        C(x,y) 
        = 
        \xi(x) - \xi(y) - (x-y) \xi'(y) 
        = 
        \int_y^x \int_y^z \xi''(w) \diff{w}\diff{z}.
    \end{equation}
    Because $\bsig^1(u^1), \bsig^2(u^2) \in B(\bm, \eta)$,
    \[
        \lt|
            \lt(
                R(\bsig^1(u^1), \bm) 
                + R(\bsig^2(u^2), \bm)
                - 2q_0
            \rt)
            \xi'(q_{\alpha^1 \wedge \alpha^2} \wedge q_{u^1\wedge u^2}) 
        \rt|
        \le 
        2 \xi'(1) \eta.
    \]
    Hence using \eqref{eq:varphi} and noting that $q_{\alpha^1\wedge\alpha^1}=1$, we obtain
    \begin{align*}
        \varphi'(t)
        &\leq
        \frac{1}{2}
        \sup_{\substack{
            \vbsig^1,\vbsig^2\in\cQ(\eta) \\
            \alpha^1,\alpha^2\in \bbN^D
        }}
        \sum_{u^1,u^2\in\bbL}
        \bigg[
            C\lt(
                R(\bsig^1(u^1), \bsig^1(u^2)), 
                q_{u^1\wedge u^2}
            \rt)
            -
            C\lt(
                R(\bsig^1(u^1), \bsig^2(u^2)), 
                q_{\alpha^1 \wedge \alpha^2} \wedge q_{u^1\wedge u^2}
            \rt)
        \bigg] \\
        &\qquad
        +2K^2 \xi'(1)\eta.
    \end{align*}
    By \eqref{eq:def-C}, $0\le C(x,y) \le |x-y|^2 \xi''(1)$.
    Since $|R(\bsig^1(u^1),\bsig^1(u^2)) - q_{u^1\wedge u^2}| \le \eta$ for $\vbsig^1 \in \cQ(\eta)$,
    \[
        C\lt(
            R(\bsig^1(u^1), \bsig^1(u^2)), 
            q_{u^1\wedge u^2}
        \rt) 
        \le 
        \xi''(1) \eta^2.
    \]
    Moreover,
    \[
        C\lt(
            R(\bsig^1(u^1), \bsig^2(u^2)), 
            q_{\alpha^1 \wedge \alpha^2} \wedge q_{u^1\wedge u^2}
        \rt) 
        \ge 0.
    \]
    So, 
    \[
        \varphi'(t) 
        \le 
        \fr12 K^2 \xi''(1) \eta^2  + 2K^2 \xi'(1)\eta
        \le 
        3K^2 \xi''(1)\eta.
    \]
\end{proof}

We will now evaluate $\varphi(1)$ to complete the proof of Proposition~\ref{prop:interpolation-common}.

\begin{lemma}
    \label{lem:varphi-1}
    The following identity holds.
    \[
        \varphi(1) = 
        F_{N, \lambda}(\cQ(\eta)) + 
        \fr{K}{2} 
        \sum_{d=0}^{D-1} 
        \kappa(q_d) \zeta_d (\theta(q_{d+1}) - \theta(q_d)).
    \]
\end{lemma}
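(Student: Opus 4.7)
My plan is to observe that at $t=1$ the interpolating Hamiltonian \eqref{eq:interpolating-hamiltonian} decouples into a $\vbsig$-dependent piece and an $\alpha$-dependent piece, which separates $\varphi(1)$ into the desired constrained free energy plus a pure Ruelle cascade free energy. Setting $t=1$ kills the $\sqrt{1-t}$ (mixed) term, leaving
\[
    \cH_{N,\lambda,1}(\vbsig,\alpha) = \cH_{N,\lambda}(\vbsig) + T(\alpha),
    \qquad
    T(\alpha) := \sum_{u\in\bbL}\sum_{i=1}^N g_{\theta,i}^{(u)}(\alpha).
\]
Since $T(\alpha)$ does not depend on $\vbsig$, the integrand factors and
\[
    \varphi(1) = F_{N,\lambda}(\cQ(\eta)) + \fr{1}{N}\E\log\sum_{\alpha\in\bbN^D}\nu_\alpha \exp T(\alpha),
\]
so the entire task reduces to evaluating this pure cascade free energy.

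For this, I would use the explicit construction of $g_\theta^{(u)}(\alpha)$ along $\bbT_D$. Because $\theta(q_0)=0$ the root term in the definition of $g_{\theta,i}^{(u)}(\alpha)$ vanishes, and one is left with
\[
    T(\alpha) = \sum_{d=1}^D Z_{d,(\alpha_1,\ldots,\alpha_d)},
    \qquad
    Z_{d,\omega} := (\theta(q_d)-\theta(q_{d-1}))^{1/2}\sum_{u\in\bbL}\sum_{i=1}^N \eta_{\omega,i}(u),
\]
where, as $(d,\omega)$ ranges over depth-$d$ nodes of $\bbT_D$, the $Z_{d,\omega}$ are mutually independent centered Gaussians (since the underlying $\eta_{\omega,i}(u)$ are). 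Using $\E[\eta_{\omega,i}(u^1)\eta_{\omega,j}(u^2)] = \delta_{ij}M^d_{u^1,u^2}$ for $|\omega|=d$, together with the identity $\kappa(q_{d-1}) = K^{-1}\Sum(M^d)$, I compute the variance
\[
    V_d := \Var(Z_{d,\omega}) = N(\theta(q_d)-\theta(q_{d-1}))\Sum(M^d) = NK(\theta(q_d)-\theta(q_{d-1}))\kappa(q_{d-1}).
\]

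The last step is the standard Ruelle cascade identity: for cascade parameters $(\zeta_0,\ldots,\zeta_{D-1})$ and a Gaussian field of the form $\sum_{d=1}^D Z_{d,(\alpha_1,\ldots,\alpha_d)}$ with mutually independent Gaussian increments of variance $V_d$ at level $d$, one has
\[
    \E\log\sum_{\alpha\in\bbN^D}\nu_\alpha\exp\lt(\sum_{d=1}^D Z_{d,(\alpha_1,\ldots,\alpha_d)}\rt) = \fr{1}{2}\sum_{d=1}^D \zeta_{d-1} V_d.
\]
This follows by iterating the single-layer Poisson--Dirichlet identity $\E\log\sum_\alpha \nu_\alpha e^{g_\alpha} = \zeta^{-1}\log\E e^{\zeta g} = \zeta V/2$ for Gaussian $g$ of variance $V$, from the bottom of the tree upward. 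Substituting $V_d$ and dividing by $N$ yields $\fr{K}{2}\sum_{d=1}^D \zeta_{d-1}\kappa(q_{d-1})(\theta(q_d)-\theta(q_{d-1}))$, which after reindexing $d\mapsto d+1$ is exactly the extra term in the statement. I do not expect a real obstacle here: the only nontrivial ingredient is the cascade identity above, which is a routine consequence of the defining Poisson--Dirichlet property of $(\nu_\alpha)$, and the remainder is bookkeeping driven by $\theta(q_0)=0$ and the piecewise definition of $\kappa$.
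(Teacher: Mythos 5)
Your proposal is correct and follows essentially the same route as the paper: decouple $\varphi(1)$ at $t=1$ into $F_{N,\lambda}(\cQ(\eta))$ plus a pure cascade free energy (using $\theta(q_0)=0$ to drop the root term), compute the increment variance via $\Sum(M^d)=K\kappa(q_{d-1})$, and invoke the Ruelle cascade identity. Your closed-form statement $\E\log\sum_\alpha\nu_\alpha\exp\bigl(\sum_{d=1}^D Z_{d,(\alpha_1,\ldots,\alpha_d)}\bigr)=\tfrac12\sum_{d=1}^D\zeta_{d-1}V_d$ is exactly the paper's recursive evaluation of $X_0$, just unrolled.
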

\begin{proof}
    It is clear that
    \[
        \varphi(1)
        =
        F_{N, \lambda}(\cQ(\eta)) + 
        \fr{1}{N}
        \log 
        \E 
        \sum_{\alpha \in \bbN^D}
        \nu_\alpha
        \exp
        \sum_{u\in \bbL}
        \sum_{i=1}^N
        g^{(u)}_{\theta,i}(\alpha).
    \]
    We will evaluate the last term by the recursive evaluation of Ruelle cascades.
    For $1\le d\le D$, independently generate $\vbfeta_d = (\bfeta_d(u))_{u\in \bbL} \in (\bbR^N)^K$ by generating, independently for each $1\le i\le N$,
    \[
        (\vbfeta_d)_i = (\bfeta_d(u)_i)_{u\in \bbL} \sim \cN(0, M^{d}).
    \]
    (Because $\theta(q_0) = 0$, we will not need $\vbfeta_0$, corresponding to the root $\emptyset$ of $\bbT_D$.)
    Let
    \[
        X_{D} = 
        \sum_{u\in \bbL}
        \sum_{i=1}^N
        \sum_{d=1}^D
        \bfeta_d(u)_i
        \lt(
            \theta(q_d) - \theta(q_{d-1})
        \rt)^{1/2},
    \]
    and for $0\le d\le D-1$ let
    \begin{equation}
        \label{eq:ruelle-recursion}
        X_{d} = 
        \fr{1}{\zeta_d} 
        \log 
        \bbE_d
        \exp 
        \zeta_d
        X_{d+1},
    \end{equation}
    where $\bbE_d$ denotes expectation with respect to $\vbfeta_{d+1}$.
    By properties of Ruelle cascades \cite[Theorem 2.9]{panchenko2013sherrington},
    \[
        \fr{1}{N}
        \log 
        \E 
        \sum_{\alpha \in \bbN^D}
        \nu_\alpha
        \exp
        \sum_{u\in \bbL}
        \sum_{i=1}^N
        g^{(u)}_{\theta,i}(\alpha)
        =
        \fr{1}{N}
        X_0.
    \]
    Here we use that the depth-zero term $\eta_\emptyset(u) \theta(q_0)^{1/2}$ of $g_\theta^{(u)}(\alpha)$ is zero because $\theta(q_0)=0$.
    We now evaluate $X_0$ by \eqref{eq:ruelle-recursion}.
    For each $1\le d\le D$, $\sum_{u\in \bbL}\sum_{i=1}^N\bfeta_d(u)_i$ has variance
    \[
        \E \lt(\sum_{u\in \bbL} \sum_{i=1}^N\bfeta_d(u)_i\rt)^2
        = 
        N
        \Sum(M^{d}) 
        =
        NK\kappa(q_{d-1}).
    \]
    So, 
    \begin{align*}
        \fr{1}{\zeta_d} 
        \log 
        \bbE_d
        \exp 
        \zeta_d
        \lt(\sum_{u\in \bbL} \sum_{i=1}^N \bfeta_{d+1}(u)_i\rt)
        (\theta(q_{d+1}) - \theta(q_d))^{1/2}
        &=
        \fr{1}{\zeta_d} 
        \log 
        \exp 
        \lt(
            \fr{NK}{2}
            \kappa(q_d)
            \zeta_d^2
            (\theta(q_{d+1}) - \theta(q_d))
        \rt) \\
        &= 
        \fr{NK}{2}
        \kappa(q_d)
        \zeta_d
        (\theta(q_{d+1}) - \theta(q_d)).
    \end{align*}
    A straightforward induction argument using this computation gives
    \[
        \fr{1}{N} X_0 = 
        \fr{K}{2} 
        \sum_{d=0}^{D-1} 
        \kappa(q_d) \zeta_d (\theta(q_{d+1}) - \theta(q_d)),
    \]
    completing the proof.
\end{proof}
\begin{corollary}
    \label{cor:varphi-1}
    For the distribution function $\zeta : [q_0,1) \to [0,1]$ defined in \eqref{eq:discrete-zeta}, 
    \[
        \varphi(1) = 
        F_{N, \lambda}(\cQ(\eta)) + 
        \fr{K}{2} 
        \int_{q_0}^1 (q-q_0) \xi''(q) \kappa(q) \zeta(q) \diff{q}
    \]
\end{corollary}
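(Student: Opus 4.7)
The plan is to derive Corollary~\ref{cor:varphi-1} as an immediate reformulation of Lemma~\ref{lem:varphi-1}: the sum in Lemma~\ref{lem:varphi-1} is literally a Riemann sum for the integral claimed in the corollary, once we identify $\theta'$ and recognize that both $\kappa$ and $\zeta$ are piecewise constant on a common partition.

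First I would observe that from the definition $\theta(q) = (q-q_0)\xi'(q) - \xi(q) + \xi(q_0)$ one computes directly
\[
    \theta'(q) = (q-q_0)\xi''(q),
\]
as already noted in the excerpt. Second, I would verify that on each interval $[q_d, q_{d+1})$ both factors $\kappa(q)$ and $\zeta(q)$ are constant. For $\zeta$ this is the definition \eqref{eq:discrete-zeta}: $\zeta(q) = \zeta_d$ on $[q_d, q_{d+1})$. For $\kappa$, recall that $M^{\vk,\vp,\vq}(q) = M^{\vk,\vp,d}$ on $[q_{d-1}, q_d)$, so $\kappa = \frac{1}{K}\Sum \circ M^{\vk,\vp,\vq}$ is right-continuous and constant on $[q_d, q_{d+1})$ with value $\kappa(q_d)$. (This is consistent with the variance computation in the proof of Lemma~\ref{lem:varphi-1}, where the variance of $\sum_{u,i} \bfeta_{d+1}(u)_i$ is $NK\kappa(q_d)$.)

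Combining the two observations, on each interval $[q_d, q_{d+1})$ the integrand $(q-q_0)\xi''(q)\kappa(q)\zeta(q) = \theta'(q)\kappa(q_d)\zeta_d$, so by the fundamental theorem of calculus
\[
    \int_{q_d}^{q_{d+1}} (q-q_0)\xi''(q)\kappa(q)\zeta(q)\,\diff{q}
    = \kappa(q_d)\zeta_d\bigl(\theta(q_{d+1}) - \theta(q_d)\bigr).
\]
Summing over $d = 0, \ldots, D-1$ gives
\[
    \sum_{d=0}^{D-1}\kappa(q_d)\zeta_d\bigl(\theta(q_{d+1}) - \theta(q_d)\bigr)
    = \int_{q_0}^1 (q-q_0)\xi''(q)\kappa(q)\zeta(q)\,\diff{q},
\]
and substituting into Lemma~\ref{lem:varphi-1} yields the corollary. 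There is essentially no obstacle here beyond the index bookkeeping above; the main substantive work (the interpolation identity and the Ruelle cascade evaluation) has already been done in Lemma~\ref{lem:varphi-1}.
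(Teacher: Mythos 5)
Your proposal is correct and matches the paper's proof exactly: both note that $\theta'(q)=(q-q_0)\xi''(q)$ and that $\kappa,\zeta$ are constant on each $[q_d,q_{d+1})$, then read off the integral as the telescoping sum from Lemma~\ref{lem:varphi-1}. You have merely written out the fundamental-theorem-of-calculus step that the paper leaves implicit.
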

\begin{proof}
    On each interval $[q_d, q_{d+1})$, the functions $\kappa(q)$ and $\zeta(q)$ are constant.
    Moreover, recall that $\theta'(q) = (q-q_0) \xi''(q)$.
    The result follows from Lemma~\ref{lem:varphi-1}.
\end{proof}

\begin{proof}[Proof of Proposition~\ref{prop:interpolation-common}]
    By Lemma~\ref{lem:gt-monotonicity} and Corollary~\ref{cor:varphi-1},
    \[
        F_{N, \lambda}(\cQ(\eta))
        \le 
        \varphi(0) 
        - 
        \fr{K}{2} 
        \int_{q_0}^1 (q-q_0) \xi''(q) \kappa(q) \zeta(q) \diff{q}
        +
        3K^2 \xi''(1) \eta.
    \]
    The result follows from \eqref{eq:feta-to-fetaw}.
\end{proof}

In the following two sections, we will use Proposition~\ref{prop:interpolation-common} to upper bound $F_N(\cQ(\eta))$ in the spherical and Ising settings by estimating
\begin{equation}
    \label{eq:varphi-0}
    \varphi(0) 
    = 
    K R(\bh, \bm) + 
    \fr{1}{N} 
    \log 
    \E 
    \sum_{\alpha \in \bbN^D}
    \nu_\alpha
    \int_{\cQ(\eta)}
    \exp
    \sum_{u\in \bbL}
    \lt[
        \la \bh + \lambda \bm, \pi(\bsig(u)) \ra +
        \sum_{i=1}^N 
        g^{(u)}_{\xi,i}(\alpha)
        \pi(\bsig(u))_i
    \rt]
    \diff{\mu^K}(\vbsig).
\end{equation}
In the spherical and Ising settings, $\mu$ is respectively the uniform measure on $S_N$ and the counting measure on $\Sigma_N$. 
We denote $\varphi(0)$ in these settings by $\varphi^{\Sp}(0)$ and $\varphi^{\Is}(0)$.
We will also denote $F_N$ in these settings by $F_N^{\Sp}$ and $F_N^{\Is}$.

\section{Overlap-Constrained Upper Bound on the Spherical Grand Hamiltonian}
\label{sec:grand-hamiltonian-sp}

In this section, we complete the proof of Proposition~\ref{prop:uniform-multi-opt} in the spherical setting.
Denote the expected overlap-constrained maximum energy of the grand Hamiltonian by
\[
    \GS_N^{\Sp}(\cQ(\eta)) = 
    \fr{1}{N} 
    \E 
    \max_{\vbsig \in \cQ(\eta)}
    \cH_N(\vbsig).
\]
Let $\ucuL$ and $\ocuL$ denote the subsets of $\cuL$ supported on $[0, q_0)$ and $[q_0, 1)$, respectively.
The function $\kappa$ defined in \eqref{eq:kappa-formula} is an element of $\ocuL$.
Moreover (recall \eqref{eq:discrete-zeta}) $\cM_{\vq}\subseteq \ocuL$.
For $\beta > 0$ and $\zeta \in \cM_{\vq}$, let $\beta \kappa \zeta \in \ocuL$ denote the pointwise product $\beta\kappa\zeta(q) = \beta \kappa(q)\zeta(q)$.
For any $\uzeta \in \ucuL$, let $\uzeta + \beta \kappa \zeta \in \cuL$ be the function 
\[
    (\uzeta + \beta \kappa \zeta)(q) = 
    \begin{cases}
        \uzeta(q) & q < q_0, \\
        \beta \kappa \zeta(q) & q \ge q_0.
    \end{cases}
\]
We will develop the following bound on $\GS_N^{\Sp}(\cQ(\eta))$ for all $D, \vk, \vp, \vq, \eta, \beta$.
\begin{proposition}
    \label{prop:spherical-multi-opt-ub}
    Let $\zeta \in \cM_{\vq}$ and $\uzeta \in \ucuL$ be arbitrary.
    Let $\beta > 0$ and suppose that $(B, \uzeta + \beta \kappa \zeta) \in \cuK(\xi)$, $B \ge \beta^{-1}$.
    There exists a constant $C$, depending only on $\xi, h$, such that for $N\ge C \log \max(K,2)$,
    \[
        \GS_N^{\Sp}(\cQ(\eta))
        \le 
        K \Par^{\Sp}(B, \uzeta + \beta \kappa \zeta) +
        CK^2 \lt(
            \beta \eta 
            + B\eta
            + \fr{\log \fr{1}{\eta}}{\beta}
            + \fr{1}{\sqrt{N}}
        \rt).
    \]
\end{proposition}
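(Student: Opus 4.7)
The plan is to (i) convert the expected constrained maximum energy to a free energy at inverse temperature $\beta$, (ii) apply Proposition~\ref{prop:interpolation-common} to the rescaled grand Hamiltonian $\beta\cH_N$, and (iii) evaluate the resulting $\varphi^{\Sp}(0)$ by relaxing cross-overlap constraints and bounding each of $K$ single-replica spherical cap integrals using the Auffinger--Chen spherical Parisi functional.

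For (i), I will use the pointwise bound $\max_{\vbsig\in\cQ(\eta)}\cH_N(\vbsig) \le \beta^{-1}\log\int_{\cQ(\eta)}\exp(\beta\cH_N)\,d\mu^K(\vbsig) + \beta^{-1}\log\mu^K(\cQ(\eta))^{-1}$, with $\mu$ uniform on $S_N$. Taking expectations and applying Jensen's inequality to move $\E$ inside $\log$ bounds $\GS_N^{\Sp}(\cQ(\eta))$ by $\beta^{-1}F_N^{\Sp}(\cQ(\eta))$ (computed with Hamiltonian $\beta\cH_N$) plus a volume term that is $O(K^2\log(1/\eta)/\beta + K^2/(\beta\sqrt{N}))$ by standard estimates for intersections of $K$ spherical shells of thickness $\eta$. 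For (ii), Proposition~\ref{prop:interpolation-common} applied to $\beta\cH_N$ (effective mixture $\beta^2\xi$) and divided by $\beta$ gives
\[
    \fr{1}{\beta}F^{\Sp}_N(\cQ(\eta))
    \le
    \fr{1}{\beta}\varphi^{\Sp}_\beta(0)
    -
    \fr{K\beta}{2}\int_{q_0}^1 (q-q_0)\xi''(q)\kappa(q)\zeta(q)\,dq
    + O(K^2\beta\eta) + O(KB\eta),
\]
on setting $|\lambda| = O(B)$ so that the $|\lambda|\eta$ term in Proposition~\ref{prop:interpolation-common} is absorbed into the $B\eta$ error.

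The crux is step (iii): bounding $\beta^{-1}\varphi^{\Sp}_\beta(0)$ by $K\Par^{\Sp}(B,\uzeta+\beta\kappa\zeta)$ plus a correction that cancels the $-\frac{K\beta}{2}\int(q-q_0)\xi''\kappa\zeta\,dq$ Guerra term. Since the integrand in \eqref{eq:varphi-0} is positive, I can relax the integration domain to $(S_N\cap B(\bm,\eta))^K$; the $\vbsig$-integral then factorizes into $K$ single-replica spherical cap integrals of the form $\int\exp\la\bh+\lambda\bm+\bg^{(u)}(\alpha),\pi(\bsig)\ra\,d\mu(\bsig)$. Although the Gaussian fields $\bg^{(u)}(\alpha)$ are correlated across $u\in\bbL$ through the matrices $M^{\vk,\vp,d}$, only their marginal distributions in $u$ enter each factor. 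I then apply the zero-temperature Auffinger--Chen Parisi bound to each cap (of radius $\sqrt{1-q_0}$), using RSB parameter $\uzeta$ on $[0,q_0)$ and spherical Lagrange multiplier $B$, and recursively collapse the Ruelle cascade layers from bottom to top; at depth $d$ the collapse scales the accumulated quadratic form by $\Sum(M^{\vk,\vp,d}) = K\kappa(q_{d-1})$, converting the cascade integration on $[q_0,1)$ into integration against the measure $\beta\kappa\zeta$.

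The main obstacle will be matching the resulting expression to the clean Parisi form. The cascade integration on $[q_0,1)$ produces terms of the form $\frac{K\beta}{2}\int q\xi''\kappa\zeta\,dq$ and $\frac{K}{2}\int \xi''/B_{\zeta_{\mathrm{tot}}}\,dq$, with $\zeta_{\mathrm{tot}} = \uzeta + \beta\kappa\zeta$; the first must combine exactly with the Guerra correction $-\frac{K\beta}{2}\int(q-q_0)\xi''\kappa\zeta\,dq$ and a boundary contribution at $q=1$ to produce the Crisanti--Sommers-type term $\frac{K}{2}\int_{q_0}^1 B_{\zeta_{\mathrm{tot}}}(q)\,dq$. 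Optimizing over $\lambda$ and combining with the external field piece $KR(\bh,\bm)$ from $\varphi^{\Sp}(0)$ yields $\frac{Kh^2}{2B_{\zeta_{\mathrm{tot}}}(0)}$. The hypothesis $B\ge\beta^{-1}$ ensures that $B_{\zeta_{\mathrm{tot}}}(q)$ stays positive after the rapid-variation $\beta\kappa\zeta$ component is included, so that $\Par^{\Sp}(B,\zeta_{\mathrm{tot}})$ is finite; the $O(K^2/\sqrt{N})$ error comes from Gaussian concentration of the single-replica spherical cap integrals around their Parisi limits.
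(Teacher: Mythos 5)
Your overall architecture matches the paper's: pass from ground state energy to a free energy at inverse temperature $\beta$, run Guerra's interpolation on $\beta\cH_N$, and then evaluate $\varphi(0)$ against the Parisi functional. The issue is that the central step (iii) glosses over the hardest part of the argument.

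Concretely, after relaxing to the product domain the $\vbsig$-integral does factorize over $u\in\bbL$, but the subsequent cascade sum $\sum_\alpha\nu_\alpha(\cdot)$ and the outer expectation $\E$ do not. Your claim that "only their marginal distributions in $u$ enter each factor" is false: when you collapse the Ruelle cascade from $t=1$ down, the Gaussian increments at each depth $d$ are distributed as $\cN(0,M^d)$ jointly across $u\in\bbL$, and the recursion couples the $K$ coordinates through the matrices $\Lambda_d \in\bbR^{K\times K}$. The paper handles this by converting spherical to Gaussian integrals via Talagrand's comparison (Lemma~\ref{lem:compare-spherical-gaussian}, with the $\chi^2$ probability factor contributing the $\frac{KB}{2}$ term via Lemma~\ref{lem:chisq-prob-lb}) and then carrying out a genuinely $K$-dimensional Gaussian recursion (Proposition~\ref{prop:gaussian-recursion}). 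The decisive inequality is not $\Sum(M^d)=K\kappa(q_{d-1})$ but rather the Loewner domination $M(q)\preceq\kappa(q)I_K$ (i.e.\ $\kappa$ is the top eigenvalue of $M(q)$), which yields $\Lambda(q)^{-1}\preceq B_{\kappa\zeta}(q)^{-1}I_K$, combined with $\Tr(M(q))=K$ to get $\Tr(\Lambda(q)^{-1}M(q))\le K/B_{\kappa\zeta}(q)$. That is where $\kappa$ enters the bound, and it is a different (and more delicate) use of the structure of $M^d$ than the row-sum identity you cite. Without writing out this matrix recursion and the Loewner comparison explicitly, the proof has a genuine gap.

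Two smaller but real issues. In step (i), your pointwise inequality runs the wrong way: $\int_{\cQ(\eta)}\exp(\beta\cH_N)\,d\mu^K\le \mu^K(\cQ(\eta))\exp(\beta\max\cH_N)$, so what you wrote bounds $\max\cH_N$ from below, not above. The correct route (as in Lemma~\ref{lem:fn-to-opt}) lower-bounds the partition function by restricting to a small ball around the argmax, which requires passing to the enlarged set $\cQ(2\eta)$ (so the ball stays inside), using the a.s.\ Lipschitz property of $\cH_N$ on a high-probability set (Proposition~\ref{prop:gradients-bounded}), and bounding the annealed/quenched gap via Borell--TIS concentration of $\cH_N(\vbsig^*)$ — that last step, not "concentration of single-replica spherical cap integrals around their Parisi limits," is the source of the $O(K^2/\sqrt{N})$ error.
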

Crucially, in the input of the Parisi functional, the increasing function $\zeta$ is pointwise multiplied by $\kappa$, which (by selecting appropriate parameters $\vk, \vp, \vq$) can be arranged to decrease as rapidly as desired.
This multiplication by $\kappa$ allows us to pass from increasing functions $\zeta \in \cM_{\vq}$ to arbitrary bounded variation functions, in the sense that $\beta \kappa \zeta$ can approximate any element of $\ocuL$.
Consequently, $\uzeta + \beta \kappa \zeta$ can approximate any element of $\cuL$, and $\Par^{\Sp}(B, \uzeta + \beta \kappa \zeta)$ can be made arbitrarily close to $\ALG^{\Sp}$.
We will prove Proposition~\ref{prop:uniform-multi-opt} by setting the parameters in Proposition~\ref{prop:spherical-multi-opt-ub} such that $(B, \uzeta + \beta \kappa \zeta)$ approximates the minimizer of $\Par^{\Sp}$ and the error term is small.

Our proof of Proposition~\ref{prop:uniform-multi-opt} proceeds in three steps. 
In Subsection~\ref{subsec:spherical-fe} we use the machinery of the previous section to prove Proposition~\ref{prop:gt-spherical-ub}, an upper bound on the free energy $F^{\Sp}_N(\cQ(\eta))$.
In Subsection~\ref{subsec:fe-to-gse}, we take this bound to low temperature to prove Proposition~\ref{prop:spherical-multi-opt-ub}.
In Subsection~\ref{subsec:grand-hamiltonian-spherical-completion}, we complete the proof of Proposition~\ref{prop:uniform-multi-opt} by setting appropriate parameters in Proposition~\ref{prop:spherical-multi-opt-ub}.

\subsection{The Free Energy Upper Bound}
\label{subsec:spherical-fe}

In this subsection, we will use Proposition~\ref{prop:interpolation-common} to upper bound $F_N^{\Sp}(\cQ(\eta))$.
We take $\mu$ to be the uniform measure on $S_N$.
The main result of this subsection is the following upper bound on $F_N^{\Sp}(\cQ(\eta))$, which holds for all $D, \vk, \vp, \vq, \eta$.

\begin{proposition}
    \label{prop:gt-spherical-ub}
    Let $\zeta \in \cM_{\vq}$ and $\uzeta \in \ucuL$ be arbitrary.
    Suppose $(B, \uzeta + \kappa \zeta) \in \cuK(\xi)$, $B \ge 1$, and $N\ge 2$.
    Then, 
    \[
        F_N^{\Sp}(\cQ(\eta))
        \le 
        K \Par^{\Sp}(B, \uzeta + \kappa \zeta) 
        + 3K^2 \xi''(1) \eta
        + KB\eta.
    \]
\end{proposition}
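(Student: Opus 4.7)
The strategy is to combine Proposition~\ref{prop:interpolation-common} with an explicit upper bound on $\varphi^{\Sp}(0)$ that matches the spherical Parisi functional. Since Proposition~\ref{prop:interpolation-common} already supplies the correction $-\tfrac{K}{2}\int_{q_0}^1 (q-q_0)\xi''(q)\kappa(q)\zeta(q)\,dq$ along with the $3K^2\xi''(1)\eta$ error, the task reduces to showing that $\varphi^{\Sp}(0)$ minus this correction is at most $K\,\Par^{\Sp}(B, \uzeta + \kappa\zeta) + KB\eta$ plus negligible terms, for a suitable choice of the free parameter $\lambda$ introduced in Section~\ref{sec:interpolation}.

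The first main step is a Cauchy--Schwarz bound on the sphere to eliminate the $\vbsig$ integration. For $\vbsig \in \cQ(\eta) \subseteq (S_N \cap B(\bm,\eta))^K$ one computes $\norm{\pi(\bsig(u))}_2^2 = N - 2\langle \bsig(u),\bm\rangle + \norm{\bm}_2^2 \le N(1-q_0) + 2N\eta$, so writing $\bg^{(u)}(\alpha)$ for the vector with entries $g^{(u)}_{\xi',i}(\alpha)$, for any $B>0$ and each $u\in\bbL$,
\[
\langle \bh + \lambda\bm + \bg^{(u)}(\alpha),\, \pi(\bsig(u))\rangle
\le \tfrac{BN(1-q_0)}{2} + BN\eta + \tfrac{1}{2B}\norm{\bh + \lambda\bm + \bg^{(u)}(\alpha)}_2^2.
\]
Substituting into \eqref{eq:varphi-0} decouples $\vbsig$ from the Gaussian processes, and since $\mu^K$ is a probability measure on $(S_N)^K$ the constraint $\vbsig\in \cQ(\eta)$ may be dropped at no cost. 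The problem reduces to computing the Ruelle-cascade expectation $\tfrac{1}{N}\log \E \sum_\alpha \nu_\alpha \exp\sum_u \tfrac{\norm{\bh + \lambda\bm + \bg^{(u)}(\alpha)}_2^2}{2B}$.

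The second main step evaluates this cascade expectation layer by layer. Because the processes $g^{(u)}_{\xi',i}(\alpha)$ are i.i.d.\ across coordinates $i$, the integrand factors over $i$, and one only needs to analyze the one-dimensional recursion. Applying the recursive evaluation of Ruelle cascades (\cite[Theorem~2.9]{panchenko2013sherrington}) together with the Gaussian-square MGF identity $\E_{g\sim\cN(0,\sigma^2)} e^{(v+g)^2/(2B)} = \sqrt{B/(B-\sigma^2)}\,e^{v^2/(2(B-\sigma^2))}$, the effective denominator at depth $d$ becomes $B - \sum_{d' \ge d}\kappa(q_{d'-1})\zeta_{d'-1}\bigl(\xi'(q_{d'})-\xi'(q_{d'-1})\bigr)$, exactly the discretization of $B_{\kappa\zeta}(q_{d-1})$. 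The decisive identification is that the row-sums of the covariance matrices $M^d$ equal $K\kappa(q_{d-1})$, so summing the squared Gaussian increments across the $K$ replicas at each layer multiplies the $\zeta$-increment by $\kappa$ as needed. Converting the discrete sum to an integral produces $\tfrac{K}{2}\int_{q_0}^1 \tfrac{\xi''(t)}{B_{\kappa\zeta}(t)}\,dt + \tfrac{K}{2}\int_{q_0}^1 (q-q_0)\xi''(q)\kappa(q)\zeta(q)\,dq$; the second piece cancels the correction from Proposition~\ref{prop:interpolation-common}. The residual quadratic at the base of the cascade depends only on $\bh + \lambda\bm$ and a rescaled constant denominator, and is handled by a second (single-replica) spherical Parisi-type interpolation on $[0,q_0)$ that introduces the function $\uzeta$ and recovers $\tfrac{Kh^2}{2B_{\uzeta+\kappa\zeta}(0)} + \tfrac{K}{2}\int_0^{q_0}\tfrac{\xi''(t)}{B_{\uzeta+\kappa\zeta}(t)}\,dt$; the deterministic contributions $KR(\bh,\bm) + \tfrac{BK(1-q_0)}{2}$ combine with the other pieces to reconstitute $\tfrac{K}{2}\int_0^1 B_{\uzeta+\kappa\zeta}(t)\,dt$. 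The main obstacle is the multi-replica bookkeeping: confirming that the $M^d$ row-sums yield exactly $K\kappa(q_{d-1})$ and that the discrete Ruelle recursion faithfully discretizes $B_{\kappa\zeta}$. The hypothesis $B \ge 1$ enters precisely to keep $B - \sigma^2 > 0$ at every recursion layer (using $\sigma^2 \le \kappa(q)\xi'(1) \le \xi'(1)$), while the logarithmic prefactors $(B/(B-\sigma^2))^{1/2}$ contribute $O(\log K/N)$ errors absorbed by the $N\ge 2$ assumption.
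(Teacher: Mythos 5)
Your first step — replacing the paper's Talagrand spherical-to-Gaussian comparison (Lemmas~\ref{lem:compare-spherical-gaussian} and \ref{lem:chisq-prob-lb}) with a pointwise Young's inequality $\la \by,\pi(\bsig)\ra \le \tfrac{1}{2B}\norm{\by}_2^2 + \tfrac{B}{2}\norm{\pi(\bsig)}_2^2$ followed by $\mu^K(\cQ(\eta))\le 1$ — is a genuine and valid alternative. It trades the constant $\tfrac{KB}{2}$ (from the $\chi^2$ bound) and the explicit $-\la\vby,\vbm\ra$ linear term for a constant $\tfrac{KB(1-q_0)}{2}+KB\eta$. Tracking through Proposition~\ref{prop:fe-ub} and a suitably re-centered $\lambda$ this gives the claimed inequality with error $2KB\eta$ rather than $KB\eta$, i.e.\ a factor-2 loss which is harmless downstream. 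This route also removes any need for the hypotheses $B\ge 1$ and $N\ge 2$, which enter the paper's proof only via Lemma~\ref{lem:chisq-prob-lb}; your stated reason for $B\ge 1$ (to keep $B-\sigma^2>0$) is not what it is used for — positivity is guaranteed by $(B,\uzeta+\kappa\zeta)\in\cuK(\xi)$.

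However, your second step contains a real gap. You describe a ``one-dimensional recursion'' with the scalar MGF identity and claim the decisive fact is that ``row-sums of $M^d$ equal $K\kappa(q_{d-1})$.'' Neither is accurate. The row-sum of $M^d$ is $\kappa(q_{d-1})$ (the \emph{total} sum is $K\kappa(q_{d-1})$), and more importantly that identity is not what drives the computation. The Gaussian increments $\veta_{d+1}\sim\cN(0,M^{d+1})$ are \emph{correlated across replicas}, so the cascade recursion at each coordinate $i$ is a genuinely $K$-dimensional Gaussian quadratic-form computation (Lemma~\ref{lem:gaussian-integral}, used in Proposition~\ref{prop:gaussian-recursion}). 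It does not factorize into $K$ independent one-dimensional problems and the scalar identity $\E\,e^{(v+g)^2/(2B)}=\sqrt{B/(B-\sigma^2)}\,e^{v^2/(2(B-\sigma^2))}$ does not apply. The operative fact is the Loewner inequality $M(q)\preceq\kappa(q)I_K$ (which holds because $\vone$ is the Perron eigenvector of each diagonal block of $M^d$, with eigenvalue equal to the row-sum). From this one deduces $\Lambda(q)\succeq B_{\kappa\zeta}(q)I_K$, which controls both $\Tr(\Lambda(q)^{-1}M(q))\le K/B_{\kappa\zeta}(q)$ in the log-determinant terms and the quadratic form $\vone^\top\Lambda_0^{-1}\vone\le K/B_{\kappa\zeta}(q_0)$. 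Without this Loewner comparison you have no valid way to upper-bound the $K\times K$ recursion by the scalar $B_{\kappa\zeta}$. Your last sentence about the ``logarithmic prefactors'' contributing an $O(\log K/N)$ error compounds the confusion: these prefactors are not negligible — after the Loewner bound and Jacobi's formula they are precisely the source of the term $\tfrac{K}{2}\int_{q_0}^1 \xi''(q)/B_{\kappa\zeta}(q)\,dq$ in the Parisi functional. Finally, your claim that $\uzeta$ enters via ``a second (single-replica) spherical Parisi-type interpolation on $[0,q_0)$'' is not what is needed; the paper's Lemma~\ref{lem:spherical-off-center-worse-than-on-center} handles the pre-$q_0$ regime with a direct Cauchy--Schwarz argument, not another interpolation, and that is the cleaner route.
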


The crux of this argument is to upper bound $\varphi^{\Sp}(0)$ so that we may apply Proposition~\ref{prop:interpolation-common}.
We equip the state space $(\bbR^N)^K$ with the natural inner product
\[
    \la \vby^1, \vby^2 \ra
    = 
    \sum_{u\in \bbL}
    \la \by^1(u), \by^2(u) \ra
\]
and norm $\norm{\vby}^2 = \la \vby, \vby \ra$.
Generate $\vbfeta_0 = (\bfeta_0(u)) \in (\bbR^N)^K$ by generating, independently for each $1\le i\le N$, 
\begin{equation}\label{eq:vbfeta0}
    (\vbfeta_0)_i 
    = 
    (\bfeta_0(u)_i)_{u\in \bbL} 
    \sim 
    \cN(0, M^{1}).
\end{equation}
Similarly, for $1\le d\le D$, independently generate $\vbfeta_d = (\bfeta_d(u))_{u\in \bbL} \in (\bbR^N)^K$ by generating, independently for each $1\le i\le N$,
\begin{equation}\label{eq:vbfetad}
    (\vbfeta_d)_i 
    = 
    (\bfeta_d(u)_i)_{u\in \bbL} 
    \sim 
    \cN(0, M^{d}).
\end{equation}
Let $\vbm = (\bm(u))_{u\in \bbL} \in (\bbR^N)^K$ and $\vbh = (\bh(u))_{u\in \bbL} \in (\bbR^N)^K$ satisfy $\bm(u) = \bm$ and $\bh(u) = \bh$ for all $u\in \bbL$.
For $\vbsig \in (\bbR^N)^K$, define $\pi(\vbsig) = \vbsig - \vbm$.
We define the following functions on $(\bbR^N)^K$. 
Let
\begin{align*}
    G_D(\vby) 
    &= 
    \log 
    \int_{\cQ(\eta)}
    \exp
    \la \vby, \pi(\vbsig)\ra
    \diff{\mu^K}(\vbsig) \\
    &= 
    - \la \vby, \vbm \ra
    + \log 
    \int_{\cQ(\eta)}
    \exp
    \la \vby, \vbsig\ra
    \diff{\mu^K}(\vbsig).
\end{align*}
and for $0\le d\le D-1$, let
\[
    G_d(\vby) = 
    \fr{1}{\zeta_d}
    \log
    \E
    \exp
    \zeta_d
    G_{d+1}\lt(
        \vby + 
        \vbfeta_{d+1} 
        (\xi'(q_{d+1}) - \xi'(q_d))^{1/2}
    \rt).
\]
By properties of Ruelle cascades, 
\[
    \varphi^{\Sp}(0) 
    = 
    \fr{1}{N}
    \E 
    G_0((\vbh + \lambda \vbm) + \vbfeta_0 \xi'(q_0)^{1/2})
    + KR(\bh, \bm).
\]
We will estimate the spherical integral $G_D$, and through it the functions $G_d$ for $0\le d\le D-1$, by comparison with a Gaussian integral. 
This step relies on the following lemma, which is a straightforward extension of \cite[Lemma 3.1]{talagrand2006spherical}; we defer the proof to the end of this section.
For $B \ge 1$, let $\nu_B$ denote the measure of $\cN(0, \fr{1}{B})$.
Let $\chi^2(d)$ denote a $\chi^2$ random variable with $d$ degrees of freedom.
\begin{lemma}
    \label{lem:compare-spherical-gaussian}
    For all $\vby \in (\bbR^N)^K$, 
    \[
        \exp G_D(\vby) 
        \le 
        \P\lt(\chi^2(N) \ge BN\rt)^{-K}
        \exp\lt(- \la \vby, \vbm \ra \rt)
        \int 
        \exp 
        \la \vby, \vbrho \ra
        \diff{\nu_B^{N\times K}(\vbrho)}.
    \]
\end{lemma}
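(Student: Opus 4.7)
The plan is to reduce the claim to the single-sphere Gaussian comparison of \cite[Lemma 3.1]{talagrand2006spherical} applied independently to each copy $u\in\bbL$. Since $\cQ(\eta)\subseteq (S_N)^K$ and the integrand $\exp\la\vby,\vbsig\ra$ is nonnegative, the first step is to drop the $\cQ$-constraint and exploit the product structure of $\mu^K$:
\[
    \int_{\cQ(\eta)} \exp\la\vby,\vbsig\ra \diff{\mu^K}(\vbsig)
    \le
    \prod_{u\in\bbL} \int_{S_N} \exp\la\by(u),\bsig(u)\ra \diff{\mu}(\bsig(u)).
\]
Discarding $\cQ(\eta)$ is harmless because the target inequality makes no reference to $Q$ or $\eta$, so all overlap constraints can be relaxed for free.

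Next, I would apply Talagrand's single-sphere bound to each of the $K$ factors. By polar decomposition, if $\brho\sim\nu_B^N$ then $\brho=r\hat U$ with $r=\norm{\brho}_2$ satisfying $Br^2\sim\chi^2(N)$ and $\hat U=\brho/r$ independently uniform on the unit sphere. Writing $\bsig=\sqrt{N}\hat U\sim\mu$ and $h(t)=\int_{S_N}\exp(t\la\by,\bsig\ra) \diff{\mu}(\bsig)$, the Gaussian integral becomes $\E[\exp\la\by,\brho\ra]=\E_r[h(r/\sqrt{N})]$. Since $\mu$ is invariant under $\bsig\mapsto-\bsig$, $h$ is even and convex with $h(0)=1$, hence nondecreasing on $[0,\infty)$. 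Restricting to the event $r\ge\sqrt{N}$, equivalently $\chi^2(N)\ge BN$, and using $h(r/\sqrt{N})\ge h(1)$ on this event gives
\[
    \int \exp\la\by,\brho\ra \diff{\nu_B^N}(\brho)
    \ge
    \P(\chi^2(N)\ge BN) \cdot \int_{S_N} \exp\la\by,\bsig\ra \diff{\mu}(\bsig),
\]
which rearranges to the desired single-sphere comparison.

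Finally, I would take the product over $u\in\bbL$: the $K$ factors of $\P(\chi^2(N)\ge BN)^{-1}$ combine, and $\diff{\nu_B^{N\times K}}=\prod_u \diff{\nu_B^N}$ converts the product of single-sphere Gaussian integrals into the full multi-copy integral $\int \exp\la\vby,\vbrho\ra \diff{\nu_B^{N\times K}}(\vbrho)$. Multiplying both sides of the resulting inequality by the common factor $\exp(-\la\vby,\vbm\ra)$ produces $\exp G_D(\vby)$ on the left and matches the right-hand side of the claim. No step presents a real obstacle: the only content is the single-sphere comparison, which is standard, and the passage to $K$ copies is purely formal once everything decouples across $\bbL$.
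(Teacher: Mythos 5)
Your proof is correct and follows the same route as the paper: drop the $\cQ(\eta)$ constraint, factor the integral over $\bbL$, apply the single-sphere Gaussian comparison coordinate-wise, and recombine. The only difference is that you re-derive the single-sphere bound (via polar decomposition, monotonicity of the even convex function $h$, and restricting to $\{r\ge\sqrt{N}\}$) where the paper simply cites it as \cite[Lemma 3.1]{talagrand2006spherical}; your derivation is correct and matches Talagrand's argument.
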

The probability term in this lemma can be controlled by the following standard bound, whose proof we also defer.
\begin{lemma}
    \label{lem:chisq-prob-lb}
    If $B\ge 1$ and $N\ge 2$, then
    \[
        \P(\chi^2(N) \ge BN)
        \ge 
        \exp(-BN/2).
    \]
\end{lemma}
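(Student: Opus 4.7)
The plan is to estimate the chi-squared tail directly from its density. Starting with
$$\P(\chi^2(N) \ge BN) = \int_{BN}^\infty \frac{x^{N/2-1} e^{-x/2}}{2^{N/2}\Gamma(N/2)}\,dx,$$
I would substitute $x = BN + 2u$ to extract the target exponential factor:
$$\P(\chi^2(N) \ge BN) = \frac{e^{-BN/2}}{2^{N/2-1}\Gamma(N/2)} \int_0^\infty (BN + 2u)^{N/2-1} e^{-u}\,du.$$
Since $N \ge 2$, the exponent $N/2 - 1$ is nonnegative, so $(BN + 2u)^{N/2-1} \ge (BN)^{N/2-1}$ for $u \ge 0$. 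Using $\int_0^\infty e^{-u}\,du = 1$, this reduces the problem to
$$\frac{(BN/2)^{N/2-1}}{\Gamma(N/2)} \ge 1,$$
and since $B \ge 1$ it suffices to prove the Gamma-function inequality $s^{s-1} \ge \Gamma(s)$ for all $s \ge 1$ (applied at $s = N/2$).

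To establish $\Gamma(s) \le s^{s-1}$, I would run a short Laplace-type argument. Substituting $t = su$ in $\Gamma(s+1) = \int_0^\infty t^s e^{-t}\,dt$ gives
$$\Gamma(s+1) = s^{s+1} e^{-s} \int_0^\infty e^{-s((u-\log u) - 1)}\,du,$$
where the phase $(u - \log u) - 1$ is nonnegative (attaining $0$ at $u=1$). Since the phase is nonnegative and $s \ge 1$, the integrand is pointwise bounded above by its $s = 1$ counterpart, so the integral is at most $\int_0^\infty e^{-((u-\log u)-1)}\,du = e \int_0^\infty u e^{-u}\,du = e$. This yields $\Gamma(s+1) \le s^{s+1} e^{1-s}$. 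Combining with the elementary inequality $s \le e^{s-1}$ for $s \ge 1$ (true at $s=1$ and has nonnegative derivative in $s$) gives $\Gamma(s+1) \le s^s$, equivalently $\Gamma(s) \le s^{s-1}$, as required.

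The main obstacle is really only bookkeeping; the substitution $x = BN + 2u$ is the one nonobvious step, and it works precisely because $N \ge 2$ makes the polynomial prefactor lower-boundable by its value at $u = 0$. The Gamma inequality could alternatively be deduced from a non-asymptotic Stirling bound such as $\Gamma(x+1) \le \sqrt{2\pi x}(x/e)^x e^{1/(12x)}$, but the Laplace argument avoids importing any external estimate. Note also that the bound is tight at $N=2$, where $\chi^2(2)$ is Exponential with mean $2$ and the inequality becomes the equality $\P(\chi^2(2) \ge 2B) = e^{-B}$, which both serves as a sanity check and shows the constant in the exponent cannot be improved.
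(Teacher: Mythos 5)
Your proof is correct and follows essentially the same route as the paper: estimate the tail integral of the $\chi^2$ density directly, bound the polynomial prefactor by its value at the lower endpoint using $N\ge 2$ (so the exponent $N/2-1$ is nonnegative), and reduce to the Gamma-function inequality $\Gamma(s)\le s^{s-1}$ for $s\ge 1$. The paper substitutes $x=Ny$, which lets it drop $y^{N/2-1}\ge 1$ for $y\ge B\ge 1$ and arrive at $(N/2)^{N/2-1}\ge\Gamma(N/2)$, whereas you substitute $x=BN+2u$, which extracts the target exponential immediately and makes the monotonicity of the prefactor the only thing to notice; the two routes are equivalent in content. Your main addition is the self-contained Laplace argument for $\Gamma(s+1)\le s^s$, which the paper simply asserts as known; this is a nice touch that makes the lemma independent of any external Stirling-type bound.
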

It remains to analyze the terms in Lemma~\ref{lem:compare-spherical-gaussian} involving $\vby$.
Define further 
\begin{align*}
    G'_D(\vby) 
    &= 
    -\la \vby, \vbm\ra + 
    \log \int 
    \exp
    \la \vby, \vbrho \ra
    \diff{\nu_b^K}(\vbrho)
    = 
    \fr{\norm{\vby}_2^2}{2B} 
    - 
    \la \vby, \vbm \ra, \\
    G'_d(\vby) 
    &= 
    \fr{1}{\zeta_d}
    \log
    \E
    \exp
    \zeta_d
    G_{d+1}\lt(
        \vby + 
        \vbfeta_{d+1} 
        (\xi'(q_{d+1}) - \xi'(q_d))^{1/2}
    \rt)
    \qquad
    \text{for $0\le d\le D-1$},
\end{align*}
Henceforth, suppose $N\ge 2$. 
Lemmas~\ref{lem:compare-spherical-gaussian} and \ref{lem:chisq-prob-lb} imply that
\begin{equation}
    \label{eq:varphi-0-ub}
    \varphi^{\Sp}(0)
    \le 
    \fr{1}{N}
    \E 
    G'_0((\vbh + \lambda \vbm) +\vbfeta_0 \xi'(q_0)^{1/2})
    + KR(\bh, \bm) 
    + \fr12 KB.
\end{equation}
Consider a new state space $\bbR^K$ with elements $\vy = (y(u))_{u\in \bbL}$ where $y(u)\in \bbR$, equipped with the natural inner product
\[
    \la \vy^1, \vy^2\ra 
    =
    \sum_{u\in \bbL} 
    \vy^1(u)\vy^2(u)
\]
and norm $\norm{\vy}_2^2 = \la \vy, \vy\ra$.
Generate the $\bbR^K$-valued Gaussians $\veta_0\sim \cN(0, M^{1})$ and, for $1\le d\le D$, $\veta_d\sim \cN(0, M^{d})$.
Recall that $\bh = (h,\ldots,h)$. 
Let $\bm = (m_1,\ldots,m_N)$, and let $\vone \in \bbR^K$ denote the all-1 vector.
For $1\le i\le N$, define the following functions on $\bbR^K$.
\begin{align*}
    \Gamma^i_D(\vy) 
    &= 
    \fr{\norm{\vy}^2}{2B} - m_i \la \vone, \vy\ra, \\
    \Gamma^i_d(\vy)
    &= 
    \fr{1}{\zeta_d}
    \log
    \E
    \exp
    \zeta_d
    \Gamma^i_{d+1}\lt(
        \vy + 
        \veta_{d+1} 
        (\xi'(q_{d+1}) - \xi'(q_d))^{1/2}
    \rt)
    \qquad
    \text{for $0\le d\le D-1$}.
\end{align*}
By independence of the $1\le i\le N$ coordinates in the $G'_d$, \eqref{eq:varphi-0-ub} implies
\begin{equation}
    \label{eq:varphi-0-ub-gamma}
    \varphi^{\Sp}(0)
    \le
    \fr{1}{N}
    \sum_{i=1}^N
    \E 
    \Gamma_0^i((h + \lambda m_i)\vone + \veta_0 \xi'(q_0)^{1/2})
    + KR(\bh, \bm) 
    + \fr12 KB.
\end{equation}
It remains to compute the Gaussian integrals $\Gamma^i_d$.
For this, we rely on the following lemma.
We defer the proof, which is a standard computation with Gaussian integrals. 
Let $\bbS_K$ denote the set of $K\times K$ positive definite matrices, and let $|\cdot|$ denote the matrix determinant.
\begin{lemma}
    \label{lem:gaussian-integral}
    Suppose $\zeta > 0$ and $\Lambda, \Sigma \in \bbS_K$ satisfy $\Lambda - \zeta \Sigma \in \bbS_K$. 
    If $\vv \in \bbR^K$ and $\veta \sim \cN(0, \Sigma)$, then
    \begin{align*}
        &\fr{1}{\zeta} 
        \log 
        \E 
        \exp 
        \fr12 \zeta 
        \lt[
            (\vy + \veta)^\top 
            \Lambda^{-1} 
            (\vy + \veta)
            - 
            2 \vv^\top (\vy + \veta)
        \rt] \\
        &= 
        \fr12 \lt[
            \vy^\top (\Lambda - \zeta \Sigma)^{-1} \vy
            - 
            2 \vv^\top \Lambda (\Lambda - \zeta \Sigma)^{-1} \vy
        \rt]
        +
        \fr{1}{2\zeta}
        \log \fr{|\Lambda|}{|\Lambda - \zeta \Sigma|}
        + 
        \fr{1}{2} 
        \vv^\top (\zeta \Sigma) (\Lambda - \zeta \Sigma)^{-1} \Lambda \vv.
    \end{align*}
\end{lemma}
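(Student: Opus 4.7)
The plan is a direct Gaussian completion-of-the-square calculation. First I would write the expectation explicitly against the density of $\cN(0,\Sigma)$, so that the exponent inside the integral becomes a quadratic form in $\veta$. Expanding $(\vy+\veta)^\top\Lambda^{-1}(\vy+\veta)$ and grouping the $\veta$-terms, this takes the shape
\[
    -\tfrac12\veta^\top A\veta + \bw^\top\veta + \tfrac12\zeta\vy^\top\Lambda^{-1}\vy - \zeta\vv^\top\vy,
\]
where $A=\Sigma^{-1}-\zeta\Lambda^{-1}$ and $\bw=\zeta(\Lambda^{-1}\vy-\vv)$. The hypothesis $\Lambda-\zeta\Sigma\in\bbS_K$ is equivalent to $A\in\bbS_K$ (multiply by $\Lambda^{1/2}$ on each side, or argue via simultaneous diagonalization of $(\Lambda,\Sigma)$), so the integral converges.

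Next I would complete the square and integrate: the Gaussian integration over $\veta$ produces the factor $|A\Sigma|^{-1/2}$ together with the shift $\tfrac12\bw^\top A^{-1}\bw$. Using $\Sigma A=(\Lambda-\zeta\Sigma)\Lambda^{-1}$, the determinant factor simplifies to $|\Lambda|/|\Lambda-\zeta\Sigma|$, which after $\frac1{2\zeta}\log(\cdot)$ is exactly the log-determinant term in the target formula.

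The remaining step is algebraic simplification of $\tfrac1{2\zeta}\bw^\top A^{-1}\bw$ plus the leftover $\tfrac12\vy^\top\Lambda^{-1}\vy-\vv^\top\vy$. I would use the key identity $A^{-1}=\Sigma(\Lambda-\zeta\Sigma)^{-1}\Lambda=\Lambda(\Lambda-\zeta\Sigma)^{-1}\Sigma$, obtained by factoring $A=\Sigma^{-1}(\Lambda-\zeta\Sigma)\Lambda^{-1}$ on one side and $A=\Lambda^{-1}(\Lambda-\zeta\Sigma)\Sigma^{-1}$ on the other (both products are symmetric since $\Lambda,\Sigma,\Lambda-\zeta\Sigma$ are). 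Together with the telescoping identities
\[
    (\Lambda-\zeta\Sigma)^{-1}-\Lambda^{-1}=\zeta\Lambda^{-1}\Sigma(\Lambda-\zeta\Sigma)^{-1},\qquad \vy-\Lambda(\Lambda-\zeta\Sigma)^{-1}\vy=-\zeta\Sigma(\Lambda-\zeta\Sigma)^{-1}\vy,
\]
the three quadratic terms (in $\vy$, in $\vv\otimes\vy$, and in $\vv$) collect into exactly the expression in the claim.

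The main obstacle is bookkeeping rather than mathematics: tracking which of the noncommuting products $\Sigma(\Lambda-\zeta\Sigma)^{-1}\Lambda$ versus $\Lambda(\Lambda-\zeta\Sigma)^{-1}\Sigma$ appears in each cross-term, and verifying the hidden symmetries that make the final expression well-formed. Once the identity $\Sigma(\Lambda-\zeta\Sigma)^{-1}\Lambda=\Lambda(\Lambda-\zeta\Sigma)^{-1}\Sigma$ is in hand, everything reduces to routine manipulation and no analytic subtlety remains.
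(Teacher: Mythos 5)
Your proposal is correct and follows essentially the same route as the paper: write the expectation as a Gaussian integral, complete the square in $\veta$ against the quadratic form $A=\Sigma^{-1}-\zeta\Lambda^{-1}$, and simplify the resulting determinant and quadratic terms via the factorizations $A^{-1}=\Lambda(\Lambda-\zeta\Sigma)^{-1}\Sigma=\Sigma(\Lambda-\zeta\Sigma)^{-1}\Lambda$. The paper compresses the final algebraic simplification into a single line labeled ``straightforward computation,'' whereas you spell out the intermediate identities, but the substance is identical.
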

We can compute the expectations in \eqref{eq:varphi-0-ub-gamma} by applying this lemma recursively.
Define 
\[
    \ocuK(\xi) = \lt\{
        (B, \zeta) \in \bbR^+ \times \ocuL : 
        B > \int_{q_0}^1 \xi''(q') \zeta(q') \diff{q'}
    \rt\}.
\]
\begin{proposition}
    \label{prop:gaussian-recursion}
    Let $\zeta \in \cM_{\vq}$, and suppose $(B, \kappa \zeta) \in \ocuK(\xi)$.
    Then, for $B_{\kappa \zeta}$ defined as in \eqref{eq:def-B},
    \[
        \E \Gamma^i_0((h + \lambda m_i)\vone + \veta_0 \xi'(q_0)^{1/2})
        \le 
        \fr{K}{2} \lt[
            \fr{(h + (\lambda - B) m_i)^2 + \xi'(q_0)}{B_{\kappa \zeta}(q_0)}
            +
            \int_{q_0}^1 \fr{\xi''(q)}{B_{\kappa \zeta}(q)} \diff{q}
             - Bm_i^2
        \rt].
    \]
\end{proposition}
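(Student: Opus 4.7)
The plan is to compute $\Gamma_d^i$ explicitly by iterating Lemma~\ref{lem:gaussian-integral} from $d = D$ down to $d = 0$. Induction gives the ansatz $\Gamma_d^i(\vy) = \tfrac12 \vy^\top \Lambda_d^{-1} \vy - m_i \vv_d^\top \vy + c_d$ with initial conditions $\Lambda_D = B \cdot I$, $\vv_D = \vone$, $c_D = 0$, and recursions
\[
\Lambda_d = \Lambda_{d+1} - \zeta_d(\xi'(q_{d+1}) - \xi'(q_d)) M^{d+1}, \qquad \Lambda_d \vv_d = \Lambda_{d+1} \vv_{d+1} = B \vone,
\]
together with an explicit additive recursion for the scalar $c_d$.

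The key structural input is that, by the $\prod_{\ell=1}^D S_{k_\ell}$-symmetry of the tree, $\vone$ is a joint eigenvector of every $M^e$ with eigenvalue $\kappa_e$; this follows from summing $M^e_{u^1, u^2}$ over $u^2 \in \bbL$ and invoking the counting formula underlying \eqref{eq:kappa-formula}. Consequently $\Lambda_d \vone = B_{\kappa\zeta}(q_d) \vone$, giving $\vv_d = (B / B_{\kappa\zeta}(q_d)) \vone$ and $\vone^\top \Lambda_d^{-1} \vone = K / B_{\kappa\zeta}(q_d)$. Under these identities the $m_i$-dependent pieces of the $c_d$ recursion telescope neatly, yielding
\[
c_0 = \tfrac{m_i^2 B^2 K}{2 B_{\kappa\zeta}(q_0)} - \tfrac{B K m_i^2}{2} + \sum_{d=0}^{D-1} \tfrac{1}{2\zeta_d} \log \tfrac{|\Lambda_{d+1}|}{|\Lambda_d|}.
\]

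Substituting into $\E \Gamma_0^i((h + \lambda m_i) \vone + \veta_0 \xi'(q_0)^{1/2})$ with $\veta_0 \sim \cN(0, M^1)$ and completing the square in $m_i$, all $m_i$-dependent terms collapse exactly to $\tfrac{K(h + (\lambda - B) m_i)^2}{2 B_{\kappa\zeta}(q_0)} - \tfrac{B K m_i^2}{2}$, which matches the corresponding piece of the target. The proposition therefore reduces to the $m_i$-free inequality
\[
\tfrac{\xi'(q_0)}{2} \mathrm{tr}(M^1 \Lambda_0^{-1}) + \sum_{d=0}^{D-1} \tfrac{1}{2\zeta_d} \log \tfrac{|\Lambda_{d+1}|}{|\Lambda_d|} \;\le\; \tfrac{K \xi'(q_0)}{2 B_{\kappa\zeta}(q_0)} + \tfrac{K}{2} \int_{q_0}^1 \tfrac{\xi''(q)}{B_{\kappa\zeta}(q)} \, dq.
\]

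To prove this I would use the same tree symmetry to simultaneously diagonalize all $M^e$ and $\Lambda_d$ in the isotypic decomposition $\bbR^{\bbL} = \bbR \vone \oplus \bigoplus_{d' \ge 1} W_{d'}$, on which $M^e$ has eigenvalue $\kappa_e$ when $e \ge d'$ and the strictly smaller value $\kappa_{d'} - L_{d'} p_{d'-1}$ (where $L_{d'} = \prod_{\ell > d'} k_\ell$) when $e < d'$. Combined with the identity $\int_{q_0}^1 \xi''/B_{\kappa\zeta} \, dq = \sum_d (\kappa_{d+1} \zeta_d)^{-1} \log(B_{\kappa\zeta}(q_{d+1}) / B_{\kappa\zeta}(q_d))$, valid because $\kappa\zeta$ is piecewise constant on $[q_d, q_{d+1})$, this converts the inequality into a comparison of weighted sums of logarithms indexed by $(d, d')$. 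The main obstacle is that the per-subspace inequality fails — on $W_0$ alone the left side already exceeds its share of the right — so the bound must exploit cancellation between the $W_0$ excess and the subspaces $W_{d'}$ with $d' \ge 1$, where the smaller eigenvalues of $M^e$ create the necessary slack. The requisite aggregate comparison can be established by a Bernoulli-type inequality of the form $(1 - \alpha t)^{\beta/\alpha} \le 1 - \beta t$ for $0 \le \beta \le \alpha$, applied layer by layer.
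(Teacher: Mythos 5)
Your setup and recursion match the paper's: you define $\Lambda_d$ by the same downward recursion, note $\vv_d = B m_i \Lambda_d^{-1}\vone$, and arrive at the same closed form for $\Gamma_0^i(\vy)$. Your observation that $\vone$ is a joint eigenvector of every $M^e$ with eigenvalue $\kappa(q_{e-1})$, so that $\Lambda_d\vone = B_{\kappa\zeta}(q_d)\vone$ exactly, is correct and in fact slightly sharper than what the paper uses for the $m_i$-dependent terms — the paper only invokes the Loewner bound $\Lambda_0^{-1}\preceq I_K/B_{\kappa\zeta}(q_0)$ to get $\Tr(\Lambda_0^{-1}\vone\vone^\top)\le K/B_{\kappa\zeta}(q_0)$, whereas your eigenvector identity shows this is equality. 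So up through the reduction to the $m_i$-free inequality you are on solid ground.

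The gap is in the final step. You correctly observe that a per-eigenspace comparison fails (the $\bbR\vone$ piece already overshoots its share, because $\kappa(q_0)$ can exceed the average diagonal value $1$), and you then appeal to ``cancellation between the $W_0$ excess and the subspaces $W_{d'}$'' via a Bernoulli-type inequality. This is not a proof: you have identified that subtle cancellation is needed, but the layer-by-layer Bernoulli argument is neither connected to the specific eigenvalues $\kappa_{d'} - L_{d'}p_{d'-1}$ nor carried out, so the crucial inequality
\[
\frac{\xi'(q_0)}{2}\Tr(M^1\Lambda_0^{-1}) + \sum_{d=0}^{D-1}\frac{1}{2\zeta_d}\log\frac{|\Lambda_{d+1}|}{|\Lambda_d|}
\le
\frac{K\xi'(q_0)}{2B_{\kappa\zeta}(q_0)} + \frac{K}{2}\int_{q_0}^1\frac{\xi''(q)}{B_{\kappa\zeta}(q)}\diff q
\]
remains unestablished.

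The paper avoids the eigendecomposition entirely. Jacobi's formula converts the determinant sum into $\int_{q_0}^1\xi''(q)\Tr(\Lambda(q)^{-1}M(q))\diff q$, and then the two ingredients $\Lambda(q)\succeq B_{\kappa\zeta}(q)I_K$ and $\Tr(M(q))=K$ (the diagonal of $M(q)$ is all $p_D=1$) immediately give $\Tr(\Lambda(q)^{-1}M(q))\le K/B_{\kappa\zeta}(q)$; the same one-line trace bound handles the $\Tr(\Lambda_0^{-1}M^1)$ term. The point you missed is that bounding the \emph{trace} rather than each eigenvalue automatically performs the aggregate accounting: the overshoot on $\bbR\vone$ (eigenvalue $\kappa$, larger than $1$) is precisely balanced by the undershoot on the other isotypic components (eigenvalues summing to $K-\kappa$), since the trace of $M(q)$ is fixed at $K$ regardless of how it distributes across eigenspaces. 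I'd suggest replacing your sketch of the final inequality with this trace argument, which closes the gap without any per-subspace analysis.
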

\begin{proof}
    Let $\Lambda_D = BI_K$, and for $0\le d\le D-1$, let 
    \[
        \Lambda_d 
        = 
        \Lambda_{d+1} 
        - \zeta_d (\xi'(q_{d+1}) - \xi'(q_d)) 
        M^{d+1}.
    \]
    We will first show that $\Lambda_0,\ldots,\Lambda_D \in \bbS_K$, so that we can apply Lemma~\ref{lem:gaussian-integral}.
    For $q\in [q_0, 1]$, we define
    \[
        \Lambda(q) = BI_K - \int_{q}^1 \xi''(q') M(q')\zeta(q') \diff{q'}.
    \]
    Note that $\Lambda_d = \Lambda(q_d)$ for all $0\le d\le D$.
    Since $M(q) \preceq \kappa(q) I_K$ in the Loewner order,
    \begin{equation}
        \label{eq:lambda-r-lb}
        \Lambda(q) 
        \succeq 
        \lt(B - \int_q^1 \xi''(q') \kappa(q')\zeta(q') \diff{q'} \rt) I_K 
        = 
        B_{\kappa \zeta}(q) I_K.
    \end{equation}
    So, the hypothesis $(B, \kappa\zeta) \in \ocuK(\xi)$ implies $\Lambda(q) \in \bbS_K$ for all $q\in [q_0,1]$.
    In particular $\Lambda_0,\ldots,\Lambda_D \in \bbS_K$.
    
    Further, define $\vv_D = m_i \vone$, and for $0\le d\le D-1$, define $\vv_d = \Lambda_d^{-1} \Lambda_{d+1} \vv_{d+1}$.
    This implies that $\vv_d = Bm_i \Lambda_d^{-1} \vone$.
    We can write $\Gamma^i_D$ as 
    \[
        \Gamma^i_D(\vy) = 
        \fr12 \lt(
            \vy^\top \Lambda_D^{-1} \vy - 
            2\vv_D^\top \vy
        \rt).
    \]
    By a recursive computation with Lemma~\ref{lem:gaussian-integral} (which applies because $\Lambda_0,\ldots,\Lambda_D \in \bbS_K$), we have for all $0\le d\le D$ that
    \begin{align*}
        \Gamma_d^i(\vy) 
        &= 
        \fr12
        \lt[
            \vy^\top \Lambda_d^{-1} \vy -
            2\vv_d^\top \vy
            +
            \sum_{d' = d}^{D-1} 
            \fr{1}{\zeta_{d'}}
            \log \fr{|\Lambda_{d'+1}|}{|\Lambda_{d'}|}
            + 
            \sum_{d' = d}^{D-1} 
            \vv_{d'+1} 
            (\Lambda_{d'+1}-\Lambda_{d'}) 
            \Lambda_{d'}^{-1} 
            \Lambda_{d'+1} 
            \vv_{d'+1}
        \rt] \\
        &= 
        \fr12
        \lt[
            \vy^\top \Lambda_d^{-1} \vy -
            2Bm_i \vone^\top \Lambda_d^{-1} \vy
            +
            \sum_{d' = d}^{D-1} 
            \fr{1}{\zeta_{d'}}
            \log \fr{|\Lambda_{d'+1}|}{|\Lambda_{d'}|}
            + 
            B^2 m_i^2
            \sum_{d' = d}^{D-1} 
            \vone^\top 
            \Lambda_{d'+1}^{-1}
            (\Lambda_{d'+1}-\Lambda_{d'}) 
            \Lambda_{d'}^{-1} 
            \vone
        \rt].
    \end{align*}
    Note that
    \[
        \sum_{d' = d}^{D-1} 
        \vone^\top
        \Lambda_{d'+1}^{-1}
        (\Lambda_{d'+1}-\Lambda_{d'}) 
        \Lambda_{d'}^{-1} 
        \vone
        = 
        \sum_{d' = d}^{D-1} 
        \vone^\top
        (\Lambda_{d'}^{-1} - \Lambda_{d'+1}^{-1})
        \vone
        = 
        \vone^\top
        (\Lambda_{d}^{-1} - \Lambda_{D}^{-1})
        \vone
        = 
        \vone^\top \Lambda_d^{-1} \vone
        - \fr{K}{B}.
    \]
    So, 
    \begin{align*}
        \Gamma_0^i(\vy) 
        &= 
        \fr12
        \lt[
            \vy^\top \Lambda_0^{-1} \vy 
            - 2Bm_i \vone^\top \Lambda_0^{-1} \vy
            + B^2m_i^2 \vone^\top \Lambda_0^{-1} \vone
            +
            \sum_{d = 0}^{D-1} 
            \fr{1}{\zeta_{d}}
            \log \fr{|\Lambda_{d+1}|}{|\Lambda_{d}|}
            - KB m_i^2
        \rt] \\
        &= 
        \fr12
        \lt[
            (\vy - B m_i \vone)^{\top} \Lambda_0^{-1} (\vy - B m_i \vone)
            +
            \sum_{d = 0}^{D-1} 
            \fr{1}{\zeta_{d}}
            \log \fr{|\Lambda_{d+1}|}{|\Lambda_{d}|}
            - KB m_i^2
        \rt]
    \end{align*}
    Therefore,
    \begin{align*}
        &\E \Gamma^i_0((h + \lambda m_i)\vone + \veta_0 \xi'(q_0)^{1/2}) \\
        &=
        \fr12
        \lt[
            (h + (\lambda - B) m_i)^2 
            \Tr( \Lambda_0^{-1} \vone\vone^\top)
            +
            \xi'(q_0)
            \Tr(\Lambda_0^{-1} M^{1}) 
            + 
            \sum_{d = 0}^{D-1} 
            \fr{1}{\zeta_{d}}
            \log \fr{|\Lambda_{d+1}|}{|\Lambda_{d}|}
            - KB m_i^2
        \rt].
    \end{align*}
    By Jacobi's formula,
    \[
        \fr{\diff{}}{\diff{q}} 
        \log |\Lambda(q)|
        = 
        \xi''(q) \zeta(q) \Tr(\Lambda(q)^{-1} M(q)),
    \]
    so
    \[
        \fr{1}{\zeta_d}
        \log
        \fr{|\Lambda_{d+1}|}{|\Lambda_d|}
        =
        \int_{q_d}^{q_{d+1}} 
        \xi''(q) 
        \Tr(\Lambda(q)^{-1} M(q)) 
        \diff{q}.
    \]
    Therefore,
    \begin{align*}
        &\E \Gamma^i_0((h + \lambda m_i)\vone + \veta_0 \xi'(q_0)^{1/2}) \\
        &=
        \fr12
        \lt[
            (h + (\lambda - B) m_i)^2 
            \Tr(\Lambda(q_0)^{-1} \vone\vone^\top)
            +
            \xi'(q_0)
            \Tr(\Lambda(q_0)^{-1} M(q_0)) 
            + 
            \int_{q_0}^1 
            \Tr(\Lambda(q)^{-1} M(q)) 
            \diff{q}
            - KB m_i^2
        \rt].
    \end{align*}
    Finally, for each $q\in [q_0, 1)$, (\ref{eq:lambda-r-lb}) implies $\Lambda(q)^{-1} \preceq \fr{I_K}{B_{\kappa \zeta}(q)}$, so 
    \[
        \Tr(\Lambda(q)^{-1}M(q)) 
        \le 
        \Tr\lt(\fr{M(q)}{B_{\kappa \zeta}(q)}\rt) 
        = 
        \fr{K}{B_{\kappa \zeta}(q)},
    \]
    and similarly $\Tr(\Lambda(q_0)^{-1} \vone \vone^\top) \le \fr{K}{B_{\kappa \zeta}(q_0)}$.
    This implies the result.
\end{proof}
Proposition~\ref{prop:gaussian-recursion} and \eqref{eq:varphi-0-ub-gamma} readily imply the following bound on $F_N^{\Sp}(\cQ(\eta))$.
\begin{proposition}
    \label{prop:fe-ub}
    Let $B \ge 1$, $N \ge 2$, and $\lambda \in \bbR$.
    Let $\zeta \in \cM_{\vq}$, and suppose $(B, \kappa \zeta) \in \ocuK(\xi)$. 
    Then,
    \begin{align*}
        F_N^{\Sp}(\cQ(\eta)) 
        &\le 
        \fr{K}{2} \lt[
            \fr{\norm{\bh + (\lambda - B)\bm}_N^2 + \xi'(q_0)}{B_{\kappa \zeta}(q_0)}
            + 2R(\bh, \bm) 
            + \int_{q_0}^1 
            \lt(\fr{\xi''(q)}{B_{\kappa \zeta}(q)} + B_{\kappa \zeta}(q)\rt) 
            \diff{q}
        \rt] \\
        &\qquad + 3K^2 \xi''(1) \eta + K|\lambda| \eta.
    \end{align*}
\end{proposition}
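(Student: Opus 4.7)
The plan is to combine the three main building blocks already assembled in the excerpt: the Guerra interpolation bound of Proposition~\ref{prop:interpolation-common}, the spherical-integral-to-Gaussian-integral comparison embodied in \eqref{eq:varphi-0-ub-gamma}, and the exact recursive evaluation of the Ruelle cascade carried out in Proposition~\ref{prop:gaussian-recursion}. Each gives one ingredient, and the only real work is a clean algebraic merge.

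First I would write
\[
    F_N^{\Sp}(\cQ(\eta))
    \le
    \varphi^{\Sp}(0)
    - \fr{K}{2}\int_{q_0}^1 (q-q_0)\xi''(q)\kappa(q)\zeta(q)\diff{q}
    + 3K^2\xi''(1)\eta + K|\lambda|\eta
\]
by Proposition~\ref{prop:interpolation-common}. Next I would combine the inequality \eqref{eq:varphi-0-ub-gamma} with Proposition~\ref{prop:gaussian-recursion}; since $\fr{1}{N}\sum_{i=1}^N (h+(\lambda-B)m_i)^2 = \norm{\bh+(\lambda-B)\bm}_N^2$ and $\fr{1}{N}\sum_{i=1}^N m_i^2 = \norm{\bm}_N^2 = q_0$, averaging the per-coordinate bound of Proposition~\ref{prop:gaussian-recursion} over $i$ yields
\[
    \varphi^{\Sp}(0)
    \le
    \fr{K}{2}\lt[
        \fr{\norm{\bh+(\lambda-B)\bm}_N^2 + \xi'(q_0)}{B_{\kappa\zeta}(q_0)}
        + \int_{q_0}^1 \fr{\xi''(q)}{B_{\kappa\zeta}(q)}\diff{q}
        - Bq_0
    \rt]
    + KR(\bh,\bm) + \fr{KB}{2}.
\]
Note $B\ge 1$ and $N\ge 2$ are exactly the hypotheses needed to invoke Lemma~\ref{lem:chisq-prob-lb} en route to \eqref{eq:varphi-0-ub}, while $(B,\kappa\zeta)\in\ocuK(\xi)$ is precisely the positive-definiteness condition used in Proposition~\ref{prop:gaussian-recursion}.

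The last step is to reconcile the two ``$B$-dependent'' contributions using a Fubini identity. Starting from the definition $B_{\kappa\zeta}(q) = B - \int_q^1 \xi''(q')\kappa(q')\zeta(q')\diff{q'}$, swapping the order of integration gives
\[
    \int_{q_0}^1 B_{\kappa\zeta}(q)\diff{q}
    = B(1-q_0) - \int_{q_0}^1 (q-q_0)\xi''(q)\kappa(q)\zeta(q)\diff{q}.
\]
Therefore the sum of the three ``$B$-type'' terms picked up above, namely $\fr{KB}{2} - \fr{KBq_0}{2}$ from $\varphi^{\Sp}(0)$ and $-\fr{K}{2}\int_{q_0}^1(q-q_0)\xi''(q)\kappa(q)\zeta(q)\diff{q}$ from the interpolation bound, collapses exactly to $\fr{K}{2}\int_{q_0}^1 B_{\kappa\zeta}(q)\diff{q}$. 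Substituting this back and rewriting $KR(\bh,\bm)$ as $\fr{K}{2}\cdot 2R(\bh,\bm)$ produces precisely the claimed upper bound on $F_N^{\Sp}(\cQ(\eta))$.

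There is no real obstacle here; the only point requiring care is the book-keeping in the Fubini step, since the $-Bq_0$ term from Proposition~\ref{prop:gaussian-recursion}, the free $\fr{KB}{2}$ from the $\chi^2$-volume estimate, and the $(q-q_0)\xi''(q)\kappa(q)\zeta(q)$ contribution from the Guerra interpolation must combine cleanly into the $B_{\kappa\zeta}(q)$ integral appearing in $\Par^{\Sp}(B,\uzeta+\kappa\zeta)$. Once that identity is verified, the proof is a two-line concatenation of the displayed inequalities.
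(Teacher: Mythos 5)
Your proposal is correct and mirrors the paper's proof step for step: average Proposition~\ref{prop:gaussian-recursion} over $i$ using $\norm{\bh+(\lambda-B)\bm}_N^2=\fr{1}{N}\sum_i(h+(\lambda-B)m_i)^2$ and $\norm{\bm}_N^2=q_0$, feed the result through \eqref{eq:varphi-0-ub-gamma} and Proposition~\ref{prop:interpolation-common}, then collapse the $B$-dependent terms via the identity $\int_{q_0}^1 B_{\kappa\zeta}(q)\diff{q}=B(1-q_0)-\int_{q_0}^1(q-q_0)\xi''(q)\kappa(q)\zeta(q)\diff{q}$ (which the paper obtains by integration by parts and you obtain by Fubini — the same computation). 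No substantive difference.
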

\begin{proof}
    By averaging Proposition~\ref{prop:gaussian-recursion} over $1\le i\le N$, we get
    \[
        \fr{1}{N} 
        \sum_{i=1}^N 
        \E \Gamma^i_0((h + \lambda m_i)\vone + \veta_0 \xi'(q_0)^{1/2})
        \le 
        \fr{K}{2} \lt[
            \fr{\norm{\bh + (\lambda - B) \bm}_N^2 + \xi'(q_0)}{B_{\kappa \zeta}(q_0)}
            +
            \int_{q_0}^1 \fr{\xi''(q)}{B_{\kappa \zeta}(q)} \diff{q}
             - Bq_0
        \rt]
    \]
    where we used that $\norm{\bm}_N^2 = q_0$.
    Equation \eqref{eq:varphi-0-ub-gamma} implies that
    \[
        \varphi^{\Sp}(0)
        \le 
        \fr{K}{2} \lt[
            \fr{\norm{\bh + (\lambda - B) \bm}_N^2 + \xi'(q_0)}{B_{\kappa \zeta}(q_0)}
            +
            2R(\bh, \bm) 
            +
            \int_{q_0}^1 \fr{\xi''(q)}{B_{\kappa \zeta}(q)} \diff{q}
            + (1-q_0)B
        \rt].
    \]
    By Proposition~\ref{prop:interpolation-common}, this implies
    \begin{align*}
        F_N^{\Sp}(\cQ(\eta)) 
        &\le 
        \fr{K}{2} \bigg[
            \fr{\norm{\bh + (\lambda - B) \bm}_N^2 + \xi'(q_0)}{B_{\kappa \zeta}(q_0)}
            +
            2R(\bh, \bm) 
            +
            \int_{q_0}^1 \fr{\xi''(q)}{B_{\kappa \zeta}(q)} \diff{q}
            + (1-q_0)B \\
            &\qquad 
            - \int_{q_0}^1 
            (q-q_0)
            \xi''(q)
            \kappa(q)
            \zeta(q) \diff{q}
        \bigg]
        + 3K^2 \xi''(1) \eta + K|\lambda| \eta
    \end{align*}
    By integration by parts,
    \begin{align*}
        - \int_{q_0}^1 (q-q_0) \xi''(q) \kappa(q) \zeta(q) \diff{q}
        &= 
        (q-q_0) \int_q^1 \xi''(q') \kappa(q') \zeta(q') \diff{q'} \Big|_{q=q_0}^1
        - \int_{q_0}^1 \int_q^1 \xi''(q') \kappa(q') \zeta(q') \diff{q'} \diff{q} \\
        &= 
        \int_{q_0}^1 B_{\kappa \zeta}(q) \diff{q} -(1-q_0)B,
    \end{align*}
    which yields the result.
\end{proof}

The next lemma upper bounds our estimates for $F_N^{\Sp}(\cQ(\eta))$ in terms of the Parisi functional uniformly in $\bm$. 

\begin{lemma}
    \label{lem:spherical-off-center-worse-than-on-center}
    Let $q_0 \in [0,1]$.
    For $(B, \zeta) \in \cuK(\xi)$, $\bh = (h, \ldots, h)$, $\norm{\bm}_N^2 = q_0$, there exists $\lambda \in [0,B]$ such that
    \[
        \fr12 \lt[
            \fr{\norm{\bh + (\lambda - B)\bm}_N^2 + \xi'(q_0)}{B_{\zeta}(q_0)}
            + 2R(\bh, \bm) 
            + \int_{q_0}^1 
            \lt(\fr{\xi''(q)}{B_{\zeta}(q)} + B_{\zeta}(q)\rt) 
            \diff{q}
        \rt]
        \le 
        \Par^{\Sp}(\zeta).
    \]
\end{lemma}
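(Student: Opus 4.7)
The plan is to choose $\lambda$ explicitly as a function of $\bar m := \fr{1}{N}\sum_i m_i$ (so that $R(\bh,\bm) = h\bar m$ and $\bar m \in [-\sqrt{q_0},\sqrt{q_0}]$) and verify the inequality by case analysis. Since $\bh = h\vone$ and $\|\bm\|_N^2 = q_0$, one has $\|\bh + u\bm\|_N^2 = h^2 + 2uh\bar m + u^2 q_0$ for $u := \lambda - B$; thus both sides of the target depend on $\bm$ only through $\bar m$. Write $\alpha := B_\zeta(q_0)$, $b_0 := B_\zeta(0)$, and define $J(t) := \fr{h^2+\xi'(t)}{B_\zeta(t)} + \int_t^1(\xi''/B_\zeta + B_\zeta)\diff{q}$. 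A direct calculation, noting $2\Par^{\Sp}(B,\zeta) = J(0)$, yields
\[
    \text{LHS} - \Par^{\Sp}(B,\zeta) = h\bar m\,(1 + u/\alpha) + \fr{u^2 q_0}{2\alpha} - \fr{I}{2},
\]
where $I := J(0) - J(q_0)$. Differentiating gives $J'(t) = -(h^2+\xi'(t))\xi''(t)\zeta(t)/B_\zeta(t)^2 - B_\zeta(t) \le 0$, so $I \ge 0$.

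The RHS above is convex in $u$ with unconstrained minimizer $u^\star = -h\bar m/q_0$. I will choose: $\lambda = B$ when $\bar m \le 0$; $\lambda = B - h\bar m/q_0$ when $\bar m \in (0,\,Bq_0/h]$; and $\lambda = 0$ when $\bar m > Bq_0/h$ (which forces $h > B\sqrt{q_0}$, hence $\alpha\sqrt{q_0} < h$). For $\bar m \le 0$ the inequality is immediate, since $\text{LHS}-\Par^{\Sp} = h\bar m - I/2 \le 0$. For the middle range with $u = u^\star$, the difference simplifies to $h\bar m - h^2\bar m^2/(2q_0\alpha) - I/2$; maximizing over $\bar m$ reduces the claim to either $q_0\alpha \le I$ (if $\alpha\sqrt{q_0} \le h$, interior maximum at $\bar m^\star = q_0\alpha/h$) or $2h\sqrt{q_0} - h^2/\alpha \le I$ (if $\alpha\sqrt{q_0} > h$, boundary at $\bar m = \sqrt{q_0}$). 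For the last range with $u = -B$, the expression is linear in $\bar m$ with non-positive slope, so its maximum at $\bar m = Bq_0/h$ reduces the claim to $2Bq_0 - B^2q_0/\alpha \le I$.

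These three scalar inequalities are verified as follows. The bound $q_0\alpha \le I$ in the first subcase follows from the identity $I - q_0\alpha = \int_0^{q_0}\xi''(t)\zeta(t)\bigl[(h^2+\xi'(t))/B_\zeta(t)^2 - t\bigr]\diff{t}$, which is nonnegative since $tB_\zeta(t)^2 \le q_0\alpha^2 \le h^2 \le h^2 + \xi'(t)$ under $\alpha\sqrt{q_0}\le h$ (using that $B_\zeta$ is nondecreasing). For the remaining two, integration by parts via $(-1/B_\zeta)' = \xi''\zeta/B_\zeta^2$ rewrites $I = h^2(1/b_0 - 1/\alpha) - \xi'(q_0)/\alpha + K$ with $K := \int_0^{q_0}(\xi''/B_\zeta + B_\zeta)\diff{t}$. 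The crude bound $K \ge q_0 b_0 + \xi'(q_0)/\alpha$ (from $b_0 \le B_\zeta(t) \le \alpha$ on $[0,q_0]$) together with AM-GM $h^2/b_0 + q_0 b_0 \ge 2h\sqrt{q_0}$ yields $2h\sqrt{q_0} - h^2/\alpha \le I$. The third bound $2Bq_0 - B^2q_0/\alpha \le I$ follows by first using $h > B\sqrt{q_0}$ and $1/b_0 \ge 1/\alpha$ to replace $h^2$ by $B^2 q_0$ inside $h^2(1/b_0 - 1/\alpha)$, then applying the analogous AM-GM $B^2 q_0/b_0 + q_0 b_0 \ge 2Bq_0$. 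The principal obstacle is sign bookkeeping across the subcases and identifying the correct AM-GM in each; aside from that, the argument is routine calculus.
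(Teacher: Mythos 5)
Your proof is correct, but it takes a genuinely different route from the paper's. The paper picks a single, $\bm$-independent choice $\lambda = \int_0^1 \xi''(q)\zeta(q)\,\diff{q} = B - B_\zeta(0)$, applies the two monotonicity bounds $\xi'(q_0)/B_\zeta(q_0) \le \int_0^{q_0}\xi''(q)/B_\zeta(q)\,\diff{q}$ and $q_0 B_\zeta(0) \le \int_0^{q_0}B_\zeta(q)\,\diff{q}$, and then observes the residual inequality factors as $\bigl(\tfrac{1}{B_\zeta(0)} - \tfrac{1}{B_\zeta(q_0)}\bigr)\norm{\bh - B_\zeta(0)\bm}_N^2 \ge 0$, a one-line square. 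You instead optimize $u = \lambda - B$ pointwise in $\bar m$, which forces projecting the unconstrained minimizer $u^\star = -h\bar m/q_0$ onto $[-B,0]$, hence a three-way case split on $\bar m$, followed by a further maximization over $\bar m$ and three separate scalar inequalities comparing $I = J(0)-J(q_0)$ to $q_0\alpha$, $2h\sqrt{q_0}-h^2/\alpha$, and $2Bq_0 - B^2 q_0/\alpha$. I checked each piece: the reduction of both sides to functions of $\bar m$ alone is right (it uses $\bh = h\vone$ and $\norm{\bm}_N^2 = q_0$), the formula $J'(t) = -(h^2+\xi'(t))\xi''(t)\zeta(t)/B_\zeta(t)^2 - B_\zeta(t)$ and the identity for $I - q_0\alpha$ are correct, the integration-by-parts rewrite $I = h^2(1/b_0 - 1/\alpha) - \xi'(q_0)/\alpha + K$ is correct, the case-membership sanity checks ($\alpha\le B$ ensures $\bar m^\star \le Bq_0/h$; case 3 forces $h > B\sqrt{q_0}$) are right, and the degenerate cases $h=0$, $q_0=0$ collapse harmlessly to $u=0$. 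The paper's argument buys brevity and no case analysis, at the cost of requiring one to guess the magic value of $\lambda$; yours is more mechanical and self-discovering but considerably longer.
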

\begin{proof}
    We take $\lambda = \int_0^1 \xi''(q)\zeta(q) \diff{q}$.
    The condition $(B,\zeta) \in \cuK(\xi)$ implies that $\lambda \in [0, B]$. 
    Note that $\lambda - B = -B_\zeta(0)$.
    It suffices to prove that
    \[
        \fr{\norm{\bh - B_\zeta(0) \bm}_N^2 + \xi'(q_0)}{B_\zeta(q_0)} 
        + 
        2R(\bh, \bm) 
        \le 
        \fr{\norm{\bh}_N^2}{B_{\zeta}(0)}
        +
        \int_0^{q_0}\lt(
            \fr{\xi''(q)}{B_{\zeta}(q)} + B_{\zeta}(q)
        \rt) \diff{q}.
    \]
    Note that
    \[
        \fr{\xi'(q_0)}{B_{\zeta}(q_0)}
        =
        \int_0^{q_0}
        \fr{\xi''(q)}{B_{\zeta}(q_0)}
        \diff{q}
        \le 
        \int_0^{q_0}
        \fr{\xi''(q)}{B_{\zeta}(q)}
        \diff{q}
        \qquad
        \text{and}
        \qquad
        q_0 B_{\zeta}(0)
        \le 
        \int_0^{q_0}
        B_{\zeta}(q)
        \diff{q}.
    \]
    So, it suffices to prove that
    \[
        \fr{\norm{\bh - B_\zeta(0) \bm}_N^2}{B_\zeta(q_0)}
        + 2R(\bh, \bm) 
        \le 
        \fr{\norm{\bh}_N^2}{B_{\zeta}(0)}
        + q_0 B_{\zeta}(0).
    \]
    This rearranges to (using that $\norm{\bm}_N^2 = q_0$)
    \[
        0 
        \le 
        \lt(
            \fr{1}{B_\zeta(0)} - \fr{1}{B_\zeta(q_0)}
        \rt)
        \lt(
            \norm{\bh}_N^2
            - 2B_\zeta(0) R(\bh, \bm)
            + B_\zeta(0)^2 \norm{\bm}_N^2
        \rt),
    \]
    which follows from Cauchy-Schwarz.
\end{proof}

We are now ready to prove Proposition~\ref{prop:gt-spherical-ub}.
\begin{proof}[Proof of Proposition~\ref{prop:gt-spherical-ub}]
    Recall that the restriction of $\uzeta + \kappa \zeta \in \cuL$ on $[q_0, 1)$ is $\kappa \zeta$.
    Because $(B, \uzeta + \kappa \zeta) \in \cuK(\xi)$, we have $(B,  \kappa \zeta) \in \ocuK(\xi)$, and so Proposition~\ref{prop:fe-ub} applies.
    Combining this with Lemma~\ref{lem:spherical-off-center-worse-than-on-center} applied on $(B, \uzeta + \kappa \zeta)$ gives the result. 
\end{proof}

\subsection{From Free Energy to Ground State Energy}
\label{subsec:fe-to-gse}

Next, we will prove Proposition~\ref{prop:spherical-multi-opt-ub} by taking Proposition~\ref{prop:gt-spherical-ub} to low temperature.
We introduce the following temperature-scaled free energy.
For $\beta > 0$ and $\eta \in (0, 1)$, let
\[
    F_N^{\Sp}(\beta, \cQ(\eta)) = 
    \fr{1}{N} 
    \log \E \int_{\cQ(\eta)}
    \exp \beta \cH_N(\vbsig) 
    \diff{\mu^K}(\vbsig).
\]
This free energy can be upper bounded by the following application of Proposition~\ref{prop:gt-spherical-ub}.

\begin{corollary}
    \label{cor:temp-fe-ub}
    Let $\zeta \in \cM_{\vq}$ and $\uzeta \in \ucuL$ be arbitrary.
    Let $\beta > 0$ and suppose $(B, \uzeta + \beta \kappa \zeta) \in \cuK(\xi)$, $B \ge \beta^{-1}$, and $N\ge 2$.
    Then,
    \[
        \fr{1}{\beta} F_N^{\Sp}(\beta, \cQ(\eta))
        \le 
        K \Par^{\Sp}(B, \uzeta + \beta \kappa \zeta) 
        + 3K^2 \xi''(1) \beta \eta
        + KB\eta.
    \]
\end{corollary}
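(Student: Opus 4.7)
The plan is a rescaling argument. For $\beta>0$, if $H_N$ has model $(\xi,h)$, then $\beta H_N$ is itself a mixed even Hamiltonian with model $(\beta^2\xi,\beta h)$: the disorder coefficients $\gamma_p$ become $\beta\gamma_p$ (so the mixture function becomes $\beta^2\xi$), and the external field $\bh$ becomes $\beta\bh$. Under this identification, the temperature-scaled free energy $F_N^{\Sp}(\beta,\cQ(\eta))$ is exactly the unit-temperature free energy $F_N^{\Sp}(\cQ(\eta))$ computed in the model $(\beta^2\xi,\beta h)$. The strategy is therefore to apply Proposition~\ref{prop:gt-spherical-ub} in this rescaled model with Parisi parameters chosen to undo the rescaling.

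Concretely, I would apply Proposition~\ref{prop:gt-spherical-ub} to the model $(\beta^2\xi,\beta h)$ with the choices $B'=\beta B$, $\zeta'=\zeta\in\cM_{\vq}$, and $\uzeta'=\beta^{-1}\uzeta\in\ucuL$ (the classes $\cM_{\vq}$ and $\ucuL$ are closed under positive scaling, so these are valid). The hypotheses transfer cleanly: $B'\ge 1$ becomes $\beta B\ge 1$, i.e.\ $B\ge\beta^{-1}$, which is assumed; and the membership $(B',\uzeta'+\kappa\zeta')\in\cuK(\beta^2\xi)$ unpacks to
\[
    \beta B>\int_0^1\beta^2\xi''(t)(\beta^{-1}\uzeta+\kappa\zeta)(t)\diff{t}=\beta\int_0^1\xi''(t)(\uzeta+\beta\kappa\zeta)(t)\diff{t},
\]
which is exactly $(B,\uzeta+\beta\kappa\zeta)\in\cuK(\xi)$, given in the corollary.

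It remains to verify that the Parisi functional rescales as expected. Writing the function in \eqref{eq:def-B} for the model $(\beta^2\xi,\beta h)$ with $B'=\beta B$ and total parameter $\beta^{-1}\uzeta+\kappa\zeta$, one computes
\[
    B'_{\beta^{-1}\uzeta+\kappa\zeta}(t)=\beta B-\int_t^1\beta^2\xi''(q)(\beta^{-1}\uzeta+\kappa\zeta)(q)\diff{q}=\beta\, B_{\uzeta+\beta\kappa\zeta}(t).
\]
Substituting into \eqref{eq:def-parisi-functional-sp} produces a $\beta$ in the numerator of every term inside the bracket (from $(\beta h)^2/\beta$, $\beta^2\xi''(t)/\beta$, and $\beta B_{\uzeta+\beta\kappa\zeta}(t)$ respectively), giving
\[
    \Par^{\Sp}_{\beta^2\xi,\beta h}(\beta B,\beta^{-1}\uzeta+\kappa\zeta)=\beta\,\Par^{\Sp}_{\xi,h}(B,\uzeta+\beta\kappa\zeta).
\]
Combined with $(\beta^2\xi)''(1)=\beta^2\xi''(1)$, Proposition~\ref{prop:gt-spherical-ub} applied to the rescaled model yields
\[
    F_N^{\Sp}(\beta,\cQ(\eta))\le K\beta\,\Par^{\Sp}(B,\uzeta+\beta\kappa\zeta)+3K^2\beta^2\xi''(1)\eta+K\beta B\eta,
\]
and dividing by $\beta$ gives the corollary. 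There is no genuine obstacle; the proof is a bookkeeping check that each occurrence of $\xi$, $h$, $B$, and $\zeta$ in the Parisi functional and in the error terms scales consistently under $H_N\mapsto\beta H_N$.
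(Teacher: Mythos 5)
Your proposal is correct and is essentially identical to the paper's proof: both apply Proposition~\ref{prop:gt-spherical-ub} to the rescaled model $(\beta^2\xi,\beta h)$ with parameters $\beta B$, $\zeta$, and $\beta^{-1}\uzeta$, check that the hypotheses transfer, verify the scaling identity $\Par^{\Sp}_{\beta^2\xi,\beta h}(\beta B,\beta^{-1}\uzeta+\kappa\zeta)=\beta\,\Par^{\Sp}_{\xi,h}(B,\uzeta+\beta\kappa\zeta)$ (which the paper simply asserts and you verify in detail), and divide by $\beta$.
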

\begin{proof}
    The hypothesis $(B, \uzeta + \beta \kappa \zeta) \in \cuK(\xi)$ implies $(\beta B, \beta^{-1} \uzeta + \kappa \zeta) \in \cuK(\beta^2 \xi)$.
    The hypothesis $B \ge \beta^{-1}$ implies $\beta B \ge 1$. 
    By Proposition~\ref{prop:gt-spherical-ub} with parameters $(\beta^2 \xi, \beta h)$ (corresponding to the Hamiltonian $\beta H_N$), $\zeta$, $\beta B$, and $\beta^{-1}\uzeta$,
    \begin{equation}
        \label{eq:temp-fe-ub}
        F_N^{\Sp}(\beta, \cQ(\eta)) 
        \le 
        K\Par^{\Sp}_{\beta^2 \xi, \beta h}(\beta B, \beta^{-1} \uzeta + \kappa \zeta)
        + 3K^2 \xi''(1) \beta^2\eta 
        + KB \beta \eta.
    \end{equation}
    We can verify that
    \[
        \Par^{\Sp}_{\beta^2 \xi, \beta h} (\beta B, \beta^{-1}\uzeta + \kappa \zeta)
        = 
        \beta \Par^{\Sp}_{\xi, h} (B, \uzeta + \beta \kappa \zeta).
    \]
    So, dividing \eqref{eq:temp-fe-ub} by $\beta$ gives the result.
\end{proof}

The following lemma relates the ground state energy $\GS_N^{\Sp}(\cQ(\eta))$ to this free energy at large inverse temperature $\beta$.
We defer the proof, which is a relatively standard approximation argument.

\begin{lemma}
    \label{lem:fn-to-opt}
    There exists a constant $C$ depending only on $\xi, h$ such that for all $\beta > 0$, $\eta \in (0, \fr12)$, and $N\ge C \log \max(K,2)$,
    \[
        \GS_N^{\Sp}(\cQ(\eta)) 
        \le 
        \fr{1}{\beta} F_N^{\Sp}(\beta, \cQ(2\eta))
        + 
        CK \lt(
            \eta 
            + \fr{\log \fr{1}{\eta}}{\beta}
            + \fr{1}{\sqrt{N}}
        \rt).
    \]
\end{lemma}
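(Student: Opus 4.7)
The plan is to apply the standard Laplace-type comparison between the finite-$\beta$ partition function and the ground state, adapted to the constrained setting. Fix the disorder $H_N$ and let $\vbsig^{*} \in \cQ(\eta)$ be an optimizer of $\cH_N$. I will lower-bound $\int_{\cQ(2\eta)} e^{\beta\cH_N(\vbsig)}\,\diff{\mu^K}(\vbsig)$ by isolating the contribution of a Euclidean ball $\cB$ of normalized radius $\delta \asymp \eta/K$ around $\vbsig^{*}$ in $(S_N)^K$. Two ingredients are needed. First, a $\delta$-perturbation in normalized norm shifts every pairwise overlap $R(\bsig(u^1),\bsig(u^2))$ and every overlap $R(\bsig(u),\bm)$ by $O(\delta)$, so for $\delta \le c\eta/K$ we have $\cB \cap (S_N)^K \subseteq \cQ(2\eta)$. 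Second, on the event $E := \{H_N^{(u)} \in K_N \text{ for all } u \in \bbL\}$, which by Proposition~\ref{prop:gradients-bounded} and a union bound holds with probability at least $1 - Ke^{-cN}$, each $\nabla H_N^{(u)}$ has operator norm at most $C_1$, so $\cH_N$ is $C_1 KN$-Lipschitz in normalized norm, giving $\cH_N(\vbsig) \ge \cH_N(\vbsig^{*}) - C_1 KN\delta$ throughout $\cB$.

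Combining these with the classical spherical-cap estimate $\mu^K(\cB) \ge \exp\lt(NK\log(c\delta) - O(K\log N)\rt)$, taking $\tfrac{1}{\beta N}\log$, and rearranging gives, on the event $E$,
\[
    \frac{\cH_N(\vbsig^{*})}{N} \le \frac{1}{\beta N}\log\int_{\cQ(2\eta)} e^{\beta\cH_N(\vbsig)}\,\diff{\mu^K}(\vbsig) + O(K\delta) + \frac{K\log(1/\delta)}{\beta} + \frac{O(K\log N)}{\beta N}.
\]
Taking $\E_{H_N}$ of both sides and applying Jensen's inequality to $\log$ replaces the quenched log-integral on the right by the annealed quantity $\tfrac{N}{\beta}F_N^{\Sp}(\beta, \cQ(2\eta))$. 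On the complement of $E$ the Lipschitz bound is unavailable, but Cauchy--Schwarz combined with the sub-gaussian tail bound of Proposition~\ref{prop:multi-opt-tail} for $N^{-1}\max_{\cQ(\eta)}\cH_N$ against $\P[E^c] \le Ke^{-cN}$ produces a correction of order $e^{-c'N}$. Setting $\delta \asymp \eta/K$ and using the hypothesis $N \ge C\log\max(K,2)$ to absorb the residual $O(K\log N/(\beta N))$ and $e^{-c'N}$ terms into the $CK/\sqrt{N}$ error then yields the stated bound.

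The technical point I expect to require the most care is the spherical-cap volume estimate in the constrained setting: one must verify that $\cB \cap (S_N)^K$, after further intersection with the band $B(\bm,2\eta)$ and the pairwise overlap constraints defining $\cQ(2\eta)$, retains log-volume at least $NK\log(c\delta) + O(K\log N)$. Since every such constraint is $O(\delta)$-open at $\vbsig^{*}$ and factorizes across the $K$ spherical coordinates up to measure-zero boundary effects, this reduces to a product of $K$ one-sphere Stirling estimates, and the hypothesis $N \ge C\log\max(K,2)$ ensures that the $K$-fold normalization correction is dominated by the leading $NK\log(c\delta)$ term. A secondary bookkeeping issue is confirming that, after conditioning on $E$ and applying Jensen, the loss from swapping $\E \log$ to $\log \E$ does not interact badly with the error terms; this is immediate because $E^c$ contributes only a vanishing fraction of the annealed integral.
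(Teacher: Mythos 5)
Your overall approach matches the paper's: localize around the constrained maximizer $\vbsig^*$, exploit the gradient bound on the high-probability event, estimate the volume of the ball by spherical caps, and pass from the quenched to the annealed log-partition function. But there is one concrete error in the parameters, and one genuine (but valid) departure from the paper's route in the final step.

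The error: the ball radius should be $\delta \asymp \eta$, not $\delta \asymp \eta/K$. Each overlap constraint $|R(\bsig(u^1),\bsig(u^2)) - q_{u^1\wedge u^2}|\le 2\eta$ (and likewise for $R(\bsig(u),\bm)$) involves only one or two coordinates, so a per-coordinate perturbation of normalized norm $\delta$ shifts each individual overlap by at most $3\delta$ regardless of $K$. Hence $\delta = \eta/3$ already yields $\cB(\vbsig^*,\eta/3)\subseteq\cQ(2\eta)$; there is no union over $K^2$ pairs to pay for, because we are comparing each overlap to its target separately, not summing them. If you carry $\delta\asymp\eta/K$ through the spherical-cap estimate, the log-volume penalty becomes $\asymp K\log(K/\eta)/\beta = K\log\tfrac{1}{\eta}/\beta + K\log K/\beta$, and the extra $K\log K/\beta$ does not fit inside the allowed error $CK(\eta + \log\tfrac1\eta/\beta + 1/\sqrt N)$ for a $C$ depending only on $\xi,h$: for fixed $\eta,\beta$ and $K$ large (with $N\asymp C\log K$ still satisfying the hypothesis), the left side grows like $K\log K$ while the right side is $O(K)$. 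Replacing $\eta/K$ by $\eta/3$ fixes this and reproduces the paper's bound $\mu^K(\cB)\ge \eta^{CNK}$.

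The departure, which is fine: you apply $\log$ pointwise, take expectations, and invoke Jensen's inequality $\E\log\le\log\E$ together with Cauchy--Schwarz (against $\P(E^c)\le Ke^{-cN}$, using the subgaussian bound of Proposition~\ref{prop:multi-opt-tail} to control second moments) to handle the discrepancy between $\E[\cdot\,\ind(E)]$ and the unrestricted expectation. The paper instead keeps the outer expectation inside the logarithm throughout, lower-bounds the integral by $\ind(S)\exp(\beta\cH_N(\vbsig^*))$ times the deterministic factors $\mu^K(\cB)e^{-\beta C_1 K\eta/3}$, then writes $\cH_N(\vbsig^*)=N\GS_N^{\Sp}+(\cH_N(\vbsig^*)-\E\cH_N(\vbsig^*))$ and lower-bounds $\E\ind(S)e^{\beta(\cH_N(\vbsig^*)-\E\cH_N(\vbsig^*))}$ by restricting to the constant-probability event where the fluctuation is $\ge -K\sqrt{4\log 2\cdot\xi(1)N}$. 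The paper's route avoids Cauchy--Schwarz and bounded-second-moment checks, at the cost of the anti-concentration step; your route avoids the anti-concentration step and is arguably more routine, but requires verifying that $\tfrac1{\beta N}\log\int_{\cQ(2\eta)}e^{\beta\cH_N}\,\diff{\mu^K}$ is $L^2$-bounded (which holds once $\cQ(\eta)\ne\emptyset$, since then $\mu^K(\cQ(2\eta))\ge\mu^K(\cB(\vbsig_0,\eta/3))$ for any fixed $\vbsig_0\in\cQ(\eta)$ and $\max_{\cQ(2\eta)}|\cH_N|/N$ is subgaussian). Incidentally, the paper's $K/\sqrt N$ error term originates precisely from its anti-concentration step; your Jensen route would not produce it at all, which is harmless since the statement only requires an upper bound.
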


\begin{proof}[Proof of Proposition~\ref{prop:spherical-multi-opt-ub}]
    Let $C$ be large enough that Lemma~\ref{lem:fn-to-opt} is satisfied and $C \log 2 \ge 2$.
    For all $N\ge C \log \max(K, 2)$, Corollary~\ref{cor:temp-fe-ub} (with $2\eta$ in place of $\eta$) and Lemma~\ref{lem:fn-to-opt} imply that
    \[
        \GS_N^{\Sp}(\eta) 
        \le 
        K \Par^{\Sp}(B, \uzeta + \beta \kappa \zeta) 
        + 6K^2 \xi''(1) \beta \eta
        + 2KB\eta
        + CK \lt(
            \eta 
            + \fr{\log \fr{1}{\eta}}{\beta}
            + \fr{1}{\sqrt{N}}
        \rt).
    \]
    By applying the estimate $K\le K^2$ and absorbing constants depending on only $\xi, h$ into $C$, we deduce
    \[
        \GS_N^{\Sp}(\eta) 
        \le 
        K \Par^{\Sp}(B, \uzeta + \beta \kappa \zeta) 
        + CK^2 \lt(
            \beta \eta 
            + B\eta 
            + \eta 
            + \fr{\log \fr{1}{\eta}}{\beta}
            + \fr{1}{\sqrt{N}}
        \rt).
    \]
    Finally, because $B\ge \beta^{-1}$, we have $\beta + B \ge \beta + \beta^{-1} \ge 2$, so by increasing the constant $C$ we may drop the term $\eta$ from the sum.
\end{proof}

\subsection{Proof of the Main Upper Bound}
\label{subsec:grand-hamiltonian-spherical-completion}

We now complete the proof of Proposition~\ref{prop:uniform-multi-opt}.
We will set the parameters of Proposition~\ref{prop:spherical-multi-opt-ub} such that $(B, \uzeta + \beta \kappa \zeta)$ approximates the minimizer of $\Par^{\Sp}$ in $\cuL$ and the error term is small.

For $\zeta \in \cuL$ and $\delta, x \in [0,1)$, we define a perturbation $\zeta_{\delta,x} \in \cuL$ of $\zeta$ by
\[
    \zeta_{\delta,x}(q) = 
    \begin{cases}
        \zeta(x+\delta) & q\in [x, x+\delta), \\
        \zeta(q) & \text{otherwise}.
    \end{cases}
\]
Note that $\zeta_{0,x} = \zeta$.

We now set several constants depending only on $\xi, h, \eps$. 
Let $C$ be the constant given by Proposition~\ref{prop:spherical-multi-opt-ub}. 
By continuity of the Parisi functional $\Par^{\Sp}$ on $\cuK(\xi)$, we may pick $(B^*, \zeta^*) \in \cuK(\xi)$ and a small constant $\Delta \in (0,1)$ such that the following properties hold. 
\begin{enumerate}[label=(\alph*), ref=\alph*]
    \item $\zeta^*$ is positive-valued, right-continuous, and piecewise constant with finitely many jump discontinuities $0 < x_1 < \cdots < x_{r} < 1$. 
    \item For all $\delta \in [0,\Delta]$ and $x\in [0, 1)$, $(B^*, \zeta^*_{\delta, x}) \in \cuK(\xi)$ and
    \begin{equation}
        \label{eq:cp-guarantee}
        \Par^{\Sp}(B^*, \zeta^*_{\delta, x}) \le \ALG + \fr{\eps}{2}.
    \end{equation}
\end{enumerate}

The perturbations $\zeta^*_{\delta, x}$ will be used in the following way.
Given $q_0 \in [0,1]$, we will apply Proposition~\ref{prop:spherical-multi-opt-ub} with $\uzeta + \beta \kappa \zeta = \zeta^*_{(1-q_0)\Delta, q_0}$.
In particular, we will construct $\beta$, $\kappa = \kappa^{\vk,\vp,\vq}$ and $\zeta \in \cM_{\vq}$ such that $\beta \kappa \zeta = \zeta^*_{(1-q_0)\Delta, q_0}$ on $[q_0, 1)$.
Because $\zeta$ is increasing, we must construct a $\kappa$ that decreases rapidly enough to make this equality hold.
In the below proof, the fact that $\zeta^*_{(1-q_0)\Delta, q_0}$ does not have any discontinuities in $[q_0, q_0+(1-q_0)\Delta]$ implies that $q_1 > q_0 + (1-q_0)\Delta$, which implies that $p_1 > \Delta$ for any $\chi$-aligned $\vp, \vq$. 
This allows us to construct a suitable $\kappa$ while keeping $K = \prod_{d=1}^D k_d$ bounded by a constant.

\begin{proof}[Proof of Proposition~\ref{prop:uniform-multi-opt}, spherical case]
    We first set the constants $K_0, \eta_0, N_0$. 
    For $x\in (0,1]$, let $\zeta^*(x^-) = \lim_{y\to x^-}\zeta^*(y)$.
    Let
    \[
        K_0 = 
        \prod_{j=1}^r
        \lt(
            \lt\lfloor 
                \fr{\zeta^*(x_j)}{\Delta \zeta^*(x_j^-)}
            \rt\rfloor 
            + 1 
        \rt).
    \]
    This is well-defined because $\zeta^*$ is positive-valued.
    Let $\eta_0 \in (0,\fr{1}{2})$ satisfy the inequalities
    \begin{align}
        \label{eq:eta-bd}
        CK_0 \lt(
            B^*\eta_0 
            + \eta_0^{1/2} 
            + \eta_0^{1/2} \log \fr{1}{\eta_0}
        \rt)
        &\le \fr{\eps}{4}, \\
        \label{eq:bstbeta-ge-one}
        \eta_0 &\le (B^*)^2, \\
        \label{eq:beta-ge-gamma}
        \eta_0  &< \zeta^*(1^-)^{-2}.
    \end{align}
    Finally, let $N_0$ satisfy $N_0 \ge C \log \max (K_0, 2)$ and 
    \begin{equation}
        \label{eq:n0-bd}
        \fr{CK_0}{\sqrt{N_0}} \le \fr{\eps}{4}.
    \end{equation}
    We emphasize that $K_0, \eta_0, N_0$ depend only on $\xi, h, \eps$.
    
    In the below analysis, we always set $\eta = \eta_0$ (this clearly satisfies $\eta \ge \eta_0$) and $\beta = \eta_0^{-1/2}$.
    
    We are given a correlation function $\chi : [0,1] \to [0,1]$ and a point $\bm \in \bbR^N$ with $\norm{\bm}_N^2 = \chi(0)$.
    We set $q_0 = \chi(0)$; we will set the rest of $\vq$ below.
    We will construct $D, \vk, \vp, \vq, \zeta$ such that on $[q_0, 1)$, 
    \begin{equation}
        \label{eq:increasify}
        \beta \kappa^{\vk,\vp,\vq} \zeta
        =
        \zeta^*_{(1-q_0)\Delta, q_0}.
    \end{equation}
    Let 
    \[
        S = \{x_1,\ldots,x_r\} \cap (q_0 + (1-q_0)\Delta, 1).
    \]
    Set $D-1 = |S|$.
    Set $\vq$ such that $(q_1,\ldots, q_{D-1})$ is the set $S$ in increasing order and $q_D = 1$.
    
    By Proposition~\ref{prop:corelation-fn-properties}(\ref{itm:cor-fn-props-increasing}), $\chi$ is either strictly increasing or constant.
    If $\chi$ is strictly increasing, set $\vp = (p_0, \ldots, p_D)$ by $p_d = \chi^{-1}(q_d)$ for all $q_d \le \chi(1)$ and $p_d=1$ for all $q_d > \chi(1)$.
    If $\chi$ is constant, its unique value is $q_0 = \chi(0)$; set $p_0 = 0$ and $p_d = 1$ for all $1\le d\le D$.
    In either case, $\vp, \vq$ are clearly $\chi$-aligned.
    Moreover, we always have $p_1 > \Delta$: if $\chi$ is increasing, this follows from $q_1 > q_0 + (1-q_0)\Delta$ and  Proposition~\ref{prop:corelation-fn-properties}(\ref{itm:cor-fn-props-sublinear}), while if $\chi$ is constant this is obvious.
    
    Set $k_1 = 1$, and for $1\le d\le D-1$, set
    \[
        k_{d+1} =
        \lt\lfloor 
            \fr{\zeta^*(q_d^-)}{\Delta \zeta^*(q_d)}
        \rt\rfloor + 1.
    \]
    Because $q_1,\ldots,q_{D-1}$ are a subset of $x_1,\ldots,x_r$, we indeed have $K = \prod_{d=1}^D k_d \le K_0$.
    
    This constructs $D,\vk,\vp,\vq,\eta$, which defines $\cH^{\vk,\vp}_N$, $\cQ(\eta) = \cQ^{\Sp}(Q^{\vk,\vq}, \bm, \eta)$, and $\kappa^{\vk,\vp,\vq}$.
    Finally, we construct the sequence $(\zeta_{-1},\zeta_0,\ldots,\zeta_D)$ satisfying
    \begin{equation}
        \label{eq:zeta-incr}
        0 = \zeta_{-1} < \zeta_0 < \cdots < \zeta_D = 1
    \end{equation}
    such that the $\zeta \in \cM_{\vq}$ defined by \eqref{eq:discrete-zeta} satisfies \eqref{eq:increasify} on $[q_0, 1)$.
    In particular, we define $\zeta_d$ for $0\le d\le D-1$ by
    \[
        \zeta_d = \fr{\zeta^*_{(1-q_0)\Delta, q_0}(q_d)}{\beta \kappa^{\vk,\vp,\vq}(q_d)}.
    \]
    For this choice of $\zeta_d$, \eqref{eq:increasify} holds at $q_0,q_1,\ldots,q_{d-1}$ by inspection.
    Because $\zeta$, $\kappa^{\vk,\vp,\vq}$ and $\zeta^*_{(1-q_0)\Delta, q_0}$ are all piecewise constant and right-continuous on $[q_0, 1)$ with jump discontinuities only at $q_1,\ldots,q_{D-1}$, \eqref{eq:increasify} holds on $[q_0, 1)$.
    It remains to verify that this choice of $\zeta_d$ satisfies the increasing condition \eqref{eq:zeta-incr}.
    Because $\zeta^*_{(1-q_0)\Delta, q_0}$ is positive-valued, $\zeta_0 > \zeta_{-1} = 0$.
    At each $1\le d\le D-1$, we have
    \[
        \fr{\zeta_d}{\zeta_{d-1}}
        = 
        \fr{\zeta^*_{(1-q_0)\Delta, q_0}(q_d)}{\zeta^*_{(1-q_0)\Delta, q_0}(q_{d-1})}
        \cdot
        \fr{\kappa^{\vk,\vp,\vq}(q_{d-1})}{\kappa^{\vk,\vp,\vq}(q_d)}
    \]
    By \eqref{eq:kappa-formula},
    \[
        \kappa^{\vk,\vp,\vq}(q_d)
        \le 
        \sum_{j=d+1}^{D-1}\lt[(k_{j+1}-1) \prod_{\ell = j+2}^D k_\ell\rt] + 1 
        = 
        \prod_{\ell = d+2}^D k_\ell,
    \]
    where we upper bounded all the $p_d$ by $1$.
    So, 
    \[
        \fr{\kappa^{\vk,\vp,\vq}(q_{d-1})}{\kappa^{\vk,\vp,\vq}(q_d)}
        = 
        1 + \fr{(k_{d+1}-1) \prod_{\ell=d+2}^D k_\ell}{\kappa^{\vk,\vp,\vq}(q_d)} p_d
        \ge 
        1 + (k_{d+1}-1) p_d 
        \ge 
        k_{d+1}p_d
        \ge 
        k_{d+1} \Delta.
    \]
    Here we used that $p_d \ge p_1 \ge \Delta$.
    Further noting that $\zeta^*_{(1-q_0)\Delta, q_0}(q_{d-1}) = \zeta^*(q_d^-)$, we have
    \[
        \fr{\zeta_d}{\zeta_{d-1}}
        \ge 
        \fr{\Delta \zeta^*_{(1-q_0)\Delta, q_0}(q_d)}{\zeta^*_{(1-q_0)\Delta, q_0}(q_{d-1})}
        \cdot
        k_{d+1} 
        =
        \fr{\Delta \zeta^*(q_d)}{\zeta^*(q_{d}^-)}
        \cdot
        k_{d+1} 
        > 1
    \]
    by definition of $k_{d+1}$.
    Thus $\zeta_d > \zeta_{d-1}$ for $1\le d \le D-1$.
    Finally, because $\kappa^{\vk,\vp,\vq}(q_{D-1}) = 1$, 
    \[
        \zeta_{D-1} = \fr{\zeta^*_{(1-q_0)\Delta, q_0}(q_{D-1})}{\beta} = \eta_0^{1/2} \zeta^*(1^-) < 1 = \zeta_D,
    \]
    using \eqref{eq:beta-ge-gamma}.
    Thus the $\zeta$ we constructed satisfies \eqref{eq:increasify} and \eqref{eq:zeta-incr}.

    Define $\uzeta \in \ucuL$ by $\uzeta = \zeta^*$ on $[0, q_0)$.
    Thus, as elements of $\cuL$, 
    \[
        \uzeta + \beta \kappa^{\vk,\vp,\vq} \zeta = \zeta^*_{(1-q_0)\Delta, q_0}.
    \]
    By construction, $(B^*, \zeta^*_{(1-q_0)\Delta, q_0}) \in \cuK(\xi)$, and \eqref{eq:bstbeta-ge-one} implies $B^* \ge \beta^{-1}$.
    By Proposition~\ref{prop:spherical-multi-opt-ub},
    \[
        \fr{1}{N} \E \max_{\vbsig \in \cQ(\eta)} \cH_N(\vbsig)
        \le 
        K \Par^{\Sp}(B^*, \zeta^*_{(1-q_0)\Delta, q_0})
        + CK^2 \lt(
            B^* \eta + \eta^{1/2} + \eta^{1/2} \log \fr{1}{\eta} + \fr{1}{\sqrt{N}}
        \rt).
    \]
    By \eqref{eq:cp-guarantee},
    \[
        K\Par^{\Sp}(B^*, \zeta^*_{(1-q_0)\delta, q_0})
        \le 
        K \lt(\ALG + \fr{\eps}{2}\rt).
    \]
    By \eqref{eq:eta-bd},
    \[
        CK^2 \lt(
            B^* \eta 
            + \eta^{1/2} 
            + \eta^{1/2} \log \fr{1}{\eta} 
        \rt)
        \le 
        \fr{K\eps}{4}.
    \]
    Finally, by \eqref{eq:n0-bd},
    \[
        \fr{CK^2}{\sqrt{N}} \le \fr{K\eps}{4}.
    \]
    Combining the last four inequalities gives the result.
\end{proof}

\subsection{Deferred Proofs}

Here we give the proofs of Lemmas~\ref{lem:compare-spherical-gaussian}, \ref{lem:chisq-prob-lb}, \ref{lem:gaussian-integral}, and \ref{lem:fn-to-opt}, which are all relatively standard.
We recall the following lemma, due to Talagrand, from which Lemma~\ref{lem:compare-spherical-gaussian} readily follows.
\begin{lemma}[{\cite[Lemma 3.1]{talagrand2006spherical}}]
    \label{lem:compare-spherical-gaussian-aux}
    For all $\by \in \bbR^N$, the following inequality holds.
    \[
        \int_{S_N}
        \exp 
        \la \by, \bsig \ra
        \diff{\mu}(\bsig)
        \le 
        \P\lt(\chi^2(N) \ge BN\rt)^{-1}
        \int 
        \exp 
        \la \by, \brho \ra
        \diff{\nu_B^N(\brho)}.
    \]
\end{lemma}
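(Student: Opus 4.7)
The plan is a direct polar-decomposition and symmetry argument, essentially Talagrand's classical comparison. The starting observation is that both $\mu$ and $\nu_B^N$ are rotationally invariant, so I would write $\brho \sim \nu_B^N$ in polar form as $\brho = r \hat\brho$, where $r = \norm{\brho}_2$ and $\hat\brho = \brho/r$ is independent of $r$ and uniform on the unit sphere of $\bbR^N$. Setting $\bsig = \sqrt{N}\,\hat\brho$, we have $\bsig \sim \mu$ and $\la \by, \brho \ra = (r/\sqrt{N}) \la \by, \bsig\ra$. Hence, defining
\[
    F(c) := \int_{S_N} \exp(c \la \by, \bsig \ra) \diff{\mu}(\bsig),
\]
Fubini gives
\[
    \int \exp \la \by, \brho \ra \diff{\nu_B^N}(\brho)
    = \E \lt[ F(r/\sqrt{N}) \rt].
\]

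The key step is a monotonicity property of $F$. Since $\mu$ is invariant under $\bsig \mapsto -\bsig$, the random variable $X = \la \by, \bsig \ra$ under $\mu$ is symmetric, so $F(c) = \E[\cosh(cX)]$. This is an even and convex function of $c$, hence non-decreasing on $[0,\infty)$. In particular, whenever $r \ge \sqrt{N}$ we have
\[
    F(r/\sqrt{N}) \ge F(1) = \int_{S_N} \exp \la \by, \bsig \ra \diff{\mu}(\bsig).
\]

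To finish, I would lower bound $\E[F(r/\sqrt{N})]$ by restricting the expectation to the event $\{r \ge \sqrt{N}\}$. Under $\nu_B^N$, each coordinate of $\brho$ has law $\cN(0, 1/B)$, so $B r^2 = B\norm{\brho}_2^2 \sim \chi^2(N)$; therefore $\{r \ge \sqrt{N}\}$ has probability $\P(\chi^2(N) \ge BN)$. Combining the two bounds,
\[
    \int \exp \la \by, \brho \ra \diff{\nu_B^N}(\brho)
    \ge
    \P(\chi^2(N) \ge BN) \cdot \int_{S_N} \exp \la \by, \bsig \ra \diff{\mu}(\bsig),
\]
and rearranging yields the claim. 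No substantive obstacle arises: the only non-trivial ingredient is the $\cosh$-convexity step establishing the monotonicity of $F$, which is driven entirely by the antipodal symmetry of $\mu$, and the joint rotational invariance of $\mu$ and $\nu_B^N$ makes the polar reduction exact rather than merely approximate.
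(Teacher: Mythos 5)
Your proof is correct and is essentially the standard argument behind Talagrand's Lemma 3.1, which the paper cites without reproving: polar decomposition of the Gaussian, monotonicity of $c\mapsto\E_\mu[\cosh(c\la\by,\bsig\ra)]$ on $[0,\infty)$ (from antipodal symmetry of $\mu$ plus convexity in $c$), and restriction of the Gaussian integral to the event $\{\norm{\brho}_2\ge\sqrt{N}\}$, whose probability is exactly $\P(\chi^2(N)\ge BN)$. All steps are sound, so nothing further is needed.
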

\begin{proof}[Proof of Lemma~\ref{lem:compare-spherical-gaussian}]
    Using $\cQ(\eta) \subseteq S_N^K$ and Lemma~\ref{lem:compare-spherical-gaussian-aux}, we get
    \begin{align*}
        \exp G_D(\vby)
        &=
        \exp(-\la \vby, \vbm \ra)
        \int_{\cQ(\eta)} 
        \exp 
        \la \vby, \vbsig \ra
        \diff{\mu^K}(\vbsig) \\
        &\le
        \exp(-\la \vby, \vbm \ra)
        \int_{S_N^K} 
        \exp 
        \la \vby, \vbsig \ra
        \diff{\mu^K}(\vbsig) \\
        &=
        \exp(-\la \vby, \vbm \ra)
        \prod_{u\in \bbL}
        \int_{S_N} 
        \exp 
        \la \by(u), \bsig(u) \ra
        \diff{\mu}(\vbsig(u)) \\
        &\le 
        \P\lt(\chi^2(N) \ge BN\rt)^{-K}
        \exp(-\la \vby, \vbm \ra)
        \prod_{u\in \bbL}
        \int
        \exp 
        \la \by(u), \brho(u) \ra
        \diff{\nu_B^N}(\vbrho(u)) \\
        &=
        \P\lt(\chi^2(N) \ge BN\rt)^{-K}
        \exp(-\la \vby, \vbm \ra)
        \int 
        \exp
        \la \vby, \vbrho \ra
        \diff{\nu_B^{N\times K}(\vbrho)}.
    \end{align*}
\end{proof}

\begin{proof}[Proof of Lemma~\ref{lem:chisq-prob-lb}]
    Using the probability density of $\chi^2(N)$, we compute:
    \begin{align*}
        \P(\chi^2(N) \ge BN)
        &=
        \int_{BN}^\infty 
        \fr{x^{N/2-1} e^{-x/2}}{2^{N/2} \Gamma\lt(\fr{N}{2}\rt)}
        \diff{x} \\
        &= 
        \fr{(N/2)^{N/2}}{\Gamma\lt(\fr{N}{2}\rt)}
        \int_B^\infty 
        y^{N/2-1} e^{-Ny/2}
        \diff{y} \\
        &\ge 
        \fr{(N/2)^{N/2}}{\Gamma\lt(\fr{N}{2}\rt)}
        \int_B^\infty 
        e^{-Ny/2}
        \diff{y} \\
        &= 
        \fr{(N/2)^{N/2-1}}{\Gamma\lt(\fr{N}{2}\rt)}
        e^{-BN/2} \\
        &\ge e^{-BN/2},
    \end{align*}
    where the last step uses that $(N/2)^{N/2-1}  \ge  \Gamma\lt(\fr{N}{2}\rt)$ for $N\ge 2$.
\end{proof}

\begin{proof}[Proof of Lemma~\ref{lem:gaussian-integral}]
    By a straightforward computation,
    \begin{align*}
        &\E 
        \exp 
        \fr12 \zeta 
        \lt[
            (\vy + \veta)^\top 
            \Lambda^{-1} 
            (\vy + \veta) 
            - 2\vv^\top (\vy + \veta)
        \rt]\\
        &= 
        |\Sigma|^{-1/2} (2\pi)^{-K/2} 
        \int
        \exp \lt[ -\fr12 \lt(
            \vx^\top \Sigma^{-1} \vx -
            \zeta 
            (\vy + \vx)^\top 
            \Lambda^{-1} 
            (\vy + \vx)
            + 2\zeta \vv^\top (\vy + \vx)
        \rt) \rt]
        \diff{\vx} \\
        &= 
        |\Sigma|^{-1/2} (2\pi)^{-K/2} 
        \int
        \exp \lt[ -\fr12 \lt(
            \vx^\top \lt(\Sigma^{-1} - \zeta \Lambda^{-1}\rt) \vx -
            2 \zeta (\Lambda^{-1} \vy  - \vv)^\top \vx -
            \zeta \vy^\top \Lambda^{-1} \vy
            + 2\zeta \vv^\top \vy
        \rt) \rt]
        \diff{\vx} \\
        &=
        |\Sigma|^{-1/2} |\Sigma^{-1} - \zeta \Lambda^{-1}|^{-1/2}
        \exp \fr12 \lt(
            \zeta^2 
            (\Lambda^{-1} \vy  - \vv)^\top 
            \lt(\Sigma^{-1} - \zeta \Lambda^{-1}\rt)^{-1} 
            (\Lambda^{-1} \vy  - \vv) +
            \zeta \vy^\top \Lambda^{-1} \vy -
            2\zeta \vv^\top \vy
        \rt) \\
        &= 
        \fr{|\Lambda|^{1/2}}{|\Lambda - \zeta \Sigma|^{1/2}}
        \exp \fr{\zeta}{2} \lt(
            \vy^\top (\Lambda - \zeta \Sigma)^{-1} \vy
            - 2 \vv^\top \Lambda (\Lambda - \zeta \Sigma)^{-1} \vy
            + \vv^\top (\zeta \Sigma) (\Lambda - \zeta \Sigma)^{-1} \Lambda \vv
        \rt).
    \end{align*}
    Taking logarithms and dividing by $\zeta$ yields the result.
\end{proof}

\begin{proof}[Proof of Lemma~\ref{lem:fn-to-opt}]
    Define the random variable
    \[
        \vbsig^* 
        = 
        \argmax_{\vbsig \in \cQ(\eta)} 
        \cH_N(\vbsig),
    \]
    where we break ties arbitrarily.
    For $\delta > 0$, define
    \[
        \cB(\vbsig^*, \delta) = 
        \lt\{
            \vbsig \in S_N^K : 
            \norm{\bsig(u) - \bsig^*(u)}_N \le \delta
            \text{~for all~}
            u\in \bbL
        \rt\}.
    \]
    If $\vbsig \in \cB(\vbsig^*, \eta/3)$, then for each $u\in \bbL$ we can write $\bsig(u) = \bsig^*(u) + \delta(u) \brho(u)$, where $\brho(u) \in S_N$ and $0\le \delta(u) \le \eta/3$. 
    Then, for all $u\in \bbL$, 
    \[
        |R(\bsig(u), \bm) - q_0| 
        \le 
        |R(\bsig^*(u), \bm) - q_0| 
        + \delta(u) |R(\brho(u), \bm)| 
        \le 
        \eta + \eta/3 
        \le 
        2\eta,
    \]
    and for all $u,v\in \bbL$,
    \begin{align*}
        &|R(\bsig(u), \bsig(v)) - q_{u\wedge v}| \\
        &\le 
        |R(\bsig^*(u), \bsig^*(v)) - q_{u\wedge v}| + 
        \delta(u) |R(\bsig^*(u), \brho(v))| + 
        \delta(v) |R(\bsig^*(v), \brho(u))| + 
        \delta(u)\delta(v) |R(\brho(u), \brho(u))| \\
        &\le \eta + \eta/3 + \eta/3 + \eta/3 = 2\eta.
    \end{align*}
    So, $\cB(\vbsig^*, \eta/3) \subseteq \cQ(2\eta)$.
    
    Let constants $c, C_1$ be given by Proposition~\ref{prop:gradients-bounded}.
    By this proposition, the event
    \[
        S = \lt\{
            \sup_{u\in \bbL}
            \sup_{\bsig\in S_N} 
            \norm{\nabla \HNp{u}(\bsig)}_N 
            \le C_1
        \rt\}
    \]
    has probability $\P(S) \ge 1 - Ke^{-cN}$.
    Here we use the fact that for $\bv \in \bbR^N$, $\norm{\bv}_N = \norm{\bv}_{\op}$.
    On $S$, 
    \[
        \cH_N(\vbsig) 
        \ge 
        \cH_N(\vbsig^*) - \fr{C_1 NK\eta}{3}
    \]
    for all $\vbsig \in \cB(\vbsig^*, \eta/3)$.
    So,
    \begin{align*}
        F_N^{\Sp}(\beta, \cQ(2\eta))
        &= 
        \fr{1}{N} \log \E 
        \int_{\cQ(2\eta)}
        \exp \beta \cH_N(\vbsig) 
        \diff{\mu^K}(\vbsig) \\
        &\ge 
        \fr{1}{N} \log \E 
        \ind(S)
        \int_{\cB(\vbsig^*, \eta/3)}
        \exp \beta \cH_N(\vbsig) 
        \diff{\mu^K}(\vbsig) \\
        &\ge 
        \fr{1}{N} \log \E 
        \ind(S)
        \int_{\cB(\vbsig^*, \eta/3)}
        \exp \beta \lt(\cH_N(\vbsig^*) - \fr{C_1 NK\eta}{3} \rt)
        \diff{\mu^K}(\vbsig)  \\
        &\ge
        \fr{1}{N} \log \E 
        \ind(S)
        \exp \beta \cH_N(\vbsig^*) 
        -\fr{\beta C_1 K\eta}{3}
        +\fr{1}{N}\log \mu^K(\cB(\vbsig^*, \eta/3))
        \\
        &=
        \beta \GS_N^{\Sp}(\cQ(\eta)) 
        -\fr{\beta C_1 K\eta}{3}
        +\fr{1}{N}\log \mu^K(\cB(\vbsig^*, \eta/3)) \\
        &\qquad 
        +
        \fr{1}{N} \log \E 
        \ind(S)
        \exp \beta \lt(\cH_N(\vbsig^*) - \E \cH_N(\vbsig^*)\rt).
    \end{align*}
    The set $\cB(\vbsig^*, \eta/3)$ is the product of $K$ spherical caps in $S_N$.
    By elementary properties of the spherical measure, there exists a large $C$ such that $\mu^K(\cB(\vbsig^*, \eta/3)) \le \eta^{CNK}$, and so
    \[
        \fr{1}{N}\log \mu^K(\cB(\vbsig^*, \eta/3))
        \ge 
        - CK \log \fr{1}{\eta}.
    \]
    By Proposition~\ref{prop:multi-opt-tail}, 
    \[
        \P\lt(\cH_N(\vbsig^*) - \E \cH_N(\vbsig^*) 
        \le 
        -K\sqrt{4\log 2\cdot \xi(1) N}\rt) 
        \le 
        \fr12.
    \]
    By a union bound, the complement of this event and $S$ simultaneously hold with probability at least $\fr12 - Ke^{-cN}$.
    Thus, 
    \[
        \fr{1}{N} \log \E 
        \ind(S)
        \exp \beta \lt(\cH_N(\vbsig^*) - \E \cH_N(\vbsig^*)\rt)
        \ge 
        -\beta K\sqrt{\fr{4\log 2\cdot \xi(1)}{N}}
        + \fr{1}{N} \log \lt(\fr12 - Ke^{-cN}\rt).
    \]
    Putting this all together, we can choose a large $C$ dependent only on $\xi, h$ such that
    \[
        F_N^{\Sp}(\beta, \cQ(2\eta))
        \ge 
        \beta \GS_N^{\Sp}(\cQ(\eta)) 
        - CK\beta \eta 
        - CK \log \fr{1}{\eta}
        - \fr{CK\beta}{\sqrt{N}}
        - \fr{1}{N} \log \fr{1}{\fr12 - K e^{-cN}}.
    \]
    By choosing $C$ large enough, we can ensure that if $N\ge C\log \max(K, 2)$, then $K e^{-cN} \le \fr14$. 
    Then, we may absorb the last term into the term $CK \log \fr{1}{\eta}$.
    Rearranging yields the result.
\end{proof}

\section{Overlap-Constrained Upper Bound on the Ising Grand Hamiltonian}
\label{sec:grand-hamiltonian-is}

In this section we upper-bound $\varphi^{\Is}(0)$. We take the reference measure $\mu$ to be the counting measure so that integrals over $\cQ^{\Is}(\eta)$ become sums.

We define $(Z_0,\dots,Z_D)$ similarly to $G_d$ of the previous section, but as a sum over all of $(\bSigma_N)^K$ directly.
As before, define $\vbfeta_0,\dots\vbfeta_D$ to be independent Gaussians as in \eqref{eq:vbfeta0} and \eqref{eq:vbfetad}. 
% Recall that $\vbh, \vbm \in (\bbR^N)^K$ are defined by $\bh(u) = \bh$, $\bh(u)=\bm$ for all $u\in \bbL$.
For $\vby\in(\bbR^K)^N$, define
\begin{align*}
    Z_D(\vby)
    &=\log
    \sum_{\vbsig\in(\bSigma_N)^K}
    \exp
    \sum_{u\in\bbL}\lt\langle \bh+\lambda\bm+\by(u),\pi(\bsig(u))\rt\rangle\\
    &=\log \prod_{i=1}^N\prod_{u\in\bbL}\bigg(2\cosh\left(h+\lambda m_i+\by(u)_i\right)\exp\lt(-m_i (h+\lambda m_i+\by(u)_i)\rt)\bigg)\\
    &=\sum_{i=1}^N\sum_{u\in\bbL}\bigg(\log\left(2\cosh\left(h+\lambda m_i+\by(u)_i\right)\right)-m_i (h+\lambda m_i+\by(u)_i)\bigg).
\end{align*}
Given the sequence $0=\zeta_{-1}<\zeta_0<\zeta_1<\dots<\zeta_L=1$, recursively set 
\[
    Z_{d}(\vby)=\frac{1}{\zeta_{d}}\E \zeta_{d}Z_{d+1}\lt(\vby+\vbfeta_{d+1}(\xi'(q_{d+1}-\xi'(q_d))^{1/2}\rt).
\]
Then $Z_0\equiv Z_0(0)$ is a deterministic function of $\bm$ and $h$.

\begin{proposition}\label{prop:X}
For any $\bm\in[-1,1]^N$,
\[
  \varphi^{\Is}(0)\leq \frac{1}{N}Z_0 +KR(\bh,\bm).
\]

\end{proposition}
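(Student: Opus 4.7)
The plan is to upper bound $\varphi^{\Is}(0)$ via a three-step argument parallel to the spherical computation of Section~\ref{sec:grand-hamiltonian-sp}, exploiting the simplification that in the Ising case the sum over $\vbsig\in\Sigma_N^K$ factors coordinate-wise and can be evaluated in closed form. First, starting from \eqref{eq:varphi-0} with $\mu$ the counting measure on $\Sigma_N$, I would enlarge the integration domain from $\cQ^{\Is}(\eta)$ to all of $\Sigma_N^K$. Since the integrand is nonnegative, this yields
\[
    \varphi^{\Is}(0) \leq KR(\bh,\bm) + \fr{1}{N}\log\E\sum_{\alpha\in\bbN^D}\nu_\alpha \sum_{\vbsig\in\Sigma_N^K} \exp\sum_{u\in\bbL}\lt[\la\bh+\lambda\bm,\pi(\bsig(u))\ra + \sum_{i=1}^N g^{(u)}_{\xi',i}(\alpha)\pi(\bsig(u))_i\rt].
\]

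Next, the exponent decomposes as a sum over the coordinates $(u,i)\in\bbL\times[N]$, so the sum over $\vbsig\in\Sigma_N^K$ factorizes into a product of two-term sums. Each factor evaluates as
\[
    \sum_{\sigma\in\{-1,1\}}\exp\lt[(h+\lambda m_i+g^{(u)}_{\xi',i}(\alpha))(\sigma-m_i)\rt] = 2\cosh\lt(h+\lambda m_i+g^{(u)}_{\xi',i}(\alpha)\rt)\exp\lt(-m_i(h+\lambda m_i+g^{(u)}_{\xi',i}(\alpha))\rt),
\]
and taking logarithms, writing $\vby_\alpha(u)_i = g^{(u)}_{\xi',i}(\alpha)$, the result is exactly $Z_D(\vby_\alpha)$ by the definition of $Z_D$.

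It then remains to compute $\E\log\sum_\alpha \nu_\alpha \exp Z_D(\vby_\alpha)$. The random field $\vby_\alpha$ is built from level-wise independent Gaussian increments $\vbfeta_\omega$ along the path to $\alpha$ in $\bbT_D$ with coefficients $(\xi'(q_{|\omega|})-\xi'(q_{|\omega|-1}))^{1/2}$, plus the common root contribution $\vbfeta_0\xi'(q_0)^{1/2}$, which matches exactly the telescoping structure of the recursion defining $Z_d$. By the standard Ruelle cascade identity \cite[Theorem~2.9]{panchenko2013sherrington} applied iteratively from $d=D-1$ down to $d=0$, and using the degenerate $\zeta_{-1}=0$ convention under which $\frac{1}{\zeta_{-1}}\log\E\exp(\zeta_{-1}\cdot)$ collapses to $\E$ to absorb the root Gaussian, this expectation reduces to the deterministic value $Z_0(0)=Z_0$; dividing by $N$ yields the claim. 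The main care point is the bookkeeping in this last step, namely matching the tree indexing of $(g^{(u)}_{\xi',i}(\alpha))$ and of the Ruelle cascade weights $\nu_\alpha$ to the $Z_d$ recursion and correctly handling the root contribution, while the constraint relaxation and the two-spin sum are routine.
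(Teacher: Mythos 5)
Your proposal is correct and follows essentially the same route as the paper: enlarge the summation domain from $\cQ^{\Is}(\eta)$ to $\Sigma_N^K$, factorize the Ising sum coordinate-wise into two-term sums to identify the resulting quantity as $Z_D(\vby_\alpha)$, and then evaluate the $\nu_\alpha$-weighted sum via the Ruelle cascade recursion. Your proof merely spells out the factorization and the two-spin closed form, which the paper absorbs into the definition of $Z_D$.

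One remark on bookkeeping, which you rightly flag as the care point: the paper's displayed recursion runs from $Z_D$ down to $Z_0$ using $\vbfeta_D,\dots,\vbfeta_1$, and then declares $Z_0\equiv Z_0(0)$, which as literally written does not account for the root increment $\vbfeta_0\,\xi'(q_0)^{1/2}$ present in the process $g^{(u)}_{\xi',i}(\alpha)$. After the Ruelle cascade evaluation, the quantity one actually obtains is $\E_{\vbfeta_0} Z_0\lt(\vbfeta_0\,\xi'(q_0)^{1/2}\rt)$, and this is what is implicitly meant by $Z_0$ here — consistent with Proposition~\ref{prop:XPhi}, since $\Phi^{\bbL}_{a,\zeta}(0,\cdot)=\E\,\Phi^{\bbL}_{a,\zeta}\lt(q_0,\cdot+\veta_0\,\xi'(q_0)^{1/2}\rt)$ already includes that outer expectation. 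You identify this as the ``absorb the root Gaussian'' step via $\zeta_{-1}=0$, which is the right idea; just note that the result is the $\vbfeta_0$-expectation, not literally $Z_0(0)$ as written, so that your final equality should read $\E_{\vbfeta_0}Z_0(\vbfeta_0\,\xi'(q_0)^{1/2}) = Z_0$ rather than $Z_0(0)=Z_0$.
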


\begin{proof}
Recall from \eqref{eq:varphi-0} that
\[
    \varphi^{\Is}(0) 
    = 
    KR(\bh,\bm)+\fr{1}{N} 
    \log 
    \E 
    \sum_{\alpha \in \bbN^D}
    \nu_\alpha
    \sum_{\vbsig\in\cQ^{\Is}(\eta)}
    \exp
    \lt(
    \sum_{u\in\bbL}\langle \bh+\lambda\bm,\pi(\bsig(u))\rangle
    +
    \sum_{u\in \bbL}
    \sum_{i=1}^N 
    g^{(u)}_{\xi',i}(\alpha)
    \pi(\bsig(u))_i
    \rt).
\]
Summing over all of $(\bSigma_N)^K$ gives the upper bound
\[
    \varphi^{\Is}(0) 
    \leq
    K R(\bh,\bm)+
    \fr{1}{N} 
    \log 
    \E 
    \sum_{\alpha \in \bbN^D}
    \nu_\alpha
    \sum_{\vbsig\in(\bSigma_N)^K}
    \exp
    \lt(
    \sum_{u\in\bbL}\langle \bh+\lambda\bm,\pi(\bsig(u))\rangle
    +
    \sum_{u\in \bbL}
    \sum_{i=1}^N 
    g^{(u)}_{\xi',i}(\alpha)
    \pi(\bsig(u))_i
    \rt).
\]
Similarly to previous sections or as in \cite[Theorem 2.9]{panchenko2013sherrington}, properties of Ruelle cascades imply that the right hand side above equals
\[
  KR(\bh,\bm)+\frac{1}{N}Z_0
\]
because the coordinates $i\in [N]$ now decouple. 
\end{proof}

\subsection{Properties of Parisi PDEs}
\label{subsec:1d-parisi-review}

Here we review properties of Parisi PDEs. We begin with the $1$-dimensional case for general $\zeta\in\cuL$ and consider the PDE
\begin{align}
\label{eq:1dParisiPDEdefn}
    \partial_t \Phi_{\zeta}(t,x)+\frac{1}{2}\xi''(t)\left(\partial_{xx}\Phi_{\zeta}(t,x)+\zeta(t)(\partial_x \Phi_{\zeta}(t,x))^2\right)&=0\\
\nonumber\Phi_{\zeta}(1,x)&=f_0(x).
\end{align}
For $\beta>0$ we will consider the initial conditions $f_0(x)=\log(\cosh(\beta x)/\beta)-ax$ for $a=m_i\in [-1,1]$ which leads to solution $\Phi_{a,\zeta}^{\beta}$. 
When not specified, we take $\beta=1$ and $a=0$, so for instance $\Phi_{a,\zeta}=\Phi_{a,\zeta}^1$ and $\Phi_{\zeta}^{\beta}=\Phi_{0,\zeta}^{\beta}$. We also allow the $\beta=\infty$ case $\Phi_{a,\zeta}^{\infty}$ corresponding to $f_0(x)=|x|-ax$. 
Note that \eqref{eq:intro-1dParisiPDEdefn} corresponds to the case $(a,\beta)=(0,\infty)$. 
Regularity properties for solutions to \eqref{eq:1dParisiPDEdefn} were derived in several works such as \cite{jagannath2016dynamic,chen2017variational} for $\zeta\in\cuU$. We draw on the results\footnote{Technically the cited results from \cite{ams20} assume $f_0$ is $1$-Lipschitz and even. The evenness is not used in the proofs of the statement below. In our case $f_0$ is $2$-Lipschitz when $|a|\leq 1$, which is equivalent up to a rescaling as in \eqref{eq:betascale}.}
of \cite{ams20} for $\zeta\in\cuL$.

\begin{proposition}{\cite[Proposition 6.1(b) and Lemma 6.4]{ams20}}
\label{prop:philip}
For $\zeta\in\cuL$ and $(a,\beta)\in [-1,1]\times(0,\infty]$, the function $\Phi_{a,\zeta}^{\beta}$ is continuous on $[0,1]\times\mathbb R$ and $2$-Lipschitz in $x$.
Moreover both
\[
\partial_{xx}\Phi_{a,\zeta}^{\beta}(t,x)\quad\text{and}\quad\partial_t \Phi_{a,\zeta}^{\beta}(t,x)
\]
are uniformly bounded on $(t,x)\in [0,1-\varepsilon]\times \R$ for any $\varepsilon>0$. Finally $\Phi_{a,\zeta}^{\beta}(t,x)$ is convex in $x$.

\end{proposition}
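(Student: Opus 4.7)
The plan is to prove all four claims by approximation, reducing the regularity statements for $\zeta\in\cuL$ to the classical theory of the Parisi PDE (see \cite{jagannath2016dynamic,chen2017variational}) for $\zeta\in\cuU$, following the strategy of \cite{ams20}. The initial data $f_0(x)=\log(\cosh(\beta x))/\beta-ax$ (or $|x|-ax$ when $\beta=\infty$) is $2$-Lipschitz and convex whenever $|a|\leq 1$, so the proof reduces to showing that (i) the Parisi PDE preserves these two structural properties, and (ii) when combined with the $\cuL$-conditions on $\xi''\zeta$, it yields uniform bounds on $\partial_{xx}\Phi$ and $\partial_t\Phi$ on $[0,1-\varepsilon]\times\R$.

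I would first handle $\zeta\in\cuU$. Approximating by piecewise constant $\zeta_n$, on each interval of constancy $[t_{d-1},t_d)$ the Cole-Hopf substitution $\Psi=\exp(\zeta_n(t)\Phi)$ linearizes \eqref{eq:1dParisiPDEdefn} into the backward heat equation $\partial_t \Psi+\tfrac{1}{2}\xi''(t)\partial_{xx}\Psi=0$ whose solution is an explicit Gaussian convolution with variance $\int_t^{t_d}\xi''(s)\,ds$. Heat-kernel smoothing preserves the $2$-Lipschitz bound and convexity (since $\exp$ of a convex function is log-convex and the heat kernel preserves log-convexity), and gives $C^\infty$ smoothness away from each $t_d$. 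Gluing intervals and passing $n\to\infty$ by standard stability of the PDE in $\zeta$ extends the conclusions to all $\zeta\in\cuU$.

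Next I would promote the result to $\zeta\in\cuL$ by taking the truncations $\zeta_N=\zeta\wedge N\in\cuU$ and sending $N\to\infty$. The $2$-Lipschitz and convexity properties in $x$ descend to the limit by pointwise convergence. For the uniform bound on $\partial_{xx}\Phi$ on $[0,1-\varepsilon]\times\R$, convexity gives $\partial_{xx}\Phi\geq 0$, and an upper bound comes by differentiating the Cole-Hopf representation on each level of the piecewise-constant approximation and exploiting the uniform bound $|\partial_x\Phi|\leq 2$ to control the nonlinear feedback $\zeta(t)(\partial_x\Phi)^2\leq 4\zeta(t)$; the finiteness of $\|\xi''\zeta\|_{\TV[0,1-\varepsilon]}$ afforded by $\zeta\in\cuL$ then produces an $N$-uniform bound, which passes to the limit. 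Given the second-derivative bound, the bound on $\partial_t\Phi$ is immediate from the PDE:
\[
|\partial_t\Phi|\leq \tfrac{1}{2}\xi''(t)\bigl(|\partial_{xx}\Phi|+\zeta(t)(\partial_x\Phi)^2\bigr),
\]
where both terms on the right are uniformly bounded on $[0,1-\varepsilon]\times\R$.

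The main obstacle is the uniform second-derivative bound when $\zeta$ is allowed to blow up as $t\to 1$, i.e.\ the case $\cuL\setminus\cuU$. The naive smoothing estimate of the heat kernel breaks down because the effective diffusion coefficient $\xi''\zeta$ is unbounded. The resolution, essentially the content of \cite[Lemma 6.4]{ams20}, is to combine the convex-in-$x$ lower bound $\partial_{xx}\Phi\geq 0$ with an integral identity derived from the PDE that measures $\partial_{xx}\Phi$ against the total variation of $\xi''\zeta$ on $[t,1-\varepsilon]$. This is precisely what the $\cuL$-condition $\|\xi''\cdot\zeta\|_{\TV[0,t]}<\infty$ for $t<1$ is designed to supply, closing the estimate.
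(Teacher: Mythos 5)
The paper itself does not prove this proposition; it cites it verbatim from \cite{ams20} (Proposition 6.1(b) and Lemma 6.4), adding only a footnote that the change from $1$-Lipschitz to $2$-Lipschitz boundary data is absorbed by the rescaling \eqref{eq:betascale}. Your sketch follows the same general strategy as the cited proof: Cole--Hopf linearization for piecewise-constant $\zeta$, preservation of the Lipschitz constant and of convexity under the $\tfrac1\zeta\log\E\exp(\zeta\,\cdot)$ smoothing (your log-convexity remark is correct, the clean way to see it being H\"older applied after monotone substitution), and the total variation hypothesis as the mechanism controlling $\partial_{xx}\Phi$.

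There is, however, a concrete error in your promotion step. For general $\zeta\in\cuL$, the truncation $\zeta_N=\zeta\wedge N$ is \emph{not} in $\cuU$, because membership in $\cuU$ requires $\zeta$ to be nondecreasing while membership in $\cuL$ does not --- indeed, allowing nonmonotone order parameters is the whole reason the paper introduces $\cuL$ and defines $\ALG$ by extending the infimum from $\cuU$ to $\cuL$. Your two-stage plan (first $\cuU$ via piecewise-constant approximation, then $\cuL$ by truncating back into $\cuU$) therefore fails at the second stage. The fix is to drop the $\cuU$ detour entirely: the Cole--Hopf argument requires only that $\zeta$ be piecewise constant and nonnegative, not monotone, so one should approximate a general $\zeta\in\cuL$ directly by bounded piecewise-constant elements of $\cuL$ and pass to the limit, using the stability estimate of Proposition~\ref{prop:6.1} to control the error. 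Your final paragraph on the $\partial_{xx}\Phi$ bound is, as you acknowledge, essentially a pointer to the content of \cite[Lemma 6.4]{ams20} rather than a derivation; that is where the $\|\xi''\cdot\zeta\|_{\TV[0,t]}<\infty$ hypothesis from the definition of $\cuL$ actually enters, and it is the substantive step in the argument, so in a self-contained write-up it would need to be supplied.
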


\begin{proposition}{\cite[Lemma 6.5]{ams20}}
For $\zeta\in\cuL$ the SDE
\begin{equation}\label{eq:1dParisiSDE}
\de X_t=\xi''(t)\zeta(t)\partial_x \Phi_{a,\zeta}^{\beta}(t,X_t)\de t+\sqrt{\xi''(t)}\de B_t, \quad X_0=X_0
\end{equation}
has strong and pathwise unique solution.
\end{proposition}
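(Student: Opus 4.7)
The plan is to reduce to standard strong existence and pathwise uniqueness results for SDEs with Lipschitz coefficients, applied on each compact subinterval $[0,1-\varepsilon] \subset [0,1)$, and then extend the solution continuously to $t=1$ using integrability properties inherited from $\zeta \in \cuL$.

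First I would verify that on each $[0,1-\varepsilon] \times \R$ the drift coefficient $b(t,x) = \xi''(t)\zeta(t)\,\partial_x \Phi_{a,\zeta}^{\beta}(t,x)$ is bounded and Lipschitz in $x$ uniformly in $t$. By Proposition~\ref{prop:philip}, $\Phi_{a,\zeta}^{\beta}$ is $2$-Lipschitz in $x$, so $|\partial_x \Phi_{a,\zeta}^{\beta}| \le 2$ everywhere; the same proposition ensures $\partial_{xx}\Phi_{a,\zeta}^{\beta}$ is bounded on $[0,1-\varepsilon] \times \R$. Since $\zeta \in \cuL$ satisfies $\|\xi''\cdot\zeta\|_{\TV[0,1-\varepsilon]} < \infty$, the product $\xi''(t)\zeta(t)$ is bounded on $[0,1-\varepsilon]$. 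Hence $b$ is bounded and Lipschitz in $x$ on $[0,1-\varepsilon]\times \R$ with constants depending on $\varepsilon$. The diffusion coefficient $\sqrt{\xi''(t)}$ is deterministic and bounded on $[0,1-\varepsilon]$.

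Second, on each such subinterval, classical theory (e.g.\ It\^o's theorem as in \cite{karatzas1991brownian}) furnishes a unique strong solution to \eqref{eq:1dParisiSDE}. Pathwise uniqueness on $[0,1-\varepsilon]$ follows from the Lipschitz drift via a standard Gronwall argument applied to $\E[(X_t-Y_t)^2]$ for two solutions $X,Y$ with common initial condition. Uniqueness ensures that the solutions produced on $[0,1-\varepsilon_1]$ and $[0,1-\varepsilon_2]$ agree on their intersection, so concatenating them yields a unique strong solution on all of $[0,1)$.

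Finally I would extend the solution to $[0,1]$ by taking the limit $t \to 1^-$ in the integral form
\[
    X_t = X_0 + \int_0^t \xi''(s)\zeta(s)\,\partial_x \Phi_{a,\zeta}^{\beta}(s,X_s)\,\diff{s} + \int_0^t \sqrt{\xi''(s)}\,\diff{B_s}.
\]
The drift integrand is bounded in absolute value by $2\xi''(s)\zeta(s)$, which is integrable on $[0,1]$ by the defining condition $\int_0^1 \xi''(s)\zeta(s)\,\diff{s} < \infty$ for $\zeta \in \cuL$; and $\int_0^1 \xi''(s)\,\diff{s} = \xi'(1) - \xi'(0) < \infty$, so the stochastic integral has an a.s.\ continuous extension to $t=1$. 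Thus $X_1 := \lim_{t \to 1^-} X_t$ exists almost surely, and uniqueness on $[0,1]$ follows from uniqueness on each $[0,1-\varepsilon]$ together with continuity. The main (very minor) obstacle is precisely this degeneration at $t=1$: neither $\zeta$ nor $\partial_{xx}\Phi$ is controlled uniformly up to the terminal time, but the $L^1$ integrability of $\xi''\zeta$ and the uniform Lipschitz bound on $\Phi$ in $x$ combine to tame it.
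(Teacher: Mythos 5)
Your argument is correct, and since the paper merely cites \cite[Lemma~6.5]{ams20} without reproducing a proof, there is no in-paper argument to compare against; the localization-to-compacts-plus-$L^1$-extension strategy you use is the standard and natural one here, and it is almost certainly what the cited lemma does. The key ingredients you invoke are exactly right: Proposition~\ref{prop:philip} supplies the uniform $x$-Lipschitz bound $|\partial_x\Phi_{a,\zeta}^{\beta}|\le 2$ globally and a bound on $\partial_{xx}\Phi_{a,\zeta}^{\beta}$ on $[0,1-\varepsilon]\times\R$, while the finite-TV condition in the definition \eqref{eq:def-cuL} of $\cuL$ implies $\xi''\zeta$ is bounded on $[0,1-\varepsilon]$, giving a Lipschitz drift; and the defining integrability $\int_0^1 \xi''\zeta\,\diff{t}<\infty$ together with $\xi'(1)<\infty$ lets the solution be continued to $t=1$ in the integral form. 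One small point worth making explicit: at $\beta=\infty$ the terminal condition is $|x|-ax$, whose $x$-derivative is discontinuous at $0$, so $\partial_x\Phi_{a,\zeta}^{\infty}(1,\cdot)$ fails to be Lipschitz; your proof is unaffected because you never impose Lipschitz regularity at $t=1$, only on $[0,1-\varepsilon]$, but it is good to notice that the extension-by-continuity step is precisely what circumvents this.
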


\begin{proposition}\cite[Proposition 6.1(c)]{ams20}\label{prop:6.1}
For $\zeta_1,\zeta_2\in\cuL$, and $\beta\in (0,\infty]$,
\[
|\Phi_{\zeta_1}^{\beta}-\Phi_{\zeta_2}^{\beta}|\leq \int_0^1\xi''(t) |\zeta_1(t)-\zeta_2(t)|dt.
\]
\end{proposition}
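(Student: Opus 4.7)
The plan is to derive a linear PDE satisfied by the difference $D = \Phi_{\zeta_1}^\beta - \Phi_{\zeta_2}^\beta$ and estimate it via a stochastic representation. Write $\Phi_i = \Phi_{\zeta_i}^\beta$. Since $\Phi_1$ and $\Phi_2$ share the terminal datum $f_0$, we have $D(1,\cdot)\equiv 0$. Subtracting the two Parisi PDEs in \eqref{eq:1dParisiPDEdefn} and using the factorization
\[
\zeta_1(\partial_x\Phi_1)^2 - \zeta_2(\partial_x\Phi_2)^2 = \zeta_1(\partial_x\Phi_1+\partial_x\Phi_2)\partial_x D + (\zeta_1-\zeta_2)(\partial_x\Phi_2)^2,
\]
I find that $D$ satisfies the linear backward parabolic equation
\[
\partial_t D + \tfrac{1}{2}\xi''(t)\Bigl(\partial_{xx}D + \zeta_1(t)(\partial_x\Phi_1+\partial_x\Phi_2)\partial_x D\Bigr) + \tfrac{1}{2}\xi''(t)(\zeta_1(t)-\zeta_2(t))(\partial_x\Phi_2)^2 = 0.
\]

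Next I would introduce the auxiliary SDE
\[
dX_t = \xi''(t)\zeta_1(t)\frac{\partial_x\Phi_1 + \partial_x\Phi_2}{2}(t,X_t)\,dt + \sqrt{\xi''(t)}\,dB_t, \quad X_s = x,
\]
whose strong solution exists by arguments parallel to the well-posedness of \eqref{eq:1dParisiSDE}. Applying It\^o's formula to $D(t,X_t)$ (valid thanks to the regularity recorded in Proposition~\ref{prop:philip}) and taking expectations yields the Feynman-Kac representation
\[
D(s,x) = \E\left[\int_s^1 \tfrac{1}{2}\xi''(t)\bigl(\zeta_1(t)-\zeta_2(t)\bigr)\bigl(\partial_x\Phi_2(t,X_t)\bigr)^2\,dt\right].
\]
Since here $a=0$, the terminal datum $f_0(x) = \beta^{-1}\log\cosh(\beta x)$ (or $|x|$ when $\beta=\infty$) is $1$-Lipschitz, and specializing the bound behind Proposition~\ref{prop:philip} to $a=0$ gives $|\partial_x\Phi_2|\le 1$. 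Therefore
\[
|D(s,x)| \le \tfrac{1}{2}\int_s^1\xi''(t)\,|\zeta_1(t)-\zeta_2(t)|\,dt \le \int_0^1\xi''(t)\,|\zeta_1(t)-\zeta_2(t)|\,dt,
\]
which is the claimed estimate.

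The main obstacle will be justifying the It\^o/Feynman-Kac step when $\zeta_1,\zeta_2\in\cuL$ are only right-continuous of bounded variation, since then $\partial_x\Phi_i$ is only bounded and measurable in $(t,x)$ rather than Lipschitz in $x$, and the drift of the auxiliary SDE may be discontinuous in $t$; in addition $\xi''(t)\zeta_1(t)$ can blow up as $t\to 1$ if $\zeta_1$ concentrates near $1$. I would handle this by first proving the bound for smooth $\zeta_i\in\cuU$, where classical PDE theory and standard It\^o calculus apply with no technicality, and then passing to the limit: approximate any $\zeta\in\cuL$ by a smooth sequence $\zeta^{(n)}\to\zeta$ with $\int_0^1\xi''(t)|\zeta^{(n)}-\zeta|\,dt\to 0$, and use continuity of $\zeta\mapsto\Phi_\zeta^\beta$ in local uniform topology (itself a consequence of the smooth-case estimate applied to $\zeta^{(n)},\zeta^{(m)}$) to transfer the bound to the general case.
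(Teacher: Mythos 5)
The statement is cited from \cite{ams20} and not reproved in this paper, so I can only compare against the standard argument for such Lipschitz estimates. Your proof is correct and is, in essence, the standard one. The algebraic factorization checks out: with $D = \Phi_1 - \Phi_2$ one has
\[
\zeta_1(\partial_x\Phi_1)^2 - \zeta_2(\partial_x\Phi_2)^2 = \zeta_1(\partial_x\Phi_1 + \partial_x\Phi_2)\,\partial_x D + (\zeta_1-\zeta_2)(\partial_x\Phi_2)^2,
\]
and the resulting linear backward equation for $D$ has precisely the generator matching your auxiliary diffusion $X_t$, so It\^o applied to $D(t,X_t)$ gives the Feynman--Kac identity with the source $\tfrac12\xi''(\zeta_1-\zeta_2)(\partial_x\Phi_2)^2$ and terminal value $D(1,\cdot)=0$. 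Using $|\partial_x\Phi_2|\le 1$ (correct here since $a=0$ so the terminal datum is $1$-Lipschitz; Proposition~\ref{prop:philip}'s $2$-Lipschitz bound is for general $a\in[-1,1]$) then yields the estimate, in fact with the sharper constant $\tfrac12$, which is a harmless strengthening of the stated bound. Your acknowledgment of the regularity issue for $\zeta\in\cuL$ (the drift is only bounded measurable near $t=1$) and the proposed fix --- prove the inequality for smooth $\zeta\in\cuU$ where classical theory gives enough regularity to run It\^o, then pass to the limit using $L^1(\xi''\,dt)$-approximation together with the continuity of $\zeta\mapsto\Phi_\zeta^\beta$ that the smooth-case bound itself furnishes via a Cauchy-sequence argument --- is the standard way these estimates are extended from $\cuU$ to $\cuL$, and it is consistent with how $\Phi_\zeta^\beta$ is actually constructed for general $\zeta\in\cuL$. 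One small remark: it is worth noting in a complete write-up that for $\beta=\infty$ one should additionally take $\beta\to\infty$ after the $\zeta$-approximation (or use Proposition~\ref{prop:cPbetalarge}), since the terminal datum $|x|$ is not smooth even when $\zeta$ is; but this does not affect the constant or the validity of the argument.
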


\subsubsection{The Multi-Dimensional Parisi PDE}

Here we define the Parisi PDE on $\R^K$. For simplicity we restrict attention to finitely supported $\zeta\in\mathcal M_{\vq}$. We construct $\Phi^{\bbL}$ via the Hopf-Cole transformation and verify that it solves a version of \eqref{eq:1dParisiPDEdefn}. 

Recall the definition of $M^d=M^{\vk,\vp,d}\in \bbR^{K\times K}$ given by
\[
    M^{\vk,\vp,d}_{u^1,u^2} 
    = 
    \ind\{u^1\wedge u^2\ge d\} 
    p_{u^1\wedge u^2}.
\]
As before, $M(t)=M^d$ for $t\in [q_{d-1},q_d)$.

For an atomic measure $\zeta\in\mathcal M_{\vq}$ consider the function $\Phi^{\bbL}_{\zeta}(t,\vx):[0,1]\times\R^{K}\to\R$ defined as as follows. The $t=1$ boundary condition is 
\[
\Phi^{\bbL}_{a,\zeta}(1,\vx)=\sum_{u\in\bbL}\log\left(2\cosh  x(u)\right)-a x(u).
\]
For $t\in [q_0,1)$, $\Phi^{\bbL}_{a,\zeta}$ is defined recursively by
\[
    \Phi^{\bbL}_{a,\zeta}(t,x)=\frac{1}{\zeta(t)}\log\E \exp\left(\zeta(t)\Phi^{\bbL}_{a,\zeta}(q_{d+1},\vx+\veta_{d+1}\cdot (\xi'(q_{d+1})-\xi'(t))^{1/2})\right), \quad t\in [q_{d},q_{d+1})
\]
where $\veta_0\sim \cN(0,M^1)$ and $\veta_d\sim\cN(0,M^d)$ for $1\leq d\leq D$ are independent Gaussian vectors in $\bbR^K$.
For $t \in [0, q_0)$, we extend the definition of $\zeta$ so that $\zeta(t)=0$ and define
\[
    \Phi^{\bbL}_{a,\zeta}(t,x)=\E \Phi^{\bbL}_{a,\zeta}(q_{0},\vx+\veta_{0}\cdot (\xi'(q_{0})-\xi'(t))^{1/2}).
\]

\begin{proposition}\label{prop:XPhi}

For any $\zeta\in\mathcal M_{\vq}$, 
\[
  Z_0=\frac{1}{N}\sum_{i=1}^N\Phi^{\bbL}_{m_i,\zeta}(0,(h+\lambda m_i)\vone).
\]
\end{proposition}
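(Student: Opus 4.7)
The plan is to prove by downward induction on $d$ that the functions $Z_d$ factor coordinatewise across $i \in [N]$, matching the per-coordinate Parisi objects $\Phi^{\bbL}_{m_i,\zeta}$ evaluated at time $q_d$. Writing $\vy_i := (y(u)_i)_{u\in\bbL}\in\bbR^K$ for the slice of $\vby\in(\bbR^N)^K$ at spatial coordinate $i$, the inductive claim is
\[
Z_d(\vby) = \sum_{i=1}^N \Phi^{\bbL}_{m_i,\zeta}\bigl(q_d,\, (h+\lambda m_i)\vone + \vy_i\bigr)
\]
for every $d \in \{0,1,\ldots,D\}$. Passing from $t=q_0$ down to $t=0$ is then handled by the extension formula $\Phi^{\bbL}_{m_i,\zeta}(0,\vx) = \E\,\Phi^{\bbL}_{m_i,\zeta}(q_0,\vx+\veta_0\,\xi'(q_0)^{1/2})$ via one additional averaging step with $\vbfeta_0$, which is exactly the Ruelle recursion applied with the degenerate weight $\zeta_{-1}=0$ (where the $\fr{1}{\zeta}\log\E\exp\zeta(\cdot)$ operation degenerates to a plain expectation).

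The base case $d = D$ is immediate: after the shift $\vx\mapsto (h+\lambda m_i)\vone+\vy_i$, the terminal condition $\Phi^{\bbL}_{m_i,\zeta}(1,\vx)=\sum_{u\in\bbL}\log(2\cosh x(u)) - m_i x(u)$ reproduces exactly the $i$-th summand of $Z_D(\vby)$.

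The inductive step rests on the independence structure of the Gaussian vector $\vbfeta_{d+1}\in(\bbR^N)^K$: its $N$ rows $((\vbfeta_{d+1})_i)_{i=1}^N$ are i.i.d.\ with common law $\cN(0,M^{d+1})$, exactly the distribution of the single-index Gaussian $\veta_{d+1}$ that appears in the definition of $\Phi^{\bbL}$. Substituting the inductive hypothesis into the recursion turns $Z_{d+1}\bigl(\vby + \vbfeta_{d+1}\sqrt{\xi'(q_{d+1})-\xi'(q_d)}\bigr)$ into a sum of $N$ random variables that are independent across $i$, so the exponential moment factorizes:
\begin{align*}
Z_d(\vby)
&= \fr{1}{\zeta_d}\log\E\exp\Bigl(\zeta_d\sum_{i=1}^N \Phi^{\bbL}_{m_i,\zeta}\bigl(q_{d+1}, (h+\lambda m_i)\vone+\vy_i+(\vbfeta_{d+1})_i\sqrt{\xi'(q_{d+1})-\xi'(q_d)}\bigr)\Bigr) \\
&= \sum_{i=1}^N \fr{1}{\zeta_d}\log\E\exp\Bigl(\zeta_d\, \Phi^{\bbL}_{m_i,\zeta}\bigl(q_{d+1}, (h+\lambda m_i)\vone+\vy_i+\veta_{d+1}\sqrt{\xi'(q_{d+1})-\xi'(q_d)}\bigr)\Bigr),
\end{align*}
and the defining recursion for $\Phi^{\bbL}_{m_i,\zeta}$ at $t=q_d$ identifies each summand with $\Phi^{\bbL}_{m_i,\zeta}(q_d,(h+\lambda m_i)\vone+\vy_i)$, closing the induction.

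No step here is genuinely difficult: the argument is pure bookkeeping around two observations, namely that the rows of each $\vbfeta_d$ are independent across the $N$ spatial coordinates and that $\log\E\exp$ of a sum of independent summands splits as a sum of $\log\E\exp$ of each. The only care required is to track the time index $q_d$ and the uniform shift $(h+\lambda m_i)\vone$ through each application of the recursion, and to recognize that the final averaging against $\vbfeta_0$ implements exactly the extension of $\Phi^{\bbL}$ from $[q_0,1]$ to $[0,1]$.
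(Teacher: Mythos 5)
Your proof is correct and matches the paper's approach: both observe that the recursive construction of $Z_d$ is, coordinate by coordinate, exactly the recursive construction of $\Phi^{\bbL}_{a,\zeta}$ at the times $\{q_d\}$, using that the rows $((\vbfeta_{d})_i)_{i\in[N]}$ are i.i.d.\ copies of $\cN(0,M^{d})$ so that the $\frac{1}{\zeta_d}\log\E\exp\zeta_d(\cdot)$ operator splits into a sum over $i$. You make the paper's one-sentence argument concrete by spelling out the downward induction and the terminal-condition match, which is a genuine improvement in transparency even though the route is identical. One small remark on the statement rather than your argument: your induction yields $Z_0=\sum_{i=1}^N\Phi^{\bbL}_{m_i,\zeta}(q_0,(h+\lambda m_i)\vone)$ with no $\frac{1}{N}$ prefactor, and the extension to time $0$ uses one more averaging over $\vbfeta_0$ (the $\eta_\emptyset$ term of the Ruelle cascade appearing in the proof of Proposition~\ref{prop:X}) which the literal recursion defining $Z_0\equiv Z_0(0)$ does not contain; the $\frac{1}{N}$ in the displayed statement is a typo, since the applications in Proposition~\ref{prop:X} and Lemma~\ref{lem:grand-bound} both require $Z_0=\sum_{i=1}^N\Phi^{\bbL}_{m_i,\zeta}(\cdot,(h+\lambda m_i)\vone)$, exactly what your induction gives.
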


\begin{proof}
This follows from Lemma~\ref{lem:recursiontoPDE} since the recursive definition of $\Phi_{a,\zeta}^{\bbL}(t,x)$ restricted to times $t\in\{q_d\}_{d\in [D]}$ is exactly that of $Z_0$ up to an spatial shift of $(h+\lambda m_i)\vone$.
\end{proof}

% \mscomment{And $M(t)$}
We defer the proof of the next lemma, which is a standard computation.

\begin{lemma}\label{lem:recursiontoPDE}

The function $\Phi^{\bbL}_{a,\zeta}$ is smooth on each time interval $[q_d,q_{d+1}]\times \R^{K}$. Moreover it is continuous and solves the $K$-dimensional Parisi PDE
\begin{equation}\label{eq:parisi-pde-Rk}
    \partial_t \Phi^{\bbL}_{a,\zeta}(t,\vx)=-\frac{\xi''(t)}{2}\left(\langle M(t),\nabla^2 \Phi^{\bbL}_{a,\zeta}\rangle + \zeta(t)\langle M(t),(\nabla \Phi^{\bbL}_{a,\zeta})^{\otimes 2}\rangle\right).
\end{equation}
Finally $|\partial_{x(u)} \Phi^{\bbL}_{a,\zeta}(t,\vx)|\leq 1+|a|$ holds for all $(t,\vx,u)\in [0,1]\times \R^{K}\times \bbL$.
\end{lemma}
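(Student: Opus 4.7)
The plan is to analyze $\Phi^{\bbL}_{a,\zeta}$ on each time interval $[q_d, q_{d+1}]$ separately via a backward induction starting from $t = 1$, using the Hopf--Cole substitution to convert the log-exp recursion into a linear heat equation. On each such interval $\zeta$ is constant, say $\zeta(t) \equiv \zeta_d$, and I would define
$$\Psi(t, \vx) := \exp\bigl(\zeta_d \Phi^{\bbL}_{a,\zeta}(t, \vx)\bigr).$$
The defining recursion is equivalent to the Gaussian-convolution identity
$$\Psi(t, \vx) = \E\, \Psi\bigl(q_{d+1},\, \vx + \veta_{d+1}\cdot(\xi'(q_{d+1}) - \xi'(t))^{1/2}\bigr), \quad \veta_{d+1} \sim \cN(0, M^{d+1}).$$

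First I would verify the PDE. By a standard heat-kernel computation, differentiating the Gaussian-convolution identity in $t$ and using $\tfrac{d}{dt}(\xi'(q_{d+1}) - \xi'(t)) = -\xi''(t)$ yields $\partial_t \Psi = -\tfrac{\xi''(t)}{2} \langle M^{d+1}, \nabla^2 \Psi\rangle$ on $[q_d, q_{d+1}]$. Translating this back via $\partial_t \Psi = \zeta_d \Psi \cdot \partial_t \Phi^{\bbL}_{a,\zeta}$ and
$$\nabla^2 \Psi = \zeta_d \Psi\bigl(\nabla^2 \Phi^{\bbL}_{a,\zeta} + \zeta_d (\nabla \Phi^{\bbL}_{a,\zeta})(\nabla \Phi^{\bbL}_{a,\zeta})^\top\bigr),$$
and dividing by $\zeta_d \Psi$, I recover exactly \eqref{eq:parisi-pde-Rk} since $M(t) = M^{d+1}$ and $\zeta(t) = \zeta_d$ on this interval. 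For the lowest interval $[0, q_0)$, where $\zeta(t) = 0$, the definition is a plain Gaussian convolution (no exponentiation), and $\Phi^{\bbL}_{a,\zeta}$ directly satisfies the linear heat equation $\partial_t \Phi^{\bbL}_{a,\zeta} = -\tfrac{\xi''(t)}{2}\langle M^1, \nabla^2 \Phi^{\bbL}_{a,\zeta}\rangle$, which is \eqref{eq:parisi-pde-Rk} with the quadratic term dropped. Smoothness on each closed slab $[q_d, q_{d+1}]\times\R^K$ is inherited from the smoothness of $\Psi(q_{d+1}, \cdot)$, which is established at the previous step of the backward induction starting from the explicitly smooth boundary data at $t=1$. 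Continuity across the interface $t = q_d$ is immediate: as $t \to q_d^-$ the Gaussian variance $\xi'(q_d) - \xi'(t)$ vanishes and the recursion on $[q_{d-1}, q_d)$ collapses to the identity, matching the value of $\Phi^{\bbL}_{a,\zeta}$ at $t = q_d$ from the interval $[q_d, q_{d+1})$.

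Finally, for the gradient bound I would again induct backwards. At $t = 1$ the explicit formula gives $\partial_{x(u)} \Phi^{\bbL}_{a,\zeta}(1, \vx) = \tanh(x(u)) - a$, bounded in absolute value by $1 + |a|$. Assuming the bound at $t = q_{d+1}$, differentiating the recursion under the expectation yields, with $s = (\xi'(q_{d+1}) - \xi'(t))^{1/2}$,
$$\partial_{x(u)}\Phi^{\bbL}_{a,\zeta}(t, \vx) = \frac{\E\bigl[\partial_{x(u)}\Phi^{\bbL}_{a,\zeta}(q_{d+1}, \vx + \veta_{d+1} s)\, e^{\zeta_d \Phi^{\bbL}_{a,\zeta}(q_{d+1}, \vx + \veta_{d+1} s)}\bigr]}{\E\bigl[e^{\zeta_d \Phi^{\bbL}_{a,\zeta}(q_{d+1}, \vx + \veta_{d+1} s)}\bigr]}.$$
The right-hand side is a tilted Gaussian average of $\partial_{x(u)}\Phi^{\bbL}_{a,\zeta}(q_{d+1}, \cdot)$, hence inherits the $L^\infty$ bound $1 + |a|$. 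The untilted version of the same argument handles $[0, q_0)$. The main obstacle in executing this plan is the regularity bookkeeping needed to justify differentiation under the expectation and interchange of limits at interval endpoints; this is handled by observing that $\Psi(q_{d+1}, \cdot)$ has at most exponential growth (propagated from the $\log\cosh$ boundary at $t = 1$ through bounded gradients), while the Gaussian heat kernel is Schwartz, so classical dominated-convergence and parabolic-regularity estimates apply uniformly on each slab.
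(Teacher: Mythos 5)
Your proof is correct and takes essentially the same approach as the paper. What you phrase as "$\Psi = e^{\zeta_d \Phi}$ solves the linear heat equation and then pull back" is exactly what the paper does implicitly: it differentiates $\E\, e^{\zeta_d \Phi(q_{d+1},\,\vx+\vy(t))}$ in $t$ via It\^o's formula on the coupled process $\vy(t)=\int_t^{1}\sqrt{\xi''(r)M(r)}\,\de\vB_r$, which is precisely your heat-kernel computation on the Gaussian convolution, and it obtains the gradient bound from the same tilted-average identity $\nabla\Phi^{\bbL}_{a,\zeta}=\E[\nabla A\, e^{\zeta_d A}]/\E[e^{\zeta_d A}]$ that you use.
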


% \mscomment{Starting here, fix missing factors of $\xi''(t)$}

\subsubsection{Auffinger-Chen Representation}

As shown by \cite{auffinger2015parisi} the Parisi PDE admits a stochastic control formulation. We now recall such representations in the cases of interest starting with the $1$-dimensional case. For $0\leq t_1\leq t_2\leq 1$ let $\mathcal D[t_1,t_2]$ be the space of processes $v\in C([t_1,t_2];\R)$ with $\sup_{t_1\leq r\leq t_2}|v_r|\leq 2$ which are progressively measurable with respect to filtration supporting a standard Brownian motion $B_t$. Define the functional
% \mscomment{Consistent including $\vx$ in $Z$}
\[
\mathcal X_{a,\zeta}^{t_1,t_2}(x,v)=\E\left[\mathcal Y_{a,\zeta}^{t_1,t_2}(x,v)-\mathcal Z_{a,\zeta}^{t_1,t_2}(v)\right]
\]
where
\begin{align*}
    \mathcal Y_{a,\zeta}^{t_1,t_2}(x,v)&\equiv \Phi^{\beta}_{a,\zeta}\left(t_2,x+\int_{t_1}^{t_2} \zeta(r)\xi''(r)v_r\de r+\int_{t_1}^{t_2} \sqrt{\xi''(r)}\de B_r\right),\\
    \mathcal Z^{t_1,t_2}_{a,\zeta}(v)&\equiv\frac{1}{2}\int_{t_1}^{t_2} \zeta(r)\xi''(r)v_r^{2} \de r.
\end{align*}
Note that since $|v_r|\leq 2$ is uniformly bounded and $||\xi''\cdot \zeta||_{1}<\infty$ there are no continuity issues near $t=1$. The next proposition, whose standard proof we defer, relates $\Phi^{\beta}_{a,\zeta}$ to stochastic control.

\begin{proposition}\label{prop:1dGTIsing}

For any $\zeta\in\cuL$, $[t_1,t_2]\subseteq [0,1]$, $a\in [-1,1]$ and $\beta\in (0,\infty]$, the function $\Phi_{a,\zeta}^{\beta}$ satisfies
\begin{equation}\label{eq:1dGTvariational}
\Phi^{\beta}_{a,\zeta}(t_1,x)=\sup_{v\in\mathcal D[t_1,t_2]}\mathcal X_{a,\zeta}^{t_1,t_2}(x,v).
\end{equation}
Moreover the maximum in \eqref{eq:1dGTvariational} is achieved by 
\[
v_r=\partial_x\Phi^{\beta}_{a,\zeta}(r,X_r)
\] 
where $X_r$ solves the SDE \eqref{eq:1dParisiSDE} with initial condition $X_{t_1}=x$.

\end{proposition}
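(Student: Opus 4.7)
The plan is to prove this by a standard verification/dynamic-programming argument based on It\^o's formula and completion of the square, first in the case where $\Phi_{a,\zeta}^{\beta}$ is regular enough for classical It\^o calculus, then by approximation.

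\textbf{Step 1: Reduction to a regular setting.} First I would reduce to the case $\beta<\infty$, $t_2<1$, and $\zeta$ a step function in $\cuU$ which is bounded. For $\beta<\infty$ the initial data $f_0$ is smooth, and on each strip $[q_{d-1},q_d]\times\R$ where $\zeta$ is constant equation~\eqref{eq:1dParisiPDEdefn} is a standard semilinear parabolic PDE, which after a Hopf--Cole transformation becomes the heat equation; hence $\Phi_{a,\zeta}^{\beta}$ is $C^{1,2}$ on each such strip. It is also continuous across the interfaces $t=q_d$ by the recursive Ruelle-cascade definition. The case $\beta=\infty$ and general $\zeta\in\cuL$ will be recovered by an approximation argument at the end using Proposition~\ref{prop:6.1} together with the $2$-Lipschitz bound of Proposition~\ref{prop:philip}.

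\textbf{Step 2: It\^o's formula and completion of the square.} Fix an admissible control $v\in\mathcal D[t_1,t_2]$ and let
\[
dX_r^v=\zeta(r)\xi''(r)v_r\de r+\sqrt{\xi''(r)}\de B_r,\quad X_{t_1}^v=x.
\]
Apply It\^o's formula to $r\mapsto \Phi_{a,\zeta}^{\beta}(r,X_r^v)$ on each strip where $\Phi$ is smooth; using the PDE~\eqref{eq:1dParisiPDEdefn} to eliminate $\partial_t\Phi$ and $\partial_{xx}\Phi$, and noting that the martingale term has expectation zero (the integrand $\partial_x\Phi\sqrt{\xi''}$ is bounded by Proposition~\ref{prop:philip}), I obtain
\[
\E\,\Phi_{a,\zeta}^{\beta}(t_2,X_{t_2}^v)-\Phi_{a,\zeta}^{\beta}(t_1,x)=\E\int_{t_1}^{t_2}\xi''(r)\zeta(r)\Bigl[v_r\,\partial_x\Phi-\tfrac12(\partial_x\Phi)^2\Bigr]\de r,
\]
where $\partial_x\Phi=\partial_x\Phi_{a,\zeta}^{\beta}(r,X_r^v)$. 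The pointwise identity
\[
v\,u-\tfrac12 u^2=\tfrac12 v^2-\tfrac12(v-u)^2
\]
then yields the key representation
\[
\Phi_{a,\zeta}^{\beta}(t_1,x)=\mathcal X_{a,\zeta}^{t_1,t_2}(x,v)+\tfrac12\E\int_{t_1}^{t_2}\xi''(r)\zeta(r)\bigl(v_r-\partial_x\Phi_{a,\zeta}^{\beta}(r,X_r^v)\bigr)^2\de r.
\]
Since the remainder is nonnegative, this proves $\Phi_{a,\zeta}^{\beta}(t_1,x)\ge\sup_{v}\mathcal X_{a,\zeta}^{t_1,t_2}(x,v)$.

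\textbf{Step 3: Attainment.} By Proposition~\ref{prop:philip} the function $\partial_x\Phi_{a,\zeta}^{\beta}$ is bounded by $2$, so the feedback control $v_r^\star=\partial_x\Phi_{a,\zeta}^{\beta}(r,X_r)$, with $X_r$ the strong solution of~\eqref{eq:1dParisiSDE} with $X_{t_1}=x$, lies in $\mathcal D[t_1,t_2]$. Plugging this $v^\star$ into the above representation makes the quadratic remainder vanish, giving equality and hence attainment of the supremum.

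\textbf{Step 4: Removing regularity assumptions.} The main technical obstacle is handling $\beta=\infty$, general $\zeta\in\cuL$, and the boundary $t_2=1$. I would approximate by sequences $\zeta_n\in\cuU$ step functions with $\int_0^1\xi''|\zeta-\zeta_n|\de t\to 0$ and $\beta_n\uparrow\infty$, and use Proposition~\ref{prop:6.1} (and an analogous $\beta$-continuity estimate obtained by rescaling $\Phi_{a,\zeta}^\beta(t,x)=\beta^{-1}\Phi_{a\beta,\beta^2\zeta}^1(t,\beta x)$, hence $|\Phi^{\beta_1}-\Phi^{\beta_2}|$ can be bounded as $\beta_1,\beta_2\to\infty$ using the Lipschitz and convexity statements in Proposition~\ref{prop:philip}) to pass to the limit on the left-hand side. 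For the right-hand side, the maps $v\mapsto\mathcal Y_{a,\zeta}^{t_1,t_2}(x,v)$ and $v\mapsto\mathcal Z_{a,\zeta}^{t_1,t_2}(v)$ are continuous in $\zeta$ and $\beta$ uniformly on bounded controls by the same Lipschitz estimates and dominated convergence (since $|v_r|\le 2$ and $\int\xi''\zeta\,\de r<\infty$). For $t_2=1$ the supremum and the PDE identity are both obtained from $t_2<1$ by sending $t_2\uparrow 1$ and invoking continuity of $\Phi_{a,\zeta}^\beta$ at $t=1$; this continuity holds because the recursive construction (or the Hopf--Cole form) extends through $t=1$ using $f_0$ as boundary data and $\int_0^1\xi''\zeta\,\de t<\infty$.

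The most delicate step is Step~4: one must verify that approximation of $\zeta\in\cuL$ by elements of $\cuU$, and of $\beta=\infty$ by finite $\beta$, is compatible with both sides of the variational identity, uniformly in the control class $\mathcal D[t_1,t_2]$. The bound $|v_r|\le 2$ in the definition of $\mathcal D[t_1,t_2]$ (matching the uniform Lipschitz bound $|\partial_x\Phi|\le 1+|a|\le 2$) is exactly what makes this uniform convergence possible.
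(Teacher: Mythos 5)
Your Steps 2--3 (apply It\^o's formula to $\Phi^{\beta}_{a,\zeta}(r,X_r^v)$, use the Parisi PDE to collapse the drift into $-\tfrac12\zeta\xi''(v_r-\partial_x\Phi)^2\le 0$, and take the feedback control $v^\star_r=\partial_x\Phi^{\beta}_{a,\zeta}(r,X_r)$ to saturate the inequality) are exactly the paper's verification argument, and your observation that $|\partial_x\Phi|\le 2$ guarantees admissibility of $v^\star$ is also what the paper implicitly uses.

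The one genuine difference is your Steps 1 and 4. The paper does \emph{not} reduce to bounded step functions $\zeta\in\cuU$ and finite $\beta$ and then approximate: it applies It\^o directly for general $\zeta\in\cuL$ and $\beta\in(0,\infty]$, relying on the regularity already established in Proposition~\ref{prop:philip} (continuity, $2$-Lipschitzness in $x$, and uniform boundedness of $\partial_{xx}\Phi$ and $\partial_t\Phi$ on $[0,1-\varepsilon]\times\R$), which [ams20] proved for the full class $\cuL$ and $\beta\in(0,\infty]$. Given those estimates your approximation scheme is more work than necessary, though it is a valid alternative that would make the argument more self-contained. One small advantage of your Step~4: the cited bounds on $\partial_{xx}\Phi$ only hold away from $t=1$, and your explicit passage $t_2\uparrow 1$ using continuity of $\Phi$ and dominated convergence addresses this endpoint, which the paper's proof glosses over (it is easily fixed in either approach since the finite-variation part has a sign). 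So your plan is correct, follows the same core strategy, and trades reliance on imported regularity for an extra approximation layer.
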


The corresponding stochastic control formulation in $\R^{K}$ is as follows. For $0\leq t_1\leq t_2\leq 1$ let $\mathcal D^{\bbL}[t_1,t_2]$ be the space of processes $\vv\in C([t_1,t_2];\R^{K})$ with $\sup_{t_1\leq r\leq t_2}|\vv_r|_{\infty}\leq 2$ which are progressively measurable with respect to a filtration supporting an $\R^K$ valued Brownian motion $\vB_r=(B_r^u)_{u\in\bbL}$. Define the functional
\[
\mathcal X_{a,\zeta}^{\bbL,t_1,t_2}(\vx,\vv)\equiv\E\left[\mathcal Y_{a,\zeta}^{\bbL,t_1,t_2}(\vx,\vv)-\mathcal Z_{a,\zeta}^{\bbL,t_1,t_2}(\vv)\right]
\]
where
\begin{align*}
    \mathcal Y_{a,\zeta}^{\bbL,t_1,t_2}(\vx,\vv)&\equiv \Phi_{a,\zeta}^{\bbL}\left(t_2,\vx+\int_{t_1}^{t_2} \zeta(r)\xi''(r)M(r)\vv_r\de r+\int_{t_1}^{t_2} \sqrt{\xi''(r) M(r)}\de\vB_r\right),\\
    \mathcal Z^{\bbL,t_1,t_2}_{a,\zeta}(\vv)&\equiv\frac{1}{2}\int_{t_1}^{t_2} \zeta(r)\xi''(r)\langle M(r),\vv_r^{\otimes 2}\rangle \de r.
\end{align*}

In the multi-dimensional case we restrict attention to finitely supported $\zeta\in\mathcal M_{\vq}$ to avoid the by-now routine process of extending regularity properties of $\Phi^{\bbL}_{\zeta}$ to general $\zeta$. The proof is again deferred.

\begin{proposition}\label{prop:GTIsing}

For any $\zeta\in\mathcal M_{\vq}$, $[t_1,t_2]\subseteq [0,1]$ and $a\in [-1,1]$, the function $\Phi^{\bbL}_{a,\zeta}$ satisfies
\begin{equation}\label{eq:GTvariational}
\Phi^{\bbL}_{a,\zeta}(t_1,\vx)=\sup_{\vv\in\mathcal D^{\bbL}[t_1,t_2]}\mathcal X_{a,\zeta}^{\bbL,t_1,t_2}(\vx,\vv).
\end{equation}
Moreover \eqref{eq:GTvariational} is maximized by $\vv_s=\nabla\Phi^{\bbL}_{a,\zeta}(s,\vX_s)$ where the $\R^{K}$-valued process $\vX_s$ solves
\[
\vX_s=\vx+\int_{t_1}^s \zeta(r)\xi''(r)M(r)\nabla \Phi^{\bbL}_{a,\zeta}(r,\vX_r)\de r+\int_{t_1}^s \sqrt{\xi''(r)M(r)} \de \vB_r,\qquad s\in [t_1,t_2].
\]

\end{proposition}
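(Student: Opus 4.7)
The plan is to carry out the standard Itô/completing-the-square argument that relates the Parisi PDE \eqref{eq:parisi-pde-Rk} to stochastic control, extended to the matrix-valued diffusion coefficient $\sqrt{\xi''(r) M(r)}$. Fix $\vv \in \mathcal{D}^{\bbL}[t_1,t_2]$ and let $\vX_s$ denote the associated controlled diffusion starting at $\vX_{t_1}=\vx$. First I would apply Itô's formula to $F(s,\vX_s) := \Phi^{\bbL}_{a,\zeta}(s,\vX_s)$ on each smoothness interval $[q_d \vee t_1, q_{d+1}\wedge t_2]$ provided by Lemma~\ref{lem:recursiontoPDE}, noting that $d\langle \vX\rangle_s = \xi''(s)M(s)\,ds$. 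Substituting the PDE \eqref{eq:parisi-pde-Rk} for $\partial_t F$ cancels the $\langle M(s),\nabla^2 F\rangle$ term and produces
\[
dF = \zeta(s)\xi''(s)\lt[\langle \nabla F, M(s)\vv_s\rangle - \tfrac12 \langle M(s),(\nabla F)^{\otimes 2}\rangle\rt] ds + \langle \nabla F,\sqrt{\xi''(s)M(s)}\,d\vB_s\rangle.
\]
Summing across the finitely many smoothness intervals and using the continuity of $\Phi^{\bbL}_{a,\zeta}$ in $t$ (Lemma~\ref{lem:recursiontoPDE}) to glue the endpoints, this expression integrates over $[t_1,t_2]$ with no boundary corrections at the points $q_d$.

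Next I would take expectations. The martingale term has zero expectation because the uniform bound $|\partial_{x(u)}\Phi^{\bbL}_{a,\zeta}|\le 1+|a|$ (Lemma~\ref{lem:recursiontoPDE}) combined with the boundedness $\|\vv_s\|_\infty \le 2$ and $\|M(s)\|_{\op}\le K$ makes the stochastic integral a true $L^2$ martingale. Subtracting $\mathcal{Z}^{\bbL,t_1,t_2}_{a,\zeta}(\vv) = \tfrac12 \E\int_{t_1}^{t_2}\zeta(s)\xi''(s)\langle M(s),\vv_s^{\otimes 2}\rangle ds$ from both sides and rearranging yields the key identity
\[
\mathcal{X}^{\bbL,t_1,t_2}_{a,\zeta}(\vx,\vv) - \Phi^{\bbL}_{a,\zeta}(t_1,\vx) = -\tfrac12 \E\int_{t_1}^{t_2}\zeta(s)\xi''(s)(\vv_s - \nabla F)^\top M(s)(\vv_s - \nabla F)\, ds.
\]
Since $M(s)$ is positive semidefinite (it is the covariance matrix of $\veta_d$), the right side is $\le 0$, giving the $\le$ direction of \eqref{eq:GTvariational}.

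For the equality, I would take the candidate optimizer $\vv_s = \nabla\Phi^{\bbL}_{a,\zeta}(s,\vX_s)$. The uniform bound $|\partial_{x(u)} \Phi^{\bbL}_{a,\zeta}|\le 1+|a|\le 2$ guarantees admissibility $\vv \in \mathcal{D}^{\bbL}[t_1,t_2]$, while smoothness of $\Phi^{\bbL}_{a,\zeta}$ on each interval $(q_d,q_{d+1})$ and continuity at endpoints yields strong existence and uniqueness of the associated SDE piecewise, which we concatenate across the finitely many breakpoints. With this choice the integrand in the identity above vanishes identically, producing equality and identifying the maximizer as stated.

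The main obstacle I expect is handling the non-smoothness of $\Phi^{\bbL}_{a,\zeta}$ across the discontinuity points $\{q_d\}$ of $\zeta$. This is where the construction via Hopf-Cole (restricting to $\zeta\in\cM_{\vq}$ with finite support) is essential: Lemma~\ref{lem:recursiontoPDE} gives smoothness on each piece and continuity across, so Itô's formula applies piecewise and the drift-plus-martingale decomposition telescopes without generating boundary terms. A secondary technical point is the martingale property of the stochastic integral, which as noted is immediate from the uniform gradient bound, and the existence of the optimal SDE, which reduces to a piecewise Lipschitz coefficient problem that can be solved forward one interval at a time.
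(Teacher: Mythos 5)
Your proof is correct and follows essentially the same approach as the paper's: apply Itô's formula to $\Phi^{\bbL}_{a,\zeta}(s,\vX_s)$ along the controlled diffusion, substitute the Parisi PDE \eqref{eq:parisi-pde-Rk} to cancel the second-order term, complete the square in the drift to obtain $-\tfrac12\langle M(s),(\vv_s-\nabla\Phi^{\bbL}_{a,\zeta})^{\otimes 2}\rangle\le 0$, and identify the optimizer $\vv_s=\nabla\Phi^{\bbL}_{a,\zeta}(s,\vX_s)$ via strong/pathwise uniqueness of the associated SDE (bounded, Lipschitz coefficients). The only difference is that you are more explicit than the paper about gluing Itô's formula across the finitely many non-smoothness points $\{q_d\}$ and about verifying the $L^2$-martingale property, both of which the paper leaves implicit but which are handled exactly as you describe.
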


\subsection{Relations Among Parisi PDEs}

Following \cite[Section 8]{chen2018generalized} we relate $\Phi_{a,\zeta}$ to $\Phi_{\zeta}$. Note that we always consider times $t\in [0,1]$ with endpoint conditions at $t=1$, while \cite{chen2018generalized} defines the boundary condition for $\Phi_{a,\zeta}$ at time $t=1-q_0$, see e.g. Equation (3.25) therein.

\begin{proposition}\label{prop:Phia-to-Phi}
For any $a\in [-1,1]$ and $\zeta\in\cuL$, with $y=x-a\int_0^1 \xi''(t)\zeta(t)dt$,
\[
	\Phi_{\zeta}(0,y)-ay=\Phi_{a,\zeta}(0,x)+\frac{a^2}{2}\int_0^1 \xi''(t)\zeta(t)dt.
\]
\end{proposition}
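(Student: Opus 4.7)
The plan is to introduce the ansatz
\[
  \Psi(t,x) \;\equiv\; \Phi_{\zeta}(t,\, x - a g(t)) \;-\; ax \;+\; \tfrac{a^2}{2} g(t),
  \qquad
  g(t) \;\equiv\; \int_t^1 \xi''(s)\zeta(s)\,\diff{s},
\]
and to verify that $\Psi = \Phi_{a,\zeta}$ on $[0,1]\times\bbR$. The identity claimed in the proposition then follows by setting $t=0$ so that $y=x-ag(0)$, rearranging $\Phi_{a,\zeta}(0,x)=\Phi_\zeta(0,y)-ax+\tfrac{a^2}{2}g(0)$ using $a(x-y)=a^2 g(0)$, and collecting the $ay$ term.

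To show $\Psi=\Phi_{a,\zeta}$, I would first check the terminal data: since $g(1)=0$ and $\Phi_\zeta(1,x)=\log\cosh(x)$, we obtain $\Psi(1,x)=\log\cosh(x)-ax=\Phi_{a,\zeta}(1,x)$. Next I would verify that $\Psi$ satisfies the same Parisi PDE~\eqref{eq:1dParisiPDEdefn} as $\Phi_{a,\zeta}$. Writing $u(t,x)=\Phi_{\zeta}(t,x-ag(t))$, the chain rule gives $\partial_t u = \partial_t\Phi_\zeta + a\xi''(t)\zeta(t)\partial_x\Phi_\zeta$, $\partial_x u = \partial_x\Phi_\zeta$, $\partial_{xx}u=\partial_{xx}\Phi_\zeta$ (evaluated at $(t,x-ag(t))$). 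Then $\partial_x\Psi=\partial_x u-a$ and $\partial_t\Psi=\partial_t u-\tfrac{a^2}{2}\xi''(t)\zeta(t)$. Substituting into the PDE and expanding $(\partial_x u-a)^2 = (\partial_x u)^2 - 2a\partial_x u + a^2$, the linear-in-$a$ term $-a\xi''(t)\zeta(t)\partial_x u$ cancels the shift contribution in $\partial_t u$, and the $a^2$ term cancels $-\tfrac{a^2}{2}\xi''(t)\zeta(t)$, reducing the equation for $\Psi$ to exactly the Parisi PDE for $\Phi_\zeta$ evaluated at $(t,x-ag(t))$, which holds by definition.

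Since both $\Psi$ and $\Phi_{a,\zeta}$ are continuous on $[0,1]\times\bbR$ and smooth enough on $[0,1-\varepsilon]\times\bbR$ (by Proposition~\ref{prop:philip}) to apply standard PDE uniqueness for the Parisi equation with bounded $\partial_{xx}$ coefficient, the two agree on $[0,1-\varepsilon]\times\bbR$ for every $\varepsilon>0$, and hence on $[0,1)\times\bbR$ by continuity. Uniqueness can be obtained from the Hopf--Cole linearization, or simply from the Auffinger--Chen representation in Proposition~\ref{prop:1dGTIsing}: both $\Psi$ and $\Phi_{a,\zeta}$ are then stochastic-control value functions with the same running cost and same terminal data, hence coincide.

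The only mild subtlety is that $\zeta\in\cuL$ need not be bounded or smooth, so the calculation above is cleanest when $\zeta\in\cuU$. The main obstacle is therefore extending from $\cuU$ to $\cuL$. I would handle this by a density argument: approximate $\zeta$ by a sequence $\zeta_n\in\cuU$ with $\int_0^1 \xi''(t)|\zeta_n(t)-\zeta(t)|\,\diff{t}\to 0$, apply the identity to each $\zeta_n$, and then pass to the limit using Proposition~\ref{prop:6.1} (which gives $\Phi_{\zeta_n}\to\Phi_\zeta$ and $\Phi_{a,\zeta_n}\to\Phi_{a,\zeta}$ uniformly) together with $\int \xi''\zeta_n\to\int\xi''\zeta$ to handle the shift $ag(0)$. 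This yields the claim for all $\zeta\in\cuL$.
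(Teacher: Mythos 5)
Your proof is correct and takes essentially the same approach as the paper: the same ansatz $\Psi(t,x) = \Phi_\zeta(t, x-ag(t)) - ax + \tfrac{a^2}{2}g(t)$, the same chain-rule verification that $\Psi$ solves the Parisi PDE with the terminal data of $\Phi_{a,\zeta}$, an appeal to uniqueness, and a reduction to the case of sufficiently regular $\zeta$. The one small difference is that you offer the Auffinger--Chen control representation as an alternative route to uniqueness, whereas the paper cites the PDE uniqueness result of Jagannath--Tobasco; both are standard and work here.
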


\begin{proof}

By setting $y=x-a\int_{0}^1\xi''(s)\zeta(s)\de s$, it suffices to show that for all $t\in[0,1]$,
\[
	\Phi_{a,\zeta}(t,x)=\Phi_{\zeta}\left(t,x-a\int_{t}^1\xi''(s)\zeta(s)\de s\right)-ax+\fr{a^2}{2}\int_{t}^1\xi''(s)\zeta(s)\de s.
\]
(In particular the desired result is obtained by setting $t=0$.) It suffices to show this for $\zeta$ continuous. Set 
\[
	f(t,x)\equiv \Phi_{\zeta}\left(t,x-a\int_{t}^1\xi''(s)\zeta(s)\de s\right)-ax+\fr{a^2}{2}\int_{t}^1\xi''(s)\zeta(s)\de s
\]
and define
\[
	b(t,x)\equiv x-a\int_t^1 \xi''(s)\zeta(s)\de s.
\]
Then we compute
\[
	\partial_{t} f(t, x)=\partial_{t} \Phi_{\zeta}(t, b(t, x))+a \xi''(t) \zeta(t) \partial_{x} \Phi_{\zeta}(t, b(t, x))-\frac{a^{2}}{2} \xi''(t) \zeta(t)
\]
and
\begin{align*}
\partial_{x} f(t, x) &=\partial_{x} \Phi_{\zeta}(t, b(t, x))-a,\\
\partial_{x x} f(t, x) &=\partial_{x x} \Phi_{\zeta}(t, b(t, x)).
\end{align*}
It follows that
\[
	\partial_{t} f(t, x)=-\frac{\xi''(t)}{2}\left(\partial_{x x} f(t, x)+\zeta(t)\left(\partial_{x} f(t, x)\right)^{2}\right).
\]
Note that at time $1$, $f(1,x)=\log(2\cosh(x))-ax=\Phi_{a,\zeta}(1,x)$. Uniqueness of solutions to the Parisi PDE as in \cite[Lemma 13]{jagannath2016dynamic} completes the proof.

\end{proof}

\begin{lemma}\label{lem:zetamonotone}
For any $\zeta,\gamma\in\cuL$ and any $(t,x,\beta)\in [0,1]\times \R\times (0,\infty]$,
\[
\Phi^{\beta}_{a,\zeta}(t,x)\leq \Phi^{\beta}_{a,\zeta+\gamma}(t,x).
\]

\end{lemma}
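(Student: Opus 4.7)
The natural approach is to use the Auffinger--Chen stochastic control representation of Proposition~\ref{prop:1dGTIsing} with $[t_1,t_2]=[t,1]$, so that the terminal value $\Phi^{\beta}_{a,\zeta}(1,\cdot)$ on the right-hand side no longer depends on $\zeta$. The key observation is that a control $v$ admissible for the $\zeta$ problem can be rescaled to a control $\tilde v$ admissible for the $\zeta+\gamma$ problem that produces the \emph{same} drift but a \emph{smaller} running cost. This will show the pointwise inequality $\mathcal X^{t,1}_{a,\zeta+\gamma}(x,\tilde v)\ge \mathcal X^{t,1}_{a,\zeta}(x,v)$, and taking the supremum over $v$ on both sides gives the lemma.

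Concretely, given $v\in \mathcal D[t,1]$, define
\[
   \tilde v_r \;=\; \frac{\zeta(r)}{\zeta(r)+\gamma(r)}\, v_r
\]
(with the convention $\tilde v_r=0$ at points where $\zeta(r)+\gamma(r)=0$, at which points both $\zeta(r)=0$ and the relevant integrands vanish identically). Since $\zeta,\gamma\ge 0$ are deterministic and $\gamma\geq 0$, we have $|\tilde v_r|\le |v_r|\le 2$ and progressive measurability is preserved, so $\tilde v\in \mathcal D[t,1]$. Then, by direct computation,
\[
   \int_t^1 (\zeta(r)+\gamma(r))\,\xi''(r)\,\tilde v_r\,\de r
   \;=\; \int_t^1 \zeta(r)\,\xi''(r)\,v_r\,\de r,
\]
so the Brownian endpoints coincide and $\mathcal Y^{t,1}_{a,\zeta+\gamma}(x,\tilde v)=\mathcal Y^{t,1}_{a,\zeta}(x,v)$ almost surely, while
\[
   \mathcal Z^{t,1}_{a,\zeta+\gamma}(\tilde v)
   \;=\; \tfrac{1}{2}\int_t^1 \frac{\zeta(r)^2}{\zeta(r)+\gamma(r)}\,\xi''(r)\,v_r^2\,\de r
   \;\le\; \tfrac{1}{2}\int_t^1 \zeta(r)\,\xi''(r)\,v_r^2\,\de r
   \;=\; \mathcal Z^{t,1}_{a,\zeta}(v).
\]
Taking expectations and using the variational identity \eqref{eq:1dGTvariational},
\[
   \Phi^{\beta}_{a,\zeta+\gamma}(t,x) \;\ge\; \mathcal X^{t,1}_{a,\zeta+\gamma}(x,\tilde v) \;\ge\; \mathcal X^{t,1}_{a,\zeta}(x,v),
\]
and taking the supremum over $v\in \mathcal D[t,1]$ on the right gives $\Phi^{\beta}_{a,\zeta+\gamma}(t,x)\ge \Phi^{\beta}_{a,\zeta}(t,x)$.

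The only subtlety is the handling of the null set $\{r:\zeta(r)+\gamma(r)=0\}$, which is dispatched by the convention above since the integrands vanish there regardless. The case $\beta=\infty$ (nonsmooth terminal condition $f_0(y)=|y|-ay$) is covered uniformly since Proposition~\ref{prop:1dGTIsing} is stated for all $\beta\in(0,\infty]$; no separate approximation argument is needed. The argument does not require any regularity of $\zeta$ or $\gamma$ beyond membership in $\cuL$, since the stochastic control representation already holds at that level.
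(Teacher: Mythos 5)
Your proof is correct and uses essentially the same argument as the paper: both invoke the Auffinger--Chen representation of Proposition~\ref{prop:1dGTIsing} and rescale the control by $\zeta(r)/(\zeta(r)+\gamma(r))$ so that the drift (hence $\cY$) is unchanged while the running cost $\cZ$ decreases. Your added remarks about the zero set of $\zeta+\gamma$ and the $\beta=\infty$ case are fine but not strictly needed beyond what Proposition~\ref{prop:1dGTIsing} already gives.
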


\begin{proof}
We use the Auffinger-Chen representation \eqref{eq:1dGTvariational} for $\Phi^{\beta}_{a,\zeta}$ and $\Phi^{\beta}_{a,\zeta+\gamma}$. For any control $v$, consider the modified control 
\[
w_t\equiv \frac{\zeta(t)v_t}{\zeta(t)+\gamma(t)}.
\]
It is not difficult to see that
\[
\cY_{a,\zeta}^{t,1}(x,v)=\cY_{a,\zeta+\gamma}^{t,1}(x,w)
\]
since the resulting SDE is the same, while
\[
\cZ_{a,\zeta}^{t,1}(v)\geq\cZ_{a,\zeta+\gamma}^{t,1}(w).
\]
Therefore
\[
\cX_{a,\zeta}^{t,1}(x,v)\leq\cX_{a,\zeta+\gamma}^{t,1}(x,w)
\]
Since $v$ was arbitrary, we are done by Proposition~\ref{prop:1dGTIsing}.
\end{proof}

Define $\ozeta=\zeta|_{[q_0,1]}$ and $\uzeta=\zeta|_{[0,q_0]}$ when $\zeta\in\cuL$ and $q_0\in [0,1]$ are given. The next lemma is analogous to Lemma~\ref{lem:spherical-off-center-worse-than-on-center} and will be used to connect our estimates for $\varphi(0)$ to the Parisi functional uniformly in $\bm$. 

% In the proof we also approximate the coordinate distribution of $\bh$ by $\cD$. Below $W_1(\cdot,\cdot)$ denotes the Wasserstein-$1$ distance between probability measures.

\begin{lemma}\label{lem:Ising-shift-bound}
For $\zeta\in\cuL$, with $\lambda=\int_{0}^{1}\xi''(t)\zeta(t)dt$,
\[
\frac{1}{N}\sum_{i=1}^N \Phi^{\infty}_{m_i,\ozeta}(0,h+\lambda m_i)-\frac{1}{2}\int_{q_0}^1 (t-q_0)\ozeta(t)\xi''(t)dt+R(\bh,\bm)\leq \Par_{\xi,h}^{\Is}(\zeta).
\]
\end{lemma}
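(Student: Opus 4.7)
The plan is to reduce everything to an application of Proposition~\ref{prop:Phia-to-Phi} together with the monotonicity in Lemma~\ref{lem:zetamonotone}, and then verify a purely arithmetic inequality. The key observation is that the specific choice $\lambda=\int_0^1\xi''(t)\zeta(t)\,dt$ is exactly the shift that makes Proposition~\ref{prop:Phia-to-Phi}, applied to $(a,\zeta)=(m_i,\zeta)$ and $x=h+\lambda m_i$, land at $y=h$. This kills the $i$-dependence inside $\Phi^{\infty}_\zeta$ and leaves only linear and quadratic boundary terms in $m_i$, which is crucial since the RHS $\Par^{\Is}_{\xi,h}(\zeta)$ is a single number not depending on $i$.

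First I would apply Lemma~\ref{lem:zetamonotone} with $\gamma=\uzeta\in\cuL$ (so that $\ozeta+\gamma=\zeta$) to get the pointwise bound
\[
    \Phi^{\infty}_{m_i,\ozeta}(0,h+\lambda m_i)\leq \Phi^{\infty}_{m_i,\zeta}(0,h+\lambda m_i).
\]
Then I would invoke Proposition~\ref{prop:Phia-to-Phi} with $a=m_i$, $x=h+\lambda m_i$; by our choice of $\lambda$ the shifted point is $y=x-m_i\lambda=h$, which yields
\[
    \Phi^{\infty}_{m_i,\zeta}(0,h+\lambda m_i)=\Phi^{\infty}_{\zeta}(0,h)-m_i h-\tfrac{m_i^2 \lambda}{2}.
\]
(Proposition~\ref{prop:Phia-to-Phi} is stated for $\Phi$ but its proof goes through verbatim for $\Phi^\infty$ because the $t=1$ boundary condition $|x|-ax$ satisfies the correct shift identity at $t=1$ and the PDE verification is identical.) Averaging over $i\in[N]$, using $\norm{\bm}_N^2=q_0$ and $R(\bh,\bm)=h\cdot\tfrac{1}{N}\sum_i m_i$ (since $\bh=(h,\ldots,h)$), gives
\[
    \frac{1}{N}\sum_{i=1}^N\Phi^{\infty}_{m_i,\ozeta}(0,h+\lambda m_i)+R(\bh,\bm)\leq \Phi^{\infty}_{\zeta}(0,h)-\tfrac{\lambda q_0}{2}.
\]

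The remaining step is to check the arithmetic identity
\[
    -\tfrac{\lambda q_0}{2}-\tfrac{1}{2}\int_{q_0}^1 (t-q_0)\ozeta(t)\xi''(t)\,dt\leq -\tfrac{1}{2}\int_0^1 t\xi''(t)\zeta(t)\,dt,
\]
which, after splitting $\int_0^1=\int_0^{q_0}+\int_{q_0}^1$ and canceling the common piece on $[q_0,1]$, reduces to
\[
    \int_0^{q_0} t\,\xi''(t)\zeta(t)\,dt\leq q_0\int_0^{q_0}\xi''(t)\zeta(t)\,dt.
\]
This is immediate since $t\leq q_0$ on $[0,q_0]$ and $\xi''\zeta\geq 0$. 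Combining this with the previous display and recalling $\Par_{\xi,h}^{\Is}(\zeta)=\Phi^{\infty}_\zeta(0,h)-\tfrac12\int_0^1 t\xi''(t)\zeta(t)\,dt$ completes the proof. I do not expect any obstacle here, as the sharp choice of $\lambda$ aligns all the quadratic-in-$m_i$ terms perfectly, and only the trivial inequality $t\leq q_0$ is needed at the end.
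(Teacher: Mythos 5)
Your proof is correct and takes essentially the same approach as the paper's: apply Proposition~\ref{prop:Phia-to-Phi} (with $a=m_i$, $x=h+\lambda m_i$, so that $y=h$) to pass to $\Phi^\infty_\zeta(0,h)$, use Lemma~\ref{lem:zetamonotone} to replace $\zeta$ by $\ozeta$, and close with the elementary inequality $\int_0^{q_0}t\xi''(t)\zeta(t)\,dt\leq q_0\int_0^{q_0}\xi''(t)\zeta(t)\,dt$ (which is the paper's $q_0\uJ\geq\uI$). You also correctly flag that Proposition~\ref{prop:Phia-to-Phi} needs the routine extension to $\beta=\infty$; the paper uses this implicitly.
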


\begin{proof}
Define the constants
\begin{align*}
    I&=\int_0^1 t\xi''(t)\zeta(t)dt, \quad J=\lambda=\int_0^1 \xi''(t)\zeta(t)dt,\\
    \oI&=\int_{q_0}^1 t\xi''(t)\ozeta(t)dt,\quad \oJ=\int_{q_0}^1 \xi''(t)\ozeta(t)dt,\\
    \uI&=\int_0^{q_0} t\xi''(t)\uzeta(t)dt,\quad  \uJ=\int_0^{q_0} \xi''(t)\uzeta(t)dt.
\end{align*}
Then $I=\oI+\uI$ and $J=\oJ+\uJ$ and $q_0\uJ\geq \uI$. Recalling that $\Par^{\Is}_{\xi,h}(\zeta)=\Phi^{\infty}_{\zeta}(0,h)-\frac{I}{2}$, we estimate

\[
\begin{WithArrows}
    \hspace{-5cm}\Par^{\Is}_{\xi,h}(\zeta)&=
    \Phi^{\infty}_{\zeta}(0,h)-\frac{I}{2}
    \Arrow{Prop~\ref{prop:philip}}\\
    &= \frac{1}{N}\sum_{i=1}^N\lt(\Phi^{\infty}_{m_i,\zeta}\left(0,h+\lambda m_i 
    \rt)+\frac{m_i^2 J}{2}\right)
    -\frac{I}{2}
    +R(\bh,\bm)
    \Arrow{$\norm{\bm}_N^2=q_0$}\\
    & = \frac{1}{N}\sum_{i=1}^N \Phi^{\infty}_{m_i,\zeta}\left(0,h+\lambda m_i\right) 
    + \frac{q_0 J}{2}-\frac{I}{2} 
    +R(\bh,\bm)
    \Arrow{$\zeta=\ozeta+\uzeta$}\\
    &= \frac{1}{N}\sum_{i=1}^N \Phi^{\infty}_{m_i,\zeta}\left(0,h+\lambda m_i\right) +\frac{q_0\oJ}{2}
    -\frac{\oI}{2} +\frac{q_0\uJ- \uI}{2}
    +R(\bh,\bm)
    \Arrow{$q_0\uJ\geq \uI$}\\
    &\geq \frac{1}{N}\sum_{i=1}^N \Phi^{\infty}_{m_i,\zeta}\left(0,h+\lambda m_i\right) +\frac{q_0\oJ}{2}-\frac{\oI}{2}
    +R(\bh,\bm)
    \Arrow{Lem~\ref{lem:zetamonotone}}\\
    &\geq \frac{1}{N}\sum_{i=1}^N \Phi^{\infty}_{m_i,{\color{red}\ozeta}}\left(0,h+\lambda m_i\right) +\frac{q_0\oJ}{2}-\frac{\oI}{2}
    +R(\bh,\bm).
\end{WithArrows}
\]
This is exactly what we wanted to show.

\end{proof}

% \subsection{One Dimensional Bound for the Grand Free Energy}

% \mscomment{Check on $\kappa$, e.g. don't write $\kappa^{-1}$ for $t<q_0$.}

% \mscomment{The subscript $a$ should be present throughout this subsection}

The next crucial lemma upper-bounds $\Phi^{\bbL}_{a,\zeta}$ using the $1$-dimensional function $\Phi_{a,\kappa\zeta}$. 
As in the spherical case, multiplying by $\kappa$ will allow us to pass from increasing $\zeta \in \cM_{\vq}$ to arbitrary functions in $\cuL$.

\begin{lemma}\label{lem:ineqising}
For any $\zeta\in\mathcal M_{\vq}$, $\vx\in\R^{K}$, $a\in [-1,1]$ and $t\in[0,1]$,
\begin{equation}\label{ineq:ising}
\Phi^{\bbL}_{a,\zeta}(t,\vx)\leq \sum_{u\in\bbL}\Phi_{a,\kappa\zeta}(t,x(u)).
\end{equation}

\end{lemma}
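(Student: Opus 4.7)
The plan is to leverage the stochastic control (Auffinger--Chen) representations for both sides. First observe that the boundary data agree: since $\Phi_{a,\kappa\zeta}(1,y) = \log(2\cosh y)-ay$ for any $\zeta$, we have
\[
\Phi^{\bbL}_{a,\zeta}(1,\vx) \;=\; \sum_{u\in\bbL}\Phi_{a,\kappa\zeta}(1,x(u)).
\]
By Proposition~\ref{prop:GTIsing}, it suffices to show that for any admissible control $\vv\in\mathcal D^{\bbL}[t,1]$,
\[
\mathcal X^{\bbL,t,1}_{a,\zeta}(\vx,\vv) \;\le\; \sum_{u\in\bbL}\Phi_{a,\kappa\zeta}(t,x(u)).
\]
To prove this, I would define marginal one--dimensional controls
\[
\tilde v^u_r \;=\; \frac{(M(r)\vv_r)_u}{\kappa(r)},\qquad u\in\bbL,\ r\in[t,1].
\]
A direct calculation using \eqref{eq:kappa-formula} shows that each row sum of $M(r)$ equals $\kappa(r)$, so $|\tilde v^u_r|\le 2$ and $\tilde v^u \in \mathcal D[t,1]$ for every $u$.

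Let $\vX_s$ denote the $\R^K$--valued process from Proposition~\ref{prop:GTIsing} driven by $\vv$. Because $M(r)_{u,u} = p_D = 1$, the quadratic variation of the martingale part of $X^u_s$ equals $\int_t^s \xi''(r)\,dr$; by Lévy's characterization (on a possibly enlarged filtration) this martingale may be written as $\int_t^s\sqrt{\xi''(r)}\,dB^u_r$ for a standard Brownian motion $B^u$. Hence each coordinate process satisfies
\[
X^u_s \;=\; x(u) + \int_t^s \kappa(r)\zeta(r)\xi''(r)\,\tilde v^u_r\,dr + \int_t^s \sqrt{\xi''(r)}\,dB^u_r,
\]
which is exactly the SDE appearing in the one--dimensional Auffinger--Chen formula for $\Phi_{a,\kappa\zeta}(t,x(u))$. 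Applying Proposition~\ref{prop:1dGTIsing} marginally to $\tilde v^u$ gives
\[
\Phi_{a,\kappa\zeta}(t,x(u)) \;\ge\; \E\!\left[\Phi_{a,\kappa\zeta}(1,X^u_1)\right] - \frac{1}{2}\E\!\int_t^1 \kappa(r)\zeta(r)\xi''(r)(\tilde v^u_r)^2\,dr.
\]

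Summing over $u\in\bbL$ and using the boundary--condition identity above, the desired inequality will follow provided
\[
\sum_{u\in\bbL} \kappa(r)(\tilde v^u_r)^2 \;=\; \frac{\|M(r)\vv_r\|_2^2}{\kappa(r)} \;\le\; \bigl\langle M(r),\vv_r^{\otimes 2}\bigr\rangle \;=\; \vv_r^\top M(r) \vv_r,
\]
i.e.\ $M(r)^2 \preceq \kappa(r) M(r)$ on $\R^K$. This is where the key structural input enters: $M(r)$ is a symmetric PSD matrix with non--negative entries whose row sums all equal $\kappa(r)$, hence its operator norm is at most $\kappa(r)$ (e.g.\ by Perron--Frobenius, or by the elementary bound $\vw^\top M\vw \le \sum_{u,u'} M_{u,u'}\frac{w_u^2+w_{u'}^2}{2} = \kappa\|\vw\|^2$). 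Equivalently, $M(r)^{1/2}(\kappa(r) I - M(r))M(r)^{1/2}\succeq 0$, which yields $\kappa(r)M(r)\succeq M(r)^2$ and hence the cost comparison. Combining these pieces gives
\[
\sum_{u\in\bbL}\Phi_{a,\kappa\zeta}(t,x(u)) \;\ge\; \E\!\left[\Phi^{\bbL}_{a,\zeta}(1,\vX_1)\right] - \frac{1}{2}\E\!\int_t^1 \zeta(r)\xi''(r)\bigl\langle M(r),\vv_r^{\otimes 2}\bigr\rangle dr \;=\; \mathcal X^{\bbL,t,1}_{a,\zeta}(\vx,\vv),
\]
and taking the supremum over $\vv$ completes the proof.

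The main technical obstacle is the reduction to admissible one--dimensional controls on a common filtration. Since $\vv$ is adapted to the filtration of $\vB$, the processes $\tilde v^u$ remain adapted, but the standard Brownian motions $B^u$ used in the $1$D representation must be constructed via Lévy's characterization applied to the coordinate martingales of $\vX$; this step is where care is needed to avoid a circular use of the Auffinger--Chen representation. Everything else is a direct consequence of the row--sum identity for $M(r)$ and the operator--norm bound $M(r)\preceq \kappa(r)I$.
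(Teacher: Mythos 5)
Your proof is correct and takes essentially the same route as the paper. Both arguments rest on (i) the change of control $\tilde v^u_r = (M(r)\vv_r)_u/\kappa(r)$, whose boundedness follows because $\kappa(r)$ is the common row sum of $M(r)$, and (ii) the key matrix inequality $M(r)^2 \preceq \kappa(r)M(r)$, used to compare the multi-dimensional control cost to the sum of one-dimensional costs. The only genuine difference is organizational: the paper introduces an intermediate functional $\widetilde{\mathcal Z}$ with cost $\kappa^{-1}\langle M^2,\vv^{\otimes 2}\rangle$ and shows it dominates, then re-expresses $\widetilde{\mathcal X}$ as a sum of one-dimensional $\mathcal X$'s, whereas you apply the one-dimensional Auffinger--Chen lower bound coordinate-wise and then sum. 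You are slightly more explicit than the paper about why each coordinate noise process $\int \sqrt{\xi'' M}\,\de\vB$ can be treated as a one-dimensional Brownian integral (using $M_{uu}=p_D=1$ and L\'evy's characterization), where the paper settles for a ``slight abuse of notation''; this is a small improvement in rigor but not a different proof idea.
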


\begin{proof}

Define
\[
    \widetilde{\mathcal Z}^{\bbL,t_1,t_2}_{a,\zeta}(\vv)\equiv\frac{1}{2}\int_{t_1}^{t_2} \zeta(r)\xi''(r)\kappa(r)^{-1}\langle M(r)^2,\vv_r^{\otimes 2}\rangle \de r.
\]
% (We define $\kappa(r)^{-1}$ arbitrarily for $r<q_0$ since $\zeta(r)=0$ in this case.)
Since $M(r)\preceq \kappa(r)I_K$, in the Loewner order, it follows that 
\[
    \kappa(r)^{-1}M(r)^2\preceq M(r).
\]
Hence $\widetilde{\mathcal Z}^{\bbL,t_1,t_2}_{a,\zeta}(\vv)\leq \mathcal Z^{\bbL,t_1,t_2}_{a,\zeta}(\vv)$ for any $\vx$ and $\vv$. Setting
\[
\widetilde{\mathcal X}^{\bbL,t_1,t_2}_{a,\zeta}(\vx,\vv)\equiv\mathcal Y^{\bbL,t_1,t_2}_{a,\zeta}(\vx,\vv)-\widetilde{\mathcal Z}^{\bbL,t_1,t_2}_{a,\zeta}(\vv),
\]
it follows that 
\[
    \mathcal X^{t,1}_{a,\zeta}(\vx,\vv)\leq \widetilde{\mathcal X}^{t,1}_{a,\zeta}(\vx,\vv)
\]
always holds. Next for any $\vv\in\mathcal D^{\bbL}[t,1]$ and $r\in [t,1]$, define $\vV_r=\frac{M(r)\vv_r}{\kappa(r)}\in\R^K$. Then 
\[
\langle M(r)^2,\vv_r^{\otimes 2}\rangle=\norm{M(r)\vv_r}_2^2 = \kappa(r)^2\norm{\vV_r}^2
\]
and so (including the relevant Brownian motions as arguments in a slight abuse of notation),
\[
\widetilde{\mathcal Z}^{\bbL,t,1}_{a,\zeta}(\vv,\vB)=\sum_{u\in\bbL}\mathcal Z^{t,1}_{a,\kappa\zeta}(V(u),B(u)).
\]
Moreover since $\kappa(r)\vV_r(u)=M(r)\vv_r(u)$,
\[
\widetilde{\mathcal Y}^{\bbL,t,1}_{a,\zeta}(\vx,\vv,\vB)=\sum_{u\in\bbL}\mathcal Y^{t,1}_{a,\kappa\zeta}(x(u),V(u),B(u)).
\]
Since each coordinate $B_r(u)$ of $\vB_r$ has the marginal law of a $1$-dimensional Brownian motion,
\[
\widetilde{\mathcal X}^{\bbL,t,1}_{a,\zeta}(\vx,\vv)=\sum_{u\in\bbL}\mathcal X^{t,1}_{a,\kappa\zeta}(x(u),V(u)).
\]
Therefore we obtain
\begin{align*}
\mathcal X^{t,1}_{a,\zeta}(\vx,\vv)&\leq \widetilde{\mathcal X}^{t,1}_{a,\zeta}(\vx,\vv)\\
&=\sum_{u\in\bbL}\mathcal X^{t,1}_{a,\kappa\zeta}(x(u),V(u))\\
&\leq  \sum_{u\in\bbL}\Phi_{a,\kappa\zeta}(t,x(u)).
\end{align*}
Since $\vv\in\mathcal D^{\bbL,t,1}$ was arbitrary this concludes the proof.
\end{proof}

\subsection{Zero Temperature Limit}

We now apply the above results with $(\beta^2\xi,\beta\bh,\beta\lambda )$ in place of $(\xi,\bh, \lambda)$, which corresponds to scaling $\cH_N$ to $\beta\cH_N$. We accordingly define $\Phi_{\beta^2\xi,\zeta}$ and $\varphi_{\beta^2\xi}^{\Is}(0)$ by making this substitution in their definitions. It is not hard to derive the scaling relation 
\begin{equation}\label{eq:betascale}
  \Phi_{\beta^2\xi,\zeta}(t,\beta x)=\beta\cdot \Phi^{\beta}_{\beta\zeta}(t,x),\quad (t,x)\in [0,1]\times \bbR
\end{equation}
for any $\beta\in (0,\infty)$ and $\zeta\in\cuL$.

We will also use the following simple estimate to pass to the zero temperature limit.

\begin{proposition}\label{prop:cPbetalarge}
$\sup_{\zeta}\left|\Phi_{\zeta}^{\beta}(t,x)-\Phi_{\zeta}^{\infty}(t,x)\right|\leq \frac{\log 2}{\beta}.$
\end{proposition}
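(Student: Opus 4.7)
The plan is to reduce the claim to a uniform bound on the difference of terminal conditions of the two Parisi PDEs, together with the Auffinger-Chen stochastic control representation from Proposition~\ref{prop:1dGTIsing}. The two relevant terminal conditions are $f_0^{\beta}(x) = \tfrac{1}{\beta}\log(2\cosh(\beta x))$ for $\Phi_{\zeta}^{\beta}$ (taking $a=0$, which absorbs the extra $\log 2/\beta$ constant compared to $\log\cosh$) and $f_0^{\infty}(x)=|x|$ for $\Phi_{\zeta}^{\infty}$.

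First I would verify the pointwise bound
\[
    0 \;\le\; f_0^{\beta}(x) - f_0^{\infty}(x) \;\le\; \frac{\log 2}{\beta}
    \qquad\text{for all } x\in\bbR.
\]
This is a direct calculation: writing $\log(2\cosh y) = |y| + \log(1+e^{-2|y|})$ gives $f_0^{\beta}(x)-|x| = \tfrac{1}{\beta}\log(1+e^{-2\beta|x|})$, which lies in $[0,\tfrac{\log 2}{\beta}]$ (using $\log 2$ as the $x=0$ maximum and monotonic decay to $0$ as $|x|\to\infty$).

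Next I would propagate this uniform closeness from $t=1$ back to general $t\in[0,1]$ via the variational representation. By Proposition~\ref{prop:1dGTIsing} applied with $a=0$, $t_1=t$, $t_2=1$, we have for any $\zeta\in\cuL$
\[
    \Phi_{\zeta}^{\beta}(t,x) = \sup_{v\in\mathcal{D}[t,1]} \E\!\left[f_0^{\beta}(X_1^{x,v}) - \mathcal Z^{t,1}_{0,\zeta}(v)\right],
\]
where $X_1^{x,v} = x + \int_t^1 \zeta(r)\xi''(r)v_r\,dr + \int_t^1 \sqrt{\xi''(r)}\,dB_r$ depends only on $x,v,\zeta$, and the penalty $\mathcal Z^{t,1}_{0,\zeta}(v)$ is independent of the terminal condition. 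The identical representation holds for $\Phi_{\zeta}^{\infty}$ with $f_0^{\infty}$. Since for any two families of functionals $A(v),B(v)$ we have $|\sup_v A - \sup_v B|\le \sup_v|A-B|$, the $\mathcal Z$ terms cancel in the difference, yielding
\[
    \left|\Phi_{\zeta}^{\beta}(t,x) - \Phi_{\zeta}^{\infty}(t,x)\right|
    \;\le\; \sup_{v\in\mathcal{D}[t,1]} \E\!\left[\left|f_0^{\beta}(X_1^{x,v}) - f_0^{\infty}(X_1^{x,v})\right|\right]
    \;\le\; \frac{\log 2}{\beta}
\]
by the terminal bound above. Taking the supremum over $\zeta$ completes the proof.

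There is no substantial obstacle: the only point to watch is that $\Phi_{\zeta}^{\beta}$ in the paper is defined with terminal data $\tfrac{1}{\beta}\log\cosh(\beta x)$ (i.e.\ without the $\log 2$) whereas $f_0^{\beta}$ above includes $\log 2$; this is precisely where the $\tfrac{\log 2}{\beta}$ constant in the statement arises, and after accounting for this additive shift the argument is immediate. The bound is also uniform in $\zeta$ because neither the terminal-condition bound nor the variational identity used any property of $\zeta$ beyond $\zeta\in\cuL$.
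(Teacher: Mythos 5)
Your proof is correct and takes essentially the same route as the paper: bound the terminal-condition discrepancy by $\log 2/\beta$ and propagate it using the Auffinger--Chen stochastic control representation of Proposition~\ref{prop:1dGTIsing}, exploiting that the penalty term $\mathcal Z$ cancels in the difference. The paper obtains the terminal bound by an abstract argument (the $\beta<\infty$ terminal function is convex, $1$-Lipschitz, dominates $|x|$, and the two terminal functions agree at infinity, so the supremum of the difference occurs at $x=0$), whereas you compute it explicitly via $\log(2\cosh y)=|y|+\log(1+e^{-2|y|})$; both are fine. One small quibble: your closing remark that the paper's $\Phi_\zeta^\beta$ is defined ``without the $\log 2$'' does not match the paper's own usage --- the paper's computation $\Phi_\zeta^\beta(1,0)=\log 2/\beta$ shows the intended terminal data is $\tfrac1\beta\log(2\cosh(\beta x))-ax$, so there is no additive shift to account for (though the bound would also hold without the $\log 2$, since $\bigl|\tfrac1\beta\log\cosh(\beta x)-|x|\bigr|\le\log 2/\beta$ as well).
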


\begin{proof}
Recall that $\Phi_{\zeta}^{\beta}(1,x)$ is convex and $1$-Lipschitz while $\Phi_{\zeta}^{\infty}(1,x)=|x|\leq \Phi_{\zeta}^{\beta}(1,x)$. It follows that 
\begin{align*}
\sup_{x\in\R}\left|\Phi_{\zeta}^{\beta}(1,x)-\Phi_{\zeta}^{\infty}(1,x)\right|&=\left|\Phi_{\zeta}^{\beta}(1,0)-\Phi_{\zeta}^{\infty}(1,0)\right|\\
&=\frac{\log 2}{\beta}.
\end{align*}
Hence
\[
\left|\mathcal X_{\zeta,\beta}^{0,1}(v,x)-\mathcal X_{\zeta,\infty}^{0,1}(v,x)\right|\leq \frac{\log 2}{\beta}
\] 
holds for any control $v$, since the only difference is from the boundary value at time $t=1$ in $\mathcal Y$. Proposition~\ref{prop:1dGTIsing} now implies the desired result.
\end{proof}

Below, recall the definition $\ozeta=\zeta|_{[q_0,1]}$.

\begin{lemma}\label{lem:grand-bound}
Let $(\vp,\vq,\vk)$ be as in Section~\ref{sec:interpolation}, and fix $\beta>0$ and $\zeta\in\cuL$ such that $\ozeta\in\cM_{\vq}$. With 
\[
    \lambda=\int_0^1 \xi''(t)\kappa(t)\zeta(t)\de t
\]
we have
\[
F_N^{\Is}(\beta,\cQ(\eta))\leq \beta K \Par^{\Is}(\beta\kappa\zeta)
+3\beta^2 K^2\xi''(1)\eta+K\beta\lambda\eta+K\log 2 .
\]
\end{lemma}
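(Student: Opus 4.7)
The plan is to chain the Guerra-Talagrand interpolation of Section~\ref{sec:interpolation} with the Parisi-PDE machinery built above, after rescaling to absorb $\beta$. First, observe that $F_N^{\Is}(\beta,\cQ(\eta))$ is the temperature-one free energy of the grand Hamiltonian $\beta\cH_N$, which has mixture $\beta^2\xi$ and external field $\beta\bh$. Apply Proposition~\ref{prop:interpolation-common} to this rescaled model with $\hat\zeta=\ozeta\in\cM_{\vq}$ and external-field parameter $\tilde\lambda$ to be chosen, producing
\[
F_N^{\Is}(\beta,\cQ(\eta))\le \varphi^{\Is,\beta^2\xi,\beta\bh}(0)-\tfrac{\beta^2K}{2}\int_{q_0}^1(q-q_0)\xi''\kappa\ozeta\,dq+3\beta^2K^2\xi''(1)\eta+K|\tilde\lambda|\eta.
\]

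Second, I unpack $\varphi^{\Is,\beta^2\xi,\beta\bh}(0)$ via Propositions~\ref{prop:X} and \ref{prop:XPhi} and then collapse the $K$-dimensional Parisi PDE via Lemma~\ref{lem:ineqising}, arriving at
\[
\varphi^{\Is,\beta^2\xi,\beta\bh}(0)\le \tfrac{K}{N}\sum_{i=1}^N \Phi_{\beta^2\xi,m_i,\kappa\ozeta}(0,\beta h+\tilde\lambda m_i)+\beta K R(\bh,\bm).
\]
The scaling identity \eqref{eq:betascale} extends verbatim to $\Phi_{a,\zeta}$: the boundary $\log 2\cosh(y)-ay$ at $y=\beta x$ is exactly $\beta[\beta^{-1}\log(2\cosh(\beta x))-ax]$ and $a$ does not enter the PDE, so $\Phi_{\beta^2\xi,m_i,\kappa\ozeta}(0,\beta h+\tilde\lambda m_i)=\beta\,\Phi^\beta_{m_i,\beta\kappa\ozeta}(0,h+\tilde\lambda m_i/\beta)$. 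Proposition~\ref{prop:cPbetalarge} also carries over to $\Phi_{a,\cdot}$ since the boundary discrepancy $\beta^{-1}\log(2\cosh(\beta x))-|x|\in[0,\log 2/\beta]$ is independent of $a$, letting me replace $\Phi^\beta$ by $\Phi^\infty$ at additive cost $\log 2/\beta$ per spin; after the factor of $\beta K$ and the $\frac{1}{N}\sum_i$ this aggregates to exactly $K\log 2$.

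Third, apply Lemma~\ref{lem:Ising-shift-bound} to $\beta\kappa\zeta\in\cuL$ (whose restriction to $[q_0,1)$ is $\beta\kappa\ozeta$ and for which $\int_0^1\xi''(\beta\kappa\zeta)\,dt=\beta\lambda$) to bound
\[
\tfrac{1}{N}\sum_i \Phi^\infty_{m_i,\beta\kappa\ozeta}(0,h+\beta\lambda m_i)-\tfrac{\beta}{2}\int_{q_0}^1(t-q_0)\xi''\kappa\ozeta\,dt+R(\bh,\bm)\le \Par^{\Is}(\beta\kappa\zeta).
\]
Choose $\tilde\lambda$ so that the input $h+\tilde\lambda m_i/\beta$ coming from the scaling step matches the rigid shift $h+\beta\lambda m_i$ demanded by the Hopf-Cole identity (Proposition~\ref{prop:Phia-to-Phi}) used to derive Lemma~\ref{lem:Ising-shift-bound}. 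Assembling everything, the two $\tfrac{\beta^2K}{2}\int(q-q_0)\xi''\kappa\ozeta\,dq$ terms cancel exactly between the interpolation bound and the Lemma~\ref{lem:Ising-shift-bound} bound (once scaled by $\beta K$), and likewise the $\pm\beta K R(\bh,\bm)$ terms cancel. What remains is $\beta K\Par^{\Is}(\beta\kappa\zeta)+3\beta^2K^2\xi''(1)\eta+K|\tilde\lambda|\eta+K\log 2$, yielding the claimed bound.

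The main obstacle will be the compatibility of the external-field parameter $\tilde\lambda$ across the three Parisi objects in play — the multidimensional $\Phi^{\bbL,\beta^2\xi}$, the rescaled 1D finite-$\beta$ PDE $\Phi^\beta$, and the zero-temperature functional $\Phi^\infty$ used in $\Par^{\Is}$. The Lemma~\ref{lem:Ising-shift-bound} shift is dictated rigidly by $\int\xi''(\beta\kappa\zeta)\,dt$, and after the $\beta^{-1}$-scaling of inputs from \eqref{eq:betascale} this rigidly selects $\tilde\lambda$; the resulting $K|\tilde\lambda|\eta$ contribution is the $\lambda$-dependent error in the final statement.
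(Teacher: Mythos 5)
Your proof follows the paper's argument step for step: interpolate with the rescaled model $(\beta^2\xi,\beta\bh)$ via Proposition~\ref{prop:interpolation-common}, unpack $\varphi^{\Is}(0)$ through Propositions~\ref{prop:X} and~\ref{prop:XPhi}, collapse the $K$-dimensional Parisi PDE to the $1$-dimensional one via Lemma~\ref{lem:ineqising}, undo the $\beta$-rescaling via \eqref{eq:betascale}, pass to $\beta=\infty$ via Proposition~\ref{prop:cPbetalarge} (paying $K\log 2$), and finally apply Lemma~\ref{lem:Ising-shift-bound}. Your observations that \eqref{eq:betascale} and Proposition~\ref{prop:cPbetalarge} extend to $\Phi_{a,\cdot}$ because the $-ax$ term is linear and $\beta$-homogeneous are exactly the points the paper treats implicitly. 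So the structure is identical.

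The one genuine issue you have put your finger on — and it deserves to be stated flatly rather than left as an ``obstacle'' — is that the shift-lemma compatibility and the claimed error term pull $\tilde\lambda$ in different directions. As you derived, applying Lemma~\ref{lem:Ising-shift-bound} to $\beta\kappa\zeta$ demands the argument $h+\beta\lambda m_i$ after rescaling, hence $\tilde\lambda=\beta^2\lambda$ at the Prop.~\ref{prop:interpolation-common} level, and then the error contribution is $K|\tilde\lambda|\eta = K\beta^2\lambda\eta$. But the lemma statement (and the displayed chain in the paper's own proof, which plugs $\tilde\lambda=\beta\lambda$ into Prop.~\ref{prop:interpolation-common} yet later writes $\Phi^\infty_{m_i,\beta\kappa\ozeta}(\cdot,h+\lambda m_i)$ where the shift lemma wants $h+\beta\lambda m_i$) carries $K\beta\lambda\eta$. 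So your final sentence that $K|\tilde\lambda|\eta$ ``is the $\lambda$-dependent error in the final statement'' is off by a factor of $\beta$, and you should say so explicitly: either the statement's error term should read $K\beta^2\lambda\eta$, or $\lambda$ should be redefined to absorb the $\beta$. Neither change affects the downstream proof of Proposition~\ref{prop:uniform-multi-opt} (the choice $\eta\sim\eps/(\beta K\xi''(1)+\lambda)$ there still makes the corrected error $o(\eps)$ since $\lambda\le\xi'(1)\le\xi''(1)$), but you should not leave the reader with the impression that $\tilde\lambda=\beta^2\lambda$ reproduces $K\beta\lambda\eta$ verbatim. Beyond that the proposal is sound.
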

\begin{proof}

Applying Proposition~\ref{prop:interpolation-common} with $\ozeta$ and $(\beta^2\xi,\beta\bh,\beta\lambda)$ in place of $(\xi,\bh, \lambda)$ in the first line, 
\[
\begin{WithArrows}
  \hspace{-2cm}F_N^{\Is}(\beta,&\cQ(\eta))- 
  3\beta^2 K^2\xi''(1)\eta-K\beta\lambda\eta
  \leq \varphi^{\Is}_{\beta^2\xi}(0)
  - \fr{\beta^2 K}{2}\int_{q_0}^1(q-q_0)\xi''(q)\kappa(q)\zeta(q) \diff{q} 
%   +\beta K R( \bh,\bm) % this term is part of $\varphi(0)$...
  \Arrow{Props~\ref{prop:X},\ref{prop:XPhi}}\\
  &\leq {\color{red}\frac{1}{N}\sum_{i=1}^N\Phi^{\bbL}_{\beta^2\xi,m_i,\ozeta}(q_0,\beta h+\beta \lambda m_i)} 
  - \fr{\beta^2 K}{2}\int_{q_0}^1(q-q_0)\xi''(q)\kappa(q)\zeta(q) \diff{q}
  +\beta K R( \bh,\bm)
  \Arrow{Lem~\ref{lem:ineqising}}\\
  &\leq \frac{{\color{red}K}}{N}\sum_{i=1}^N\Phi_{\beta^2\xi,m_i,{\color{red}\kappa}\ozeta}(q_0, \beta h+\beta \lambda m_i)
  - \fr{\beta^2 K}{2}\int_{q_0}^1(q-q_0)\xi''(q)\kappa(q)\zeta(q) \diff{q}
  +\beta K R( \bh,\bm)
  \Arrow{\eqref{eq:betascale}}\\
  &= \frac{{\color{red}\beta }K}{N}\sum_{i=1}^N\Phi{\color{red}^{\beta}_{m_i,\beta\kappa\ozeta}}(q_0, {\color{red}h+\lambda m_i})
  - \fr{\beta^2 K}{2}\int_{q_0}^1(q-q_0)\xi''(q)\kappa(q)\zeta(q) \diff{q}
  +\beta K R( \bh,\bm)
  \Arrow{Prop \ref{prop:cPbetalarge}}\\
  &= \frac{\beta K}{N}\sum_{i=1}^N
  \Phi^{{\color{red}\infty}}_{m_i,\beta\kappa\ozeta}(q_0, h+\lambda m_i)
  - \fr{{\color{red}\beta} K}{2}\int_{q_0}^1(q-q_0)
  \xi''(q){\color{red}\beta}\kappa(q)\zeta(q) \diff{q}\\
 &\qquad +\beta K R( \bh,\bm)+{\color{red} K\log 2}
%   \Arrow{Prop \ref{prop:cPbetalarge}}\\
    \Arrow{Lem~\ref{lem:Ising-shift-bound}}\\
    &\leq {\color{red}\beta K \Par^{\Is}(\beta\kappa\zeta)}+ K\log 2.
\end{WithArrows}
\]
Here terms modified from the previous line are in red text.

\end{proof}

All that remains is to approximate an arbitrary $\zeta_*\in\cuL$ by $\beta\kappa\zeta$ on $[q_0,1)$ for $\zeta\in\cM_{\vq}$ and choose parameters appropriately. We do this now.

\begin{proof}[Proof of Proposition~\ref{prop:uniform-multi-opt}, Ising case]

First choose $\zeta_*=\zeta_*(\xi,h,\varepsilon)\in\cuL$ such that
\begin{equation}\label{eq:zetastar}
	\Par^{\Is}(\zeta_*)\leq \inf_{\zeta\in\cuL}\Par^{\Is}(\zeta)+\frac{\varepsilon}{10}=\ALG^{\Is}+\frac{\varepsilon}{10}.
\end{equation}
Since $\int_0^1 \xi''(t)\zeta_*(t)\de t<\infty$, the monotone convergence theorem guarantees 
\[
	\lim_{\beta\to\infty}\int_0^1 \xi''(t)\cdot \lt|\min(\zeta(t),\beta)-\zeta(t)\rt|\de t=0.
\] 
Define $\zeta_{\beta}(t)=\min(\zeta(t),\beta)$. Therefore there exists 
\begin{equation}\label{eq:beta-defn}
    \beta=\beta(\zeta_*,\xi,\varepsilon)=\beta(\xi,h,\varepsilon)\geq\frac{20\log 2}{\eps}
\end{equation} 
sufficiently large so that (recall Proposition~\ref{prop:6.1})
\begin{equation}\label{eq:zetabeta}
	\Par^{\Is}(\zeta_{\beta})-\Par^{\Is}(\zeta_*) \leq 2\int_0^1 \xi''(t)\cdot \lt|\zeta_{\beta}(t)-\zeta(t)\rt|\de t\leq \fr{\eps}{10}.
\end{equation}
For $\delta>0$, let $q_0^{\delta}=q_0$ and $q^{d+1}_{\delta}=\min(q_d^{\delta}+\delta,1)$. This determines $D$ which satisfies $q_{D-1}<q_D=1$.
Since $\zeta_{\beta}\in\cuL$ is bounded and has bounded variation, there exists $\delta=\delta(\xi,\zeta_{\beta},\eps)=\delta(\xi,\bh,\eps)>0$ such that the function
\[
	\zeta_{\beta,\delta}(t)=\begin{cases}
	\zeta_{\beta}(t),&\quad t\in [0,q_0)\\
	\max\lt(\delta,\zeta_{\beta}(q^{\delta}_j)\rt),&\quad t\in [q^{\delta}_j,q^{\delta}_{j+1}), j\geq 0
	\end{cases}
\]
satisfies
\begin{equation}\label{eq:zetabetadelta}
	\Par^{\Is}(\zeta_{\beta})-\Par^{\Is}(\zeta_{\beta,\delta}) \leq 2\int_0^1 \xi''(t)\lt|\zeta_{\beta}(t)-\zeta_{\beta,\delta}(t)\rt| \de t\leq \fr{\eps}{10}.
\end{equation}
(Note in particular that $\delta$ does not depend on $q_0$.) Observe that $\zeta_{\beta,\delta}(t)\in [\delta, \beta]$ holds for all $t\in [0,1]$. Next define
\[
	k_1=k_2=\dots=k_{D}=k_*\equiv\lt\lceil\fr{\beta}{\delta^2}\rt\rceil.
\]
% \mscomment{$j\to d$}
This leads to $p_d^{\delta}=\chi^{-1}(q_d^{\delta})$ with $\delta\leq p_1^{\delta}\leq p_D^{\delta}= 1$ and hence $\kappa(t)=\kappa_d$ for $t\in [q_d^{\delta},q_{d+1}^{\delta})$, where 
\[
	\delta k_*^{D-d}\le \kappa_d\le k_*^{D-d}.
\]
Next define
\[
	\hzeta_{\beta,\delta}(t)\equiv \frac{\zeta_{\beta,\delta}(t)}{\beta \kappa(t)},\quad t\in[q_0,1]
\]
so that $\beta\kappa \hzeta_{\beta,\delta}=\zeta_{\beta,\delta}.$ Note that
\[
\sup_{t\in [0,1]}\hzeta_{\beta,\delta}(t)\leq \frac{\sup_{s\in[0,1]}\zeta_{\beta,\delta}(s)}{\beta }\leq 1.
\]
Additionally $\hzeta_{\beta,\delta}$ is nondecreasing since if $q_d\leq t_d<q^{\delta}_{d+1}\leq t_{d+1}\leq q^{\delta}_{d+2}$, then
\begin{align*}
	\frac{\hzeta_{\beta,\delta}(t_d)}{\hzeta_{\beta,\delta}(t_{d+1})}&=\frac{\zeta_{\beta,\delta}(t_d)}{\zeta_{\beta,\delta}(t_d)}\cdot \frac{\kappa_{d+1}}{\kappa_d}\\
	&\leq \frac{\beta}{\delta^2 k_*}\leq 1
\end{align*}
by definition of $k_*$. Set
\[
    \lambda=\int_0^1 \xi''(t)\kappa(t)\hzeta_{\beta,\delta}(t)\de t
\]
% \mscomment{Check that it should be $\int_{0}^1$ and not $\int_{q_0}^1$}
and
\begin{equation}\label{eq:eta-defn}
    \eta=\frac{\eps}{30\beta K \xi''(1)+10\lambda}.
\end{equation}
We now show that using $\hzeta_{\beta,\delta}$ in the interpolation implies Proposition~\ref{prop:uniform-multi-opt}.
Take $\vp, \vq, \vk,D,\beta,\eta$ as above.
\[
\begin{WithArrows}
    \hspace{-2.5cm}\fr{1}{N} \E \lt[\max_{\vbsig \in \cQ^{\Is}(\eta)}
    \cH_N^{\vk,\vp}(\vbsig) \rt]
	&\leq F^{\Is}_N(\beta,\cQ(\eta))/\beta
	\Arrow{Lem~\ref{lem:grand-bound}}\\
	&\leq K \Par^{\Is}(\beta\kappa \hzeta_{\beta,\delta})
	+3\beta K^2\xi''(1)\eta +K\lambda\eta+\frac{K\log 2}{\beta}
	\Arrow{\eqref{eq:beta-defn}, \eqref{eq:eta-defn}}\\
	&\leq  K\cdot\Par^{\Is}(\zeta_{\beta,\delta})+\frac{2 K \eps}{10}
	\Arrow{\eqref{eq:zetabetadelta}}\\
	&\leq K\cdot\Par^{\Is}(\zeta_{\beta})+\frac{3 K \eps}{10}
	\Arrow{\eqref{eq:zetabeta}}\\
	&\leq  K\cdot\Par^{\Is}(\zeta_*)+\frac{4 K \eps}{10}
	\Arrow{\eqref{eq:zetastar}}\\
	&\leq  K\cdot\ALG^{\Is}+\frac{5 K \eps}{10}.
\end{WithArrows}
\]
Moreover the values $D,\eta$ and $K$ above are bounded depending only on $\xi,h$ and $\eps$. Indeed $D \leq \delta^{-1}+1$, $\eta$ is bounded as in \eqref{eq:eta-defn}, and $K=\prod_{d=1}^D k_i=k_*^D=\lt\lceil\fr{\beta}{\delta^2}\rt\rceil^D$. Meanwhile $\beta$ as defined in \eqref{eq:beta-defn} also depends only on $\xi,h,\eps$. This concludes the proof.

\end{proof}

\subsection{Deferred Proofs}
\label{sec:s6-deferred}

Here we give the missing proofs for this section, which are all relatively standard.

\begin{proof}[Proof of Lemma~\ref{lem:recursiontoPDE}]

We assume $\zeta(t)>0$ as the $\zeta(t)=0$ case is clear. We consider only the case $t\in [q_{D-1},1)$ as the remaining cases are identical by induction. Let $\vy=\vy(t)\in\bbR^K$ be the Gaussian random vector
\[
    \vy=\veta_D(\xi''(1)-\xi''(t))^{1/2}.    
\]
Below $A$ always denotes 
\[
    A(\vx+\vy)=\Phi^{\bbL}_{a,\zeta}(1,\vx+\vy)
\]
and for convenience we set $m=\zeta_D=\zeta(t)$ for $t\in [q_{D-1},1)$. First note that since $|\partial_{x(u)} \Phi^{\bbL}_{a,\zeta}(1,\vx)|\leq 1+|a|$ holds, there are no issues of convergence in any of the expectations even though $\vy$ has unbounded support.

By differentiating in the endpoint value $\vx+\vy$ before taking expectation in $\vy$ it follows that
\[
\nabla \Phi^{\bbL}_{a,\zeta} = \frac{\E[\nabla A e^{mA}]}{\E[e^{mA}]}.
\]
This immediately implies that $|\partial_{x(u)} \Phi^{\bbL}_{a,\zeta}(t,\vx)|\leq 1+|a|$. Similarly one has
\[
\partial_{x_ix_j} \Phi^{\bbL}_{a,\zeta} = \frac{\E\left[\partial_{x_ix_j} A+m(\partial_{x_i} A)(\partial_{x_j} A)e^{mA}\right]}{\E[e^{mA}]}-m\left(\frac{\E[\partial_{x_i}e^{mA}]}{\E[e^{mA}]}\right)\left(\frac{\E[\partial_{x_j}e^{mA}]}{\E[e^{mA}]}\right).
\]
Combining, we compute
\[
\langle T,\nabla^2 \Phi^{\bbL}_{a,\zeta}\rangle + m\langle T,(\nabla L)^{\otimes 2}\rangle=\frac{1}{\E[e^{mA}]}\E[\left(\langle T,\nabla^2 A\rangle+m\langle T,(\nabla A)^{\otimes 2}\rangle\right) e^{mA}] 
\]
Next, note that the time-derivative of the covariance of $\vy(t)$ is $M(t)$. Since $M(t)$ is positive semidefinite we can couple together $(\vy(t))_{t\in [q_{L-1},1]}$ via
\[
\vy(t)=\int_{t}^{1} \sqrt{\xi''(r) M(r)} \de \vB_r
\]
where $\vB_r$ is a standard Brownian motion in $\bbR^K$. Applying Ito's formula backward in time now implies
\[
\frac{\diff{}}{\diff{t}}\E e^{mA(\vx,\vy(t))}= -\frac{1}{2}m\E\left[\left(\langle T,\nabla^2 A\rangle +m\langle M(t), (\nabla A)^{\otimes 2}\rangle\right) e^{mA}\right].
\]
Therefore we conclude
\begin{align*}
\partial_t \Phi^{\bbL}_{a,\zeta} &= -\frac{\frac{\de}{\de t}\E e^{mA(\vx,\vy(t))}}{m\E e^{mA(\vx,\vy(t))}}\\
& = -\frac{1}{2}\langle T,\nabla^2 \Phi^{\bbL}_{a,\zeta}\rangle + m\langle T,(\nabla \Phi^{\bbL}_{a,\zeta})^{\otimes 2}\rangle.
\end{align*}

\end{proof}

\begin{proof}[Proof of Proposition~\ref{prop:1dGTIsing}]

Set
\[
    W_s=x+\int_{t_1}^s \zeta(r)\xi''(r)v_r\de r+\int_{t_1}^s \sqrt{\xi''(r)}\de B_r
\]
and
\[
    V_s\equiv \Phi^{\beta}_{a,\zeta}\left(s, W_s\right)-\frac{1}{2}\int_{t_1}^s \zeta(r)\xi''(r)v_r^2\de r.
\]
Ito's formula gives
\[
\de V_t=\left(\partial_t \Phi^{\beta}_{a,\zeta}(t, W_t)+\zeta(t)\xi''(t)v_t\partial_x \Phi^{\beta}_{a,\zeta}(t, W_t)+\frac{1}{2}\xi''(t)\partial_{xx}\Phi^{\beta}_{a,\zeta}(t, W_t)-\frac{1}{2}\zeta(t)\xi''(t)v_t^2\right)\de t+Y_t\de B_t.
\]
Here $Y_t$ is irrelevant and \eqref{eq:1dParisiPDEdefn} lets us rewrite the finite variation part of $\de V_t$ as 
\begin{align*}
\partial_t \Phi^{\beta}_{a,\zeta}(t, X_t)+\zeta(t)\xi''(t)v_t\partial_x \Phi^{\beta}_{a,\zeta}(t, W_t)+\frac{1}{2}\xi''(t)\partial_{xx}\Phi^{\beta}_{a,\zeta}(t, W_t)&-\frac{1}{2}\zeta(t)\xi''(t)v_t^2\\
&= -\frac{1}{2}\zeta(t)\xi''(t)\left(v_t-\partial_x\Phi^{\beta}_{a,\zeta}(t, W_t)\right)^2\\
&\leq 0.
\end{align*}
We conclude that
\[
\Phi^{\beta}_{\zeta}(t_1,x)\geq \mathcal X_{\zeta}^{t_1,t_2}(x,v)
\]
with equality when $v_r=\partial_x\Phi^{\beta}_{\zeta}(r,W_r)$ holds for all $r\in [t_1,t_2]$. By uniqueness of solutions for SDEs with Lipschitz coefficients, this implies $W_r=X_r$.

\end{proof}

\begin{proof}[Proof of Proposition~\ref{prop:GTIsing}]

The proof is similar to the $1$-dimensional case. First, the SDE defining $\vX_t$ has strong and pathwise unique solutions since $\nabla\Phi^{\bbL}_{a,\zeta}(t, \vx)$ is uniformly bounded and Lipschitz in $\vx$. Set
\[
    \vW_s=\vx+\int_{t_1}^{s} \zeta(r)\xi''(r)M(r)\vv_r\de r+\int_{t_1}^{s} \sqrt{\xi''(r)M(r)}\de \vB(r)
\]
and
\[
V^{\bbL}_s\equiv \Phi^{\bbL}_{a,\zeta}\left(s,\vX_s\right)-\frac{1}{2}\int_{t_1}^s \zeta(r)\xi''(r)\langle M(r),\vv_r^{\otimes 2}\rangle\de r.
\]
By Ito's formula,
\[
\de V^{\bbL}_t=\left(\partial_t \Phi^{\bbL}_{\zeta}(t, \vW_t)+\zeta(t)\xi''(t)\vv_t\partial_x \Phi^{\bbL}_{\zeta}(t, \vW_t)+\frac{1}{2}\xi''(t)\partial_{xx}\Phi^{\bbL}_{\zeta}(t, \vX_t)-\frac{\xi''(t)}{2}\langle M(t),\vv_t^{\otimes 2}\rangle\right)\de t+Y_t^{\bbL}\de B_t.
\]
Here $Y_t^{\bbL}$ is again irrelevant. By \eqref{eq:parisi-pde-Rk} the finite variation part of $\de V^{\bbL}_t$ is 
\begin{align*}
    \partial_t \Phi^{\bbL}_{a,\zeta}(t, \vW_t)
    +\lt\langle M(t),\vv_t\otimes \nabla\Phi^{\bbL}_{a,\zeta}(t, \vW_t)\rt\rangle
    +\frac{1}{2}\xi''(t)\partial_{xx}\Phi^{\bbL}_{a,\zeta}(t, \vW_t)
    &-\frac{\xi''(t)}{2}\langle M(t),\vv_t^{\otimes 2}\rangle\\
    &= -\frac{1}{2}\left\langle M(t),\left(\vv_t-\nabla\Phi^{\bbL}_{a,\zeta}(t, \vW_t)\right)^{\otimes 2}\right\rangle\\
    &\leq 0.
\end{align*}
We conclude that
\[
\Phi^{\bbL}_{a,\zeta}(t_1,\vx)\geq \mathcal X_{a,\zeta}^{\bbL,t_1,t_2}(\vx,\vv)
\]
with equality when 
\[
\vv_r=\nabla\Phi^{\bbL}_{a,\zeta}(r, \vW_r)
\]
holds for all $r\in [t_1,t_2]$. Again, uniqueness of solutions to SDEs with Lipschitz coefficients implies $\vW_r=\vX_r$.

\end{proof}
\section{Necessity of Full Branching Trees}
\label{sec:necessity-fully-branching-trees}

In this section we show, roughly speaking, that it is necessary to use a full branching tree to obtain our results within the overlap gap framework. We restrict for convenience to the setting of spherical models with null external field $h=0$ and set $\ALG^{\Sp}_{\xi} = \ALG^{\Sp}_{\xi,0}=\int_0^1 \xi''(t)^{1/2} \diff{t}$ (recall Proposition~\ref{prop:alg-sp-value}) and $\OPT^{\Sp}_{\xi} = \OPT^{\Sp}_{\xi,0}$. 

A consequence of Theorem~\ref{thm:tree-needed}, proved near the end of this section, can be expressed informally as follows for any $\xi$ with $\ALG^{\Sp}_{\xi}<\OPT^{\Sp}_{\xi}$. Recall the canonical bijection between finite ultrametric spaces and edge-weighted rooted trees (or see Subsection~\ref{subsec:trees-ultrametrics} for a reminder). For all finite ultrametric spaces $X$ of diameter at most $\sqrt{2}$ whose corresponding rooted tree does not contain a subdivision of a full binary subtree of depth $D$, with probability at least $1-e^{-\Omega(N)}$ the following holds. There exists an isometric (up to the scaling factor $\sqrt{N}$) embedding $\iota:X\to S_N$ such that
\[
  H_N(\iota(x))\geq (\ALG^{\Sp}_{\xi}+\eps_{\xi,D})N,\quad \forall x\in X.
\]
Here $\eps_{\xi,D}>0$ is a constant depending only on $\xi$ and $D$, and in particular is independent of the size of the ultrametric $X$. 
In other words, to rule out algorithms achieving better than $\ALG^{\Sp}_{\xi} + \eps$ using forbidden ultrametrics, as $\eps \to 0$ it is necessary to take $D\to\infty$, in effect using the full power of Proposition~\ref{prop:uniform-multi-opt}.

The full statement of Theorem~\ref{thm:tree-needed} shows that in fact a super-constant amount of branching must occur at all ``depths'' in $[0,1]$ where $\xi''(t)^{-1/2}$ is strictly convex. We also show in Theorem~\ref{thm:many-branches-needed} that there exists an embedding $\iota$ as above with large average energy
\[
    \frac{1}{|X|}\sum_{x\in X} H_N(\iota(x))\geq (\ALG^{\Sp}_{\xi}+\eps_{\xi,D})N
\]
unless ``almost all of'' $X$ branches a super-constant amount at ``almost all such depths''. Note that this average energy is what the Guerra-Talagrand interpolation actually allows one to upper bound.
Throughout this section we always consider just a single Hamiltonian $H_N$. This corresponds to the case $\vec p\approx(1,1,\dots,1)$, i.e. a correlation function $\chi(p)$ which sharply increases near $p=1$ such as $\chi(p)=p^{100}$.

Our plan to prove Theorem~\ref{thm:tree-needed} is as follows. If $\ALG_{\xi}^{\Sp}<\OPT_{\xi}^{\Sp}$, there exists an interval $[a,b]\subseteq [0,1]$ on which $(\xi'')^{-1/2}$ is strictly convex. 
Let $\bbT$ be the finite rooted tree with leaf set corresponding to the ultrametric space $X$. 
Let $\eps > 0$ be a small constant depending only on $\xi$ and $D$.
We use the algorithm of \cite{subag2018following} to find embeddings of ancestor points $\iota(x_a)$ for each $x\in X$ of norm $\norm{\iota(x_a)}_2=\sqrt{aN}$ which satisfy
\[
    H_N(\iota(x_a))
    \geq 
    \lt(\int_0^a \xi''(t)^{1/2} \diff{t} 
    -\eps\rt)N.
\]
Next we embed the depth $[a,b]$ parts of $\bbT$ so that the resulting depth $b$ ancestor embeddings $\iota(x_b)$ satisfy
\[
    H_N(\iota(x_b))
    \geq 
    \lt(\int_0^b \xi''(t)^{1/2} \diff{t}
    +2\eps\rt)N.
\]
In other words, from radius $\sqrt{aN}$ to $\sqrt{bN}$, the embedded points' energy grows by $\int_a^b \xi''(t)^{1/2} \diff{t} + 3\eps$, which exceeds the maximum possible growth of an overlap concentrated algorithm by a small constant $3\eps$.
This is the main step of our procedure, and it succeeds whenever the portion of $\bbT$ at depths in $[a,b]$ does not contain a full binary tree of depth $D$. 
The proof uses induction on $D$, and the $D=1$ case is described in Figure~\ref{fig:ladder-pic}. 
We remark that our proof is essentially constructive assuming access to an oracle to find many orthogonal near-maximizers of $H_N$ on arbitrary bands as guaranteed by Lemma~\ref{lem:exact-embedding}.  

Finally we again use the algorithm of \cite{subag2018following} to define embeddings of the leaves $\iota(x)\in S_N$ for $x\in X$ with
\[
    H_N(\iota(x))
    \geq 
    \lt(\int_0^1 \xi''(t)^{1/2} \diff{t}
    +\eps\rt)N.
\]
% \mscomment{Also signpost propositions based on what they are doing. And make a picture for the ladder proof.}

We remark that in previous multi-OGP arguments, ultrametricity of the forbidden configuration does not explicitly enter. 
However in these arguments, it is always \emph{possible} that the structure of replicas identified is an ultrametric. 
Specifically, in a ``star'' multi-OGP \cite{rahman2017independent, gamarnik2017performance, gamarnik2021partitioning} all the replicas are pairwise equidistant. 
For the ``ladder'' OGP implementations of \cite{wein2020independent, bresler2021ksat}, the forbidden structure is defined by applying some stopping rule to choose a finite number of solutions from a ``stably evolving'' sequence of algorithmic outputs. 
In both settings it is possible that the resulting configuration is a star ultrametric with all pairwise nonzero distances equal.
However, the rooted tree corresponding to such an ultrametric does not contain even a full binary tree of depth $D=2$. Therefore Theorem~\ref{thm:tree-needed} strongly suggests that existing OGP arguments are incapable of ruling out Lipschitz $\cA$ from achieving energies down to the algorithmic threshold $\ALG^{\Sp}_{\xi}$.

\subsection{Preparation}

% \mscomment{Check for other uses of $k$ now...}

For given $\xi$ and $t\in [0,1]$, define
\[
  \ALG^{\Sp}_{\xi}(t)=\int_0^t \xi''(s)^{1/2} \de s
\]
so that $\ALG^{\Sp}_{\xi}(1)=\ALG^{\Sp}_{\xi}$. Define also 
\[
  \ALG^{\Sp}_{\xi}([a,b])=\ALG^{\Sp}_{\xi}(b)-\ALG^{\Sp}_{\xi}(a).
\]
% \mscomment{Check that the definition of $\xi_a$ is consistent.}
Define 
\[
\xi_a(t)=\xi(t)-\xi(a)-(t-a)\xi'(a).
\]
Note that $\xi_a(a)=\xi'_a(a)=0$, and $\xi_a''(t)=\xi''(t)$ for all $t$. Define the rescaled mixture function
\[
  \xi_{[a,b]}(t)=\xi_a\left(a+(b-a)t\right).
\]
We derive 
\[
  \ALG^{\Sp}_{\xi_{[a,b]}}=\int_0^1 \sqrt{\xi_{[a,b]}''(t)}\de t= \int_a^b \sqrt{\xi_a''(s)}\de s = \int_a^b \sqrt{\xi''(s)}\de s= \ALG^{\Sp}_{\xi}([a,b]).
\]\
Correspondingly, define
\[
  \OPT^{\Sp}_{\xi}([a,b])=\OPT^{\Sp}_{\xi_{[a,b]}}.
\]

\begin{proposition}\label{prop:concave-OPT-ALG}
Suppose $\frac{\de^2}{\de t^2}(\xi''(t)^{-1/2})>0$ for $t\in [a,b]\subseteq [0,1]$. Then 
\begin{equation}\label{eq:opt-alg}
  \OPT^{\Sp}_{\xi}([a,b])>\ALG_{\xi}([a,b]).
\end{equation}
\end{proposition}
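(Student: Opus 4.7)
The plan is to rescale $[a,b]$ to $[0,1]$ and then combine the AM--GM lower bound on the spherical Parisi functional with the equality-case analysis behind Proposition~\ref{prop:alg-sp-value}. Setting $\tilde\xi = \xi_{[a,b]}$, the definitions give $\OPT^{\Sp}_\xi([a,b]) = \OPT^{\Sp}_{\tilde\xi}$ and $\ALG^{\Sp}_\xi([a,b]) = \ALG^{\Sp}_{\tilde\xi}$, while a chain-rule computation yields
\[
    \frac{d^2}{dt^2}\tilde\xi''(t)^{-1/2} = (b-a)\,\frac{d^2}{ds^2}\xi''(s)^{-1/2}\bigg|_{s=a+(b-a)t},
\]
so $\tilde\xi''(t)^{-1/2}$ is strictly convex on all of $[0,1]$. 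In particular $\tilde\xi''$ cannot be locally constant and is thus strictly increasing (it is nondecreasing since $\xi^{(4)}\ge 0$), which gives $\tilde\xi'(1) = \int_0^1 \tilde\xi''(s)\,ds < \tilde\xi''(1)$.

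Next I would apply Proposition~\ref{prop:alg-sp-value} to the pair $(\tilde\xi, 0)$. Since $\tilde\xi'(1) < \tilde\xi''(1)$ and the relevant $\hq \to 0^+$ in the $h=0$ subcase, one obtains the clean formula $\ALG^{\Sp}_{\tilde\xi} = \int_0^1 \sqrt{\tilde\xi''(t)}\,dt$. On the other side, by the uniqueness statement accompanying \eqref{eq:opt-variational-formula-sp}, the infimum defining $\OPT^{\Sp}_{\tilde\xi}$ is attained at a unique $(B_*, \zeta_*) \in \cuV(\tilde\xi)$, and AM--GM applied pointwise to the integrand of the Parisi functional yields
\[
    \OPT^{\Sp}_{\tilde\xi} = \frac{1}{2}\int_0^1 \left(\frac{\tilde\xi''(t)}{B_{\zeta_*}(t)} + B_{\zeta_*}(t)\right) dt \ge \int_0^1 \sqrt{\tilde\xi''(t)}\,dt = \ALG^{\Sp}_{\tilde\xi},
\]
with equality iff $B_{\zeta_*}(t) = \sqrt{\tilde\xi''(t)}$ for almost every $t\in[0,1]$.

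The final step is to rule out this equality case. Absolute continuity of $B_{\zeta_*}$ (from the definition $B_\zeta(t) = B - \int_t^1 \tilde\xi''(q)\zeta(q)\,dq$ and integrability of $\tilde\xi''\zeta_*$) promotes the a.e.\ identity to $B_{\zeta_*} = \sqrt{\tilde\xi''}$ pointwise, after which differentiating forces $\zeta_*(t) = \tilde\xi'''(t)/(2\tilde\xi''(t)^{3/2}) = -\frac{d}{dt}\tilde\xi''(t)^{-1/2}$. The requirement $\zeta_* \in \cuU$, i.e.\ $\zeta_*$ nondecreasing, is exactly concavity of $\tilde\xi''(t)^{-1/2}$ on $[0,1]$, which directly contradicts the strict convexity established in the first step; hence $\OPT^{\Sp}_{\tilde\xi} > \ALG^{\Sp}_{\tilde\xi}$, proving \eqref{eq:opt-alg}. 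The only mild obstacle is bookkeeping: $\tilde\xi$ is not a mixed even polynomial in the strict sense --- expanding about $s=a$ produces odd powers of $t$ --- so one must check that Proposition~\ref{prop:alg-sp-value} and the uniqueness of the $\OPT^{\Sp}$ minimizer apply to $\tilde\xi$. Both depend on the mixture only through smoothness and nonnegativity of $\tilde\xi^{(k)}$ on $[0,1]$ for $k\ge 2$, which are inherited automatically from $\xi$.
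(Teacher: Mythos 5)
Your proof is correct and follows the same route as the paper, which simply invokes Proposition~\ref{prop:alg-sp-value} applied to $\xi_{[a,b]}$; you have expanded the paper's one-line proof into its underlying logic. Two small remarks. First, you could avoid invoking \emph{uniqueness} of the $\OPT^{\Sp}$ minimizer: attainment alone (from \cite[Theorem 10]{auffinger2017energy}) combined with the AM--GM equality case already forces $\zeta_* = -\frac{d}{dt}\tilde\xi''(t)^{-1/2}$, which is then seen not to lie in $\cuU$. Second, your caveat about $\tilde\xi = \xi_{[a,b]}$ not being an even mixture is well placed and correctly resolved: expanding around $s=a$ produces all powers $t^k$ with nonnegative coefficients $\tilde\gamma_k^2 = (b-a)^k\xi^{(k)}(a)/k!$, and both the spherical ground-state Parisi formula of Auffinger--Chen and the AM--GM argument underlying Proposition~\ref{prop:alg-sp-value} apply to general (not necessarily even) mixtures with nonnegative coefficients. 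One can even note that for $\tilde\xi$ the $h=0$ infimum over $\cuK(\tilde\xi)$ \emph{is} attained (since $\tilde\xi''(0) = (b-a)^2\xi''(a) > 0$, the pair $(B,\zeta) = (\tilde\xi''(1)^{1/2}, -\frac{d}{dq}\tilde\xi''(q)^{-1/2})$ satisfies $B - \int_0^1\tilde\xi''\zeta = \tilde\xi''(0)^{1/2} > 0$), unlike the typical even-mixture case the footnote in Proposition~\ref{prop:alg-sp-value} addresses --- but this does not affect the conclusion, since what matters is only that this $\zeta$ fails to be nondecreasing.
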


\begin{proof}
    The result follows from Proposition~\ref{prop:alg-sp-value} applied to $\xi_{[a,b]}$. 
\end{proof}

The next proposition follows from the work \cite{subag2018free} and ensures the existence of many approximately orthogonal replicas which each approximately achieve the ground state energy in spherical spin glasses without external field. In Lemma~\ref{lem:exact-embedding} we make several simple modifications to this result, for instance requiring that the replicas be exactly orthogonal.

\begin{proposition}
\label{prop:many-ortho}
Suppose $\frac{\de^2}{\de t^2}(\xi''(t)^{-1/2})>0$ for $t\in [0,1]$. Then for any $C,\varepsilon>0$ and $k\in\bbN$, for $N\geq N_0=N_0(\xi,C,\eps,k)$, with probability at least $1-e^{-CN}$ either $H_N\notin K_N$ (recall Proposition~\ref{prop:gradients-bounded}) or the following holds. There exist $k$ points $\bsig_1,\dots,\bsig_k\in S_N$ with
\[
  |R(\bsig_i,\bsig_j)|\leq \eps,\quad 1\leq i<j\leq k
\]
and 
\[
  H_N(\bsig_i)\geq N(\OPT^{\Sp}_{\xi}-\eps),\quad i\in [k].
\]
\end{proposition}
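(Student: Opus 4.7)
The plan is to construct the $k$ near-optimal configurations iteratively, one at a time. The base case $k=1$ is the standard fact that with exponentially high probability $\max_{\bsig \in S_N} H_N(\bsig)/N \ge \OPT^{\Sp}_{\xi} - \eps$, which follows from the spherical Parisi formula (the theorem stated after \eqref{eq:def-parisi-functional-sp}) together with Gaussian concentration of measure via the Borell-TIS inequality (as invoked in Proposition~\ref{prop:multi-opt-tail}). For the inductive step, given $\bsig_1,\dots,\bsig_{j-1}$ satisfying the stated properties, I will restrict $H_N$ to $S_N \cap V_j$ where $V_j \subseteq \bbR^N$ is the orthogonal complement of $\mathrm{span}(\bsig_1,\dots,\bsig_{j-1})$, and extract $\bsig_j$ as a near-maximizer on this restricted sphere.

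The key observation is that, after rescaling vectors to the standard normalization on $V_j \cong \bbR^{N'}$ with $N' = N - j + 1$, the restriction of $H_N$ to $S_N \cap V_j$ is itself a spherical mixed even $p$-spin Hamiltonian with mixture $\wt\xi = (N/N')\,\xi$. Since $N/N' \to 1$ as $N \to \infty$ with $j$ held fixed, continuity of $\OPT^{\Sp}$ in the mixture yields $\OPT^{\Sp}_{\wt\xi} \to \OPT^{\Sp}_{\xi}$. Applying the base case to the restricted problem then produces $\bsig_j \in S_N \cap V_j$ with $H_N(\bsig_j)/N \ge \OPT^{\Sp}_{\xi} - \eps$ with exponentially high probability. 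By construction $R(\bsig_i, \bsig_j) = 0$ for $i < j$, which a fortiori gives $|R(\bsig_i,\bsig_{i'})| \le \eps$. A union bound over the $k$ inductive steps completes the argument. An alternative and more conceptual route, more literally aligned with the citation \cite{subag2018free}, is to invoke Subag's pure-state / free-energy machinery: under the FRSB-at-zero-temperature hypothesis, the low-temperature Gibbs measure decomposes into many pure states whose barycenters are approximately orthogonal vectors in $S_N$ of nearly optimal energy density, and one selects $k$ such centers.

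The main obstacle will be matching the prescribed exponential probability rate: the plain Borell-TIS bound yields concentration at rate proportional to $\eps^2/\xi(1)$, which need not dominate an arbitrary prescribed $C$. To obtain rate $C$, I will exploit the fact that under the hypothesis there are exponentially many near-ground-state configurations (as established by the complexity computations of \cite{auffinger2013random,subag2017complexity} and extended by the second-moment / multi-samplability techniques underlying \cite{subag2018free}). This allows the failure probability in each inductive step to be boosted to $e^{-CN}$ at the cost of enlarging $N_0 = N_0(\xi, C, \eps, k)$, so that a final union bound over the $k$ iterations retains an exponentially small failure probability at the desired rate.
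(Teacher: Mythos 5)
Your main route --- peel off one approximately orthogonal near-optimum at a time by restricting $H_N$ to the orthogonal complement of the points already found --- has a genuine gap at its central step. The subspace $V_j = \mathrm{span}(\bsig_1,\dots,\bsig_{j-1})^\perp$ is determined by $H_N$ (each $\bsig_i$ was selected by inspecting $H_N$), so the restriction of $H_N$ to $S_N\cap V_j$ is \emph{not} distributed as a fresh mixed $p$-spin Hamiltonian in $N'=N-j+1$ variables. Your rescaling computation (mixture becomes $(N/N')\xi$) is correct for a \emph{fixed} subspace, because the disorder tensor written in an orthonormal basis of a fixed $V$ still has i.i.d.\ Gaussian entries; it does not hold for a subspace adapted to the disorder. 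The conditional problem you are left with --- does the maximum of $H_N$ over $S_N$ intersected with the orthogonal complement of one near-optimum still reach $\OPT^{\Sp}_\xi - \eps$? --- is precisely the nontrivial fact the proposition is asserting, so the induction begs the question at every step. One could try to repair this by union-bounding over a net of subspaces (the relevant Grassmannian is $O(N)$-dimensional, so a $\delta$-net has size $(1/\delta)^{O(N)}$), but that forces the rate $C$ to be larger than $\log(1/\delta)$, which brings back the second obstacle you flag.

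That second obstacle is where your proposal collapses into the paper's argument. You propose to upgrade Borell--TIS's $e^{-(\eps^2/2\xi(1))N}$ to $e^{-CN}$ via the complexity/multi-samplability results of \cite{subag2018free}; but once you invoke those results, the peeling is superfluous. The paper's proof uses exactly \cite[Proposition~1 and Theorem~3]{subag2018free} to show, directly and in one stroke, that the free energy constrained to $k_N\to\infty$ mutually near-orthogonal replicas is within $\eps$ of the unconstrained free energy with probability $1-e^{-CN}$; a sample from this constrained Gibbs measure then yields $k_N$ near-orthogonal configurations with large average energy, and Markov's inequality (with $H_N\in K_N$ supplying a uniform energy bound) pigeonholes out $k$ of them that are individually near-optimal. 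Your ``alternative and more conceptual route'' aside is essentially this argument (though note the pure-state barycenters you mention live inside $B_N$ at self-overlap $q_{EA}<1$, not on $S_N$, so some care with the $\beta\to\infty$ limit is needed); it is not developed, and it is the route that actually closes the proof.
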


% \mscomment{I think we do want $c$ to be arbitrarily large. Maybe just assume $H_N\in K_N$ for convenience.} 

\begin{proof}

With the absence of external field, it follows from \cite[Lemma 42]{subag2018free} that $0$ is multi-samplable. Let $\cQ_k(\varepsilon) \subseteq B(\bm,\eps)^k \cap S_N^k$ denote the set of $\vbsig$ with $|R(\bsig_i,\bsig_j)|\leq \eps$ for $i\neq j$.
Let $\mu$ be the uniform measure on $S_N$. 
Define
\[
    F_{N,\beta}=\frac{1}{\beta N}\log \int_{S_N} \exp \beta H_N(\bsig) \diff{\mu(\bsig)}
\]
to be the quenched free energy of $H_N$ on $S_N$ at inverse temperature $\beta$ and 
\[
    \wt{F}_{N,\beta}(\bm)=\wt{F}_{N,\beta}(\bm,k_N,\eps)\equiv \frac{1}{\beta Nk_N} \log \int_{\cQ_{k_N}(\eps)} \exp \beta\sum_{i=1}^{k_N} H_N(\bsig_i) \diff{\mu^k(\vbsig)}.
\]
Here $k_N$ grows to $\infty$ with $N$ at a suitably slow rate. By \cite[Proposition 1 and Theorem 3]{subag2018free}\footnote{In the statement of \cite[Theorem 3]{subag2018free}, there are values $\delta_N,\rho_N$ which also shrink with $N$. We are taking $\eps=\rho_N$ a small constant and ignoring the constraint from $\delta$, so our value of $\wt{F}_{N,\beta}(\bm)$ is larger than that of \cite{subag2018free}. Therefore the lower bound on $\wt{F}_{N,\beta}(\bm)$ we use is somewhat weaker than in the results cited.}  it follows that for $N\geq N_0$ sufficiently large,
\[
  \prob\lt[\E\wt{F}_{N,\beta}(\bm)-\E F_{N,\beta}\ge -\eps\rt]\ge 1-e^{-CN}.
\]
Therefore there exists some $\vbsig\in\cQ_{k_N}(\eps)$ satisfying
\[
  \sum_{i=1}^{k_N} H_N(\bsig_i) \geq Nk_N (\OPT^{\Sp}_{\xi}-\eps-o_{\beta}(1)).
\]
Here $o_{\beta}(1)$ is a value tending to $0$ as $\beta\to\infty$, uniformly in everything else. Assuming $H_N\in K_N$, the values $\frac{1}{N}|H_N(\bsig_i)|$ are uniformly bounded by a constant $C_1$ (because $H_N(0)=0$). It follows by Markov's inequality that at least $k_N\lt(\frac{\eps}{10C_1}-\eps-o_{\beta}(1)\rt)$ of the $\bsig_i$ satisfy $H_N(\bsig_i)\geq N(\OPT^{\Sp}_{\xi}-\fr{\eps}{2}-o_{\beta}(1))$. Since $k_N\to\infty$, eventually
\[
    k\leq\lt\lfloor k_N\lt(\frac{\eps}{10C_1}-\eps-o_{\beta}(1)\rt)\rt\rfloor
\]
for suitably large $\beta$, which completes the proof.
\end{proof}

For fixed $\bm$, define the first-order Taylor expansion 
\[
  \overline{H}^{\bm}_N(\bsig)=H_N(\bm)+\langle \nabla H_N(\bm),\bsig-\bm\rangle.
\]
of $H_N$ and write
\[
    H_N=\overline H^{\bm}_N + \hH^{\bm}_N.
\]
For $0\leq a\leq b\leq 1$ with $\bm\in \sqrt{a}\cdot S_N$, define $B(\bm,0,b)=B(\bm,0)\cap \sqrt{b}\cdot S_N$ and its convex hull $B(\bm,0,[a,b])$.

\begin{lemma}
\label{lem:taylor-HN}
For any fixed $\bm$, the law of $\hH^{\bm}_N$ restricted to $B(\bm,0)$ is a Gaussian process with covariance
\begin{equation}
  \E[\hH^{\bm}_N(\bsig^1)\hH^{\bm}_N(\bsig^2)]=N\xi_a(R(\bsig^1,\bsig^2)).
\end{equation}
Moreover the restrictions of $\hH^{\bm}_N$ and $\overline{H}^{\bm}_N$ to $B(\bm,0)$ are independent.
\end{lemma}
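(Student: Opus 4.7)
The plan is to verify the independence and covariance claims via direct computations using the base covariance formula $\E[H_N(\bx) H_N(\by)] = N\xi(R(\bx,\by))$ (valid because $h=0$ in this section, so $H_N = \wtH_N$ is centered Gaussian). Since $\hH^{\bm}_N$ is a linear functional of the centered Gaussian process $\{H_N(\bsig),\nabla H_N(\bm)\}$, it is automatically a centered Gaussian process, and the restrictions of $\hH^{\bm}_N$ and $\overline{H}^{\bm}_N$ to $B(\bm,0)$ are jointly Gaussian. Thus independence reduces to a covariance check.

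For the independence step, I will verify that for every $\bsig$ with $R(\bsig,\bm)=a$ (i.e.\ $\bsig\in B(\bm,0)$),
\[
    \E[\hH^{\bm}_N(\bsig)\,H_N(\bm)]=0
    \quad\text{and}\quad
    \E[\hH^{\bm}_N(\bsig)\,\partial_j H_N(\bm)]=0 \text{ for all } j.
\]
The first identity follows by expanding $\hH^{\bm}_N = H_N - H_N(\bm) - \langle \nabla H_N(\bm),\bsig-\bm\rangle$ and using $\E[\partial_i H_N(\bm)\,H_N(\bm)] = m_i\,\xi'(a)$; the cross-term becomes $-\xi'(a)\sum_i(\sigma_i-m_i)m_i = -\xi'(a)(NR(\bsig,\bm)-N\|\bm\|_N^2)=0$ exactly because $\bsig\in B(\bm,0)$. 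The second identity uses $\E[\partial_i H_N(\bm)\partial_j H_N(\bm)] = \delta_{ij}\xi'(a) + \tfrac{m_im_j}{N}\xi''(a)$ and $\E[H_N(\bsig)\partial_j H_N(\bm)] = \sigma_j\xi'(a)$; the diagonal $\xi'(a)$ contribution cancels against the $H_N$ and $H_N(\bm)$ terms, while the $\xi''(a)$ contribution reduces to a multiple of $NR(\bsig,\bm)-N\|\bm\|_N^2=0$.

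With independence established, the covariance of $\hH^{\bm}_N$ is computed by writing $H_N = \hH^{\bm}_N + \overline{H}^{\bm}_N$ and using that the cross-covariances vanish (by the independence just established, applied to each $\bsig^1,\bsig^2\in B(\bm,0)$), giving
\[
    \E[\hH^{\bm}_N(\bsig^1)\hH^{\bm}_N(\bsig^2)]
    = N\xi(R(\bsig^1,\bsig^2)) - \E[\overline{H}^{\bm}_N(\bsig^1)\overline{H}^{\bm}_N(\bsig^2)].
\]
Expanding $\overline{H}^{\bm}_N(\bsig^i)=H_N(\bm)+\langle \nabla H_N(\bm),\bsig^i-\bm\rangle$ and using the band constraint $R(\bsig^i,\bm)=\|\bm\|_N^2=a$ to kill all terms proportional to $\sum_k(\sigma^i_k-m_k)m_k$, the second-moment computation collapses to $N\xi(a)+N(R(\bsig^1,\bsig^2)-a)\xi'(a)$. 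Subtracting yields exactly $N\xi_a(R(\bsig^1,\bsig^2))$ from the definition $\xi_a(t)=\xi(t)-\xi(a)-(t-a)\xi'(a)$.

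No serious obstacle is anticipated: the only slightly delicate point is ensuring the $\xi''(a)m_im_j/N$ contribution in $\E[\partial_iH_N(\bm)\partial_jH_N(\bm)]$ drops out, but this is automatic from the band constraint. The whole argument amounts to carefully tracking how the projection $\overline{H}^{\bm}_N$ is precisely the $L^2$-projection of $H_N|_{B(\bm,0)}$ onto the $(N+1)$-dimensional subspace spanned by $H_N(\bm)$ and $\partial_1 H_N(\bm),\dots,\partial_N H_N(\bm)$.
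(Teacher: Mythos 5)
Your proof is correct, and it takes a genuinely different route from the paper's. The paper's proof is synthetic: it first checks that $\xi_a$ has nonnegative derivatives on $[a,1]$, so that $N\xi_a(R(\cdot,\cdot))$ is a valid covariance; it then builds two \emph{independent} Gaussian processes $\wt{H}_N$ and $\underline{H}_N$ on $B(\bm,0)$ with covariances $N\xi_a(R(\bsig^1,\bsig^2))$ and $N\bigl(\xi(a)+\xi'(a)(R(\bsig^1,\bsig^2)-a)\bigr)$ respectively; it verifies that the sum of these covariances is $N\xi(R(\bsig^1,\bsig^2))$, so $\wt{H}_N+\underline{H}_N\ed H_N|_{B(\bm,0)}$; and finally it identifies $\wt{H}_N=\hH^{\bm}_N$ and $\underline{H}_N=\overline{H}^{\bm}_N$ by observing that $\xi_a(a)=\xi_a'(a)=0$ forces $\wt{H}_N$ to vanish to first order at $\bm$ while $\underline{H}_N$ is affine on $B(\bm,0)$. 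Your argument runs in the opposite direction: starting from the defining formula $\hH^{\bm}_N(\bsig)=H_N(\bsig)-H_N(\bm)-\langle\nabla H_N(\bm),\bsig-\bm\rangle$, which makes $\hH^{\bm}_N$ a linear functional of the Gaussian process $H_N$ and hence automatically centered Gaussian, you verify via direct covariance computations that $\hH^{\bm}_N(\bsigma)$ is uncorrelated with $H_N(\bm)$ and each $\partial_j H_N(\bm)$ whenever $R(\bsigma,\bm)=a$, which by joint Gaussianity gives independence from $\overline{H}^{\bm}_N$; you then back out the covariance of $\hH^{\bm}_N$ by subtracting the (now-computable) covariance of $\overline{H}^{\bm}_N$ from that of $H_N$. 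Your route sidesteps two small points that the paper's argument needs: the positive-semidefiniteness of $\xi_a$ (automatic for you since $\hH^{\bm}_N$ is a priori Gaussian) and the coupling step that turns the distributional identity $\wt{H}_N+\underline{H}_N\ed H_N$ into the almost-sure identification $\underline{H}_N=\overline{H}^{\bm}_N$. The paper's approach, in exchange, is shorter on the page and makes the decomposition conceptually transparent as a sum of two explicitly constructed independent Gaussian fields. Both are valid; your computations check out exactly, including the two places where the band constraint $R(\bsig,\bm)=\|\bm\|_N^2$ is used to kill the $\xi'(a)$ and $\xi''(a)$ cross terms.
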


\begin{proof}

Note that for all $\bsig^1,\bsig^2\in B(\bm,0)$,
\[
  R(\bsig^1-\bm,\bsig^2-\bm)=R(\bsig^1,\bsig^2)-a.
\]
Since $\xi_a(t)$ has all derivatives non-negative for $t\geq a$, we may sample a centered Gaussian process $\wt{H}_N$ on $B(\bm,0,[a,1])$ with covariance given by
\begin{align*}
    \E[\wt{H}_N(\bsig^1)\wt{H}_N(\bsig^2)]
    &= N\xi_a(R(\bsig^1-\bm,\bsig^2-\bm)+a) \\
    &=N\xi_a(R(\bsig^1,\bsig^2)).
\end{align*}
Next, generate the independent centered Gaussian process $\underline{H}_N$ by
\[
  \E[\underline{H}_N(\bsig^1)\underline{H}_N(\bsig^2)]=N\lt(\xi(a)+\xi'(a)\lt(R(\bsig^1,\bsig^2)-a\rt)\rt).
\]
It follows by adding covariances (with $x=R(\bsig^1,\bsig^2)$ in the definition of $\xi_a$) that 
\[
  \wt{H}_N+\underline{H}_N \stackrel{d}{=}H_N
\]
when restricted to $B(\bm,0)$.
Since $\xi_a(a)=\xi'_a(a)=0$, it follows that $\wt{H}_N(\bm)=0$ and $\nabla \wt{H}_N(\bm)=0$ hold almost surely. 
Therefore $\underline{H}_N=\overline{H}^{\bm}_N$ is the first-order Taylor expansion of $H_N$ around $\bm$, and then also $\wt{H}_N=\hH^{\bm}_N$. Moreover $\wt{H}_N$ and $\underline{H}_N$ are independent by construction. This concludes the proof.
\end{proof}

In the following Lemma~\ref{lem:exact-embedding}, we refine Proposition~\ref{prop:many-ortho} in several simple but convenient ways. In particular, Lemma~\ref{lem:taylor-HN} implies the same result uniformly over all bands $B(\bm,0,b)$; it also guarantees exact orthogonality. Lemma~\ref{lem:exact-embedding} will serve as a useful tool for embedding more complicated ultrametric trees. Roughly speaking, it gives a way to gain on the embedding algorithm of \cite{subag2018following} (stated later as Proposition~\ref{prop:tree-embedding-alg}).

\begin{lemma}\label{lem:exact-embedding}
Suppose $\frac{\de^2}{\de t^2}(\xi''(t)^{-1/2})>0$ for $t\in [a,b]\subseteq [0,1]$. 
Then there exists $\varepsilon>0$ depending only on $\xi,a,b$ such that for any $k$, for $N\geq N_0(\xi,a,b,k)$ sufficiently large and some $c=c(\xi,a,b,k)$, with probability $1-e^{-cN}$ the following holds. 

For any $\bm$ with $||\bm||_N^2=a\leq 1$ and any linear subspace $W\subseteq \R^N$ with $\dim(W)\geq N-k$, there exist $k$ points $\bsig_1,\dots,\bsig_k\in W+\bm$ such that
\begin{equation}\label{eq:ortho-opts}
  R(\bsig_i-\bm,\bsig_j-\bm)=(b-a)\cdot\ind(i=j)\quad\forall i,j\in [k]
\end{equation}
and
\begin{equation}\label{eq:many-opts}
 H_N(\bsig_i)\geq H_N(\bm)+N(\ALG^{\Sp}_{\xi}([a,b])+\varepsilon)\quad \forall i\in [k].
\end{equation}

\end{lemma}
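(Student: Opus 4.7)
The plan is to apply Proposition~\ref{prop:many-ortho} to the residual process obtained from a Taylor expansion of $H_N$ at $\bm$, control the linear Taylor contribution by sacrificing extra replicas, enforce exact orthogonality via Gram--Schmidt, and finally uniformize in $(\bm, W)$ by a net argument.

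Fix $\bm$ with $\|\bm\|_N^2 = a$ and $W \subseteq \bbR^N$ with $\dim W \geq N - k$. By Lemma~\ref{lem:taylor-HN}, on the band $B(\bm, 0)$ we decompose $H_N = \overline H^{\bm}_N + \hH^{\bm}_N$ with the two summands independent and $\hH^{\bm}_N$ a centered Gaussian process of covariance $N\xi_a(R(\bsig^1, \bsig^2))$. For $\brho \in W \cap \bm^\perp$ with $\|\brho\|_N^2 = 1$, set $\bsig(\brho) = \bm + \sqrt{b-a}\,\brho$; then $R(\bsig(\brho^i) - \bm, \bsig(\brho^j) - \bm) = (b-a) R(\brho^i, \brho^j)$ and the process $\brho \mapsto \hH^{\bm}_N(\bsig(\brho))$ has covariance $N\xi_{[a,b]}(R(\brho^i, \brho^j))$, i.e. a spherical spin glass with mixture $\xi_{[a,b]}$ on a subsphere of dimension at least $N - k - 1$. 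The convexity hypothesis pulls back to $\xi_{[a,b]}$ since $\frac{d^2}{dt^2}[\xi_{[a,b]}''(t)^{-1/2}] = (b-a)\frac{d^2}{ds^2}[\xi''(s)^{-1/2}]\big|_{s = a + (b-a)t} > 0$, and Proposition~\ref{prop:concave-OPT-ALG} then yields $\Delta := \OPT^{\Sp}_\xi([a,b]) - \ALG^{\Sp}_\xi([a,b]) > 0$. Choose $\varepsilon = \Delta/10$, some $\varepsilon' \ll \varepsilon$, and $k' = k'(\xi, a, b, k, \varepsilon)$ sufficiently large; Proposition~\ref{prop:many-ortho}, applied with its free constant taken arbitrarily large, then produces with probability $1 - e^{-cN}$ approximately orthonormal $\brho_1, \ldots, \brho_{k'}$ in $W \cap \bm^\perp$ with pairwise overlaps at most $\varepsilon'$ and $\hH^{\bm}_N(\bsig(\brho_i)) \geq N(\OPT^{\Sp}_\xi([a,b]) - \varepsilon')$ for each $i$.

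On $\{H_N \in K_N\}$, the linear correction equals $\overline H^{\bm}_N(\bsig(\brho_i)) - H_N(\bm) = \sqrt{b-a}\,\langle \nabla H_N(\bm), \brho_i\rangle$, and Proposition~\ref{prop:gradients-bounded} yields $\|\nabla H_N(\bm)\|_2 \leq C_1\sqrt{N}$. Parseval's inequality applied to the nearly orthonormal vectors $\brho_i/\sqrt{N}$ gives $\sum_{i=1}^{k'} \langle \nabla H_N(\bm), \brho_i\rangle^2 \leq 2 C_1^2 N^2$, so at least $k$ indices satisfy $|\langle \nabla H_N(\bm), \brho_i\rangle| \leq 2 C_1 N/\sqrt{k'-k+1}$, bounding this linear contribution by $\varepsilon N$ in absolute value for $k'$ large enough. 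A Gram--Schmidt step then produces exactly orthonormal $\tilde\brho_i \in W \cap \bm^\perp$ with $\|\tilde\brho_i - \brho_i\|_N = O_k(\varepsilon')$, and the $C_1 N$-Lipschitz bound on $H_N$ with respect to $\|\cdot\|_N$ absorbs this perturbation into at most another $\varepsilon N$ of energy loss once $\varepsilon'$ is chosen small enough. Setting $\bsig_i = \bm + \sqrt{b-a}\,\tilde\brho_i$ gives \eqref{eq:ortho-opts} exactly and $H_N(\bsig_i) - H_N(\bm) \geq N(\OPT^{\Sp}_\xi([a,b]) - 3\varepsilon) \geq N(\ALG^{\Sp}_\xi([a,b]) + \varepsilon)$, which is \eqref{eq:many-opts}.

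Uniformity in $(\bm, W)$ follows from a standard net argument: take $e^{-CN}$-nets of $\sqrt{a}\cdot S_N$ and of the Grassmannian of codimension-$\leq k$ subspaces, both of cardinality $e^{O_k(N)}$, union bound using the arbitrary $c$ available in Proposition~\ref{prop:many-ortho}, and extend to all $(\bm, W)$ using Lipschitz continuity of the whole construction in $(\bm, W)$ on $\{H_N \in K_N\}$. The main conceptual obstacle is coupling orthogonality with control of the linear Taylor term $\overline H^{\bm}_N$, resolved above by running Proposition~\ref{prop:many-ortho} with $k'$ rather than $k$ replicas and discarding, via Cauchy--Schwarz, the directions most aligned with $\nabla H_N(\bm)$.
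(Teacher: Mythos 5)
The proposal shares the paper's skeleton: Taylor-decompose $H_N = \overline H^{\bm}_N + \hH^{\bm}_N$ via Lemma~\ref{lem:taylor-HN}, observe that $\hH^{\bm}_N$ restricted to the band is a spin glass with mixture $\xi_{[a,b]}$, invoke Proposition~\ref{prop:many-ortho} to harvest many near-orthogonal near-optimizers, cull replicas by a pigeonhole/Cauchy--Schwarz step, Gram--Schmidt to exact orthogonality, and make the statement uniform via a net. Where you diverge is in the handling of the two constraints (the subspace $W$, and the linear correction $\langle \nabla H_N(\bm), \cdot\rangle$). The paper picks $M$ replicas on the \emph{full} band about a net point $\bn$, then projects onto $\wt W = W\cap\bm^\perp\cap(\nabla H_N(\bm))^\perp$ for the \emph{actual} $\bm, W$; this handles $W$ and the linear term in one stroke (the latter vanishes exactly on $\wt W$) and eliminates any need to net over subspaces. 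You instead restrict the spin glass to $W\cap\bm^\perp$ at the outset, forcing a net over the Grassmannian, and neutralize the linear term separately by overproducing $k'$ replicas and discarding those most aligned with $\nabla H_N(\bm)$. Your Cauchy--Schwarz discard is a legitimate alternative to the paper's orthogonal-slice trick and gives quantitatively the right bound, but the choice to net over $W$ is unforced and costs you.

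Two concrete gaps. First, the net cardinality claim is wrong: an $e^{-CN}$-net of $\sqrt{a}\cdot S_N$ (or of the Grassmannian, whose dimension is $\Theta_k(N)$) has cardinality $e^{\Theta(N^2)}$, not $e^{O_k(N)}$, and cannot be union-bounded against an $e^{-CN}$ failure probability. You need constant-resolution $\eta$-nets (as the paper uses, size $(10/\eta)^N$), with the residual $\eta$-error absorbed by Lipschitzness of $H_N$ on $K_N$. Second, the final ``extend to all $(\bm,W)$ using Lipschitz continuity of the whole construction'' does not close the argument. The conclusion \eqref{eq:ortho-opts} demands \emph{exact} orthogonality and exact membership in $W+\bm$; a net-point configuration for a nearby $(\bn, W')$ lives in $W'+\bn$, not $W+\bm$, and it is not automatic how to transport it while keeping both the exact affine constraint and near-optimality of the energy. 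This is precisely what the paper's construction sidesteps: because the net is only over $\bm$, the projection onto the actual $\wt W$ (followed by Gram--Schmidt within $\wt W$) enforces the $W$-constraints exactly, with the energy loss controlled by the pigeonhole bound on $\sum_i \|P_{\wt W^\perp}(\wt\bsig_i-\bm)\|_2^2$. If you retain your restriction to $W\cap\bm^\perp$ you must spell out a genuine transport map between nearby affine subspaces; the cleaner fix is to drop the Grassmannian net entirely and adopt the paper's adaptive projection.
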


\begin{proof}

% Let $B(\bm,0,b)=B(\bm,0)\cap \sqrt{b}\cdot S_N$ for $\bm\in \sqrt{a}\cdot S_N$.
Consider a (non-random) $\eta\sqrt{N}$-net $\mathcal N_{\eta}$ on $\sqrt{a}\cdot S_N$ of size at most $(10/\eta)^N$. For any $\bm\in \sqrt{a}\cdot S_N$, the Hamiltonian $\hH_N^{\bm}(\bsig)$ restricted to $B(\bm,0,b)$ has covariance
\begin{align*}
    \E \hH_N^{\bm}(\bsig_1)\hH_N^{\bm}(\bsig_2)
    &= N\xi_a(R(\bsig_1,\bsig_2))\\
    &= N\xi_{[a,b]}\lt(R\lt(\fr{\bsig_1-\bm}{\sqrt{b-a}},
    \fr{\bsig_2-\bm}{\sqrt{b-a}}\rt)\rt).
\end{align*}
Since 
\[
  ||\bsig-\bm||_2= \sqrt{N(b-a)}
\]
for $\bsig\in B(\bm,0,b)$, we conclude that $\hH_N^{\bm}$ is exactly an $N-1$ dimensional spin glass with mixture $\xi_{[a,b]}$ on $B(\bm,0,b)$ up to rescaling the input.

Fix a large constant $M$, and choose $\varepsilon$ sufficiently small depending on $M$. We apply Proposition~\ref{prop:many-ortho} to $\hH_N^{\bm}$ with mixture $\xi_{[a,b]}(t)$ based on the observation just above.
Recall that the constant $C$ in Proposition~\ref{prop:many-ortho} can be arbitrarily large.
% \mscomment{Yikes} 
It follows by a union bound that with probability $1-e^{-C_1 N}$, for all $\bn\in \cN_{\eta}$ there exist $\wt\bsig_1(\bn),\dots,\wt\bsig_M(\bn)$ satisfying
\[
  |R(\wt\bsig_i(\bn)-\bn,\wt\bsig_j(\bn)-\bn)-(b-a)\cdot \ind(i=j)|\leq \eps\quad\forall 1\leq i<j\leq M
\]
and 
\begin{equation}\label{eq:all-replicas-good}
  \hH_N(\wt\bsig_i(\bn))\geq N(\OPT^{\Sp}_{\xi}([a,b])-\eps)\quad\forall i\in [M].
\end{equation}
For any $\bm\in \sqrt{a}\cdot S_N$, there exists by definition $\bn\in \cN_{\eta}$ with $||\bm-\bn||\leq \eta\sqrt{N}$. Then with $\wt\bsig_i=\wt\bsig_i(\bn)$ as above,
\[
  |R(\wt\bsig_i-\bm,\wt\bsig_j-\bm)-(b-a)\cdot \ind(i=j)|\leq \eps_1\quad\forall 1\leq i<j\leq M
\]
for some $\eps_1=o_{\eps,\eta}(1)$ tending to $0$ as $\eps,\eta\to 0$. Define the linear subspace $\wt W\subseteq W$ by
\[
  \wt W= W\cap \bm^{\perp} \cap (\nabla H_N)^{\perp}
\]
where $(\cdot)^{\perp}$ denotes orthogonal complement.
Let $P_{{\wt W}^{\perp}}$ be the orthogonal projection matrix onto ${\wt W}^{\perp}$. It is easy to see that
\[
  \lt|\lt|\sum_{i=1}^M (\wt\bsig_i-\bm)^{\otimes 2}\rt|\rt|_2^2\leq (1+M\eps)N\leq 2N
\]
for $\eps$ sufficiently small. Then
\begin{align*}
  \sum_{i=1}^M ||P_{{\wt W}^{\perp}}(\wt\bsig_i)||^2_2 &= \lt\langle P_{{\wt W}^{\perp}},\sum_{i=1}^M (\wt\bsig_i-\bm)^{\otimes 2}\rt\rangle\\
  &\leq || P_{{\wt W}^{\perp}}||_2^2\cdot\lt|\lt|\sum_{i=1}^M (\bsig_i-\bm)^{\otimes 2}\rt|\rt|_2^2\\
  &\leq 2(k+2)N.
\end{align*}
By the pigeonhole principle, at most $M-k$ values $i\in [M]$ can satisfy
\[
||P_{{\wt W}^{\perp}}(\bsig_i-\bm)||_2^2 \geq \frac{2(k+2)N}{M-k}.
\]
It follows that there exist a subset $\wt\bsig_{i_1},\dots,\wt\bsig_{i_k}$ with
\[
 ||P_{{\wt W}^{\perp}}(\wt\bsig_{i_j}-\bm)||_2^2 \leq \eta N,\quad j\in [k]
\]
where $\eta\leq \frac{2(k+2)}{M-k}$ is arbitrarily small (by choosing $M$ large depending on $k$). Defining $\bsig_{i_1}',\dots, \bsig_{i_k}'$ by
\[
  \bsig_{i_j}'-\bm=P_{{\wt W}^{\perp}}(\wt\bsig_{i_j}-\bm),
\]
we have
\[
  \bsig_{i_1}',\dots,\bsig_{i_k}'\in \bm+\wt W
\]
satisfying 
\[
  |R(\bsig_{i_j}'-\bm,\bsig_{i_{\ell}}-\bm)-(b-a)\cdot \ind(j=\ell)|\leq \eps_2,\quad j,\ell\in [k]
\]
and 
\[
  ||\bsig_{i_j}'-\wt\bsig_{i_j}||_2^2\leq \eta N,\quad j\in [k].
\]
Here $\eps_2=o_{\eps_1,\eta}(1)$ tends to $0$ as $\eps_1$ and $\eta$ tend to $0$. Using Gram-Schmidt orthonormalization inside the affine subspace $B(\bm,0)$, for $\eps_3=o_{\eps_2}(1)$ we may find $\hbsig_1,\dots,\hbsig_k\in B(\bm,[a,b])\cap W$ satisfying
\[
  R(\hbsig_i-\bm,\hbsig_j-\bm)=(b-a)\cdot \ind(i=j)\quad\forall 1\leq i<j\leq k
\]
and 
\[
  ||\hbsig_j-\bsig_{i_j}'||_2^2 \leq \eps_3 N\quad\forall j\in [k].
\]
Assuming $H_N$ is $C_1\sqrt{N}$-Lipschitz with respect to the $\norm{\cdot}_2$ norm (recall Proposition~\ref{prop:gradients-bounded}), this implies based on \eqref{eq:all-replicas-good} that for some $\eps_4=o_{\eps_3}(C_1+1)$,
\[
  \hH_N(\hbsig_j)\geq N(\OPT^{\Sp}_{\xi}-\eps_4)\quad\forall i\in [k]
\] 
and 
\[
  ||\hbsig_j-\wt\bsig_{i_j}||_2^2 \leq 2(\eps_3+\eta) N\quad\forall j\in [k].
\]
Recalling Proposition~\ref{prop:concave-OPT-ALG}, this completes the proof.
\end{proof}

\subsection{Trees and Ultrametrics}
\label{subsec:trees-ultrametrics}

We recall the well known connection between trees and ultrametric spaces. Here and throughout given a rooted tree $\bbT$ with root $r(\bbT)$ we denote by $\PA(v)$ the parent of $v\in V(\bbT)\backslash\{r(\bbT)\}$

\begin{definition}{\cite{bocker1998recovering}} A dated, rooted tree $\bbT$ with range $[a,b]\subseteq [0,1]$ is a finite tree rooted at $r(\bbT)\in V(\bbT)$ together with a height function
\[
  |\cdot|:V(\bbT)\to [a,b]
\] 
satisfying the following properties.
\begin{itemize}
  \item  $|r(\bbT)|=a$.
  \item $|v|=b$ for all leaves $v\in L(\bbT)$.
  \item $|\PA(v)|< |v|$ for all $v\in V(\bbT)\backslash\{r(\bbT)\}$. 
\end{itemize}
We say $\bbT$ is reduced if no $v\in V(\bbT)$ except possibly $r(\bbT)$ has exactly $1$ child.

\end{definition}

In a rooted tree, let $u\wedge v\in V(\bbT)$ denote the least common ancestor of vertices $u$ and $v$. To any dated rooted tree $\bbT$, we associate a metric $d_T:V(\bbT)\times V(\bbT)\to [0,\sqrt{2}]$ characterized by
\begin{equation}\label{eq:depth-to-d}
  |u\wedge v|=\frac{|u|-d_T(u,v)^2+|v|}{2},\qquad u,v\in V(\bbT).
\end{equation}
When $u,v\in L(\bbT)$ are leaves and $\bbT$ has range $[a,b]$, this becomes
\begin{equation}
  |u\wedge v|=b-\frac{d_T(u,v)^2}{2},\qquad u,v\in L(\bbT).
\end{equation}
Crucially, observe that for $u,v\in L(\bbT)$, the value $d_T(u,v)$ is a strictly decreasing function of $|u\wedge v|$. Therefore $d_T$ defines an ultrametric on $L(\bbT)$, or in fact the set of vertices at any fixed height. The specific decreasing bijection between $|u\wedge v|\in [0,1]$ and $d_T(u,v)\in [0,\sqrt{2}]$ for $u,v\in L(\bbT)$ can in general be arbitrary; the one above is suited for embeddings into Euclidean space since
\begin{equation}\label{eq:overlap-distance}
  R(\bsig^1,\bsig^2)=\frac{R(\bsig^1,\bsig^1)-||\bsig^1-\bsig^2||_N^2+R(\bsig^2,\bsig^2)}{2},\qquad \bsig^1,\bsig^2\in \bbR^N.
\end{equation}
The following type of result is well known and seems to be folklore. 

\begin{proposition}{\cite[Section 6]{rammal1986ultrametricity},\cite{bocker1998recovering}}
For any finite set $X$, \eqref{eq:depth-to-d} defines a bijection between the following two isomorphism classes.
\begin{enumerate}
  \item Dated, rooted reduced trees with range $[0,1]$ and leaf set $X$.
  \item Ultrametric structures on $X$ with diameter at most $\sqrt{2}$. 
\end{enumerate}
\end{proposition}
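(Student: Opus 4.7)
The plan is to construct the forward and inverse maps separately, then verify the round-trip compositions yield isomorphisms.

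For the forward direction, given a dated rooted reduced tree $\bbT$ with range $[0,1]$ and leaf set $X$, define $d_T$ on $X$ via \eqref{eq:depth-to-d}; since leaves all have height $1$, this is $d_T(u,v)^2 = 2(1 - |u \wedge v|)$. Non-negativity, symmetry, and $d_T(u,u) = 0$ are immediate, and the diameter bound $d_T(u,v) \leq \sqrt{2}$ follows from $|u \wedge v| \geq |r(\bbT)| = 0$. The strong (ultrametric) triangle inequality follows from the standard tree-theoretic observation that among three leaves $u, v, w \in X$, two of the three pairwise least common ancestors $u \wedge v$, $u \wedge w$, $v \wedge w$ coincide and equal a (weak) ancestor of the third; hence $|u \wedge v| \geq \min(|u \wedge w|, |v \wedge w|)$, and using that $s \mapsto 2(1-s)$ is decreasing gives $d_T(u,v) \leq \max(d_T(u,w), d_T(v,w))$.

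For the inverse direction, given an ultrametric $(X,d)$ with $d \leq \sqrt{2}$, define for each $t \in [0,1]$ the equivalence relation $x \sim_t y$ iff $d(x,y)^2 \leq 2(1-t)$. Reflexivity and symmetry are immediate, and the ultrametric inequality gives transitivity. As $t$ increases from $0$ to $1$, the partition $X / \sim_t$ refines monotonically from a single class to singletons. Let the vertex set consist of all pairs $(C, t)$ where $C$ is a $\sim_t$-class and either $t = 0$, $t = 1$, or $t$ is a value at which $C$ is the strict refinement of a larger class appearing at every $t' < t$ arbitrarily close to $t$; set the height $|(C,t)| = t$, identify leaves with singletons at $t = 1$, and declare $(C', t')$ to be the parent of $(C, t)$ when $C' \supsetneq C$ and no intermediate proper superclass appears. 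A direct check shows this yields a dated rooted reduced tree with range $[0,1]$, and by construction the induced metric on leaves agrees with $d$.

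To conclude bijectivity, it remains to verify that these constructions are mutual inverses up to tree isomorphism. Going tree $\to$ ultrametric $\to$ tree, each internal node $v$ of the original tree corresponds exactly to the $\sim_{|v|}$-class of leaves in its subtree, and the reducedness condition on $\bbT$ ensures that the heights at which the equivalence relation strictly refines are precisely the heights of internal nodes of $\bbT$. The converse round-trip ultrametric $\to$ tree $\to$ ultrametric is automatic since both sides yield $d$ on pairs of leaves by construction. The main technical subtlety I anticipate is the careful treatment of the reducedness requirement—in particular, the exception allowed for the root, and ensuring the height function is recovered exactly rather than only up to a monotone reparametrization—but this is bookkeeping rather than a substantive obstacle. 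Given the classical nature of the result (as indicated by the cited references), I will relegate these verifications to a brief direct check.
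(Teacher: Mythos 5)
The paper does not prove this proposition; it cites it as folklore (references to Rammal--Toulouse--Virasoro and B\"ocker--Dress), so there is no ``paper's own proof'' to compare against. Evaluating your argument on its own terms: the forward direction and the overall strategy (two explicit maps plus a round-trip check) are sound, but the inverse construction as you have written it assigns internal vertices the \emph{wrong heights}, which breaks the round trip.

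Your criterion for $(C,t)$ to be an internal vertex --- that $C$ is a $\sim_t$-class and is a strict refinement of a larger class at every $t'<t$ near $t$ --- identifies the \emph{first} time $C$ appears as a class (the moment its parent splits). But the height that makes eq.~\eqref{eq:depth-to-d} consistent is the \emph{last} time $C$ is a single class, namely $t_C = 1 - \tfrac{1}{2}\mathrm{diam}(C)^2$. Concretely, take $X=\{u,v\}$ with $d(u,v)^2=1$: the intended tree has a root at height $0$, one internal node at height $0.5$, and leaves at height $1$, giving $d_T(u,v)^2 = 2(1-0.5)=1$. With the relation $x\sim_t y \iff d(x,y)^2 \le 2(1-t)$, the class $\{u,v\}$ is a $\sim_t$-class for every $t\in[0,0.5]$; it is therefore never ``a strict refinement of a larger class at $t'<t$ near $t$,'' so your criterion produces no vertex at height $0.5$ at all, and the resulting tree would force $d(u,v)^2=2$. (If instead you use strict inequality $d(x,y)^2 < 2(1-t)$, the criterion does produce vertices, but at the height of the \emph{parent's} death time rather than $t_C$, and it produces no singleton classes at $t=1$.) The fix is to declare: for each subset $C$ that arises as a $\sim_t$-class for some $t$, create an internal vertex at height $t_C = 1-\tfrac{1}{2}\mathrm{diam}(C)^2$, with the root placed at height $0$ (allowed to have one child when $\mathrm{diam}(X)<\sqrt{2}$, consistent with the reducedness exception) and leaves placed at height $1$. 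With that correction, your round-trip verifications go through as you sketch them.
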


Any dated, rooted tree can be naturally reduced by removing vertices with a single child and connecting their parent and child. Hence we will consider general dated, rooted trees to give ourselves more flexibility. We are interested in embeddings of the leaves $L(\bbT)$ into level sets $\{\bsig\in \bbR^N:H_N(\bsig)\geq (\ALG+\eps)N\}$ which are isometries up to the scaling factor $\sqrt{N}$. It will be convenient to embed the entire vertex set $V(\bbT)$.

\begin{definition}
A \emph{Euclidean embedding} of a dated, rooted tree $\bbT$ to is a function $\iota:V(\bbT)\to\bbR^N$
satisfying
\[
  R(\iota(u),\iota(v))=|u\wedge v|\qquad \forall u,v\in V(\bbT).
\]
or equivalently (by \eqref{eq:overlap-distance}), 
\[
  ||\iota(u)-\iota(v)||_N=d_T(u,v)\qquad \forall u,v\in V(\bbT).
\]
\end{definition}

The next lemma gives an alternate characterization of Euclidean embeddings. .

\begin{lemma}
\label{lem:isometric-criterion}
$\iota:V(\bbT)\to\bbR^N$ is a Euclidean embedding if and only if the following properties hold. Below we implicitly define $|\PA(r(\bbT))|=0$ and $\iota(\PA(r(\bbT))=0$.
\begin{enumerate}
  \item $\iota(r(\bbT))=a$.
  \item For all $v\in V(\bbT)$,
  \[
    ||\iota(v)-\iota(\PA(v))||_N = |v|-|\PA(v)|.
  \]
  \item For all distinct $u,v\in V(\bbT)$,
  \[
    R(\iota(u)-\iota(\PA(u)),\iota(v)-\iota(\PA(v)))=0.
  \]
\end{enumerate}
\end{lemma}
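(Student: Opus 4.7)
The plan is to prove both directions directly; the content is essentially a bookkeeping exercise in bilinearity of the inner product, together with the fact that common-ancestor heights along a rooted tree are controlled by the ``path from the root.'' I use throughout that for any $u,v \in V(\bbT)$,
\[
    \|\iota(u)-\iota(v)\|_N^2 = R(\iota(u),\iota(u)) - 2R(\iota(u),\iota(v)) + R(\iota(v),\iota(v)),
\]
and that (since $\PA(v)$ is an ancestor of $v$) $v\wedge\PA(v) = \PA(v)$, with the convention that the ``parent of the root'' has height $0$ and maps to $\bzero$.

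For the forward direction, suppose $\iota$ is a Euclidean embedding. Condition 1 is immediate from $R(\iota(r(\bbT)),\iota(r(\bbT))) = |r(\bbT)\wedge r(\bbT)| = a$. Condition 2 follows by writing $\|\iota(v)-\iota(\PA(v))\|_N^2 = |v|+|\PA(v)|-2|\PA(v)| = |v|-|\PA(v)|$. For condition 3, expand $R(\iota(u)-\iota(\PA(u)),\iota(v)-\iota(\PA(v)))$ bilinearly into four terms and use the Euclidean embedding property to rewrite each as a height of a least common ancestor:
\[
    |u\wedge v| - |u\wedge\PA(v)| - |\PA(u)\wedge v| + |\PA(u)\wedge\PA(v)|.
\]
A short case analysis on the position of $u\wedge v$ relative to $u,v$ shows this quantity is $0$: if neither of $u,v$ is an ancestor of the other then all four LCAs equal the common ancestor $w=u\wedge v$; if $u$ is a strict ancestor of $v$ (or symmetrically) the first two terms both reduce to $|u|$ and the last two to $|\PA(u)|$; the degenerate case $v=\PA(u)$ (or symmetric) works identically.

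For the backward direction, the key observation is the telescoping decomposition
\[
    \iota(v) = \sum_{w\in P(v)} \bigl(\iota(w)-\iota(\PA(w))\bigr),
\]
where $P(v)$ denotes the path of vertices from $r(\bbT)$ down to $v$ (inclusive), using $\iota(\PA(r(\bbT)))=\bzero$. Bilinearity gives
\[
    R(\iota(u),\iota(v)) = \sum_{w_1\in P(u)}\sum_{w_2\in P(v)} R\bigl(\iota(w_1)-\iota(\PA(w_1)),\ \iota(w_2)-\iota(\PA(w_2))\bigr).
\]
By condition 3 all cross-terms with $w_1\neq w_2$ vanish, leaving the diagonal over $w\in P(u)\cap P(v) = P(u\wedge v)$. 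By condition 2 each diagonal term equals $|w|-|\PA(w)|$, and telescoping yields $R(\iota(u),\iota(v)) = |u\wedge v|-0 = |u\wedge v|$, which is the definition of a Euclidean embedding.

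The only delicate point is justifying $P(u)\cap P(v)=P(u\wedge v)$ — but this is immediate from the definition of least common ancestor in a rooted tree — and verifying that the convention $\iota(\PA(r(\bbT)))=\bzero$, $|\PA(r(\bbT))|=0$ makes the root contribute $R(\iota(r(\bbT)),\iota(r(\bbT)))=a$ consistently in both conditions 1 and 2. There is no genuine obstacle; the lemma is a direct consequence of bilinearity and the telescoping identity along root-to-vertex paths.
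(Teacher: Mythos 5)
Your proof is correct and takes essentially the same approach as the paper: both directions are handled by bilinear expansion of $R$, and the backward direction uses the telescoping decomposition of $\iota(v)$ along the root-to-vertex path, with condition 3 killing the cross terms and condition 2 giving the diagonal. The only cosmetic difference is that you fold the root uniformly into the telescoping sum via $\iota(\PA(r(\bbT)))=\bzero$, whereas the paper pulls the $R(\iota(r),\iota(r))=a$ contribution out as a separate additive term; the two are interchangeable. (You also implicitly read condition 2 with a square, $\|\iota(v)-\iota(\PA(v))\|_N^2 = |v|-|\PA(v)|$, and condition 1 as $R(\iota(r),\iota(r))=a$ — this matches the paper's own use of those conditions and is clearly the intended reading.)
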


\begin{proof}

First if $\iota$ is a Euclidean embedding, then clearly the first property holds. The second holds because
\begin{align*}
  ||\iota(v)-\iota(\PA(v))||_N^2
  &=
  R(\iota(v)-\iota(\PA(v)),\iota(v)-\iota(\PA(v)))\\
  & = |v\wedge v|-|v\wedge \PA(v)|  -|\PA(v)\wedge v|+|\PA(v)\wedge \PA(v)| \\
  &=|v|-|\PA(v)|.
\end{align*}
For the third property, we compute
\[
  R(\iota(u)-\iota(\PA(u)),\iota(v)-\iota(\PA(v)))
  = |u\wedge v| - |v\wedge \PA(v)| - |\PA(u)\wedge v| + |\PA(u)\wedge \PA(v)|.
\]
Since $u\neq v$, without loss of generality suppose $v\neq (u\wedge v)$. Then $v$ is an ancestor of neither $u$ nor $\PA(u)$. The third property then follows because 
\[
  u\wedge v = u\wedge \PA(v)
  \qquad
  \text{and}
  \qquad 
  \PA(u)\wedge v = \PA(u)\wedge \PA(v).
\]

In the other direction, let us assume the three properties hold and show $\iota$ is a Euclidean embedding. Consider vertices $u=u_n$ and $v=v_m$ with ancestor paths \[
    (r(\bbT)=u_0,u_1,\dots,u_{n-1}),\qquad (r(\bbT)=v_0,v_1,\dots,v_{m-1}).
\]
Suppose that $u\wedge v=u_d=v_d$, so that $u_j=v_j$ if and only if $j\leq d$. Then we expand
\begin{align*}
R(\iota(u),\iota(v)) &= a+\sum_{\substack{1\leq i \leq n\\1\leq j\leq m}} R\big(\iota(u_{i})-\iota(u_{i-1}),\iota(v_{j})-\iota(v_{j-1})\big)\\
& = a+\sum_{1\leq i\leq d} R\big(\iota(u_{i})-\iota(u_{i-1}),\iota(u_{i})-\iota(u_{i-1})\big)\\
& = a+\sum_{1\leq i\leq d} |u_i|-|u_{i-1}|\\
&=a+|u_{d}|-|r(\bbT)|\\
&=|u\wedge v|.
\end{align*}
\end{proof}

Next, define the depth $D$ rooted binary tree $\bbT^2_D$ on vertex set
\[
  V(\bbT^2_D)=\emptyset \cup [2]\cup [2]^2\cup\dots\cup [2]^D.
\]
As usual, a vertex $v\in [2]^j$ is the parent of $u\in [2]^{j+1}$ if and only if $v$ is an initial substring of $u$. We say the rooted tree $\bbT$ \emph{contains} $\bbT^2_D$ if there exists an ancestry-preserving injection
\[
  \phi:V(\bbT^2_D)\to V(\bbT)
\]
(which need not preserve the root). Define the branching depth $D(\bbT)$ to be the largest $D$ such that $\bbT$ contains $\bbT^2_D$. For $v\in V(\bbT)$, define $D(v)=D(\bbT_v)$ where $\bbT_v\subseteq \bbT $ is the subtree rooted at $v$.

\begin{proposition}

For any rooted tree $\bbT$, the set 
\begin{equation}
    \label{eq:def-VD}
    V_D=\{v\in V(\bbT):D(v)=D(\bbT)\}
\end{equation}
is a simple path graph with one endpoint at $r(\bbT)$.

\end{proposition}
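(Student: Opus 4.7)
The plan is to establish two structural properties of $V_D$: (i) $V_D$ is closed under taking parents (so it is a rooted subtree of $\bbT$ containing $r(\bbT)$), and (ii) every vertex in $V_D$ has at most one child that also lies in $V_D$. Together these force $V_D$ to be a simple path with one endpoint at the root.

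For (i), note first that $r(\bbT) \in V_D$ since $D(r(\bbT)) = D(\bbT)$ by definition, so $V_D \neq \emptyset$. If $v \in V_D$ and $v \neq r(\bbT)$, then $\bbT_v \subseteq \bbT_{\PA(v)}$, so any ancestry-preserving injection $\phi : V(\bbT^2_{D(\bbT)}) \to V(\bbT_v)$ also injects into $V(\bbT_{\PA(v)})$. This gives $D(\PA(v)) \geq D(v) = D(\bbT)$, and the reverse inequality $D(\PA(v)) \leq D(\bbT)$ is automatic, hence $\PA(v) \in V_D$.

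For (ii), suppose toward contradiction that $v \in V_D$ has two distinct children $c_1, c_2$ both in $V_D$, so there exist ancestry-preserving injections $\phi_i : V(\bbT^2_{D(\bbT)}) \to V(\bbT_{c_i})$ for $i=1,2$. I would then construct an ancestry-preserving injection $\phi : V(\bbT^2_{D(\bbT)+1}) \to V(\bbT_v)$ by sending the root of $\bbT^2_{D(\bbT)+1}$ to $v$, identifying the two depth-$D(\bbT)$ subtrees hanging from this root with the two copies of $\bbT^2_{D(\bbT)}$, and then using $\phi_1, \phi_2$ on these subtrees. The images lie in the disjoint subtrees $\bbT_{c_1}$ and $\bbT_{c_2}$ and ancestry is preserved because $v = c_i \wedge v$ is an ancestor of everything in the $\phi_i$-image. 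This yields $D(v) \geq D(\bbT)+1$, contradicting $v \in V(\bbT)$.

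Combining (i) and (ii): $V_D$ is a connected (under the parent relation) subset of $V(\bbT)$ containing $r(\bbT)$ in which each vertex has at most one child belonging to $V_D$. A straightforward induction on the depth of $V_D$ then shows that $V_D$ is linearly ordered by ancestry, i.e., it forms a simple path with one endpoint at $r(\bbT)$. There is no real obstacle here beyond carefully verifying the contradiction in step (ii), where the definition of ``contains $\bbT^2_D$'' via an ancestry-preserving (but not necessarily root-preserving) injection must be used to glue the two subtree embeddings at $v$.
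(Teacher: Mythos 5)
Your proposal is correct and follows essentially the same approach as the paper's proof: closure of $V_D$ under ancestry plus a contradiction argument showing that two incomparable vertices in $V_D$ would force $\bbT$ to contain $\bbT^2_{D(\bbT)+1}$. The only stylistic difference is that you phrase the contradiction in terms of a vertex with two children in $V_D$, while the paper phrases it via two incomparable vertices $v,w \in V_D$ directly, but the underlying construction of the $\bbT^2_{D+1}$-embedding is identical.
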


\begin{proof}

Let $D=D(\bbT)$. Clearly $V_D$ is closed under ancestry and contains $r(\bbT)$. Suppose for sake of contradiction that $V_D$ is not a path with $r(\bbT)$ as an endpoint. Then $V_D$ contains vertices $v$ and $w$ neither of which is an ancestor of the other. But if the disjoint subtrees rooted at $v$ and $w$ each contain $\bbT^2_D$, then $\bbT$ contains $\bbT^2_{D+1}$, a contradiction. 
\end{proof}

We will use the following slight generalization of the main result of \cite{subag2018following}. It can be seen as the ``default'' embedding procedure which ensures energy $\ALG^{\Sp}_{\xi}$ at the leaves, while Lemma~\ref{lem:exact-embedding} gives a tool to improve over this embedding on intervals $[a,b]$ where $\xi''(t)^{-1/2}$ is convex.

\begin{proposition}\label{prop:tree-embedding-alg}
For any $\varepsilon>0$ and $k\in\Z^+$, there exist $c$ and $N_0$ depending on $\xi,\varepsilon,k$ such that with probability $1-e^{-cN}$ the following holds for all $N\geq N_0$. 

For any $\bm$ with $||\bm||_N^2=q\leq 1$, any dated, rooted tree $\bbT$ of order $|V(\bbT)|\leq k$ with range $[q,1]$, and any linear subspace $W\subseteq \R^N$ with $\dim(W)\geq N-k$, there is an embedding $\iota$ of $X$ into $W+\bm$ such that 
\begin{equation} \label{eq:alg-distances}
||\iota(u)-\iota(v)||_N = d(u,v)\quad \forall u,v\in V(\bbT)
\end{equation}
and
\begin{equation} \label{eq:alg-energies}
H_N(\iota(x))\geq H_N(\bm)+N\cdot (\ALG^{\Sp}_{\xi}(\norm{\iota(u)}_N^2-\ALG^{\Sp}_{\xi}(\norm{\bm}_N^2)-\varepsilon)\quad \forall v\in V(\bbT).
\end{equation}

\end{proposition}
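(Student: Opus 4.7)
The approach is to adapt the iterative construction of Subag \cite{subag2018following} to accommodate the branching structure of $\bbT$. First I discretize by subdividing each edge of $\bbT$ into segments of squared length $\delta$ (in the $d_T$ metric) via auxiliary vertices, for a small parameter $\delta = \delta(\xi, \eps, k) > 0$. The refined tree $\bbT_\delta$ has order $O(k/\delta)$ and every edge $(u,v) \in E(\bbT_\delta)$ satisfies $|v| - |u| = \delta$; any Euclidean embedding of $\bbT_\delta$ restricts to one of $\bbT$, so it suffices to embed $\bbT_\delta$.

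I build $\iota$ top-down from $\iota(r(\bbT)) = \bm$. At each vertex $v$ with $\iota(v) = \bx$ already defined and $\|\bx\|_N^2 = s = |v|$, I enumerate the children $v_1, \dots, v_\ell$ and set $\iota(v_j) = \bx + \sqrt{\delta N}\, \bv_j$, where the $\bv_j$ have unit $\|\cdot\|_N$-norm and the entire collection of increment vectors $\{\iota(v) - \iota(\PA(v))\}_{v \in V(\bbT_\delta)}$ is mutually orthogonal and orthogonal to $\bm$, with all $\bv_j$ additionally lying in $W$. By Lemma~\ref{lem:isometric-criterion}, this automatically yields a Euclidean embedding realizing the tree distances exactly and hence \eqref{eq:alg-distances}. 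The subspace $U$ of allowed directions at any step has codimension at most $|V(\bbT_\delta)| + k + 1 = O_k(1/\delta)$ in $\bbR^N$, still leaving ample room for $N$ large.

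To guarantee the energy gain I select $\bv_j$ from the top eigenspace of the restricted Hessian $P_U\, \nabla^2 H_N(\bx)\, P_U$. The crucial spectral input is that with probability $1 - e^{-cN}$, for every $\bx$ with $\|\bx\|_N^2 = s \in [0, 1]$ the Hessian $\nabla^2 H_N(\bx)$ has at least $|V(\bbT_\delta)| + k$ eigenvalues exceeding $2\sqrt{\xi''(s)} - \eps_1$ for any prescribed $\eps_1 > 0$. This follows from the asymptotic GOE character of the Hessian (whose entries have variance $\xi''(s)/N$, yielding limiting spectral edge $2\sqrt{\xi''(s)}$) combined with Gaussian concentration and a union bound over an $e^{-CN}$-net of $\bx$; the argument is parallel to the proof of Proposition~\ref{prop:ogp-prob-bds}(\ref{itm:ogp-prob-bd-seigen}) following \cite[Lemma 2.6]{sellke2020approximate}. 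Cauchy interlacing then transfers this lower bound to $P_U\, \nabla^2 H_N(\bx)\, P_U$ by shifting the relevant eigenvalue index by the (bounded) codimension. After optionally negating $\bv_j$ so that $\langle \nabla H_N(\bx), \bv_j\rangle \ge 0$ (which preserves every orthogonality relation), a second-order Taylor expansion combined with Proposition~\ref{prop:gradients-bounded}'s uniform bound on $\|\nabla^3 H_N\|_{\op}$ yields
\begin{equation*}
    H_N(\iota(v_j)) - H_N(\iota(v))
    \ge
    \tfrac{\delta N}{2}\bigl(2\sqrt{\xi''(s)} - \eps_1\bigr) - O_\xi(\delta^{3/2} N)
    = N\sqrt{\xi''(s)}\, \delta - O_\xi\bigl((\eps_1 + \sqrt{\delta})\delta N\bigr).
\end{equation*}

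Telescoping along the path in $\bbT_\delta$ from $r(\bbT)$ to any $u \in V(\bbT_\delta)$ gives a Riemann sum approximating $N\int_q^{\|\iota(u)\|_N^2} \sqrt{\xi''(s)}\, ds = N\bigl(\ALG^{\Sp}_\xi(\|\iota(u)\|_N^2) - \ALG^{\Sp}_\xi(q)\bigr)$; taking $\delta, \eps_1$ small depending on $\xi, \eps, k$ absorbs the cumulative error into $N\eps$, yielding \eqref{eq:alg-energies} at every intermediate vertex. The main obstacle is the uniform spectral lower bound described above; it is standard but requires the net-plus-concentration argument, and once this is in place the remaining orthogonal-direction bookkeeping and Taylor expansion are routine.
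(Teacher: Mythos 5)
Your proposal reconstructs Subag's iterative construction in detail, and this is essentially the paper's proof: the paper gives a one-paragraph argument that simply cites \cite[Theorem 4 and Remark 6]{subag2018following} and notes that the restriction to $W+\bm$ is harmless because a GOE matrix has $\Omega_\eps(N)$ eigenvalues above $2-\eps$ with probability $1-e^{-\Omega_\eps(N^2)}$, so by Courant--Fisher the top eigenvalue of the restriction to a subspace of bounded codimension is still near $2-\eps$. Your proposal fills in exactly these details (discretize the tree, pick increment directions from the top eigenspace of the restricted Hessian, flip sign to align with the gradient, second-order Taylor plus a uniform $\nabla^3 H_N$ bound, telescope into a Riemann sum) and handles the subspace restriction by the same Cauchy interlacing observation. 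The spectral ingredient you flag as the main obstacle (uniform lower bound on constantly many near-edge eigenvalues via a net plus Gaussian concentration) is the same input the paper invokes. One small slip: you say the increment directions $\bv_j$ have unit $\norm{\cdot}_N$-norm while using the step $\iota(v_j)=\bx+\sqrt{\delta N}\,\bv_j$; for the step to have $\norm{\cdot}_N^2$-length $\delta$ (which the rest of your argument requires) you need $\norm{\bv_j}_2=1$, as in Subag's update. This is a notational slip only and does not affect the argument.
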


\begin{proof}

The proof is essentially contained in \cite[Theorem 4 and Remark 6]{subag2018following}. The restriction to $W+\bm$ has no effect on the proof whenever $k\leq o(N)$. Indeed, a GOE matrix has $\Omega_{\varepsilon}(N)$ eigenvalues at least $2-\eps$ with probability $1-e^{-\Omega_{\eps}(N^2)}$. This property implies existence of an eigenvalue at least $2-\eps$ when a GOE matrix is restricted to any subspace of dimension at least $N-\Omega_{\varepsilon}(N)$ by the Courant-Fisher theorem. Repeating the proof of \cite[Theorem 4]{subag2018following} with this minor modification establishes the result.
\end{proof}

The following simple lemma is a slightly more general statement of Proposition~\ref{prop:tree-embedding-alg}. It will be used to extend partial embeddings of ancestor-closed subsets of $V(\bbT)$ to all of $V(\bbT)$.

\begin{lemma}
\label{lem:euclidean-tree-embedding-alg}
For any $\eps>0$ and finite dated rooted tree $\bbT$, there exist $c$ and $N_0$ depending on $\xi,\eps,T$ such that with probability $1-e^{-cN}$ the following holds for all $N\geq N_0$.

For any ancestor-closed subset $U\subseteq V(\bbT)$, let $\iota_U:U\to \bbR^N$ be a Euclidean embedding. Then there is a Euclidean embedding $\iota:V(\bbT)\to\bbR^N$ extending $\iota_U$ such that for any $v\in V(\bbT)$ with lowest $U$-ancestor $u\in U$,
\begin{equation}
\label{eq:tree-embedding-again}
  H_N(\iota(v))\geq H_N(\iota(u))+N\cdot (\ALG^{\Sp}_{\xi}(|v|)-\ALG^{\Sp}_{\xi}(|u|)-\eps).
\end{equation}
\end{lemma}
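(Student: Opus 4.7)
The plan is to process $V(\bbT) \setminus U$ one maximal pendant subtree at a time, applying Proposition~\ref{prop:tree-embedding-alg} to embed each subtree while restricting to a subspace orthogonal to all previously embedded edges. Since $U$ is ancestor-closed, its complement decomposes into disjoint maximal pendant subtrees $C_1, \ldots, C_s$, each rooted at some vertex $v_i \notin U$ whose parent $u_i = \PA(v_i)$ lies in $U$. If $U = \emptyset$, I would first place $\iota(r(\bbT))$ at any point of $\sqrt{|r(\bbT)| N} \cdot S_N$ and add $r(\bbT)$ to $U$; the required energy bound (\ref{eq:tree-embedding-again}) is vacuous for $r(\bbT)$, which is its own lowest $U$-ancestor under the convention $\iota(\PA(r(\bbT))) = 0$ from Lemma~\ref{lem:isometric-criterion}.

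For each $i$ in a fixed order, I would form the augmented subtree $\tilde C_i$ rooted at $u_i$, consisting of $u_i$ together with $C_i$, padded (if necessary) by short chains of dummy descendants hanging below each leaf of $C_i$ so that $\tilde C_i$ has range $[|u_i|, 1]$ as required by Proposition~\ref{prop:tree-embedding-alg}. Let $W_i \subseteq \bbR^N$ be the orthogonal complement of the span of all edge vectors $\iota(w) - \iota(\PA(w))$ embedded so far (from $\iota_U$ and from $\tilde C_j$ with $j < i$, including the root edge $\iota(r(\bbT))$ under the convention above). Since this span has dimension at most a constant $k_{\bbT}$ depending only on $\bbT$, we have $\dim(W_i) \geq N - k_{\bbT}$. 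Applying Proposition~\ref{prop:tree-embedding-alg} to $\tilde C_i$ with base point $\bm = \iota(u_i)$, subspace $W_i$, and parameter $k = k_{\bbT}$ yields an isometric embedding of $\tilde C_i$ into $\iota(u_i) + W_i$ satisfying (\ref{eq:alg-distances}) and (\ref{eq:alg-energies}); I then discard the dummy vertices. A union bound over the $O_{\bbT}(1)$ iterations preserves a $1 - e^{-cN}$ success probability.

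To conclude, I verify that the assembled map $\iota$ satisfies the three conditions of Lemma~\ref{lem:isometric-criterion}. Condition (1) holds by construction. Condition (2) on edge norms follows from the hypothesis on $\iota_U$ and from (\ref{eq:alg-distances}) inside each $\tilde C_i$. Condition (3) on pairwise orthogonality splits into intra-block and cross-block cases: within a single $\tilde C_i$, orthogonality follows from Lemma~\ref{lem:isometric-criterion} applied to the Euclidean embedding of $\tilde C_i$ itself (the ``only if'' direction); across blocks, the new edges of $\tilde C_i$ lie in $W_i$, which is orthogonal to every previously embedded edge by construction. The energy bound (\ref{eq:tree-embedding-again}) for any $v$ with lowest $U$-ancestor $u = u_i$ is exactly (\ref{eq:alg-energies}) applied to $\tilde C_i$, using $\norm{\iota(u_i)}_N^2 = |u|$ and $\norm{\iota(v)}_N^2 = |v|$.

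The main subtlety is maintaining cross-block orthogonality while still leaving each $W_i$ large enough to invoke Proposition~\ref{prop:tree-embedding-alg}; this is resolved because the orthogonality constraint costs only a constant (tree-dependent) number of dimensions, so $\dim(W_i) \geq N - k_{\bbT} \gg k_{\bbT}$ for large $N$.
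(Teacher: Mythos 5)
Your proof follows essentially the same route as the paper's: decompose $V(\bbT)\setminus U$ into maximal pendant subtrees hanging off vertices of $U$, then repeatedly invoke Proposition~\ref{prop:tree-embedding-alg} with the subspace constraint chosen orthogonal to everything embedded so far, and verify the result is a Euclidean embedding via Lemma~\ref{lem:isometric-criterion}. Your formulation of the orthogonality constraint in terms of edge vectors $\iota(w)-\iota(\PA(w))$ rather than the span of the $\iota(u)$ is equivalent (under the convention $\iota(\PA(r(\bbT)))=0$, which you correctly observe), and your explicit three-condition check via Lemma~\ref{lem:isometric-criterion} is just a more careful version of the paper's one-line appeal to it.

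One genuine refinement you add is the padding of each pendant subtree by dummy descendants so that it has range $[|u_i|,1]$, which Proposition~\ref{prop:tree-embedding-alg} requires. The paper's proof silently assumes the subtrees have leaves at height $1$; in the actual applications (Theorems~\ref{thm:tree-needed}, \ref{thm:many-branches-needed}) this holds automatically, but for the stated generality of the lemma your fix is correct and worth noting, provided the padding adds only $O_\bbT(1)$ dummy vertices so the order bound in Proposition~\ref{prop:tree-embedding-alg} is preserved. Your handling of the $U=\emptyset$ case by placing $\iota(r(\bbT))$ arbitrarily is likewise a clean way to avoid an edge case. (Minor typo: the root should go on the sphere of radius $\sqrt{|r(\bbT)|N}$, i.e.\ $\sqrt{|r(\bbT)|}\cdot S_N$, not $\sqrt{|r(\bbT)|N}\cdot S_N$.)
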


\begin{proof}

The result follows by repeated application of Proposition~\ref{prop:tree-embedding-alg}. 
Indeed, $V(\bbT)\backslash U$ consists of a disjoint union of subtrees $\bbT_i$ rooted at vertices $u_1,\dots,u_k$ in $U$. 
For each $j\in [k]$, given a Euclidean embedding $\iota_U^{j-1}$ of 
\[
  U_{j-1}=U\cup\lt(\bigcup_{1\leq i\leq j-1} \bbT_i\rt),
\] 
we extend it to $\bbT_j$ as follows. 
Let 
\[
  W_j=\Span(\iota(u)_{u\in U_{j-1}})^{\perp}
\]
be the orthogonal complement of the span of the already-embedded vertices.
Then applying Proposition~\ref{prop:tree-embedding-alg} with $W=W_j$ and $\bm=\iota(u_j)$, we obtain a Euclidean embedding of $\bbT_j$ into $W_j+\iota(u_j)$, which joins with $\iota_U^{j-1}$ to form an embedding $\iota_U^j$ on $U_j$. 
It follows from Lemma~\ref{lem:isometric-criterion} that $\iota_U^{j}$ is again a Euclidean embedding of $U_j$. 
Moreover \eqref{eq:alg-energies} ensures that \eqref{eq:tree-embedding-again} is satisfied for each $v\in \bbT_j$. 
Repeating this inductively for each $j\in [k]$ completes the proof.
\end{proof}

% \mscomment{Not sure we actually ever need the following one...}

% \begin{lemma}
% \label{lem:euclidean-tree-embedding-opt}
% Suppose $\frac{d^2~}{dt^2}\lt(\xi''(t)^{-1/2}\rt)>0$ for $t\in [a,b]\subseteq [0,1]$. 
% Then there exists $\varepsilon>0$ depending only on $\xi,a,b$ such that for any dated rooted tree $\bbT$, for $N\geq N_0(\xi,a,b,\bbT)$ sufficiently large, with probability $1-e^{-cN}$ the following holds. 

% For any ancestor-closed subset $W\subseteq V(\bbT)$, let $\iota_U:W\to \bbR^N$ be a Euclidean embedding. Suppose that all children of some $w\in W$ have depth exactly $b$. Then $\iota_U$ extends to a Euclidean embedding defined also on the children of $W$, and such that for all children $v$ of $w$,
% \begin{equation}
% \label{eq:opt-embedding-again}
%   H_N(\iota(v))\geq H_N(\iota(w))+(\ALG^{\Sp}_{\xi}(b)-\ALG^{\Sp}_{\xi}(a)+\varepsilon)N.
% \end{equation}
% \end{lemma}

% \begin{proof}
% The result is immediate from Lemma~\ref{lem:exact-embedding}, using the subspace
% \[
%   S=\iota(w)+span(\iota(w')_{w'\in W})^{\perp}
% \]
% similarly to the previous lemma.

% \end{proof}

\subsection{Proof of Theorems~\ref{thm:tree-needed} and \ref{thm:many-branches-needed}}

We now show that full binary trees are necessary for our results, in the sense that trees $\bbT$ not containing $\bbT_D^2$ fail as obstructions to energy $(\ALG^{\Sp}_{\xi}+\eps_{\xi,D})N$ for some $\eps_{\xi,D}>0$ independent of $|V(\bbT)|$. The main arguments are devoted to proving Lemma~\ref{lem:tree-needed}, which implies the two main theorems. Lemma~\ref{lem:tree-needed} is proved by induction on $D$, and a representative case for $D=1$ is depicted in Figure~\ref{fig:ladder-pic}.

\begin{figure}[H]
\centering
\begin{framed}
\includegraphics[width=0.45\linewidth]{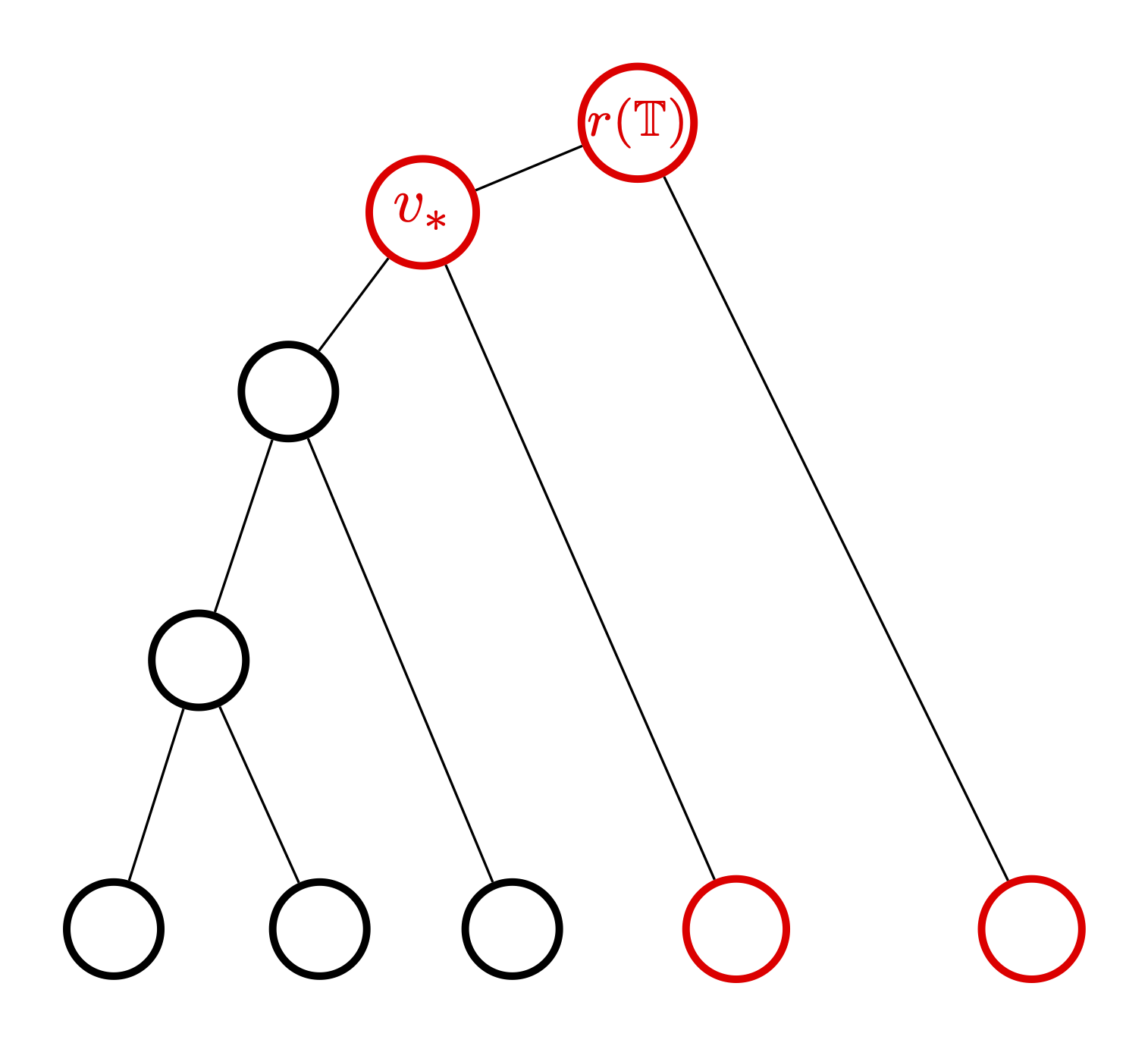}
\caption{A stylized instance of Lemma~\ref{lem:tree-needed} in the case $D=1$ and $[a,b]=[0,1]$ is displayed. By definition of branching depth, when $D=1$ the non-leaves of $\bbT$ consist of a single path. We choose a vertex $v_*$ along this path with small depth $|v_*|=a_2$, and embed $v_*$ to have energy at least $(\ALG(a_2)+2\eps)N$ using Lemma~\ref{lem:exact-embedding}. The leaves with parent on the segment connecting $v_*$ to $r(\bbT)$ (shown in red) can be embedded one at a time using Lemma~\ref{lem:exact-embedding}. The remaining subtree under $v_*$ is embedded all at once using Proposition~\ref{prop:tree-embedding-alg}. This results in a Euclidean embedding $\iota:V(\bbT)\to\bbR^N$ satisfying $H_N(\iota(v))\geq (\ALG+\eps)N$ for all $v\in L(\bbT)$. For $D>1$, we repeat this idea recursively.}
\label{fig:ladder-pic}
\end{framed}

\end{figure}

In the proofs below, we will repeatedly use the principle that $\bbT$ can be subdivided by placing additional vertices on the edges of $\bbT$. This only makes constructing an Euclidean embedding more difficult. In particular, we may assume that all leaves have an ancestor of any given height. We sometimes make this explicit by considering the subgraph $\bbT_{[a,a']}$ of a tree $\bbT$ with range $[a,b]$, for $a<a'<b$. Precisely, $\bbT_{[a,a']}$ is the subgraph of vertices with heights in $[a,a']$, where we implicitly assume via subdivision that each leaf in $L(\bbT)$ has ancestors at heights exactly $a$ and $a'$. Note that unless $a=0$, $\bbT_{[a,a']}$ is not in general a tree but is a disjoint union of dated rooted trees each with range $[a,a']$. We similarly define $\bbT_{[a]}$ to consist of the subset of $V(\bbT)$ at heights exactly $a$, which without loss of generality contains exactly one ancestor of each leaf of $\bbT$.

\begin{lemma}
\label{lem:tree-needed}
Fix a mixture $\xi$, and suppose $\frac{\de^2}{\de t^2}(\xi''(t)^{-1/2})>0$ for $t\in [a,b]\subseteq [0,1]$. Fix $D\in\bbN$ and sufficiently small constants $c,\eps>0$ depending only on $\xi,a,b$ and $D$. Then for any $a_1\in \lt[a,\frac{a+b}{2}\rt]$, any $k\in \bbN$, and any dated rooted tree $\bbT$ with range $[a,b]$, with probability $1-O(e^{-cN})$ over the random choice of $H_N$, the following holds. 

For any $\bm\in \sqrt{a_1}\cdot S_N$ and any linear subspace $W\subseteq\R^N$ with $\dim(W)\geq N-k$, there exists a Euclidean embedding 
\[
  \iota:V(\bbT)\to W+\bm
\]
with $\iota(r({\bbT}))=\bm$ such that for all $v\in L(\bbT)$,
\begin{equation}\label{eq:good-embedding-lemma}
  H_N(\iota(v))\geq H_N(\bm)+(\ALG^{\Sp}_{\xi}(b)-\ALG^{\Sp}_{\xi}(a_1))+\eps)N.
\end{equation}
\end{lemma}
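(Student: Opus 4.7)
The plan is to prove Lemma~\ref{lem:tree-needed} by induction on $D$, under the implicit assumption that $\bbT$ has branching depth $D(\bbT) \le D - 1$ (i.e., does not contain a full binary subtree of depth $D$). The core intuition, following Figure~\ref{fig:ladder-pic}, is that on any subinterval $[a_1, a_2] \subseteq [a, b]$ where $\xi''(t)^{-1/2}$ remains strictly convex, Lemma~\ref{lem:exact-embedding} furnishes a fixed quantitative gain of $2\eps$ relative to the default embedding procedure of Proposition~\ref{prop:tree-embedding-alg}. The gain comes from the strict inequality $\OPT^{\Sp}_\xi([a_1, a_2]) > \ALG^{\Sp}_\xi([a_1, a_2])$ supplied by Proposition~\ref{prop:concave-OPT-ALG}. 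We apply this gain once at a carefully chosen vertex on the ``spine'' path $V_{D(\bbT)}$ and then recurse on the remaining pieces of $\bbT$.

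For the base case $D = 1$, $D(\bbT) = 0$ means $\bbT$ is a single path, and its unique leaf is embedded by a direct application of Lemma~\ref{lem:exact-embedding} on $[a_1, b]$ with $k = 1$. For the inductive step, fix $\delta = \delta(\xi, a, b, D) > 0$ small enough that Lemma~\ref{lem:exact-embedding} applied on $[a_1, a_1 + \delta]$ yields a gain of $2\eps$, and that $a_1 + \delta \le (a+b)/2$. Subdivide $\bbT$ to insert a vertex $v_*$ at height $a_2 := a_1 + \delta$ on the spine $V_{D(\bbT)}$ (or take $v_* = v_\dagger$, the bottom of the spine, if it has smaller height). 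Apply Lemma~\ref{lem:exact-embedding} to embed $v_*$ into $W + \bm$ so that
\[
  H_N(\iota(v_*)) \ge H_N(\bm) + N(\ALG^{\Sp}_\xi([a_1, a_2]) + 2\eps).
\]
Every subtree of $\bbT$ not traversed along $V_{D(\bbT)}$ --- the side subtrees branching off the spine either above or below $v_*$, together with the subtrees rooted at children of $v_\dagger$ --- has branching depth strictly less than $D(\bbT)$, hence at most $D - 2$. The inductive hypothesis at parameter $D - 1$ therefore applies to each such subtree (rooted at its parent vertex), embedding it in an orthogonally chosen subspace with a further $+\eps$ surplus at each of its leaves. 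The remaining spine path from $v_*$ down to $v_\dagger$ itself is embedded via Proposition~\ref{prop:tree-embedding-alg}, which adds energy at the default rate $\ALG^{\Sp}_\xi$ with only a negligible loss, comfortably absorbed by the $2\eps$ surplus inherited from $v_*$.

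The principal obstacle is controlling constants uniformly in $|V(\bbT)|$: both the number of recursive calls and the total codimension of subspaces used for orthogonal embeddings grow with tree size. This is resolved because Lemma~\ref{lem:exact-embedding} supplies $k$ mutually orthogonal approximate maximizers with a fixed per-step gain independent of $k$, provided $\delta$ is held fixed, and Proposition~\ref{prop:tree-embedding-alg} likewise tolerates arbitrary tree order at fixed precision. A subtler point is that each recursive call must satisfy the constraint that its ``new $a_1$'' lies in the first half of the relevant subinterval's range $[a', b]$; choosing $\delta$ sufficiently small enforces this at every level of recursion, as does the observation that any subtree rooted at height $h$ automatically satisfies $h \le (h+b)/2$. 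Finally, strict convexity of $\xi''(t)^{-1/2}$ is a local property inherited on every subinterval encountered, so Proposition~\ref{prop:concave-OPT-ALG} and hence Lemma~\ref{lem:exact-embedding} remain applicable throughout the induction.
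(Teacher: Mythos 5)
Your high-level strategy matches the paper's (induction on branching depth, gaining once on the spine via Lemma~\ref{lem:exact-embedding}, recursing on the side subtrees), but two concrete steps fail, and both correspond to the exact places where the paper's proof splits into its two cases.

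\textbf{Gap 1: the short-spine case.} You write that when the spine $V_{D(\bbT)}$ terminates below height $a_2$, you take $v_* = v_\dagger$ (the bottom of the spine) and still apply Lemma~\ref{lem:exact-embedding} to obtain a gain of $2\eps$. But the gain guaranteed by Lemma~\ref{lem:exact-embedding} on an interval $[a_1,|v_*|]$ depends on the interval, and as $|v_*|\to a_1$ the surplus $\OPT^{\Sp}_\xi([a_1,|v_*|])-\ALG^{\Sp}_\xi([a_1,|v_*|])$ degenerates to zero. There is no fixed $\eps>0$ available from that lemma when the spine is arbitrarily short. The paper's Case~1 ($t\le a_2$) does something structurally different: it embeds the short spine and the children of spine vertices \emph{arbitrarily} (accepting an energy loss controlled only by the Lipschitz bound, hence $O(\sqrt{a_3-a_1})$), and then obtains the surplus entirely from the inductive hypothesis applied to the subtrees hanging off the spine, each of which has branching depth $\le D(\bbT)-1$. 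The choice of $a_2,a_3$ (with $a_3-a_1$ bounded by $\eps/(4C_1)$-type quantities) is made precisely so that the Lipschitz loss on the spine is dominated by the surplus $\eps_2$ coming from the inductive hypothesis at depth $D-1$.

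\textbf{Gap 2: side subtrees below $v_*$.} In your Case~2, you propose embedding ``the remaining spine path from $v_*$ down to $v_\dagger$'' via Proposition~\ref{prop:tree-embedding-alg}, while all side subtrees—including those rooted on the spine below $v_*$—are handled by the inductive hypothesis at parameter $D-1$. But the inductive hypothesis requires its base point $a_1$ to lie in $[a,\frac{a+b}{2}]$, and a side subtree rooted off the spine below $v_*$ can be rooted at any height in $(a_2,b)$, far past $\frac{a+b}{2}$. Your claim ``any subtree rooted at height $h$ automatically satisfies $h\le(h+b)/2$'' addresses the wrong constraint: what you need is $h\le(a+b)/2$ for the fixed $a$, which does not hold. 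The paper avoids this entirely: in its Case~2, the whole subtree $\bbT_{v_*}$—spine, side subtrees, and all—is embedded by the default procedure (Proposition~\ref{prop:tree-embedding-alg}), with no further recursion. That is safe because the surplus was already banked at $v_*$ and the default procedure only loses a sub-$\eps_2/2$ amount. The recursion is only ever invoked on subtrees rooted at height $\le a_3 \le \frac{3a+b}{4} < \frac{a+b}{2}$, namely the ones hanging off the spine \emph{above} $v_*$, which is where the ancestry-closed structure of $Q_D$ is used.

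Finally, a minor point: your base case is $D=1$ with $D(\bbT)=0$; the paper's is $D=0$. This off-by-one is harmless, but the lemma should be understood as applying to trees with $D(\bbT)\le D$, not $D(\bbT)\le D-1$.
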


\begin{proof}
We proceed by induction on $D$.

\paragraph{Base Case}

In the base case $D=0$, the tree $\bbT$ contains only a single leaf $v$. It then suffices to find a single point $\bsig\in W+\bm$ with $\norm{\bsig}_N^2=b$ such that 
\[
  H_N(\bsig)\geq H_N(\bm)+(\ALG^{\Sp}_{\xi}(b)-\ALG^{\Sp}_{\xi}(a') +\eps)N.
\]
Indeed such a $\bsig$ exists by Lemma~\ref{lem:exact-embedding}. 
% Since $\dim(W+\bm)\geq N-k> |V(\bbT)|$ for sufficiently large $N$, it is easy to extend the choice $\iota(v)=\bsig$ to a Euclidean embedding $\iota:V(\bbT)\to W+\bm$. This completes the proof.

\paragraph{Inductive Step}

For $D\geq 1$, assume the result holds for branching depths up to $D-1$. Our strategy is to first embed the path $V_D$ (recall \eqref{eq:def-VD}), and then apply the inductive hypothesis on the remainder of $\bbT$ to complete the embedding. 
We will assume in the remainder of the proof that $H_N$ is $C_1\sqrt{N}$-Lipschitz with respect to the $\norm{\cdot}_2$ norm for some constant $C_1$ as in Proposition~\ref{prop:gradients-bounded}. 

Define $a_2,a_3\in \lt[a_1,\frac{3a+b}{4}\rt]$ such that 
\[
  \sqrt{a_3^2-a_2^2}=\sqrt{a_2^2-a_1^2}\leq \frac{\eps}{4C_1}.
\] 
Let $t=\max_{v\in V_D}|v|$ denote the maximum height of any $v\in V_D$. (Note that $t$ is not affected by adding extraneous vertices to $\bbT$.)

\paragraph{Case $1$: $t\leq a_2$}

Let $v_t\in V_D$ satisfy $|v_t|=t$. Take
\[
  \iota:V_D:\to W+\bm
\]
to be an arbitrary Euclidean embedding with codomain $W+\bm$. Without loss of generality, we may assume that the children of each $v\in V_D$ have height at most $a_3$. Next, extend $\iota$ to a still arbitrary Euclidean embedding on $Q_D$, which consists of $V_D$ together with all children of vertices in $V_D$.

For each vertex $v\in Q_D$, the Lipschitz property implies 
\begin{align*}
  H_N(\iota(v))&\geq H_N(\bm)-C_1\sqrt{a_3^2-a_1^2}N\\
  &\geq H_N(\bm)-\eps_1 N.
\end{align*}
Observe that any $v\in Q_D\backslash V_D$ satisfies $D(v)\leq D-1$. Because of this, we can now apply the inductive hypothesis to each subtree $\bbT_v$ rooted at some $v\in Q_D\backslash V_D$ in an arbitrary order over the $v$'s. More precisely, suppose a Euclidean embedding mapping a subset $U\subseteq V(\bbT)$ to $W+\bm$ is given, and that $U$ contains no strict descendants of $v\in Q_D\backslash V_D$. Then we know that $|v|\leq a_3\leq \frac{3a+b}{4}$. Define the affine subspace 
\[
  W_v=\Span(\iota(u)_{u\in U})^{\perp}.
\] 
Then by the inductive hypothesis (using the same values $a,b$), there exists $\varepsilon_2$ depending only on $\xi,a,b,D-1$ such that $\iota$ extends to a Euclidean embedding
\[
  \iota:V\cup \bbT_v\to W+\bm
\]
such that $\iota(u)\in W_v$ for all $u\in \bbT_v$, and which satisfies
\[
  H_N(\iota(u))\geq H_N(\iota(v))+\lt(\ALG^{\Sp}_{\xi}(b)-\ALG^{\Sp}_{\xi}(|v|+\eps_2)\rt)N,\qquad \forall u\in L(\bbT_v).
\]
In particular, the above procedure can be repeated for each $v$, resulting in an embedding $\iota$ defined on all of $V(\bbT)$. Finally for any $u\in L(\bbT)$, we must have $u\in L(\bbT_v)$ for some $v$ as above, and so
\begin{align*}
  H_N(\iota(u))
  &\geq H_N(\iota(v))+(\ALG^{\Sp}_{\xi}(b)-\ALG^{\Sp}_{\xi}(|v|+\eps_2))N\\
  &\geq H_N(\bm) +(\ALG^{\Sp}_{\xi}(b)-\ALG^{\Sp}_{\xi}(|v|)+\eps_2-\eps_1)N\\
  &\geq H_N(\bm) +(\ALG^{\Sp}_{\xi}(b)-\ALG^{\Sp}_{\xi}(a_3)+\eps_1)N  +\lt(\eps_2-2\eps_1-\xi'(1)\sqrt{a_3^2-a_1^2}\rt)N.
\end{align*}
Note that
\[
  2\eps_1+\xi'(1)\sqrt{a_3^2-a_1^2}\leq \eps_1\cdot \lt(2+\fr{\xi'(1)}{C_1}\rt).
\]
Since $\eps_2$ depended only on $\xi,a,b,D$ and $\eps_1$ was chosen sufficiently small depending on the same parameters, we may assume that 
\[
  \eps_2-2\eps_1-\xi'(1)\sqrt{a_3^2-a_1^2}\geq 0.
\]
Choosing $\eps=\eps_1$ finishes Case 1 of the inductive step.

\paragraph{Case $2$: $t\geq a_2$}

Let $v_*\in V(\bbT)$ denote the unique vertex on $V_D$ at height $a_2$ -- such a $v_*$ exists without loss of generality. Then applying Lemma~\ref{lem:exact-embedding} on $\bbT_{[a_1,a_2]}$, it follows that there exists $\bsig\in W+\bm$ with $||\bsig||_N^2=a_2$ such that 
\[
  H_N(\bsig)\geq H_N(\bm)+(\ALG^{\Sp}_{\xi}(a_2)-\ALG^{\Sp}_{\xi}(a_1)+\eps_2)N
\]
for some $\eps_2$ depending only on $\xi,a,b$. Set $\iota(v_*)=\bsig$. Next we apply Proposition~\ref{prop:tree-embedding-alg} to the subtree $\bbT_{v_*}$ rooted at $v_*$, obtaining a Euclidean embedding 
\[
  \iota:V_D\cup \bbT_{v_*}\to W+\bm
\]
such that
\[
  H_N(\iota(x))\geq H_N(\bm)+(\ALG^{\Sp}_{\xi}(a_2)-\ALG^{\Sp}_{\xi}(a_1)+\eps_3)N
\]
for $\eps_3=\eps_2/2$. Extending to $\iota$ to the remainder of $V(\bbT)$ proceeds exactly as in Case $1$.
\end{proof}

Below we use Lemma~\ref{lem:tree-needed} to show that to rule out energies greater than $\ALG^{\Sp}$, $\bbT$ must have large branching depth when restricted to any height interval on which $\xi''(t)^{-1/2}$ is convex.

\begin{theorem}
\label{thm:tree-needed}
Fix $\xi$ and suppose $\frac{\de^2}{\de t^2}(\xi''(t)^{-1/2})>0$ for all $t\in [a,b]\subseteq [0,1]$. Fix $D\in\bbN$ and sufficiently small constants $c,\eps>0$ depending only on $\xi,a,b$ and $D$. Then for any dated rooted tree $\bbT$ with range $[0,1]$ such that every connected component of $\bbT_{[a,b]}$ has branching depth at most $D$, with probability $1-O(e^{-cN})$ over the random choice of $H_N$, there exists a Euclidean embedding 
\[
  \iota:V(\bbT)\to\bbR^N
\]
such that for all $v\in L(\bbT)$,
\begin{equation}\label{eq:good-embedding}
  H_N(\iota(v))\geq (\ALG^{\Sp}_{\xi}(|v|)+\eps)N.
\end{equation}
\end{theorem}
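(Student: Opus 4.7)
The strategy is to split $\bbT$ into three height slabs $[0,a]$, $[a,b]$, $[b,1]$, embed each slab using a separate invocation of the available machinery while reserving mutually orthogonal directions, and glue the three embeddings into a single Euclidean embedding via Lemma~\ref{lem:isometric-criterion}. Only the middle slab carries a strict gain above $\ALG^{\Sp}_{\xi}$, which is supplied by Lemma~\ref{lem:tree-needed}; the outer two slabs are embedded with Lemma~\ref{lem:euclidean-tree-embedding-alg}, which only loses a tiny $\eps'$ to be absorbed by the gain. First I would subdivide $\bbT$ so that every root-to-leaf path has vertices at heights exactly $a$ and exactly $b$; this leaves $L(\bbT)$ unchanged and inserts degree-$2$ vertices, so each connected component of $\bbT_{[a,b]}$ still has branching depth $\leq D$.

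With $\eps'>0$ to be chosen, apply Lemma~\ref{lem:euclidean-tree-embedding-alg} to the ancestor-closed set $\{v\in V(\bbT):|v|\leq a\}$ starting from $\iota(r(\bbT))=0$, giving $H_N(\iota(v))\geq (\ALG^{\Sp}_{\xi}(|v|)-\eps')N$ for all such $v$. Now enumerate the connected components $C_1,\dots,C_r$ of $\bbT_{[a,b]}$; each has a unique root $r_{C_j}\in\bbT_{[a]}$ which has already been embedded as $\bm_j=\iota(r_{C_j})$ with $\norm{\bm_j}_N^2=a$. Process the components in sequence: before embedding $C_j$, let $W_j$ be the orthogonal complement of all previously embedded vectors (codimension at most $|V(\bbT)|$, a constant independent of $N$), and apply Lemma~\ref{lem:tree-needed} with the interval $[a,b]$, starting height $a_1=a\in[a,\tfrac{a+b}{2}]$, subspace $W_j$, center $\bm_j$, and branching depth $D$. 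This yields a Euclidean embedding of $C_j$ into $W_j+\bm_j$ for which every leaf $v\in\bbT_{[b]}$ of $C_j$ satisfies
\[
  H_N(\iota(v)) \;\geq\; H_N(\bm_j) + \bigl(\ALG^{\Sp}_{\xi}(b)-\ALG^{\Sp}_{\xi}(a)+\eps_1\bigr)N \;\geq\; \bigl(\ALG^{\Sp}_{\xi}(b)+\eps_1-\eps'\bigr)N,
\]
where $\eps_1=\eps_1(\xi,a,b,D)>0$ is the constant provided by Lemma~\ref{lem:tree-needed}.

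Finally, apply Lemma~\ref{lem:euclidean-tree-embedding-alg} once more with $U=\bbT_{\leq b}$ to extend the embedding to all of $V(\bbT)$. For every leaf $v\in L(\bbT)$, letting $u\in\bbT_{[b]}$ be its ancestor at height $b$, we get
\[
  H_N(\iota(v)) \;\geq\; H_N(\iota(u)) + \bigl(\ALG^{\Sp}_{\xi}(|v|)-\ALG^{\Sp}_{\xi}(b)-\eps'\bigr)N \;\geq\; \bigl(\ALG^{\Sp}_{\xi}(|v|)+\eps_1-2\eps'\bigr)N.
\]
Choosing $\eps'=\eps_1/4$ and $\eps=\eps_1/2$ yields \eqref{eq:good-embedding}. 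Orthogonality of the successive updates, combined with the criterion of Lemma~\ref{lem:isometric-criterion}, ensures that the three pieces assemble into a genuine Euclidean embedding of $V(\bbT)$. The overall failure probability is bounded by a union bound over the finitely many invocations of Lemmas~\ref{lem:tree-needed} and~\ref{lem:euclidean-tree-embedding-alg}, giving $1-O(e^{-cN})$.

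\paragraph{Main obstacle.}
The genuinely nontrivial content is already packaged inside Lemma~\ref{lem:tree-needed}, which is where the hypothesis $D(\bbT_{[a,b]})\leq D$ is used and where strict convexity of $\xi''(t)^{-1/2}$ is exploited through Lemma~\ref{lem:exact-embedding}. Once that lemma is in hand, the remaining difficulty is purely organizational: keeping track of which orthogonal directions have been consumed so that the three-slab construction assembles consistently, and verifying that the constant-codimension subspace constraints are compatible with the lemmas at every step. Both lemmas permit an arbitrary subspace $W$ of codimension at most a constant (depending on $\bbT$ but not $N$), which is exactly what is needed for this bookkeeping to close.
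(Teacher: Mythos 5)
Your proposal is correct and follows essentially the same route as the paper: subdivide so every leaf has ancestors at heights $a$ and $b$; embed $\bbT_{[0,a]}$ with the default Subag-style guarantee (the paper uses Proposition~\ref{prop:tree-embedding-alg} directly, you invoke Lemma~\ref{lem:euclidean-tree-embedding-alg}, which is just the iterated form of the same proposition); then apply Lemma~\ref{lem:tree-needed} component by component on $\bbT_{[a,b]}$ to gain the constant $\eps_1$ on the middle slab; then extend across $\bbT_{[b,1]}$ via Lemma~\ref{lem:euclidean-tree-embedding-alg}; and finally combine the three estimates after taking the auxiliary losses to be a fraction of $\eps_1$. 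You spell out the subspace bookkeeping (processing the components of $\bbT_{[a,b]}$ in sequence, shrinking $W_j$ each time, invoking Lemma~\ref{lem:isometric-criterion} to certify that orthogonal increments assemble into a genuine Euclidean embedding) somewhat more explicitly than the paper does, but the idea, the key lemma, and the way $D(\bbT_{[a,b]})\le D$ and the strict convexity of $\xi''(t)^{-1/2}$ enter are all identical.
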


\begin{proof}[Proof of Theorem~\ref{thm:tree-needed}]

We let $\eps>0$ be sufficiently small throughout the argument. By Proposition~\ref{prop:tree-embedding-alg}, there exists a Euclidean embedding $\iota:\bbT_{[0,a]}\to \bbR^N$ such that for all $v\in \bbT_{[a]}$,
\begin{equation}\label{eq:0-a}
    H_N(\iota_a(v))\geq (\ALG^{\Sp}_{\xi}(a)-\eps)N.
\end{equation}
Here as usual we assume without loss of generality that all leaves in $\bbT$ have an ancestor at height $a$.
Next we extend $\iota$ to a Euclidean embedding
\[
\iota:\bbT_{[0,b]}\to\bbR^N
\]
such that for all $v\in V(\bbT_{[b]})$ with ancestor $u$ at height $a$,
\begin{equation}\label{eq:a-b}
    H_N(\iota(v))\geq H_N(\iota(u)) + (\ALG^{\Sp}_{\xi}(b)-\ALG^{\Sp}_{\xi}(a)+3\eps)N.
\end{equation}
In fact the existence of such an extension follows directly from Lemma~\ref{lem:tree-needed} for $\eps$ sufficiently small. Here as before we repeatedly apply Lemma~\ref{lem:tree-needed} to individual subtrees in $\bbT_{[a,b]}$, using the subspace $W$ in the statement to ensure the orthogonality constraints are satisfied. 

Finally extend $\iota$ to $\bbT_{[b,1]}$ using Lemma~\ref{lem:euclidean-tree-embedding-alg} on each component. The result is that for any $x\in L(\bbT)$ with ancestor $v$ at height $b$,
\begin{equation}\label{eq:b-1}
  H_N(\iota(x))\geq H_N(\iota(v)) + (\ALG^{\Sp}_{\xi}(1)-\ALG^{\Sp}_{\xi}(b)-\eps)N.
\end{equation}
Combining \eqref{eq:0-a}, \eqref{eq:a-b}, and \eqref{eq:b-1} completes the proof.
\end{proof}

In the Guerra-Talagrand interpolation used for our main argument, it was only possible to directly estimate the \emph{average} energy of the replicas instead of the minimum. In the following final result, we show that to force the average of $H_N(v)$ over the leaves $v\in L(\bbT)$ to be small, it is necessary to use a tree which branches a superconstant amount in any height interval $[a,b]$ as above, \emph{on a set of components of $\bbT_{[a,b]}$ ancestral to almost all leaves}.

% Such a statement is of course not true for the minimum energy; one could for instance begin with a large branching tree and then attach many more extraneous leaves near its root.

% \mscomment{How is the below?}

Let us illustrate the difference between Theorems~\ref{thm:tree-needed} and \ref{thm:many-branches-needed} by an example. Form $\bbT$ by starting with a full symmetric tree as in Proposition~\ref{prop:uniform-multi-opt} and adding many children of the root as additional leaves. Then by construction (recall also Proposition~\ref{prop:multi-opt-tail}), with probability $1-e^{-\Omega(N)}$ any Euclidean embedding $\iota:\bbT\to \bbR^N$ satisfies
\[
    \min_{v\in L(\bbT)} H_N(\iota(v))\leq (\ALG+\eps)N
\]
for $\eps>0$ as in Proposition~\ref{prop:uniform-multi-opt} arbitrarily small given $\xi$. However the same is not true for the average energy. Indeed, Theorem~\ref{thm:tree-needed} with $D=1$ implies that the additional leaves in $\bbT$ can be embedded to each have energy at least $(\ALG+2\eps')N$ where $\eps'>0$ depends only on $\xi$. If the additional leaves form a sufficiently large fraction of $L(\bbT)$, then any Euclidean extension $\iota$ to all of $\bbT$ satisfies
\[
    \frac{1}{|L(\bbT)|}\sum_{v\in L(\bbT)} H_N(\iota(v))\geq (\ALG+\eps')N
\]
assuming $H_N\in K_N$.

\begin{theorem}
\label{thm:many-branches-needed}
Fix a mixture $\xi$ and $\delta > 0$, and suppose $\frac{\de^2}{\de t^2}(\xi''(t)^{-1/2})>0$ for $t\in [a,b]\subseteq [0,1]$.
Fix $D\in\bbN$ and sufficiently small constants $c,\eps>0$ depending only on $\xi,a,b,D$ and $\delta$. 
Consider any tree $\bbT$ with range $[0,1]$ and $|L(\bbT)|=n$ leaves such that for at least $\delta n$ of the leaves $v\in |L(\bbT)|$, the subtree of $\bbT_{[a,b]}$ consisting of ancestors of $v$ has branching depth at most $D$. With probability $1-O(e^{-cN})$ over the random choice of $H_N$, there exists a Euclidean embedding 
\[
  \iota:V(\bbT)\to\bbR^N
\]
such that 
\begin{equation}\label{eq:good-embedding-average}
  \fr{1}{L(\bbT)} \sum_{v\in L(\bbT)} H_N(\iota(v))
  \geq 
  (\ALG^{\Sp}_{\xi}+\eps)N.
\end{equation}

\end{theorem}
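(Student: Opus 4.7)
The plan is to adapt the proof of Theorem~\ref{thm:tree-needed} to the weaker average-energy setting by gaining energy $+\eps_1$ on the ``good'' leaves (those with low branching depth on $[a,b]$) while losing only a small $\eps_0 \ll \delta \eps_1$ on the ``bad'' leaves. Because the good leaves constitute a $\delta$-fraction of the total, the average-over-leaves energy exceeds $\ALG^{\Sp}_\xi$ by a positive constant.

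The key observation is that the branching-depth condition on the subtree of $\bbT_{[a,b]}$ ancestral to $v$ depends only on $v$'s ancestor at height $a$: letting $v_a$ denote that ancestor, the relevant component of $\bbT_{[a,b]}$ is the one rooted at $v_a$, and all leaves below $v_a$ are either simultaneously ``good'' or simultaneously ``bad.'' So one may partition $\bbT_{[a]}$ into $V_a^*$ (those rooting a component of $\bbT_{[a,b]}$ of branching depth $\leq D$) and its complement, with $\sum_{v_a\in V_a^*} |\{\text{leaves below }v_a\}| \geq \delta n$.

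The construction of $\iota$ proceeds in three stages, each invocation used on the orthogonal complement of the vertices embedded so far (which by Lemma~\ref{lem:isometric-criterion} produces a valid Euclidean embedding):
\begin{enumerate}
\item Apply Lemma~\ref{lem:euclidean-tree-embedding-alg} (i.e.\ Subag's algorithm) to embed $\bbT_{[0,a]}$ with $H_N(\iota(v_a)) \geq (\ALG^{\Sp}_\xi(a) - \eps_0)N$ for every $v_a\in \bbT_{[a]}$, where $\eps_0$ is a small constant to be fixed.
\item For each $v_a \in V_a^*$, apply Lemma~\ref{lem:tree-needed} to the component of $\bbT_{[a,b]}$ rooted at $v_a$ (which has range $[a,b]$ and branching depth $\leq D$), gaining additive energy at least $(\ALG^{\Sp}_\xi(b)-\ALG^{\Sp}_\xi(a) + 3\eps_1)N$ at height $b$, where $\eps_1 = \eps_1(\xi,a,b,D)>0$. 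For each $v_a \notin V_a^*$, use Lemma~\ref{lem:euclidean-tree-embedding-alg} on the $[a,b]$-portion.
\item Extend to $\bbT_{[b,1]}$ from every height-$b$ vertex via Lemma~\ref{lem:euclidean-tree-embedding-alg}, losing at most $\eps_0 N$ energy.
\end{enumerate}
A good leaf $v$ thus satisfies $H_N(\iota(v)) \geq (\ALG^{\Sp}_\xi + 3\eps_1 - 3\eps_0)N$, while a bad leaf satisfies $H_N(\iota(v)) \geq (\ALG^{\Sp}_\xi - 2\eps_0)N$. Averaging,
\[
\frac{1}{n}\sum_{v\in L(\bbT)} H_N(\iota(v)) \geq \ALG^{\Sp}_\xi N + \bigl(3\delta\eps_1 - (3\delta + 2(1-\delta))\eps_0\bigr)N,
\]
so choosing $\eps_0 = \delta\eps_1/10$ yields average energy at least $(\ALG^{\Sp}_\xi + \eps)N$ with $\eps = 2\delta\eps_1$, completing the proof.

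The main obstacle is bookkeeping the orthogonality constraints across the potentially many subtrees rooted at height $a$: each invocation of Lemma~\ref{lem:tree-needed} or~\ref{lem:euclidean-tree-embedding-alg} restricts to a subspace of shrinking dimension, and one must verify that the probability bounds $1-O(e^{-cN})$ in these lemmas combine (via a union bound over the at most $|V(\bbT)|$ applications) to give a single good-$H_N$ event of exponential probability. Since both lemmas are uniform over the base point $\bm$ and the subspace $W$ with codimension up to a parameter $k$, taking $N_0$ large enough relative to $|V(\bbT)|$ (which the theorem statement permits, since $c, \eps$ depend only on $\xi, a, b, D, \delta$) suffices.
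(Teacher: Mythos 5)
Your proposal is correct and follows essentially the same three-stage argument as the paper: embed $\bbT_{[0,a]}$ via Subag's algorithm, gain energy on the components of $\bbT_{[a,b]}$ of branching depth at most $D$ via Lemma~\ref{lem:tree-needed} while settling for the default on the rest, then extend to $\bbT_{[b,1]}$; the paper folds the two error scales into a single $\eps_0$ chosen small enough that the gain from Lemma~\ref{lem:tree-needed} exceeds $4\eps_0$, whereas you keep them separate as $\eps_1$ and $\eps_0$, but the bookkeeping is equivalent. Your observation that a leaf's good/bad status is determined by its height-$a$ ancestor, and your remark on the union bound over subtrees, are both implicit in the paper and correctly handled.
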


\begin{proof}

Take $\eps_0>0$ sufficiently small. For $v\in L(\bbT)$ and $t\in [0,1]$, let $v^t$ denote the ancestor of $v$ at height $v$. As before, Proposition~\ref{prop:tree-embedding-alg} shows that there exists a Euclidean embedding $\iota:\bbT_{[0,a]}\to \bbR^N$ such that for all $v^a\in \bbT_{[a]}$,
\begin{equation}\label{eq:0-a-v2}
    H_N(\iota_a(v^a))\geq (\ALG^{\Sp}_{\xi}(a)-\delta\eps_0)N.
\end{equation}
Let $\wt{\bbT}_{[a,b]}\subseteq \bbT_{[a,b]}$ consist of all connected components in $\bbT_{[a,b]}$ of branching depth at most $D$. 
Next we extend $\iota$ to a Euclidean embedding
\[
\iota:\bbT_{[0,a]}\cup \wt{\bbT}_{[a,b]} \to\bbR^N
\]
such that for all $v^b\in L(\wt{\bbT}_{[a,b]})$ with ancestor $v^a$ at height $a$,
\begin{equation}\label{eq:a-b-v2}
    H_N(\iota(v^b))\geq H_N(\iota(v^a)) + (\ALG^{\Sp}_{\xi}(b)-\ALG^{\Sp}_{\xi}(a)+4\eps_0)N.
\end{equation}
Lemma~\ref{eq:tree-embedding-again} allows $\iota$ to extend to the remainder of $V(\bbT_{[a,b]})$ such that
\begin{equation}\label{eq:a-b-v3}
    H_N(\iota(v^b))\geq H_N(\iota(v^a)) + (\ALG^{\Sp}_{\xi}(b)-\ALG^{\Sp}_{\xi}(a)-\delta\eps_0)N.
\end{equation}
holds for all $v\in V(\bbT_{[a,b]})$. Since at least $\delta |L(\bbT)|$ leaves $v$ satisfy $v^a\in \wt{\bbT}_{[a,b]}$, \eqref{eq:a-b-v2} and \eqref{eq:a-b-v3} imply
\begin{equation}
\label{eq:a-b-average}
\frac{1}{|L(\bbT)|}\sum_{v\in L(\bbT)}
  H_N(\iota(v^a))-H_N(\iota(v^b))\geq (\ALG^{\Sp}_{\xi}(b)-\ALG^{\Sp}_{\xi}(a)+3\delta\eps_0)N
\end{equation}
As before we finish by extending $\iota$ to $\bbT_{[b,1]}$, using Lemma~\ref{lem:euclidean-tree-embedding-alg} one component at a time. Then for any $v\in L(\bbT)$,
\begin{equation}\label{eq:b-1-v2}
  H_N(\iota(v))\geq H_N(\iota(v^b)) + (\ALG^{\Sp}_{\xi}(1)-\ALG^{\Sp}_{\xi}(b)-\delta\eps_0)N.
\end{equation}
By combining \eqref{eq:0-a-v2}, \eqref{eq:a-b-average} and \eqref{eq:b-1-v2}, it follows that the average of $H_N(\iota(v))$ over $v\in L(\bbT)$ is 
\begin{align*}
  \frac{1}{|L(\bbT)|}\sum_{v\in L(\bbT)}[H_N(\iota(v))]
  &\geq \frac{1}{|L(\bbT)|}\sum_{v\in L(\bbT)}
  \bigg(H_N(\iota(v))-H_N(\iota(v^b))
   +H_N(\iota(v^b))-H_N(\iota(v^a))+H_N(\iota(v^a)) \bigg)\\
  &\geq \ALG^{\Sp}_{\xi}(1)+\delta\eps_0.
\end{align*}
Taking $\eps=\delta\eps_0$ completes the proof.
\end{proof}

\section{Overlap Concentration of Standard Optimization Algorithms}
\label{sec:overlap-conc-of-algs}

In this section we prove using Gaussian concentration of measure and Kirszbraun's theorem that approximately $\tau$-Lipschitz functions $\cA:\sH_N\to B_N$ are $(\lambda,e^{-c_{\lambda, \tau}N})$ overlap concentrated. 
We also show that common optimization algorithms such as gradient descent, AMP, and Langevin dynamics are approximately Lipschitz.

\subsection{Overlap Concentration of Approximately Lipschitz Algorithms}
\label{subsec:approx-lip}

Recall that we identify each Hamiltonian $H_N$ with its disorder coefficients $(\Gp{p})_{p\in 2\bbN}$, which we concatenate into an infinite vector $\bg = \bg(H_N)$. 
We can define a (possibly infinite) distance on these Hamiltonians by 
\begin{equation}
    \label{eq:hamiltonian-norm}
    \norm{H_N - H_N'}_N = 
    \fr{1}{\sqrt{N}} 
    \norm{\bg(H_N) - \bg(H_N')}_2.
\end{equation}
We consider algorithms $\cA : \sH_N \to B_N$ that are $\tau$-Lipschitz with respect to the $\norm{\cdot}_N$ norms, i.e. $\cA$ satisfying
\begin{equation}
    \label{eq:lp}
    \norm{\cA(H_N) - \cA(H'_N)}_N
    \le 
    \tau \norm{H_N - H'_N}_N.
\end{equation}
for all $H_N, H'_N \in \sH_N$.
This is the same notion of Lipschitz as in Theorem~\ref{thm:main-lip}, though the current scaling with $\norm{\cdot}_N$ norms will be more convenient for proofs.

We will show overlap concentration for the following class of algorithms that relax the Lipschitz condition to a high probability set of inputs.

\begin{definition}
    \label{defn:approx-lp}
    Let $\tau, \nu > 0$. 
    An algorithm $\cA: \sH_N \to B_N$ is $(\tau,\nu)$-approximately Lipschitz if there exists a $\tau$-Lipschitz $\cA' : \sH_N \to B_N$ with
    \begin{equation}
        \label{eq:approx-lp}
        \P\lt[
            \cA(H_N)
            =
            \cA'(H_N)
        \rt]
        \ge 
        1-\nu.
    \end{equation}
\end{definition}

\begin{proposition}
    \label{prop:lp-overlap-conc}
    If $\cA : \sH_N \to B_N$ is $\tau$-Lipschitz, then for all $\lambda > 0$ it is $\lt(\lambda, \exp\lt(-\fr{\lambda^2}{8\tau^2}N\rt)\rt)$ overlap concentrated.
\end{proposition}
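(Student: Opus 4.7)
The plan is to reduce the claim to standard Gaussian concentration of Lipschitz functions. We parametrize the $p$-correlated pair by writing $\HNp{\ell}$ as a deterministic function of the three independent standard Gaussian vectors $\bg^{[0]},\bg^{[1]},\bg^{[2]}$ that underlie the disorder coefficients of $\wtH_N^{[0]},\wtH_N^{[1]},\wtH_N^{[2]}$ via \eqref{eq:def-hamiltonian-no-field}. Set
\[
    F(\bg^{[0]},\bg^{[1]},\bg^{[2]})
    =
    R\bigl(\cA(\HNp{1}),\cA(\HNp{2})\bigr).
\]
By definition, $F$ is a deterministic function of an $\ell^2$-standard Gaussian on the concatenated vector $(\bg^{[0]},\bg^{[1]},\bg^{[2]})$.

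The heart of the argument is to verify that $F$ is $(2\tau/\sqrt{N})$-Lipschitz in this joint Gaussian, with respect to the standard Euclidean norm. For this I would use two routine ingredients. First, since $\cA(\cdot)\in B_N$ has $\norm{\cdot}_N\le 1$, the bilinear nature of $R$ gives
\[
    |F - F_*|
    \le
    \norm{\cA(\HNp{1})-\cA(\HNp{1}_*)}_N
    + \norm{\cA(\HNp{2})-\cA(\HNp{2}_*)}_N,
\]
where starred quantities correspond to the perturbed Gaussians. Second, the Lipschitz hypothesis \eqref{eq:lp} and the definition of $\HNp{\ell}$ together give, for $\ell\in\{1,2\}$,
\[
    \norm{\cA(\HNp{\ell})-\cA(\HNp{\ell}_*)}_N
    \le
    \frac{\tau}{\sqrt{N}}\,
    \bigl\|\sqrt{p}(\bg^{[0]}-\bg^{[0]}_*)
    + \sqrt{1-p}(\bg^{[\ell]}-\bg^{[\ell]}_*)\bigr\|_2.
\]
Combining these bounds and using that $F$ therefore has directional Lipschitz constants $2\tau\sqrt{p}/\sqrt{N}$ along $\bg^{[0]}$ and $\tau\sqrt{1-p}/\sqrt{N}$ along each of $\bg^{[1]},\bg^{[2]}$, the Euclidean Lipschitz constant of $F$ on the joint input is at most
\[
    \sqrt{\frac{4p\tau^2}{N} + \frac{2(1-p)\tau^2}{N}}
    \le \frac{2\tau}{\sqrt{N}}.
\]

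With this Lipschitz bound in hand, the conclusion follows by applying the Gaussian concentration inequality (Borell-TIS) to $F$ on the standard Gaussian space indexed by $(\bg^{[0]},\bg^{[1]},\bg^{[2]})$, yielding $\P[|F-\E F|\ge \lambda]\le \exp(-\lambda^2 N/(8\tau^2))$ (absorbing the factor of $2$ or using a one-sided bound if desired). The main, and only, obstacle is the little bookkeeping to get the correct joint Lipschitz constant with the right factor; everything else is immediate from the reparametrization and standard concentration.
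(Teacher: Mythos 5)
Your proof is correct and follows the same strategy as the paper: express $F = R(\cA(\HNp{1}),\cA(\HNp{2}))$ as a deterministic function of the i.i.d.\ Gaussian coordinates $(\gb{0},\gb{1},\gb{2})$, bound its Euclidean Lipschitz constant by $2\tau/\sqrt{N}$, and invoke Gaussian concentration. The only real difference is in how the Lipschitz bound is produced: the paper differentiates $\cA$ and computes $\nabla F$ coordinatewise, bounding $\|\nabla F\|_2$ via $\smax(\nabla\cA)\le\tau$, which requires a small aside on differentiability of Lipschitz maps (and on the infinite-dimensional Gaussian space), whereas you bound increments of $F$ directly using the bilinearity of $R$ together with $\|\cA(\cdot)\|_N\le 1$, the $\tau$-Lipschitz hypothesis \eqref{eq:lp}, and then combine block-directional Lipschitz constants via Cauchy--Schwarz. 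That route sidesteps differentiability entirely and yields the same constant. One point worth writing out explicitly if you formalize this: the passage from directional Lipschitz constants $L_0,L_1,L_2$ on the three blocks to a joint constant $\sqrt{L_0^2+L_1^2+L_2^2}$ goes via a block-by-block triangle inequality followed by Cauchy--Schwarz, which you invoke implicitly.
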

\begin{proof}
    % \bhcomment{say a spell about why infinite derivatives are ok}
    % \mscomment{Spell inserted at the end}
    % \bhcomment{yer a wizard Mark!}
    % \mscomment{I'm a wot???}
    We write $\cA(\bg)$ to mean $\cA(H_N)$ for the Hamiltonian $H_N$ with disorder coefficients $\bg = \bg(H_N)$.
    Let $\cA_i(\bg)$ denote the $i$-th coordinate of $\cA(\bg)$, so $\cA(\bg) = (\cA_1(\bg), \ldots, \cA_N(\bg))$. 
    Define the gradient matrix $\nabla \cA(\bg) \in \bbR^{\bbN \times N}$ by
    \[
        \nabla \cA(\bg)
        = 
        \begin{bmatrix}
            \nabla \cA_1(\bg) 
            & 
            \nabla \cA_2(\bg) 
            &
            \cdots
            &
            \nabla \cA_N(\bg) 
        \end{bmatrix}.
    \]
    Because $\cA$ is $\tau$-Lipschitz, we have for all $\bg, \bg' \in \bbR^{\bbN}$ that
    \[
        \tau \ge \fr{\norm{\cA(\bg) - \cA(\bg')}_N}{\norm{\bg - \bg'}_N}.
    \]
    By taking the limit $\bg' \to \bg$ from the best direction, we conclude that for all $\bg \in \bbR^{\bbN}$, 
    \begin{equation}
        \label{eq:lip-implies-smax}
        \tau \ge \smax(\nabla \cA(\bg)),
    \end{equation}
    where $\smax$ denotes the largest singular value.
    
    Consider any $p\in [0,1]$. 
    We can generate $p$-correlated $\gp{1}, \gp{2} \in \bbR^{\bbN}$ by generating i.i.d. $\gb{0}, \gb{1}, \gb{2} \in \bbR^{\bbN}$, each with i.i.d. standard Gaussian entries, and setting, for $i=1,2$,
    \[
        \gp{i} = \sqrt{p} \gb{0} + \sqrt{1-p} \gb{i}.
    \]
    We will apply Gaussian concentration to the function
    \[
        F(\gb{0}, \gb{1}, \gb{2}) 
        = 
        R\lt(\cA(\gp{0}), \cA(\gp{1})\rt),
    \]
    which is a function of i.i.d. standard Gaussians.
    For each $i\in \bbN$, let $\nabla \cA_{\cdot, i}(\bg)$ denote the $i$-th row of $\nabla \cA(\bg)$, i.e.
    \[
        \nabla \cA_{\cdot, i}(\bg) = 
        \begin{bmatrix}
            \pderiv{\cA_1}{\bg_i}(\bg)
            &
            \pderiv{\cA_2}{\bg_i}(\bg)
            &
            \cdots
            &
            \pderiv{\cA_N}{\bg_i}(\bg)
        \end{bmatrix}.
    \]
    We can compute that
    \begin{align}
        \label{eq:fderiv-g0}
        \pderiv{F}{\gb{0}_i}(\gb{0}, \gb{1}, \gb{2}) 
        &= 
        \fr{\sqrt{p}}{N} \lt[
            \nabla \cA_{\cdot, i}(\gp{1}) \cA(\gp{2}) + 
            \nabla \cA_{\cdot, i}(\gp{2}) \cA(\gp{1})
        \rt], \\
        \label{eq:fderiv-g1}
        \pderiv{F}{\gb{1}_i}(\gb{0}, \gb{1}, \gb{2}) 
        &= 
        \fr{\sqrt{1-p}}{N} \nabla \cA_{\cdot, i}(\gp{1}) \cA(\gp{2}), \\
        \label{eq:fderiv-g2}
        \pderiv{F}{\gb{2}_i}(\gb{0}, \gb{1}, \gb{2}) 
        &= 
        \fr{\sqrt{1-p}}{N} \nabla \cA_{\cdot, i}(\gp{2}) \cA(\gp{1}).
    \end{align}
    By the inequality $(x+y) \le 2x^2 + 2y^2$, \eqref{eq:fderiv-g0} implies
    \[
        \pderiv{F}{\gb{0}_i}(\gb{0}, \gb{1}, \gb{2})^2
        \le 
        \fr{2p}{N^2} \lt[
            \lt(\nabla \cA_{\cdot, i}(\gp{1}) \cA(\gp{2})\rt)^2 + 
            \lt(\nabla \cA_{\cdot, i}(\gp{2}) \cA(\gp{1})\rt)^2
        \rt].
    \]
    Similarly, \eqref{eq:fderiv-g1} and \eqref{eq:fderiv-g2} imply
    \begin{align*}
        \pderiv{F}{\gb{1}_i}(\gb{0}, \gb{1}, \gb{2})^2 
        &\le 
        \fr{2(1-p)}{N^2} \lt(\nabla \cA_{\cdot, i}(\gp{1}) \cA(\gp{2})\rt)^2, \\
        \pderiv{F}{\gb{2}_i}(\gb{0}, \gb{1}, \gb{2})^2
        &\le 
        \fr{2(1-p)}{N^2} \lt(\nabla \cA_{\cdot, i}(\gp{2}) \cA(\gp{1})\rt)^2.
    \end{align*}
    Summing over the last three inequalities and over $i\in \bbN$ gives
    \begin{align*}
        \norm{\nabla F(\gb{0}, \gb{1}, \gb{2})}_2^2
        &\le 
        \fr{2}{N^2} 
        \sum_{i\in \bbN}
        \lt(\nabla \cA_{\cdot, i}(\gp{1}) \cA(\gp{2})\rt)^2
        +
        \fr{2}{N^2} 
        \sum_{i\in \bbN}
        \lt(\nabla \cA_{\cdot, i}(\gp{2}) \cA(\gp{1})\rt)^2 \\
        &= 
        \fr{2}{N^2}
        \norm{\nabla \cA(\gp{1}) \cA(\gp{2})}_2^2 
        +
        \fr{2}{N^2}
        \norm{\nabla \cA(\gp{2}) \cA(\gp{1})}_2^2.
    \end{align*}
    Since $\cA(\gp{1}), \cA(\gp{2}) \in B_N$, this implies
    \[
        \norm{\nabla F(\gb{0}, \gb{1}, \gb{2})}_2^2
        \le 
        \fr{2}{N} \smax \lt(\nabla \cA(\gp{1})\rt)^2 +
        \fr{2}{N} \smax \lt(\nabla \cA(\gp{2})\rt)^2
        \le 
        \fr{4\tau^2}{N}
    \]
    for all $\gb{0}, \gb{1}, \gb{2} \in \bbR^{\bbN}$.
    The last inequality uses \eqref{eq:lip-implies-smax}.
    By Gaussian concentration,
    \[
        \P \lt[
            \lt|F(\gb{0}, \gb{1}, \gb{2}) - 
            \E F(\gb{0}, \gb{1}, \gb{2})\rt| 
            \ge \lambda
        \rt] 
        \le 
        \exp\lt(- \fr{\lambda^2}{8\tau^2}N\rt).
    \]
    Note that Gaussian concentration of measure applies in infinite-dimensional abstract Weiner spaces as explained just before \cite[Theorem 2.7]{Ledoux} regarding Equation (2.10) therein. Alternatively if one wishes to avoid infinite-dimensional Gaussian measures, it suffices to prove the present proposition for the (still $\tau$-Lipschitz) conditional expectations
    \[
        \cA_p(H_N)=\E[\cA(H_N)|\Gp{2},\dots,\Gp{p}]
    \]
    and observe that $\lim_{p\to\infty} \cA_p(H_N)=\cA(H_N)$ holds almost surely and in $L^1$. 
\end{proof}

\begin{proposition}
    \label{prop:approx-lp-overlap-conc}
    Suppose $\cA : \sH_N \to B_N$ is $(\tau, \nu)$-approximately Lipschitz.
    Then, for any $\lambda > 0$, it is $\lt(\lambda, \exp\lt(-\fr{(\lambda-4\nu)_+^2}{8\tau^2}N\rt) + 2\nu\rt)$ overlap concentrated.
\end{proposition}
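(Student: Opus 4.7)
The plan is to reduce to Proposition~\ref{prop:lp-overlap-conc} by replacing $\cA$ with its $\tau$-Lipschitz twin $\cA'$ guaranteed by Definition~\ref{defn:approx-lp}, and accounting for the (small) discrepancy between the two algorithms in both the probabilistic event and the expectation that defines overlap concentration.

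Fix $p\in[0,1]$ and $p$-correlated Hamiltonians $\HNp{1},\HNp{2}$, and write
\[
    X = R\lt(\cA(\HNp{1}),\cA(\HNp{2})\rt),
    \qquad
    X' = R\lt(\cA'(\HNp{1}),\cA'(\HNp{2})\rt).
\]
Let $E$ denote the event that $\cA(\HNp{i})=\cA'(\HNp{i})$ for both $i=1,2$; by \eqref{eq:approx-lp} and a union bound $\P(E^c)\le 2\nu$. On $E$ we have $X=X'$, and in general $|X|,|X'|\le 1$ since both $\cA,\cA'$ take values in $B_N$, so $|X-X'|\le 2$ pointwise. Consequently
\[
    \lt|\E X - \E X'\rt|
    = \lt|\E\lt[(X-X')\mathbbm{1}_{E^c}\rt]\rt|
    \le 2\P(E^c)
    \le 4\nu.
\]

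Now suppose $\lambda > 4\nu$ (otherwise the asserted bound reduces to the trivial bound $1$, since then $\exp(-(\lambda-4\nu)_+^2/(8\tau^2)N)=1$). On the event $E$ we may replace $X$ by $X'$, and then shift centers paying $|\E X-\E X'|\le 4\nu$:
\[
    \P\lt[|X-\E X|\ge \lambda\rt]
    \le \P\lt[|X-\E X|\ge \lambda,\,E\rt] + \P(E^c)
    \le \P\lt[|X'-\E X'|\ge \lambda - 4\nu\rt] + 2\nu.
\]
Applying Proposition~\ref{prop:lp-overlap-conc} to the $\tau$-Lipschitz algorithm $\cA'$ with parameter $\lambda-4\nu$ bounds the first term by $\exp\!\lt(-\tfrac{(\lambda-4\nu)^2}{8\tau^2}N\rt)$, which is exactly the claimed estimate.

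There is no real obstacle here beyond bookkeeping; the only points to verify carefully are (i) that both algorithms being $B_N$-valued gives the uniform bound $|X-X'|\le 2$ used to control the mean shift, and (ii) that the case $\lambda\le 4\nu$ is handled by the $(\cdot)_+$ in the exponent making the bound vacuous. Everything else is an immediate application of Proposition~\ref{prop:lp-overlap-conc}.
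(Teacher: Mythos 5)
Your proof is correct and follows essentially the same route as the paper: replace $\cA$ by its Lipschitz twin $\cA'$, bound the mean shift $|\E X - \E X'|\le 4\nu$ via the pointwise bound $|X-X'|\le 2$ and the matching event of probability $\ge 1-2\nu$, then invoke Proposition~\ref{prop:lp-overlap-conc} at parameter $\lambda-4\nu$. The only cosmetic difference is that the paper intersects the two good events and argues on that intersection, while you phrase the same logic as a union bound and a conditional inclusion; the content is identical.
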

\begin{proof}
    If $\lambda \le 4\nu$ the result is trivial, so suppose $\lambda > 4\nu$. 
    Let $\cA'$ be such that \eqref{eq:approx-lp} holds.
    
    Let $p\in [0,1]$, and let $\HNp{1}, \HNp{2}$ be $p$-correlated.
    We have 
    \[
        \lt|
            R\lt(\cA (\HNp{1}), \cA (\HNp{2})\rt)
            -
            R\lt(\cA' (\HNp{1}), \cA' (\HNp{2})\rt)
        \rt|
        \le 2
    \]
    pointwise.
    Furthermore, \eqref{eq:approx-lp} implies that 
    \begin{equation}
        \label{eq:approx-lp-match-event}
        \cA (\HNp{1}) = \cA' (\HNp{1})
        \qquad
        \text{and}
        \qquad
        \cA (\HNp{2}) = \cA' (\HNp{2})
    \end{equation}
    with probability at least $1-2\nu$.
    So,
    \begin{equation}
        \label{eq:approx-lp-mean}
        \lt|
            \E R\lt(\cA (\HNp{1}), \cA (\HNp{2})\rt)
            -
            \E R\lt(\cA' (\HNp{1}), \cA' (\HNp{2})\rt)
        \rt|
        \le 4 \nu.
    \end{equation}
    By Proposition~\ref{prop:approx-lp-overlap-conc}, we have
    \begin{equation}
        \label{eq:conc-event}
        \lt|
            R\lt(\cA' (\HNp{1}), \cA' (\HNp{2})\rt) - 
            \E R\lt(\cA' (\HNp{1}), \cA' (\HNp{2})\rt)
        \rt|
        \le 
        \lambda-4\nu
    \end{equation}
    with probability at least $1 - \exp\lt(-\fr{(\lambda-4\nu)^2}{8\tau^2}N\rt)$.
    The events \eqref{eq:approx-lp-match-event} and \eqref{eq:conc-event} occur simultaneously with probability at least $1 - \exp\lt(-\fr{(\lambda-4\nu)^2}{8\tau^2}N\rt) - 2\nu$.
    On this event, \eqref{eq:approx-lp-mean} and \eqref{eq:conc-event} imply
    \[
        \lt|
            R\lt(\cA (\HNp{1}), \cA (\HNp{2})\rt) - 
            \E R\lt(\cA (\HNp{1}), \cA (\HNp{2})\rt)
        \rt|
        \le 
        \lambda,
    \]
    as desired.
\end{proof}

\subsection{Standard Deterministic Optimization Algorithms are Approximately Lipschitz}

Fix constants $T_0, T, k_0 \in \bbN$ and $r\in [1, \sqrt{2})$.
We take as initialization a sequence $(\bx^{-T_0},\dots,\bx^{-1})$ of vectors in $B_N$, which is independent of the Hamiltonian $H_N$. 
We consider rather general $k_0$-th order optimization algorithms which compute
\begin{equation}
    \label{eq:opt-form}
    \bx^{t+1}
    =
    f_t\lt(
        \lt(\bx^s\rt)_{-T_0 \le s \le t},
        \lt(\nabla^k H_N(\bx^s)\rt)_{1\le k\le k_0,-T_0\le s\le t}
    \rt),
    \quad 
    0\le t\le T-1
\end{equation}
and output $\bx^T$.
Here, $(f_0,f_1,\ldots,f_{T-1})$ is a deterministic sequence of functions such that $f_0,\ldots,f_{T-2}$ have codomain $rB_N$, $f_{T-1}$ has codomain $B_N$, and these functions are all Lipschitz in the sense that there exist constants $c_0,\ldots,c_{T-1} > 0$ such that
\begin{align}
    \notag
    &\norm{f_t \lt(
        \lt(\bx^s\rt)_{-T_0 \le s \le t},
        \lt(A^s_k\rt)_{1\le k\le k_0, -T_0 \le s\le t}
    \rt)
    -
    f_t \lt(
        \lt(\by^s\rt)_{-T_0 \le s \le t},
        \lt(B^s_k\rt)_{1\le k\le k_0, -T_0 \le s\le t}
    \rt)}_N \\
    \label{eq:ft-lipschitz}
    &\le 
    c_t \lt[
        \sum_{s=-T_0}^t \norm{\bx^s - \by^s}_N +
        \sum_{k=1}^{k_0} \sum_{s=-T_0}^t \norm{A^s_k - B^s_k}_{\op}
    \rt].
\end{align}

As we review below, the majority of standard convex optimization algorithms fall into this class. However we remark that some optimization algorithms for highly smooth and convex functions, such as Newton's method and the recent advances \cite{gasnikov2019near,nesterov2020superfast}, do not fall into this class. This is because they require inverting a Hessian matrix or solving another inverse problem each iteration. 

\begin{example}
    Projected gradient descent is of the form in \eqref{eq:opt-form} via
    \[
        f_k=\rho\lt(\bx^k-\eta_k\nabla H_N(\bx^k)\rt).
    \]
    Here $\rho$ is the projection map onto either $B_N$ or $C_N$ and the learning rate parameters $(\eta_1,\dots,)$ are arbitrary constants. 
    Other variants such as accelerated gradient descent, ISTA, and FISTA (see e.g. \cite{bubeck2014convex}) can similarly be expressed in the form \eqref{eq:opt-form}.
\end{example}

\begin{example}
    Approximate message passing (AMP) with arbitrary Lipschitz non-linearities can be expressed in the form of \eqref{eq:opt-form}. 
    Given a deterministic sequence of Lipschitz functions $f_t:\bbR^{t+1}\to \bbR$ for each $t\ge 0$, the AMP iterates are given by
    \begin{align}
        \label{eq:ampdef}
        \bx^{t+1}
        &=
        \nabla \wtH_N(f_t(\bx^0,\dots,\bx^t)) - 
        \sum_{s=1}^t d_{t,s}f_{s-1}(\bx^0,\dots,\bx^{s-1}), \\
        d_{t,s} 
        &= 
        \xi''\lt(R \lt(
            f_{t}(\bx^0,\dots,\bx^{t}),
            f_{s-1}(\bx^0,\dots,\bx^{s-1})
        \rt)\rt)
        \cdot 
        \E\lt[
            \fr{\partial f_t}{\partial X^s}(X^0,\dots,X^t)
        \rt].
    \end{align}
    Here $X^0\sim p_0$ is a uniformly bounded random variable, and $\bx^0$ has i.i.d. coordinates generated from the same law. 
    The non-linearities $f_t$ are applied entry-wise as functions $f_t:\R^{N\times (t+1)}\to \R^N$. 
    The sequence $(X^{t})_{t\ge 1}$ is an independent centered Gaussian process with covariance $Q_{t,s}=\E[X^{t}X^s]$ defined recursively by
    \begin{equation}
        Q_{t+1, s+1}
        =
        \xi'\lt(
            \E
            \lt[
                f_{t}\lt(X^{0}, \ldots, X^{t}\rt) f_{s}\lt(X^{0}, \ldots, X^{s}\rt)
            \rt]
        \rt), 
        \quad t,s \ge 0.
    \end{equation}
    It is not difficult to see that the iteration \eqref{eq:ampdef} is captured by \eqref{eq:opt-form}, by defining the non-linearities $f_{t}(\bx^0,\dots,\bx^{t})$ as additional iterates $\bx^{\ell}$ so that their gradients can be evaluated.
\end{example}

\begin{theorem}
    \label{thm:opt-algs-approx-lp}
    For any functions $f_0,\ldots,f_{T-1}$ as above and any initialization $(\bx^{-T_0},\ldots,\bx^{-1})$ of vectors in $B_N$, there exist constants $\tau, c$ such that the map $H_N\to \bx^T$ defined by the iteration \eqref{eq:opt-form} is $(\tau,\nu)$-approximately Lipschitz with $\nu = e^{-cN}$.
\end{theorem}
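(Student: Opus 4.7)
The plan is to show that on a high-probability subset $K_N\subseteq \sH_N$ the iteration map defined by \eqref{eq:opt-form} is Lipschitz in $\norm{\cdot}_N$, and then extend it to all of $\sH_N$ via Kirszbraun's theorem so as to satisfy Definition~\ref{defn:approx-lp}. First, define $\cA:\sH_N\to B_N$ by \eqref{eq:opt-form}; the codomain assumptions on $f_0,\dots,f_{T-1}$ ensure $\cA(\sH_N)\subseteq B_N$ and that every intermediate iterate $\bx^t$ lies in $rB_N$. Take $K_N$ to be the event from Proposition~\ref{prop:gradients-bounded}, which has probability at least $1-e^{-cN}$ and carries uniform constants $C_1,\dots,C_{k_0+1}$ controlling the size and $\bx$-Lipschitz continuity of $\nabla^k H_N$ on $rB_N$.

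The main quantitative input is a deterministic pointwise Lipschitz estimate in $H_N$: there exist constants $L_k=L_k(\xi,r)<\infty$ such that
\[
    \norm{\nabla^k H_N(\bx)-\nabla^k H_N'(\bx)}_{\op}\le L_k\norm{H_N-H_N'}_N,\qquad \bx\in rB_N,\;H_N,H_N'\in \sH_N,\;1\le k\le k_0.
\]
This will follow because, for each fixed $\bx\in rB_N$ and $\bu_1,\dots,\bu_k\in S_N$, the scalar $\fr{1}{N}\la\nabla^k H_N(\bx),\bu_1\otimes\cdots\otimes\bu_k\ra$ is a linear functional of the disorder vector $\bg(H_N)$ whose $\ell_2$ coefficient norm, by direct computation across the disjoint $\Gp{p}$-blocks, equals $\fr{1}{\sqrt N}\sqrt{\sum_{p\in 2\bbN}\gamma_p^2(p!/(p-k)!)^2\norm{\bx}_N^{2(p-k)}}\le L_k/\sqrt N$ with $L_k^2=\sum_{p\in 2\bbN}\gamma_p^2(p!/(p-k)!)^2 r^{2(p-k)}$. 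The summability hypothesis $\sum_p 2^p\gamma_p^2<\infty$ together with $r^2<2$ guarantees $L_k<\infty$ by a routine term-by-term bound. The supremum defining $\norm{\cdot}_{\op}$ is a supremum of linear functionals of $\bg$, so it inherits the same Lipschitz constant in $\norm{\bg}_2$; the factor of $\sqrt N$ cancels under $\norm{H_N-H_N'}_N=\norm{\bg-\bg'}_2/\sqrt N$.

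Armed with this estimate, I induct on $t$ to show $\norm{\bx^t-\by^t}_N\le A_t\norm{H_N-H_N'}_N$ for any $H_N,H_N'\in K_N$ with iterates $(\bx^t),(\by^t)$. The base case $-T_0\le t\le -1$ is trivial since the initializations coincide. For the inductive step, apply \eqref{eq:ft-lipschitz} together with the splitting
\[
    \norm{\nabla^k H_N(\bx^s)-\nabla^k H_N'(\by^s)}_{\op}\le \norm{\nabla^k H_N(\bx^s)-\nabla^k H_N(\by^s)}_{\op}+\norm{\nabla^k H_N(\by^s)-\nabla^k H_N'(\by^s)}_{\op},
\]
bounding the first term by $C_{k+1}\norm{\bx^s-\by^s}_N$ via Proposition~\ref{prop:gradients-bounded} (using $H_N\in K_N$ and $\bx^s,\by^s\in rB_N$) and the second by $L_k\norm{H_N-H_N'}_N$ via the deterministic estimate. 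Iterating yields $\norm{\cA(H_N)-\cA(H_N')}_N\le \tau\norm{H_N-H_N'}_N$ on $K_N$ for a constant $\tau$ depending only on $\xi,h,r,T,T_0,k_0$ and the Lipschitz parameters of the $f_t$. Finally, Kirszbraun's theorem extends $\cA|_{K_N}$ to a $\tau$-Lipschitz map $\tilde\cA:\sH_N\to\bbR^N$, and composing with the $1$-Lipschitz Euclidean projection onto $B_N$---which is the identity on $\cA(K_N)\subseteq B_N$---produces the desired $\cA':\sH_N\to B_N$. The only step with genuine content is the deterministic pointwise Lipschitz estimate; everything else is iteration of \eqref{eq:ft-lipschitz} and a standard Kirszbraun extension.
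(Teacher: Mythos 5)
Your proof is correct and follows the same route as the paper's: restrict to the event $K_N$ from Proposition~\ref{prop:gradients-bounded}, prove the iteration map is Lipschitz on $K_N$ by induction via \eqref{eq:ft-lipschitz} together with a Lipschitz estimate for the gradient tensors, and extend to all of $\sH_N$ by Kirszbraun's theorem. The only difference is that you rederive the Lipschitz-in-$H_N$ gradient estimate directly from the $\ell_2$ coefficient norm rather than citing Proposition~\ref{prop:gradstable}, and you explicitly compose the Kirszbraun extension with the projection onto $B_N$, a step the paper leaves implicit.
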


\begin{proof}
    We will first show the existence of $\tau$ such that the map $H_N \to \bx^T$, with domain restricted to $K_N$ (recall Proposition~\ref{prop:hnp-op}), is $\tau$-Lipschitz with respect to the $\norm{\cdot}_N$ norms.
    Consider running the iteration \eqref{eq:opt-form} on two Hamiltonians $H_N, H'_N \in K_N$ with the same initializaton $(\bx^{-T_0},\ldots,\bx^{-1})$; call the respective iterates $\bx^0, \ldots, \bx^T$ and $\by^0, \ldots, \by^T$. 
    A straightforward induction using Proposition~\ref{prop:gradstable} and \eqref{eq:ft-lipschitz} gives constants $C_0, \ldots, C_T$ such that for $0\le t\le T$, 
    \[
        \norm{\bx^t - \by^t}_N \le C_t \norm{H_N - H'_N}_N.
    \]
    In particular, we may take $\tau = C_T$.
    
    By Kirszbraun's theorem, there exists a $\tau$-Lipschitz $\cA'$ such that $\cA(H_N) = \cA'(H_N)$ for $H_N \in K_N$.
    By Proposition~\ref{prop:hnp-op}, there exists $c$ such that $\P(H_N \in K_N) \ge 1 - e^{-cN}$.
    Therefore $\cA$ is $(\tau, \nu)$-approximately Lipschitz for $\nu = e^{-cN}$.
\end{proof}

The following corollary follows immediately from Theorem~\ref{thm:opt-algs-approx-lp} and Proposition~\ref{prop:approx-lp-overlap-conc}.
\begin{corollary}
    For any functions $f_0,\ldots,f_{T-1}$ as above and any initialization $(\bx^{-T_0},\ldots,\bx^{-1})$ of vectors in $B_N$, for every $\lambda > 0$ there exists a constant $c_\lambda$ such that for sufficiently large $N$, the map $H_N\to \bx^T$ defined by the iteration \eqref{eq:opt-form} is $(\lambda,e^{-c_\lambda N})$ overlap concentrated.
\end{corollary}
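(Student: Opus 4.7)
The plan is to simply combine Theorem~\ref{thm:opt-algs-approx-lp} with Proposition~\ref{prop:approx-lp-overlap-conc}. First, by Theorem~\ref{thm:opt-algs-approx-lp}, there exist constants $\tau, c > 0$ (depending on the sequence $f_0, \ldots, f_{T-1}$ and the initialization but independent of $N$) such that the map $\cA : H_N \mapsto \bx^T$ defined by \eqref{eq:opt-form} is $(\tau, e^{-cN})$-approximately Lipschitz.

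Next, I apply Proposition~\ref{prop:approx-lp-overlap-conc} with parameter $\lambda$ to this algorithm. This yields that $\cA$ is $\lt(\lambda, \nu_N\rt)$ overlap concentrated with
\[
    \nu_N = \exp\lt(-\fr{(\lambda - 4e^{-cN})_+^2}{8\tau^2}N\rt) + 2e^{-cN}.
\]
For $N$ sufficiently large depending only on $\lambda$ and $c$, we have $4e^{-cN} \le \lambda/2$, so the first term is bounded above by $\exp\lt(-\fr{\lambda^2 N}{32\tau^2}\rt)$. Choosing
\[
    c_\lambda = \fr{1}{2} \min\lt(\fr{\lambda^2}{32\tau^2}, c\rt)
\]
makes both terms of $\nu_N$ bounded by $\tfrac{1}{2} e^{-c_\lambda N}$ for $N$ large enough, giving $\nu_N \le e^{-c_\lambda N}$ as desired.

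There is essentially no obstacle here; the work has all been done in the two cited results. The only mild bookkeeping point is ensuring that the additive error $2\nu$ from the approximate-Lipschitz-to-overlap-concentration conversion and the subtraction of $4\nu$ inside the Gaussian exponent do not degrade the final rate. Both are handled by the fact that $\nu = e^{-cN}$ decays exponentially, which is much faster than any constant $\lambda > 0$, so for $N$ large the bound reduces to the clean expression derived from Gaussian concentration alone.
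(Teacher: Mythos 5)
Your proof is correct and takes exactly the approach the paper intends: the paper's own proof is the single remark that the corollary ``follows immediately from Theorem~\ref{thm:opt-algs-approx-lp} and Proposition~\ref{prop:approx-lp-overlap-conc},'' and you have simply spelled out the routine bookkeeping (absorbing the $4\nu$ shift and the additive $2\nu$ term) that makes this immediate.
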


\subsection{Reflected Langevin Dynamics are Approximately Lipschitz}

Here we show that a natural version of Langevin dynamics, run for bounded time, is approximately Lipschitz for almost any realization of the driving Brownian motion and hence falls into the scope of our main results. 
The Langevin dynamics for a Hamiltonian $H_N$ are given by the diffusion
\[
    \de X_t=\fr{\beta}{2}\nabla H_N\de t+ \de B_t.
\]
When $X_t$ can range over all of space, the SDE above may explode to infinity in finite time. We therefore modify the na\"ive dynamics above by enforcing an inward-normal reflecting boundary for the convex body $\cK=rB_N$ or $\cK=rC_N$. We refer the reader to \cite{pilipenko2014introduction} for the relevant definitions. In short, the result is a stochastic differential equation of the form
\begin{equation}\label{eq:SDEreflection}
    \de X_t=\fr{\beta}{2}\nabla H_N(X_t)\de t+ \de B_t-v_t\de \ell_t.
\end{equation}
Here $\ell_t$ is non-decreasing and only increases at times when $X_t\in\partial\cK$. Meanwhile $v_t\in \bbR^N$ is contained in the outward normal cone of $X_t\in\partial \cK$ for all $t$. Note that there may be several inequivalent choices for such a reflected process; our results apply to any of these choices. The Langevin dynamics we consider consists of solving \eqref{eq:SDEreflection} for a constant time $T$ starting from $X_0$ which is independent of $H_N$, and then projecting $X_T$ onto $B_N$ or $C_N$.

The corresponding Skorokhod problem was shown to have a Lipschitz solution for convex polyhedra such as $rC_N$ in \cite[Proposition 2.2]{dupuis1991lipschitz}. 
% (In fact \cite[Proposition 4.1]{dupuis1991lipschitz} shows that polyhedra are the \emph{only} convex bodies admitting such a Lipschitz solution). 
In this case, solving \eqref{eq:SDEreflection} reduces to solving an SDE with Lipschitz coefficients as explained in \cite[Section 2.2]{pilipenko2014introduction}. As a result, the solutions to \eqref{eq:SDEreflection} from different starting points $X_0$ (but with a shared Brownian motion) can be coupled together to give a continuous stochastic flow (see \cite[Chapter 5, Section 13]{rogers1994diffusions}). 
In the case of a smooth boundary such as $B_N$, although the Skorokhod problem does not have a Lipschitz solution, the results of \cite{lions1984stochastic} imply the existence of a stochastic flow as explained in \cite{burdzy2009differentiability}.

\begin{lemma}
\label{lem:hardshrinkage}
    Let $X_t,Y_t$ solve \eqref{eq:SDEreflection} inside a convex body $\cK$ with the same Brownian motion. Then 
    \[
        \int_0^t 
        \lt\la 
            X_t-Y_t,
            v_t^X\de \ell_t^X -
            v_t^Y\de \ell_t^Y 
        \rt\ra
        \ge 0.
    \]
    Here $(v_t^X,\ell_t^X)$ denote the reflecting boundary terms for $X_t$ and similarly for $Y_t$.
\end{lemma}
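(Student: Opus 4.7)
The plan is to split the combined integral into its two reflection contributions and show that each is separately nonnegative, using only the defining property of the outward normal cone together with convexity of $\cK$. This is the usual monotonicity that makes reflection coupling of diffusions in convex domains contractive at the boundary.

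Concretely, I would first use that $\ell^X_s$ only increases at times $s$ when $X_s\in\partial\cK$, and that at such times $v^X_s$ lies in the outward normal cone to $\cK$ at $X_s$. By convexity of $\cK$ this cone consists exactly of vectors $w$ with $\langle w, X_s-z\rangle\ge 0$ for every $z\in\cK$. Since $Y_s\in\cK$ for all $s$, taking $z=Y_s$ yields
\[
    \langle X_s - Y_s, v^X_s\rangle \ge 0
\]
at every $s$ where $\de\ell^X_s$ charges mass, so
\[
    \int_0^t \langle X_s - Y_s, v^X_s\rangle \de\ell^X_s \ge 0.
\]
The symmetric argument, exchanging the roles of $X$ and $Y$, gives $\langle v^Y_s, Y_s - X_s\rangle \ge 0$ at every $s$ where $\de\ell^Y_s$ charges mass, which rearranges to
\[
    \int_0^t \langle X_s - Y_s, -v^Y_s\rangle \de\ell^Y_s \ge 0.
\]
Adding the two inequalities proves the claim.

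There is no substantive obstacle; the entire content of the lemma is the boundary-monotonicity built into the definition of a reflected process in a convex body. The only care required is to ensure that the pair $(v^{\cdot}_s,\ell^{\cdot}_s)$ is interpreted in the standard Skorokhod sense of \cite{pilipenko2014introduction} so that the support property of $\ell^{\cdot}$ and the normal-cone membership of $v^{\cdot}$ are directly available; both hold for any of the (possibly nonunique) reflected solutions our main results are allowed to use.
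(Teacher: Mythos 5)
Your argument is correct and is essentially identical to the paper's proof: both decompose the integral into the two boundary contributions and observe that the outward-normal-cone property of $v^X_s$ (resp.\ $v^Y_s$), combined with $Y_s\in\cK$ (resp.\ $X_s\in\cK$) and the fact that $\ell^X$ (resp.\ $\ell^Y$) only increases on the boundary, makes each term nonnegative. You have merely spelled out the normal-cone inequality $\langle w, X_s - z\rangle \ge 0$ more explicitly than the paper does.
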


\begin{proof}
    Recall that $\ell_t^X,\ell_t^Y$ are increasing. Moreover $\la X_t-Y_t,v_t^X\ra\ge 0$ whenever $X_t\in\partial\cK$ by the definition of the normal cone, and similarly $\la Y_t-X_t,v_t^Y\ra\ge 0$ whenever $Y_t\in\partial\cK$. The result follows.
\end{proof}

\begin{theorem}
    Both variants of Langevin dynamics above define, for any initialization $X_0\in B_N$ and for almost every path $(B_t)_{t\in [0,T]}$, a $(\tau,\nu)$ approximately Lipschitz map $\cA:\sH_N\to B_N$ with $\tau=O_{\xi,h,T}(1)$ and $\nu\le e^{-\Omega(N)}$.
\end{theorem}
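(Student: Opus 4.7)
The plan is to run two copies of the reflected SDE \eqref{eq:SDEreflection} for $H_N$ and $H_N'$ from the same starting point $X_0$ and driven by the same Brownian motion $(B_t)_{t\in[0,T]}$, obtaining coupled solutions $X_t,Y_t$ via the stochastic flow discussed in the paragraph above Lemma~\ref{lem:hardshrinkage}. I will control $Z_t = X_t - Y_t$ pathwise by a Grönwall estimate, valid on the event $H_N,H_N' \in K_N$ from Proposition~\ref{prop:gradients-bounded}, and then invoke Kirszbraun to convert ``Lipschitz on a high probability set'' into approximate Lipschitzness in the sense of Definition~\ref{defn:approx-lp}.

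The key computation is as follows. Using the reflection identity $\de Z_t = \tfrac{\beta}{2}(\nabla H_N(X_t) - \nabla H_N'(Y_t))\de t - v_t^X\de\ell_t^X + v_t^Y\de\ell_t^Y$, Itô's formula applied to $\|Z_t\|_2^2$, and Lemma~\ref{lem:hardshrinkage} to drop the boundary terms, gives
\[
\tfrac{1}{2}\de \|Z_t\|_2^2 \le \tfrac{\beta}{2}\la Z_t,\, \nabla H_N(X_t)-\nabla H_N(Y_t)\ra \de t + \tfrac{\beta}{2}\la Z_t,\, \nabla H_N(Y_t)-\nabla H_N'(Y_t)\ra \de t.
\]
On $\{H_N\in K_N\}$ the map $\nabla H_N$ is $C_2$-Lipschitz on $rB_N$ by Proposition~\ref{prop:gradients-bounded}, so the first term is bounded by $\tfrac{\beta C_2}{2}\|Z_t\|_2^2$. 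For the second term one checks directly from the definition \eqref{eq:def-hamiltonian-no-field} that for $Y\in rB_N$,
\[
\|\nabla H_N(Y) - \nabla H_N'(Y)\|_2 \le \lt(\sum_{p\in 2\bbN} p^2\gamma_p^2 r^{2p-2}\rt)^{1/2} \sqrt{N}\, \|H_N-H_N'\|_N,
\]
by Cauchy--Schwarz on the sum over $p$ and the relation $\|G^{(p)}\cdot Y^{\otimes(p-1)}\|_2 \le \|Y\|_2^{p-1}\|G^{(p)}\|_F$; the prefactor is finite because $r<\sqrt{2}$ and $\sum 2^p\gamma_p^2<\infty$. Combining these and dividing by $\|Z_t\|_2$ yields a linear inequality $\tfrac{\de}{\de t}\|Z_t\|_2 \le \tfrac{\beta C_2}{2}\|Z_t\|_2 + C'\sqrt{N}\|H_N-H_N'\|_N$, and Grönwall (starting from $Z_0=0$) gives $\|Z_T\|_2 \le \tau\sqrt{N}\,\|H_N-H_N'\|_N$ for some $\tau=\tau(\xi,h,T,\beta,r)$, i.e. $\|X_T-Y_T\|_N \le \tau\|H_N-H_N'\|_N$. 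Composing with the 1-Lipschitz projection onto $B_N$ (or $C_N$) preserves this bound.

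This gives a $\tau$-Lipschitz map on the (measurable, $H_N$-dependent) set $K_N\subseteq \sH_N$, and the bound is uniform over almost every Brownian path since the constants above are deterministic functions of $(\xi,h,T,\beta,r)$. Kirszbraun's theorem extends this map to a $\tau$-Lipschitz $\cA':\sH_N\to B_N$ agreeing with $\cA$ on $K_N$, and $\P[H_N\in K_N]\ge 1-e^{-cN}$ by Proposition~\ref{prop:gradients-bounded}, verifying Definition~\ref{defn:approx-lp} with $\nu\le e^{-\Omega(N)}$.

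The main obstacle in carrying this out cleanly is the step bounding $\|\nabla H_N(Y) - \nabla H_N'(Y)\|_2$: one needs the quantitative linear dependence of the gradient on the disorder coefficients in the normalization used for $\|\cdot\|_N$, and the convergence of $\sum p^2 \gamma_p^2 r^{2p-2}$, which requires $r<\sqrt{2}$ as assumed in Proposition~\ref{prop:gradients-bounded}. A secondary subtlety is ensuring $X_t,Y_t$ remain in $rB_N$ throughout $[0,T]$ so that the gradient estimates from Proposition~\ref{prop:gradients-bounded} apply at every intermediate time; this is automatic because the reflecting boundary confines the trajectories to $\cK\subseteq rB_N$ by construction.
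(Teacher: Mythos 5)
Your proposal is correct and follows essentially the same route as the paper: couple the two reflected diffusions via a shared Brownian motion and shared initial condition, note that $Z_t=X_t-Y_t$ has no martingale part, apply It\^o to $\|Z_t\|_2^2$, discard the reflection terms via Lemma~\ref{lem:hardshrinkage}, control the gradient difference on $K_N$, and close with Gr\"onwall and a Kirszbraun extension. The paper's proof invokes Proposition~\ref{prop:gradstable} in one step to bound $\norm{\nabla H_N^X(X_t)-\nabla H_N^Y(Y_t)}_N$ directly in terms of $\norm{X_t-Y_t}_N+\norm{H_N^X-H_N^Y}_N$, whereas you decompose it into a same-Hamiltonian Lipschitz piece (Proposition~\ref{prop:gradients-bounded}) and a same-point disorder-difference piece that you bound by hand; your by-hand bound is correct and is in fact the content of the proof of Proposition~\ref{prop:gradstable}, so this is a cosmetic difference. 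One small technical point: after the It\^o and hardshrinkage step you ``divide by $\|Z_t\|_2$'' to get a linear ODE inequality for $\|Z_t\|_2$, but since $Z_0=0$ this division is not directly justified at $t=0$. The paper sidesteps this by applying AM-GM to produce a self-bounding inequality for $\|Z_t\|_N^2$ and then running Gr\"onwall on the squared quantity; you should do the same (or regularize by considering $\sqrt{\|Z_t\|_2^2+\varepsilon}$), but this is a routine fix and does not affect the substance of the argument.
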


\begin{proof}
    Fix Hamiltonians
    \[
        H_N^X,H_N^Y\in K_N\subseteq \sH_N
    \]
    satisfying $\norm{H_N^X-H_N^Y}_N = \Delta$.
    Let $X_t,Y_t$ be the solutions to \eqref{eq:SDEreflection} driven by a shared Brownian motion with $H_N^X$ and $H_N^Y$ for $H_N$ respectively, and with shared initial condition $X_0=Y_0$. 
    We will show that 
    \[
        \norm{X_T-Y_T}_N\le C\Delta
    \]
    holds almost surely for some constant $C=C(\xi,h,T)$. This suffices to imply the result. (Note that $\cA$ might not be defined on all of $\sH_N$, but it suffices for it to be well-defined and Lipschitz on $K_N$.)
    
    First observe that $X_t-Y_t$ is a finite variation process, i.e. it has no Brownian component. With $\ell^X$ and $\ell^Y$ the corresponding finite variation processes in \eqref{eq:SDEreflection}, Ito's formula gives
    \begin{align*}
        \fr{1}{2}\de \norm{X_t-Y_t}_2^2
        &=
        \lt\la
            X_t-Y_t,
            \de X_t - \de Y_t
        \rt\ra \de t \\
        &= 
        \la 
            X_t-Y_t,
            -v_t^X\de\ell_t^X + 
            v_t^Y\de\ell_t^Y
        \ra 
        \de t + 
        \beta\la 
            X_t-Y_t,
            \nabla H_N^X(X_t) - 
            \nabla H_N^Y(Y_t)
        \ra \de t.
    \end{align*}
    Integrating and using Lemma~\ref{lem:hardshrinkage}, we find
    \begin{align*}
        \norm{X_t-Y_t}_2^2 
        &\le 
        \int_0^t 
        \la 
            X_s-Y_s,
            -v_s^X\de\ell_s^X + 
            v_s^Y\de\ell_s^Y
        \ra
        \de s + 
        \beta \int_0^t 
        \la 
            X_s-Y_s,
            \nabla H_N^X(X_s) - 
            \nabla H_N^Y(Y_s)
        \ra \de s \\
        &\le 
        \beta \int_0^t 
        \la 
            X_s-Y_s,
            \nabla H_N^X(X_s) - 
            \nabla H_N^Y(Y_s)
        \ra \de s.
    \end{align*}
    By Proposition~\ref{prop:gradstable} with $C=C_1'$,
    \[
        \norm{\nabla H_N^X(X_t)-\nabla H_N^Y(Y_t)}_N
        \le 
        C(\Delta+\norm{X_t-Y_t}_N).
    \]
    Using AM-GM and rescaling, we obtain for each $t\in [0,T]$ the self-bounding inequality
    \begin{align*}
        \norm{X_t-Y_t}_N^2
        &\le 
        C
        \int_0^t 
        \Delta\norm{X_s-Y_s}_N +
        \norm{X_s-Y_s}_N^2 
        \de t\\
        &\le 
        2C
        \int_0^t 
        \Delta^2 + \norm{X_s-Y_s}_N^2 
        \de t\\
        &\le 2C\Delta^2 T+ 2C\int_0^t \norm{X_s-Y_s}_N^2 \de t.
    \end{align*}
    Gr\"onwall's inequality now implies $\norm{X_T-Y_T}_N^2\le 2C\Delta^2T e^{2CT}$. This concludes the proof.
\end{proof}

\section*{Acknowledgements}

Brice Huang was supported by an NSF graduate research fellowship, a Siebel scholarship, NSF awards DMS-2022448 and CCF-1940205, and NSF TRIPODS award 1740751.
Mark Sellke was supported by an NSF graduate research fellowship, the William R. and Sara Hart Kimball Stanford graduate fellowship, and NSF award CCF-2006489. 
Part of this work was conducted while the authors were visiting the Simons Institute for the Theory of Computing. 
We thank Guy Bresler, Wei-Kuo Chen, Andrea Montanari, Cristopher Moore, Dmitry Panchenko, Yi Sun, and Lenka Zdeborov\'a for helpful references and comments.

\bibliographystyle{alpha}
\bibliography{all-bib}

\appendix
\section{Bounds on Hamiltonian Derivatives}
\label{sec:basic-estimates}

In this section we will prove high-probability bounds on the derivatives of $H_N$, including Proposition~\ref{prop:gradients-bounded}.
We write $H_N(\bsig) = \la \bh, \bsig \ra + \wtH_N(\bsig)$ for
\[
    \wtH_N(\bsig) 
    = 
    \sum_{p\in 2\bbN} \gamma_p H_{N,p}(\bsig),
\]
where the $p$-tensor component is
\[
    H_{N,p}(\bsig) 
    = 
    \fr{1}{N^{(p-1)/2}} 
    \la \Gp{p}, \bsig^{\otimes p} \ra.
\]
By slight abuse of notation, we also denote $\fr{1}{N^{(p-1)/2}} \Gp{p} = H_{N,p}$.

\begin{proposition}
    \label{prop:hnp-op}
    There exists universal constants $c, C > 0$ such that for all sufficiently large $N$,
    \[
        \norm{H_{N,p}}_{\op} \le C \sqrt{p}
    \]
    for all $p\in 2\bbN$ with probability at least $1 - e^{-cN}$
\end{proposition}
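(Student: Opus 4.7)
The plan is to reduce the claim to a standard supremum bound on a Gaussian process on the product of spheres, and to union bound in $p$. Writing $\bsig^i = \sqrt{N}u^i$ for unit vectors $u^i \in \bbR^N$, the definition of $\|\cdot\|_{\op}$ gives
\[
\|H_{N,p}\|_{\op} = \fr{1}{\sqrt{N}}\sup_{u^1,\ldots,u^p}\bigl|\langle \Gp{p}, u^1 \otimes \cdots \otimes u^p\rangle\bigr|,
\]
so it suffices to prove that the centered Gaussian process $T_{u^1,\ldots,u^p} = \langle \Gp{p},u^1\otimes\cdots\otimes u^p\rangle$ indexed by $p$-tuples of unit vectors satisfies $\sup T \le C\sqrt{pN}$ with probability at least $1 - e^{-c(p+1)N}$. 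Summing geometrically over $p \in 2\bbN$ then yields the stated $1 - e^{-cN}$ bound.

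For a fixed $p$, I would use an $\eps$-net argument. Pick a $\delta$-net $\cN_\delta$ on the unit sphere $\bbS^{N-1}$ of size at most $(3/\delta)^N$. Standard multilinearity gives a one-step perturbation estimate
\[
\sup_{u^i\in\bbS^{N-1}} T_{u^1,\ldots,u^p} \le (1-p\delta)^{-1}\sup_{u^i\in\cN_\delta}T_{u^1,\ldots,u^p},
\]
so taking $\delta = 1/(2p)$ loses only a factor $2$. Each $T$ at a fixed point of the product net is a standard Gaussian, and the product net has size $(6p)^{pN}$, so a union bound gives
\[
\P\Bigl(\sup_{\cN_\delta^p} T \ge t\Bigr) \le 2(6p)^{pN}e^{-t^2/2}.
\]
Choosing $t$ of order $\sqrt{pN}$ (with sufficiently large universal constant) makes this at most $e^{-c(p+1)N}$. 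Alternatively, because the map $\Gp{p}\mapsto \sup T$ is $1$-Lipschitz in the Euclidean norm on $\bbR^{N^p}$, Borell--TIS gives Gaussian concentration of $\sup T$ around its mean at rate $e^{-s^2/2}$, so once the expectation is bounded by $C\sqrt{pN}$ the deviation $s = \sqrt{pN}$ yields the same kind of tail.

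The main technical delicacy is removing the $\sqrt{\log p}$ factor that a purely naive net argument produces: the crude union bound above gives at best $\sup T \lesssim \sqrt{pN\log p}$, so recovering the claimed $C\sqrt{p}$ with a universal constant requires either a finer chaining argument applied to the canonical metric on $(\bbS^{N-1})^p$, a Gaussian comparison (Slepian/Sudakov--Fernique) against a dominating process whose supremum is $O(\sqrt{pN})$, or simply absorbing the $\sqrt{\log p}$ factor into the constant by using a tighter net of mesh $\delta$ tuned together with the exponent $c$ in the failure probability. The rest of the argument (the per-$p$ Gaussian tail together with the geometric union bound in $p$) is routine.
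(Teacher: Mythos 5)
The high-level plan (reduce to a Gaussian-process supremum, prove a per-$p$ tail bound with failure probability $\le e^{-c(p+1)N}$, and union bound geometrically over $p \in 2\bbN$) is exactly the paper's strategy. The difference is that the paper does not re-derive the per-$p$ bound: it simply invokes \cite[Eq.~B.6]{arous2020geometry}, which delivers $\P[\norm{H_{N,p}}_{\op}\ge 2K\sqrt p]\le e^{-K^2pN/2}$ for a universal $K$. Your proposal tries to prove that ingredient from scratch, and it is precisely there that it has a real gap.

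As you note, the product $\eps$-net of mesh $\delta=1/(2p)$ has cardinality $(6p)^{pN}$, and making $2(6p)^{pN}e^{-t^2/2}\le e^{-c(p+1)N}$ forces $t\gtrsim\sqrt{pN\log p}$; after the factor-$2$ perturbation loss this yields $\norm{H_{N,p}}_{\op}\le C\sqrt{p\log p}$, not $C\sqrt p$. None of the three remedies you sketch actually closes this. The third (``absorbing the $\sqrt{\log p}$ into the constant'') is not possible even in principle: $C$ must be a single universal constant while $\sqrt{\log p}$ is unbounded as $p\to\infty$, and the inequality you need is $\sqrt{\log(6p)}\le C$ uniformly in $p$. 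The second (Sudakov--Fernique) also stalls with the natural comparison process: taking $S_{\vec u}=\sum_i\la g^i,u^i\ra$ with i.i.d. $g^i\sim\cN(0,I_N)$, one does have $\E(T_{\vec u}-T_{\vec v})^2=2(1-\prod_i\la u^i,v^i\ra)\le 2\sum_i(1-\la u^i,v^i\ra)=\E(S_{\vec u}-S_{\vec v})^2$, so the comparison is legitimate, but $\E\sup S = p\,\E\norm{g}\approx p\sqrt N$, which is worse than what you need by a factor $\sqrt p$; you do not exhibit any dominating process whose supremum is $O(\sqrt{pN})$. The first (chaining) faces the same obstruction: Dudley's integral with the product-net covering numbers $N(\delta)\lesssim(3\sqrt p/\delta)^{pN}$ again gives $\sqrt{pN\log p}$, so one would need a genuinely better covering (or majorizing measure) on $(\bbS^{N-1})^p$, which you assert but do not construct. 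In short, your argument proves $\norm{H_{N,p}}_{\op}\le C\sqrt{p\log p}$ uniformly over $p$ with probability $1-e^{-cN}$ --- which incidentally would still suffice for the way this proposition is used downstream, e.g.\ in Proposition~\ref{prop:gradients-bounded}, since $\sum_p\gamma_p r^{p-k}p^{\uk}\sqrt{p\log p}<\infty$ --- but it does not establish the stated $C\sqrt p$ bound, and the paper's proof relies on the sharper per-$p$ estimate from \cite{arous2020geometry} to get it.
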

\begin{proof}
    By \cite[Equation B.6]{arous2020geometry} with $k=p$, we have for some universal constant $K$ and all $p\in 2\bbN$,
    \[
        \P \lt[
            \norm{H_{N,p}}_{\op}
            \ge 
            2K \sqrt{p}
        \rt]
        \le 
        e^{-K^2 pN/2}.
    \]
    Take $C = 2K$. 
    The result follows by a union bound over $p\in 2\bbN$.
\end{proof}

\begin{proof}[Proof of Proposition~\ref{prop:gradients-bounded}]
    Let $K_N \subseteq \sH_N$ be the set of Hamiltonians $H_N$ satisfying the conclusion of Proposition~\ref{prop:hnp-op}.
    We will take
    \[
        C_{k} 
        = 
        C 
        \sum_{p\in 2\bbN, p\ge k} 
        \gamma_p r^{p-k} 
        p^{\uk} \sqrt{p}
        +
        h \ind\{k=1\},
    \]
    where $C$ is given by Proposition~\ref{prop:hnp-op} and $p^{\uk} = p(p-1)\cdots (p-k+1)$ denotes the $k$-th falling power of $p$.
    This is finite because $r<\sqrt{2}$ and $\sum_{p\in 2\bbN} \gamma_p^2 2^p < \infty$ implies $\limsup_{p\to\infty} \fr{\gamma_{p+2}}{\gamma_p} \le \fr{1}{2}$.

    If $H_N \in K_N$, for each $\bsig^1,\ldots,\bsig^k \in S_N$ we have
    \begin{align*}
        \fr{1}{N} 
        \lt\la 
            \nabla^k \wtH_N(\bx), 
            \bsig^1 \otimes \cdots \otimes \bsig^k
        \rt\ra
        &= 
        \sum_{p\in 2\bbN, p\ge k}
        \fr{\gamma_p}{N}
        \lt\la 
            \nabla^k H_{N,p}(\bx),
            \bsig^1 \otimes \cdots \otimes \bsig^k
        \rt\ra \\
        &=
        \sum_{p\in 2\bbN, p\ge k}
        \fr{\gamma_p p^{\uk}}{N}
        \lt\la 
            H_{N,p},
            \bx^{\otimes (p-k)}
            \otimes
            \bsig^1 \otimes \cdots \otimes \bsig^k
        \rt\ra \\
        &\le 
        \sum_{p\in 2\bbN, p\ge k}
        \gamma_p r^{p-k} p^{\uk}
        \norm{H_{N,p}}_{\op} \\
        &\le
        C 
        \sum_{p\in 2\bbN, p\ge k} 
        \gamma_p r^{p-k} 
        p^{\uk} \sqrt{p},
    \end{align*}
    by Proposition~\ref{prop:hnp-op}.
    Thus
    \[
        \norm{\nabla^k \wtH_N(\bx)}_{\op}
        \le 
        C 
        \sum_{p\in 2\bbN, p\ge k} 
        \gamma_p r^{p-k} 
        p^{\uk} \sqrt{p}.
    \]
    For $k\ge 2$, $\nabla^k H_N(\bx) = \nabla^k \wtH_N(\bx)$, and for $k=1$, $\norm{\nabla H_N(\bx)}_{\op} \le \norm{\nabla \wtH_N(\bx)}_{\op} + h$.
    This proves the first claim.
    Similarly, 
    \begin{align*}
        &\fr{1}{N} 
        \lt\la 
            \nabla^k H_N(\bx) - \nabla^k H_N(\by), 
            \bsig^1 \otimes \cdots \otimes \bsig^k
        \rt\ra \\
        &= 
        \sum_{p\in 2\bbN, p\ge k}
        \fr{\gamma_p}{N}
        \lt\la 
            \nabla^k H_{N,p}(\bx) - \nabla^k H_{N,p}(\by),
            \bsig^1 \otimes \cdots \otimes \bsig^k
        \rt\ra \\
        &=
        \sum_{p\in 2\bbN, p\ge k}
        \fr{\gamma_p p^{\uk}}{N}
        \lt\la 
            H_{N,p},
            \lt(\bx^{\otimes (p-k)} - \by^{\otimes (p-k)}\rt)
            \otimes
            \bsig^1 \otimes \cdots \otimes \bsig^k
        \rt\ra \\
        &=
        \sum_{p\in 2\bbN, p\ge k}
        \fr{\gamma_p p^{\uk}}{N}
        \sum_{j=0}^{p-k-1}
        \lt\la 
            H_{N,p},
            (\bx - \by) 
            \otimes 
            \bx^{\otimes (p-k-1-j)}
            \otimes 
            \by^{\otimes j}
            \otimes
            \bsig^1 \otimes \cdots \otimes \bsig^k
        \rt\ra \\
        &\le 
        \sum_{p\in 2\bbN, p\ge k}
        \gamma_p r^{p-k-1}
        p^{\uk} (p-k) 
        \norm{\bx-\by}_N
        \norm{H_{N,p}}_{\op} \\
        &\le 
        C_{k+1} \norm{\bx-\by}_N,
    \end{align*}
    so $\norm{\nabla^k H_N(\bx) - \nabla^k H_N(\by)}_{\op} \le C_{k+1} \norm{\bx-\by}_N$, proving the second claim.
\end{proof}

\begin{proposition}
    \label{prop:gradstable}
    Fix a model $(\xi,h)$ and a constant $r\in [1,\sqrt{2})$.
    Let $K_N$ be given by Proposition~\ref{prop:gradients-bounded}.
    There exists a sequence of constants $(C'_k)_{k\ge 1}$ independent of $N$ such that for all $H_N, H'_N \in K_N$ and $\bx, \by \in \bbR^N$ with $\norm{\bx}_N, \norm{\by}_N \le r$,
    \[
        \norm{\nabla^k H_N(\bx)-\nabla^k H_N'(\by)}_{\op}
        \le 
        C'_k
        \lt(\norm{\bx-\by}_N + \norm{H_N-H_N'}_N\rt),
    \]
    where $\norm{H_N-H_N'}_N$ is defined by \eqref{eq:hamiltonian-norm}.
\end{proposition}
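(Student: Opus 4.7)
Proof proposal: The plan is to split via the triangle inequality
\[
\norm{\nabla^k H_N(\bx) - \nabla^k H'_N(\by)}_{\op} \le \norm{\nabla^k H_N(\bx) - \nabla^k H_N(\by)}_{\op} + \norm{\nabla^k H_N(\by) - \nabla^k H'_N(\by)}_{\op},
\]
and bound the two terms separately. The first term is immediate from Proposition~\ref{prop:gradients-bounded}(2), which gives an upper bound of $C_{k+1}\norm{\bx-\by}_N$ since $H_N\in K_N$ and $\norm{\bx}_N,\norm{\by}_N\le r$.

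The second term is the main step, but it is still routine. The key observation is that it can be bounded \emph{without} using membership of $H_N-H'_N$ in $K_N$; it follows directly from the size of the disorder coefficients. Writing $D_N=H_N-H'_N$ (the external fields cancel), the $k$-th derivative is
\[
\langle \nabla^k D_N(\by),\bsig^1\otimes\cdots\otimes\bsig^k\rangle
=\sum_{p\in 2\bbN,\,p\ge k}\frac{\gamma_p\,p^{\uk}}{N^{(p-1)/2}}\,\langle \Gp{p}_{H_N}-\Gp{p}_{H'_N},\by^{\otimes(p-k)}\otimes\bsig^1\otimes\cdots\otimes\bsig^k\rangle.
\]
Applying Cauchy--Schwarz to each summand with unit vectors $\bsig^i/\sqrt{N}$ and using $\norm{\by}_2\le r\sqrt{N}$ yields
\[
\Bigl|\langle \Gp{p}_{H_N}-\Gp{p}_{H'_N},\by^{\otimes(p-k)}\otimes\bsig^1\otimes\cdots\otimes\bsig^k\rangle\Bigr|
\le \norm{\Gp{p}_{H_N}-\Gp{p}_{H'_N}}_F\cdot r^{p-k}N^{(p-k)/2}\cdot N^{k/2},
\]
where $\norm{\cdot}_F$ denotes the entrywise $\ell_2$ norm. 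Dividing by $N$ as in the definition of $\norm{\cdot}_{\op}$ and noting the powers of $N$ collapse to $N^{-1/2}$, one obtains
\[
\norm{\nabla^k D_N(\by)}_{\op}\le \sum_{p\in 2\bbN,\,p\ge k}\gamma_p\,p^{\uk}\,r^{p-k}\cdot\frac{\norm{\Gp{p}_{H_N}-\Gp{p}_{H'_N}}_F}{\sqrt{N}}.
\]
By Cauchy--Schwarz over the index $p$ and the identity $\norm{H_N-H'_N}_N^2=\frac{1}{N}\sum_p\norm{\Gp{p}_{H_N}-\Gp{p}_{H'_N}}_F^2$, this is at most
\[
\Bigl(\sum_{p\in 2\bbN,\,p\ge k}\gamma_p^2\,(p^{\uk})^2\,r^{2(p-k)}\Bigr)^{1/2}\norm{H_N-H'_N}_N.
\]
The series converges because $\sum_p 2^p\gamma_p^2<\infty$ forces $\gamma_p$ to decay faster than $(r/\sqrt{2})^{-p}$ asymptotically, so multiplication by the polynomial factor $(p^{\uk})^2 r^{2(p-k)}$ with $r<\sqrt{2}$ preserves summability (exactly as in the proof of Proposition~\ref{prop:gradients-bounded}).

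Finally, setting $C'_k$ to be the maximum of $C_{k+1}$ and the constant from the series above completes the proof. The only mildly nontrivial point is handling the polynomial-in-$p$ factor in the convergence of the series, but this is identical to the argument already used for Proposition~\ref{prop:gradients-bounded} and presents no real obstacle.
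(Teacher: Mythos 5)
Your proof is correct and follows essentially the same route as the paper: triangle inequality, then Proposition~\ref{prop:gradients-bounded}(2) for the spatial increment and a tensor expansion plus Cauchy--Schwarz for the disorder increment. The only cosmetic difference is in how the $p$-sum is aggregated at the end: the paper bounds each $\fr{1}{\sqrt{N}}\norm{\Gp{p}-\Gp{p}'}_F \le \norm{H_N - H'_N}_N$ uniformly and sums the coefficients $\gamma_p p^{\uk} r^{p-k}$, while you retain the $p$-dependence and apply one more Cauchy--Schwarz over $p$, producing the constant $\bigl(\sum_p \gamma_p^2 (p^{\uk})^2 r^{2(p-k)}\bigr)^{1/2}$; both series converge under the standing hypothesis $\sum_p 2^p\gamma_p^2<\infty$ with $r<\sqrt{2}$, so either choice works.
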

Note that when $\norm{H_N-H_N'}_N$ is infinite, this proposition is vacuously true.
\begin{proof}
    We have that
    \[
        \norm{\nabla^k H_N(\bx)-\nabla^k H_N'(\by)}_{\op}
        \le 
        \norm{\nabla^k H_N(\bx)-\nabla^k H'_N(\bx)}_{\op}
        +
        \norm{\nabla^k H'_N(\bx)-\nabla^k H'_N(\by)}_{\op},
    \]
    and by \eqref{eq:gradient-lipschitz}, 
    \[
        \norm{\nabla^k H'_N(\bx)-\nabla^k H'_N(\by)}_{\op}
        \le 
        C_{k+1} \norm{\bx-\by}_N.
    \]
    For all $\bsig^1, \ldots, \bsig^k \in S_N$, we have
    \begin{align*}
        \fr{1}{N}
        \lt\la
            \nabla^k H_N(\bx) - \nabla^k H'_N(\bx),
            \bsig^1 \otimes \cdots \otimes \bsig^k
        \rt\ra
        &= 
        \sum_{p\in 2\bbN, p\ge k} 
        \fr{\gamma_p}{N}
        \lt\la 
            \nabla^k H_{N,p}(\bx) - \nabla^k H'_{N,p}(\bx),
            \bsig^1 \otimes \cdots \otimes \bsig^k
        \rt\ra \\
        &= 
        \sum_{p\in 2\bbN, p\ge k} 
        \fr{\gamma_p p^{\uk}}{N}
        \lt\la 
            H_{N,p} - H'_{N,p},
            \bx^{\otimes (p-k)} \otimes 
            \bsig^1 \otimes \cdots \otimes \bsig^k
        \rt\ra \\
        &\le 
        \sum_{p\in 2\bbN, p\ge k} 
        \gamma_p r^{p-k} p^{\uk}
        \norm{H_{N,p} - H'_{N,p}}_{\op}.
    \end{align*}
    Moreover,
    \begin{align*}
        \norm{H_{N,p} - H'_{N,p}}_{\op}
        &= 
        \fr{1}{N^{(p+1)/2}}
        \max_{\bsig^1,\ldots,\bsig^p \in S_N}
        \la \Gp{p} - {\mathbf{G}'}^{(p)}, \bsig^1 \otimes \cdots \otimes \bsig^p \ra \\
        &\le 
        \fr{1}{\sqrt{N}} \norm{\Gp{p} - {\mathbf{G}'}^{(p)}}_2 \\
        &\le 
        \norm{H_N - H'_N}_N.
    \end{align*}
    Thus we have 
    \[
        \fr{1}{N}
        \lt\la
            \nabla^k H_N(\bx) - \nabla^k H'_N(\bx),
            \bsig^1 \otimes \cdots \otimes \bsig^k
        \rt\ra
        \le 
        \sum_{p\in 2\bbN, p\ge k} 
        \gamma_p r^{p-k} p^{\uk}
        \cdot 
        \norm{H_N - H'_N}_N.
    \]
    Because this holds for all $\bsig^1, \ldots, \bsig^k \in S_N$, we have
    \[
        \norm{\nabla^k H_N(\bx)-\nabla^k H'_N(\bx)}_{\op}
        \le 
        \sum_{p\in 2\bbN, p\ge k} 
        \gamma_p r^{p-k} p^{\uk}
        \cdot 
        \norm{H_N - H'_N}_N.
    \]
    The result follows by taking $C'_{k}$ to be the larger of $C_{k+1}$ and $\sum_{p\in 2\bbN, p\ge k} \gamma_p r^{p-k} p^{\uk}$.
\end{proof}

\section{Explicit Formula for the Spherical Algorithmic Threshold}
\label{sec:alg-sp-value}

In this section, we will prove Proposition~\ref{prop:alg-sp-value}, which gives an explicit formula for $\ALG^{\Sp}_{\xi,h}$. 

We first remark that the $\hq$ defined in the second case of Proposition~\ref{prop:alg-sp-value} exists and is unique.
Define $f(q) = q\xi''(q) - \xi'(q) = \sum_{p\in 2\bbN}p(p-2) \gamma_p^2 q^{p-1}$. 
If we are in the second case of the proposition, then $h^2 + \xi'(1) < \xi''(1)$, so $f(1) > h^2$. 
Since $f(0) = 0 \le h^2$, existence of $\hq$ follows from the Intermediate Value Theorem.
Moreover, $f(1) > h^2 \ge 0$ implies $\gamma_p > 0$ for some $p>2$, so $f(q)$ is strictly increasing for $q\in [0,1]$.
This implies uniqueness.

Recall that the spherical Parisi functional $\Par^{\Sp}$ \eqref{eq:def-parisi-functional-sp} is defined in terms of a function $B_\zeta(t) = B - \int_t^1 \xi''(q)\zeta(q) \diff{q}$. 
As $(B, \zeta)$ ranges over $\cuK(\xi)$, $B_\zeta(t)$ ranges over all continuous, nondecreasing functions from $[0,1]$ to $\bbR_{>0}$.
We can thus reparametrize the minimizaton \eqref{eq:def-alg-sp} as one over continuous and nondecreasing $B : [0,1] \to \bbR_{>0}$. 
By slight abuse of notation, for continuous and nondecreasing $B : [0,1] \to \bbR_{>0}$ define
\[
    \Par^{\Sp}(B) 
    =
    \Par_{\xi, h}^{\Sp}(B) 
    = 
    \fr12 \lt[
        \fr{h^2}{B(0)} + 
        \int_0^1 \lt(
            \fr{\xi''(q)}{B(q)} + B(q) 
        \rt) \diff{q}.
    \rt]
\]

\begin{proof}[Proof of Proposition~\ref{prop:alg-sp-value}]
    We first handle the case $h=0$. 
    By AM-GM, 
    \[
        \Par^{\Sp}(B) 
        =
        \fr12 
        \int_0^1 \lt(
            \fr{\xi''(q)}{B(q)} + B(q) 
        \rt) \diff{q}
        \ge 
        \int_0^1 \xi''(q)^{1/2} \diff{q}.
    \]
    Equality holds when $B(q) = \xi''(q)^{1/2}$ for all $q\in [0,1]$.
    However, this requires $B(0) = 0$, so this objective is not attained, though approximations to this $B$ get arbitrarily close.
    Thus $\ALG^{\Sp} = \int_0^1 \xi''(q)^{1/2} \diff{q}$.
    We will show this $\ALG^{\Sp}$ equals the value claimed.
    If $\gamma_p > 0$ for some $p>2$, then $\xi'(1) < \xi''(1)$, so we are in the second case of the proposition.
    Since $\hq = 0$, we are done.
    Otherwise, $\gamma_p = 0$ for all $p>2$, and $\xi'(1) = \xi''(1)$. 
    Then $\xi''(q)$ is constant, so $\ALG^{\Sp} = \xi''(1)^{1/2} = \xi'(1)^{1/2}$ as claimed.
    
    Otherwise, $h>0$.
    We extend the definition of $\hq$ to
    \[
        \hq = \sup\lt\{
            q\in [0,1]: 
            h^2 + \xi'(q) \ge q \xi''(q)
        \rt\}.
    \]
    This gives $\hq = 1$ in the first case of the proposition, and matches the definition of $\hq$ in the second case.
    Note that $\hq > 0$.
    Define 
    \[
        \hB = \lt(\fr{h^2 + \xi'(\hq)}{\hq}\rt)^{1/2}.
    \]
    We will prove both cases simultaneously by showing that for any continuous and nondecreasing $B : [0,1] \to \bbR_{>0}$, we have
    \[
        % \label{eq:B-opt-claim}
        \Par^{\Sp}(B) 
        \ge 
        \hq^{1/2} \lt(h^2 + \xi'(\hq)\rt)^{1/2} + 
        \int_{\hq}^1 \xi''(q)^{1/2} \diff{q},
    \]
    with equality if and only if
    \[
        B(q) = 
        \begin{cases}
            \hB & q \le \hq, \\
            \xi''(q)^{1/2} & q > \hq.
        \end{cases}
    \]
    It is easy to check that this $B$ is continuous and nondecreasing (i.e. if $\hq < 1$, then $\hB = \xi''(\hq)^{1/2}$) and that it corresponds to the equality cases claimed in the proposition.
    By AM-GM,
    \begin{equation}
        \label{eq:hq-to-1}
        \fr12 \int_{\hq}^1 \lt(
            \fr{\xi''(q)}{B(q)} + B(q)
        \rt)\diff{q}
        \ge 
        \int_{\hq}^1 \xi''(q)^{1/2} \diff{q},
    \end{equation}
    with equality if and only if $B(q) = \xi''(q)^{1/2}$ on $(\hq,1]$.
    Define the truncated Parisi operator
    \[
        \Par^{\Sp, \hq}(B)
        = 
        \fr12 \lt[
            \fr{h^2}{B(0)} + 
            \int_0^{\hq} \lt(
                \fr{\xi''(q)}{B(q)} + B(q) 
            \rt) \diff{q}
        \rt].
    \]
    Let $\wtB : [0, \hq] \to \bbR_{>0}$ be given by $\wtB(q) = \hB$, and note that $\Par^{\Sp, \hq}(\wtB) = \hq^{1/2} \lt(h^2 + \xi'(\hq)\rt)^{1/2}$. 
    We will show that for continuous and nondecreasing $B : [0, \hq] \to \bbR_{>0}$, we have $\Par^{\Sp, \hq}(B) \ge \Par^{\Sp, \hq}(\wtB)$, with equality if and only if $B \equiv \wtB$ on $[0, \hq]$.
    Along with \eqref{eq:hq-to-1}, this implies the conclusion.
    We consider two cases.
    
    \paragraph{Case 1: $B(0) < \hB$.}
    Define
    \[
        \wtq = \sup\lt\{
            q \in [0, \hq] : 
            B(q) \le \hB
        \rt\}.
    \]
    It is possible that $\wtq = \hq$.
    For $q\in [\wtq, \hq]$, we have $B(q) \ge \hB$, so 
    \[
        \int_{\wtq}^{\hq} \lt(
            \fr{\xi''(q)}{B(q)}
            + B(q)
        \rt)
        -
        \int_{\wtq}^{\hq} \lt(
            \fr{\xi''(q)}{\hB}
            + \hB
        \rt)
        =
        \int_{\wtq}^{\hq} \lt(
            \fr{1}{\hB} - \fr{1}{B(q)}
        \rt) 
        \lt(B(q) \hB - \xi''(q)\rt)
        \diff{q}.
    \]
    Because
    \[
        B(q)\hB \ge \hB^2 \ge \fr{h^2 + \xi'(\hq)}{\hq} \ge \xi''(\hq) \ge \xi''(q),
    \]
    we have
    \begin{equation}
        \label{eq:wtq-hq}
        \int_{\wtq}^{\hq} \lt(
            \fr{\xi''(q)}{B(q)}
            + B(q)
        \rt)
        \ge
        \int_{\wtq}^{\hq} \lt(
            \fr{\xi''(q)}{\hB}
            + \hB
        \rt).
    \end{equation}
    Moreover, for $q\in [0, \wtq]$, we have $B(q) \le \hB$, so
    \begin{align*}
        2\lt(\Par^{\Sp, \wtq}(B) - \Par^{\Sp, \wtq}(\wtB)\rt)
        &= 
        h^2 \lt(\fr{1}{B(0)} - \fr{1}{\hB}\rt)
        -
        \int_0^{\wtq}
        \lt(B(q)\hB - \xi''(q)\rt)
        \lt(\fr{1}{B(q)} - \fr{1}{\hB}\rt) 
        \diff{q} \\
        &\ge
        h^2 \lt(\fr{1}{B(0)} - \fr{1}{\hB}\rt)
        -
        \int_0^{\wtq}
        \lt(\hB^2 - \xi''(q)\rt)
        \lt(\fr{1}{B(q)} - \fr{1}{\hB}\rt) 
        \diff{q} \\
        &=
        h^2 \lt(\fr{1}{B(0)} - \fr{1}{\hB}\rt)
        -
        \int_0^{\wtq}
        \lt(\fr{h^2 + \xi'(\hq)}{\hq} - \xi''(q)\rt)
        \lt(\fr{1}{B(q)} - \fr{1}{\hB}\rt) 
        \diff{q} \\
        &\ge 
        h^2 \lt(\fr{1}{B(0)} - \fr{1}{\hB}\rt)
        -
        \int_0^{\wtq}
        \lt(\fr{h^2 + \xi'(\hq)}{\hq} - \xi''(q)\rt)
        \lt(\fr{1}{B(0)} - \fr{1}{\hB}\rt) 
        \diff{q} \\
        &=
        \lt(\fr{1}{B(0)} - \fr{1}{\hB}\rt) 
        \lt[
            h^2 
            -
            \int_0^{\wtq}
            \lt(\fr{h^2 + \xi'(\hq)}{\hq} - \xi''(q)\rt)
            \diff{q} 
        \rt] \\
        &\ge 
        \lt(\fr{1}{B(0)} - \fr{1}{\hB}\rt) 
        \lt[
            h^2 
            -
            \int_0^{\hq}
            \lt(\fr{h^2 + \xi'(\hq)}{\hq} - \xi''(q)\rt)
            \diff{q} 
        \rt] \\
        &= 0.
    \end{align*}
    Thus $\Par^{\Sp, \wtq}(B) \ge \Par^{\Sp, \wtq}(\wtB)$, with equality only if $\wtq = \hq$ and $B(q) = \hB$ for all $q\in [0, \wtq]$.
    Combining this with \eqref{eq:wtq-hq} gives that $\Par^{\Sp, \hq}(B) \ge \Par^{\Sp, \hq}(\wtB)$, with equality only if $B \equiv \wtB$ on $[0, \hq]$.
    
    \paragraph{Case 2: $B(0) \ge \hB$.}
    In this case, $B(q)\ge \hB$ for all $q\in [0, \hq]$.
    So,
    \begin{align*}
        2\lt(\Par^{\Sp, \hq}(B) - \Par^{\Sp, \hq}(\wtB)\rt)
        &= 
        -h^2 \lt(\fr{1}{\hB} - \fr{1}{B(0)}\rt)
        +
        \int_0^\hq 
        \lt(B(q)\hB - \xi''(q)\rt)
        \lt(\fr{1}{\hB} - \fr{1}{B(q)}\rt)
        \diff{q} \\
        &\ge
        -h^2 \lt(\fr{1}{\hB} - \fr{1}{B(0)}\rt)
        +
        \int_0^\hq 
        \lt(\hB^2 - \xi''(q)\rt)
        \lt(\fr{1}{\hB} - \fr{1}{B(q)}\rt)
        \diff{q} \\
        &= 
        -h^2 \lt(\fr{1}{\hB} - \fr{1}{B(0)}\rt)
        +
        \int_0^\hq 
        \lt(\fr{h^2 + \xi'(\hq)}{\hq} - \xi''(q)\rt)
        \lt(\fr{1}{\hB} - \fr{1}{B(q)}\rt)
        \diff{q} \\
        &\ge 
        -h^2 \lt(\fr{1}{\hB} - \fr{1}{B(0)}\rt)
        +
        \int_0^\hq 
        \lt(\fr{h^2 + \xi'(\hq)}{\hq} - \xi''(q)\rt)
        \lt(\fr{1}{\hB} - \fr{1}{B(0)}\rt)
        \diff{q} \\
        &=
        \lt(\fr{1}{\hB} - \fr{1}{B(0)}\rt) \lt[
            -h^2 
            +
            \int_0^\hq 
            \lt(\fr{h^2 + \xi'(\hq)}{\hq} - \xi''(q)\rt)
            \diff{q} 
        \rt] \\
        &= 0.
    \end{align*}
    For equality to hold, we must have $B(q) = \wtB$ for all $q\in [0, \hq]$, so $B \equiv \wtB$ on $[0, \hq]$.
\end{proof}

\end{document}